\newcommand{\limdir}{\mathrm{limdir}}
\newcommand{\cov}{\mathrm{cov}}
\newcommand{\add}{\mathrm{add}}
\newcommand{\afrak}{\mathfrak{a}}
\newcommand{\bfrak}{\mathfrak{b}}
\newcommand{\minadd}{\mathrm{minadd}}
\newcommand{\mincof}{\mathrm{mincof}}
\newcommand{\cof}{\mathrm{cof}}
\newcommand{\non}{\mathrm{non}}
\newcommand{\supcov}{\mathrm{supcov}}
\newcommand{\supadd}{\mathrm{supadd}}
\newcommand{\dfrak}{\mathfrak{d}}
\newcommand{\ufrak}{\mathfrak{u}}
\newcommand{\sfrak}{\mathfrak{s}}
\newcommand{\minnon}{\mathrm{minnon}}
\newcommand{\mincov}{\mathrm{mincov}}
\newcommand{\cfrak}{\mathfrak{c}}
\newcommand{\Rbb}{\mathbb{R}}
\newcommand{\Scal}{\mathcal{S}}
\newcommand{\supcof}{\mathrm{supcof}}
\newcommand{\Ical}{\mathcal{I}}
\newcommand{\Jcal}{\mathcal{J}}
\newcommand{\Abf}{\mathbf{A}}
\newcommand{\Bbf}{\mathbf{B}}
\newcommand{\Hcal}{\mathcal{H}}
\newcommand{\Pcal}{\mathcal{P}}
\newcommand{\Rcal}{\mathcal{R}}
\newcommand{\Ncal}{\mathcal{N}}
\newcommand{\Mcal}{\mathcal{M}}
\newcommand{\SNcal}{\mathcal{SN}}
\newcommand{\Pbb}{\mathbb{P}}
\newcommand{\Cbb}{\mathbb{C}}
\newcommand{\Qbb}{\mathbb{Q}}
\newcommand{\Ebb}{\mathbb{E}}
\newcommand{\Ecal}{\mathcal{E}}
\newcommand{\Dbb}{\mathbb{D}}
\newcommand{\Tbb}{\mathbb{T}}
\newcommand{\LOCbb}{\mathbb{LC}}
\newcommand{\Sbb}{\mathbb{S}}
\newcommand{\id}{\mathrm{id}}
\newcommand{\la}{\langle}
\newcommand{\ra}{\rangle}
\newcommand{\cf}{\mathrm{cf}}
\newcommand{\frestr}{\!\!\upharpoonright\!\!}
\newcommand{\Dbf}{\mathbf{D}}
\newcommand{\ran}{\mathrm{ran}}
\newcommand{\Lc}{\mathbf{Lc}}
\newcommand{\blc}{\mathfrak{b}^{\mathrm{Lc}}}
\newcommand{\dlc}{\mathfrak{d}^{\mathrm{Lc}}}
\newcommand{\aLc}{\mathbf{aLc}}
\newcommand{\balc}{\mathfrak{b}^{\mathrm{aLc}}}
\newcommand{\dalc}{\mathfrak{d}^{\mathrm{aLc}}}
\newcommand{\Ed}{\mathbf{Ed}}
\newcounter{enuAlph}
\newtheorem{theorem}{Theorem}[section]
\newtheorem{lemma}[theorem]{Lemma}
\newtheorem{Corol}[theorem]{Corollary}
\newtheorem{Question}[theorem]{Question}
\newtheorem{Claim}[theorem]{Claim}
\newtheorem{Subclaim}[theorem]{Subclaim}
\newtheorem{thm}[enuAlph]{Theorem}
\theoremstyle{definition}
\newtheorem{definition}[theorem]{Definition}
\newtheorem{Example}[theorem]{Example}
\theoremstyle{remark}
\newtheorem{remark}[theorem]{Remark}
\numberwithin{equation}{section}
\definecolor{OliveGreen}{cmyk}{0.64,0,0.95,0.40}
\begin{document}

\title{On cardinal characteristics of Yorioka ideals}

\author{Miguel A. Cardona}
\address{TU Wien, Faculty of Mathematics and Geoinformation, Institute of Discrete Mathematics and Geometry, Wiedner Hauptstrasse 8--10, A-1040 Vienna, Austria }
\email{miguel.montoya@tuwien.ac.at}
\thanks{This work was supported by: the Austrian Science Fund (FWF) P23875 (both authors), I1272 (second author) and P30666 (both authors); the first author was supported by the International Academic Mobility Scholarship from Universidad Nacional de Colombia; the second author was supported by the grant no. IN201711, Direcci\'on Operativa de Investigaci\'on, Instituci\'on Universitaria Pascual Bravo, and by Grant-in-Aid for Early Career Scientists 18K13448, Japan Society for the Promotion of Science.}

\author{Diego A. Mej\'{\i}a}
\address{Shizuoka University, Faculty of Sciences. 836 Ohya, Suruga-ku, Shizuoka city, Japan 422-8529}
\email{diego.mejia@shizuoka.ac.jp}
\urladdr{http://www.researchgate.com/profile/Diego\_Mejia2}

\subjclass[2010]{Primary 03E17; Secondary 03E15, 03E35, 03E40}



\keywords{Cardinal characteristics of the continuum, Yorioka ideals, localization cardinals, anti-localization cardinals, matrix iterations, preservation properties}

\begin{abstract}
   Yorioka \cite{Yorioka} introduced a class of ideals (parametrized by reals) on the Cantor space to prove that the relation between the size of the continuum and the cofinality of the strong measure zero ideal on the real line cannot be decided in ZFC. We construct a matrix iteration of ccc posets to force that, for many ideals in that class, their associated cardinal invariants (i.e. additivity, covering, uniformity and cofinality) are pairwise different. In addition, we show that, consistently, the additivity and cofinality of Yorioka ideals does not coincide with the additivity and cofinality (respectively) of the ideal of Lebesgue measure zero subsets of the real line.
\end{abstract}

\maketitle

\normalem

\section{Introduction}\label{SecIntro}

Yorioka \cite{Yorioka} introduced a characterization of $\SNcal$, the \emph{$\sigma$-ideal of strong measure zero subsets of the Cantor space $2^\omega$}, in terms of $\sigma$-ideals $\Ical_f$ parametrized by increasing functions $f\in\omega^\omega$, which we call \emph{Yorioka ideals} (see Definition \ref{DefYorioId}). Concretely, $\SNcal=\bigcap\{\Ical_f:f\in\omega^\omega\textrm{\ increasing}\}$ and $\Ical_f\subseteq\Ncal$ where $\Ncal$ is the \emph{$\sigma$-ideal of Lebesgue-measure zero subsets of $2^\omega$}. Yorioka used this characterization to show that no inequality between $\cof(\SNcal)$ and $\cfrak:=2^{\aleph_0}$ cannot be decided in ZFC, even more, he proved that $\cof(\SNcal)=\dfrak_\kappa$ (the \emph{dominating number on $\kappa^\kappa$}) whenever $\add(\Ical_f)=\cof(\Ical_f)=\kappa$ for all increasing $f$ (\footnote{Yorioka's original proof assumes $\add(\Ical_f)=\cof(\Ical_f)=\dfrak=\cov(\Mcal)=\kappa$ for all increasing $f$, but $\dfrak$ and $\cov(\Mcal)$ can be omitted thanks to the results in Section \ref{SecZFC}.}).

Further research on Yorioka ideals has been continued by Kamo and Osuga \cite{O,kamo-osuga,K1,KO}. In \cite{kamo-osuga} they proved that, in ZFC, $\add(\Ical_f)\leq\bfrak\leq\dfrak\leq\cof(\Ical_f)$ for all increasing $f$ and that, for any fixed $f$, the basic diagram of the cardinal invariants associated to $\Ical_f$ (see Figure \ref{FigaddetcYorio}) is complete in the sense that no other inequality can be proved in ZFC. On the other hand, in \cite{KO} they constructed models by FS (finite support) iterations of ccc posets where infinitely many cardinal invariants of the form $\cov(\Ical_f)$ are pairwise different. Moreover, if there exists a weakly inaccessible cardinal, then there is a ccc poset forcing that there are continuum many pairwise different cardinals of the form $\cov(\Ical_f)$.

\begin{figure}
  \begin{center}
    \includegraphics[width=12cm,height=4cm]{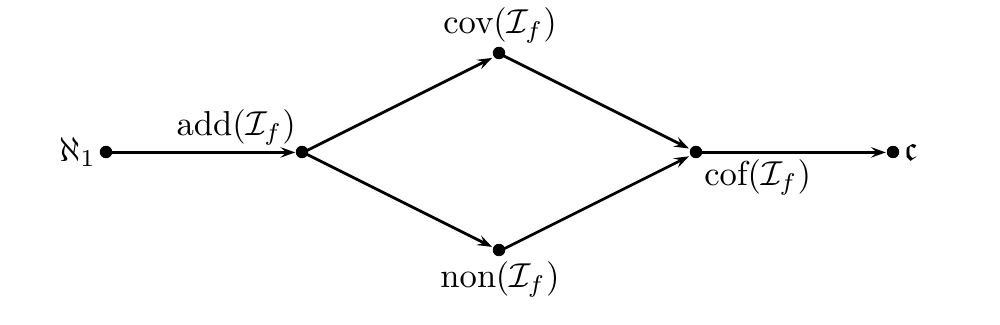}
    \caption{Cardinal invariants associated with $\Ical_f$. Each arrow denotes $\leq$.}
    \label{FigaddetcYorio}
  \end{center}
\end{figure}

To continue this line of research, we aim to obtain further consistency results considering several cardinal invariants associated with Yorioka ideals at the same time, that is, to construct models of ZFC where three or more of such cardinal invariants are pairwise different. Given a family $\Ical$ of subsets of a set $X$, the \emph{cardinal invariants associated with $\Ical$} are the four cardinals $\add(\Ical)$, $\cov(\Ical)$, $\non(\Ical)$ and $\cof(\Ical)$. The main objective of this paper is to prove the following result.

\begin{thm}\label{mainA}
   There is a function $f_0\in\omega^\omega$ and a ccc poset forcing that the four cardinal invariants associated with $\Ical_f$ are pairwise different for each increasing $f\geq^*f_0$.
\end{thm}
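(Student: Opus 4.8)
The plan is to prove Theorem~\ref{mainA} in two stages: a reduction to a statement about (anti-)localization cardinals, and then a single matrix iteration witnessing it.

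\emph{Reduction.} By the ZFC results of Section~\ref{SecZFC} (building on Kamo--Osuga), if $f_0$ is chosen to grow fast enough, then for every increasing $f\geq^* f_0$ the four invariants of $\Ical_f$ are squeezed between the ``minimum'' and ``supremum'' versions --- denoted $\minadd,\supadd$ and $\mincof,\supcof$ on the $\add$/$\cof$ side and $\mincov,\supcov$ and $\minnon,\supnon$ on the $\cov$/$\non$ side --- of localization cardinals ($\blc,\dlc$) together with $\bfrak,\dfrak$ and of anti-localization cardinals ($\balc,\dalc$), for slalom parameters depending only on $f_0$; it is precisely the growth of $f_0$ that makes a single family of combinatorial witnesses serve all such $f$ at once. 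Hence it suffices to produce a ccc poset forcing, for prescribed regular cardinals $\aleph_1\leq\theta_1<\theta_2<\theta_3<\theta_4=\cfrak$,
\[
  \add(\Ical_f)=\theta_1,\quad \cov(\Ical_f)=\theta_2,\quad \non(\Ical_f)=\theta_3,\quad \cof(\Ical_f)=\theta_4 \qquad\text{for all increasing }f\geq^* f_0.
\]
This assignment is consistent with Figure~\ref{FigaddetcYorio}, and since $\cov(\Ical_f)$ and $\non(\Ical_f)$ are $\leq$-incomparable the values $\theta_2,\theta_3$ may be interchanged by the same argument. For concreteness take $\theta_1=\aleph_1$: then simply preventing new dominating reals keeps $\bfrak=\aleph_1$ and forces $\add(\Ical_f)=\aleph_1$, so the real work concerns $\cov$, $\non$ and $\cof$; for $\theta_1>\aleph_1$ one prepends an initial block of localization and dominating reals of ``length $\theta_1$'' and keeps the resulting witnesses via the preservation theory below.

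\emph{Construction.} I would build a finite-support matrix iteration $\langle\Pbb_{\alpha,\xi},\dot{\Qbb}_{\alpha,\xi}:\alpha\leq\theta_4,\ \xi\leq\pi\rangle$ of ccc posets in the style of the matrix iterations of Blass--Shelah and Mej\'ia. The horizontal direction, of length $\theta_4$, adds Cohen reals at successor steps, which forces $\cov(\Mcal)=\dfrak=\cfrak=\theta_4$ in the final model and hence $\cof(\Ical_f)=\theta_4$, while the matrix structure (short cofinal type below any column) is what allows $\add(\Ical_f)$ to stay at $\theta_1$. The vertical iterands, scheduled by a suitable bookkeeping function, are of two kinds: anti-localization posets --- variants of the $\aLc$ and $\Ed$ forcings for the parameters fixed above --- of ``type $\theta_2$'', applied cofinally along a $\theta_2$-cofinal index set so that the anti-localization cardinal governing $\cov(\Ical_f)$ climbs to $\theta_2$; and anti-localization posets of ``type $\theta_3$'', applied along a $\theta_3$-cofinal index set so that the one governing $\non(\Ical_f)$ climbs to $\theta_3$ (plus, when $\theta_1>\aleph_1$, the initial localization/dominating block). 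Every iterand is chosen to add no dominating real over the relevant intermediate models. The four values are then read off in the extension by the standard $\mathrm{LCU}$ and $\mathrm{COB}$ computations for matrix iterations, applied to the relational systems $\Lc$, $\aLc$, $\Cv$ and the dominating relation.

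\emph{Preservation --- the crux.} The construction only works if it is accompanied by the right preservation theory: (i) every iterand and the Cohen spine must add no dominating real over the appropriate submodels, so the $\bfrak$-witnesses (hence the bound $\add(\Ical_f)\leq\theta_1$) survive; (ii) the ``type $\theta_3$'' forcing and the Cohen spine must be ``good'' for the anti-localization system responsible for $\cov(\Ical_f)$ --- they add no anti-localization real of ``type $\theta_2$'' --- so that a size-$\theta_2$ witness for $\cov(\Ical_f)\leq\theta_2$ is preserved; (iii) symmetrically, the ``type $\theta_2$'' forcing and the Cohen spine must preserve a size-$\theta_3$ witness for $\non(\Ical_f)\leq\theta_3$; and (iv) all of these goodness properties must be preserved under finite-support iteration and must survive the vertical passage through the matrix, via isomorphism-of-names arguments as in Blass--Shelah. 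Parts (i) and (iv) are by now fairly routine, modulo care. The main obstacle is (ii)--(iii): one has to design two anti-localization-type forcings, one raising $\cov(\Ical_f)$ and the other $\non(\Ical_f)$, each of which is ``good'' for the combinatorial invariant the other is responsible for, and this must hold uniformly for all $f\geq^* f_0$ simultaneously. That mutual non-interference is exactly where the growth rate of $f_0$ and the precise choice of the slalom parameters are pinned down, and where the delicate linkedness and goodness estimates --- more intricate analogues of the $\sigma$-linkedness and ``$\Ed$-good'' arguments familiar from Cicho\'n's diagram --- must be carried out.
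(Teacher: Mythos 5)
Your overall architecture (reduce to localization/anti-localization cardinals via the Kamo--Osuga connections, then run a matrix iteration with a preservation theory for ``goodness'') is the right one and matches the paper's strategy. But your target constellation of values is inconsistent with ZFC, so the construction as described cannot succeed. You propose to force $\cov(\Mcal)=\dfrak=\cfrak=\theta_4$ (via the length-$\theta_4$ horizontal Cohen direction) while simultaneously forcing $\non(\Ical_f)=\theta_3<\theta_4$. This contradicts Corollary \ref{2.11}: in ZFC, $\cov(\Mcal)\leq\minnon\leq\non(\Ical_f)$ for every increasing $f$ (via $\cov(\Mcal)\leq\dalc_{b,1}$ and Lemma \ref{2.8}). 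So any model with $\cov(\Mcal)=\cfrak$ has $\non(\Ical_f)=\cfrak$, and your third and fourth values collapse. Relatedly, your aside that $\theta_2$ and $\theta_3$ ``may be interchanged by the same argument'' is false for this method: a FS ccc iteration of uncountable cofinality forces $\cov(\Ical_f)\leq\non(\Mcal)\leq\cov(\Mcal)\leq\non(\Ical_f)$, and the consistency of $\add(\Ical_f)<\non(\Ical_f)<\cov(\Ical_f)<\cof(\Ical_f)$ is left open in the paper (Question \ref{Q5}).

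The paper's actual proof (Theorem \ref{main}) distributes the roles differently, and it is worth seeing why. The Cohen/Hechler ``scale'' data sit at the \emph{second} value: $\bfrak=\dfrak=\add(\Mcal)=\cof(\Mcal)=\non(\Mcal)=\cov(\Mcal)=\nu=\cov(\Ical_f)$, with the vertical FS iteration having cofinality $\nu$ (so $\cov(\Ical_f)\leq\supcov\leq\non(\Mcal)=\nu$ comes for free) and the horizontal direction having length $\kappa=\non(\Ical_f)$, the \emph{third} value (the preserved horizontal Cohen reals give $\kappa\leq\minnon$, while the matrix-scheduled $\Ebb_{c_\pi}$-generics give $\non(\Ical_f)\leq\dalc_{c_\pi,1}\leq\kappa$). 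Consequently the bound $\add(\Ical_f)\leq\theta_1$ cannot come from $\bfrak$ (which is $\nu>\mu$); it comes instead from $\add(\Ical_f)\leq\add(\Ical_{\id_\omega})\leq\cov(\Ical_{\id_\omega})\leq\balc_{b_{-1},H}\leq\mu$, using $\mu$-$\Slm(b_{-1},\rho^*)$-goodness of every iterand, with the matching lower bound $\mu\leq\add(\Ncal)$ from $\mathrm{MA}_{<\mu}$. Dually, $\cof(\Ical_f)=\lambda$ is obtained not from $\dfrak$ but from $\lambda\leq\dalc_{b_{-1},H}\leq\non(\Ical_{\id_\omega})\leq\cof(\Ical_f)\leq\cfrak=\lambda$. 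If you rearrange your construction along these lines --- in particular, give up $\cov(\Mcal)=\cfrak$ and control $\add(\Ical_f)$ and $\cof(\Ical_f)$ through an anti-localization pair rather than through $\bfrak$ and $\dfrak$ --- the preservation program you outline in your last paragraph is essentially the one carried out in Sections \ref{SecPreservation} and \ref{SecMain}.
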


Concerning problems of this nature, the consistency of $\add(\Ncal)<\cov(\Ncal)<\non(\Ncal)<\cof(\Ncal)$ with ZFC is a consequence of \cite[Thm. 17]{M}. Quite recently, Goldstern, Kellner and Shelah \cite{GKS} showed the consistency, modulo strongly compact cardinals, of Cicho\'n's diagram separated into 10 different values, in particular, the four cardinal invariants associated with the meager ideal $\Mcal$ on $\mathbb{R}$ are pairwise different is consistent. However, this consistency result alone is unknown without using large cardinals.

Though many inequalities between the cardinal invariants associated with Yorioka ideals and the cardinals in Cicho\'n's diagram are known in ZFC, there are still many open questions. Most of these inequalities had been settled in \cite{kamo-osuga,KO}, however, there are a couple of statements from \cite{K1} whose proofs do not appear anywhere. Namely, $\add(\Ncal)\leq\add(\Ical_g)\leq\add(\Ical_f)$ and $\cof(\Ical_f)\leq\cof(\Ical_g)\leq\cof(\Ncal)$ when $f,g\in\omega^\omega$ are increasing and $f(n+1)-f(n)\leq g(n+1)-g(n)$ for all but finitely many $n<\omega$. We offer our own proofs of these inequalities in Corollaries \ref{YorioaddN} and \ref{2.6}, even more, in Theorem \ref{2.4.1} we show the stronger fact (\footnote{Its proof does not use the fact that $\Ical_f$ is an ideal, and it is simpler than Yorioka's argument to prove that $\Ical_f$ is a $\sigma$-ideal (see \cite[Lemma 3.4]{Yorioka}), though the ideas in both proofs are quite similar.}) that $\add(\Ical_g)$ is above some definable cardinal invariant above $\add(\Ncal)$ (and dually for $\cof(\Ical_f)$). We use this to prove the following new consistency result.

\begin{thm}[Theorem \ref{addN<addIf}]\label{mainB}
   If $f\in\omega^\omega$ is increasing then there is a ccc poset that forces $\add(\Ncal)<\add(\Ical_f)<\cof(\Ical_f)<\cof(\Ncal)$.
\end{thm}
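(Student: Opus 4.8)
The plan is to use Theorem~\ref{2.4.1} to squeeze $\add(\Ical_f)$ and $\cof(\Ical_f)$ between well-understood cardinal characteristics, reducing the four inequalities to a standard ``separate the left half of Cicho\'n's diagram'' problem, and then to solve the latter with a finite support matrix iteration of ccc posets of the type built for Theorem~\ref{mainA}. For the reduction, recall that $\add(\Ncal)$ and $\cof(\Ncal)$ have localization characterisations ($\blc$ and $\dlc$ in the notation of this paper); by Theorem~\ref{2.4.1} and its dual there are definable cardinal characteristics $\mathfrak{x}=\mathfrak{x}_f$ and $\mathfrak{y}=\mathfrak{y}_f$, attached to parameters extracted from $f$, such that in ZFC
\[
\add(\Ncal)\leqs\mathfrak{x}\leqs\add(\Ical_f)\leqs\bfrak\leqs\dfrak\leqs\cof(\Ical_f)\leqs\mathfrak{y}\leqs\cof(\Ncal),
\]
where the middle block is the Kamo--Osuga inequality \cite{kamo-osuga} and the two outer blocks are Theorem~\ref{2.4.1} together with Corollaries~\ref{YorioaddN} and \ref{2.6}. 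Hence, over a model of GCH, it suffices to produce a ccc poset forcing
\[
\add(\Ncal)=\aleph_1,\qquad \mathfrak{x}=\bfrak=\aleph_2,\qquad \dfrak=\mathfrak{y}=\aleph_3,\qquad \cof(\Ncal)=\cfrak=\aleph_4,
\]
since the displayed chain then pins $\add(\Ical_f)=\aleph_2$ and $\cof(\Ical_f)=\aleph_3$, yielding $\add(\Ncal)<\add(\Ical_f)<\cof(\Ical_f)<\cof(\Ncal)$.

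For the forcing I would run a finite support matrix iteration $\langle\Pbb_{\alpha,\xi}\rangle$ of ccc posets (each $\sigma$-centered or $\sigma$-$k$-linked), of the shape used for Theorem~\ref{mainA}, of length $\aleph_4$ (so $\cfrak=\aleph_4$) and of a suitably chosen height: the matrix structure lets the columns build an $\aleph_3$-sized dominating family and an $\aleph_3$-sized dominating family for the relational system behind $\mathfrak{y}$, so that $\dfrak=\mathfrak{y}=\aleph_3$ while the length keeps $\cof(\Ncal)=\aleph_4$. Guided by a bookkeeping function one interleaves along the iteration: Hechler forcing $\Dbb$ (pushing $\bfrak$ up to $\aleph_2$ and $\dfrak$ up to $\aleph_3$, the two being separable here precisely because of the matrix, as in the standard treatment of the left half of Cicho\'n's diagram); the localization poset $\LOCbb$ for the relational system underlying $\mathfrak{x}$ from Theorem~\ref{2.4.1}, applied cofinally to all small nice names so that every family of fewer than $\aleph_2$ reals gets bounded in that system and hence $\mathfrak{x}\geqslant\aleph_2$; and (bounded) eventually different real forcing $\Ed$, applied cofinally along the length, used to keep $\cof(\Ncal)=\dlc=\aleph_4$ (each such generic being bounded, it adds no dominating real, so $\dfrak=\aleph_3$ is untouched).

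The delicate part, and the main obstacle, is to guarantee $\add(\Ncal)=\aleph_1$, that is, to keep some $\aleph_1$-sized family $\Lc$-unbounded (localized by no single slalom) through the whole matrix iteration. This rests on the preservation machinery: one must check that each iterand is ``$\Lc$-good'' and that $\Lc$-goodness is inherited by finite support matrix iterations, and then preserve, say, the reals of the ground model as the witnessing family. The nontrivial compatibility is that $\LOCbb$ --- which bounds the $\mathfrak{x}$-system, and so for the Bartoszy\'nski slaloms would threaten to raise $\add(\Ncal)$ --- is nevertheless $\Lc$-good: this works because the relational system isolated in Theorem~\ref{2.4.1} is \emph{not} the localization system governing $\Ncal$, so its generic objects do not enlarge $\Ncal$ and iterating it does not push $\add(\Ncal)$ up; dually, one checks that the $\Ed$-generics used to maximise $\cof(\Ncal)$ remain caught by the $\mathfrak{y}$-objects, so the bound $\cof(\Ical_f)\leqs\mathfrak{y}=\aleph_3$ survives. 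Granting these preservation statements, together with a bookkeeping arranged so that all eight characteristics in the chain land on the four prescribed values, the top stage of the matrix iteration forces $\add(\Ncal)<\add(\Ical_f)<\cof(\Ical_f)<\cof(\Ncal)$, as required.
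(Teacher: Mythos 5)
Your reduction is exactly the paper's: with $b\gg 2^f$ and $h=\id_\omega$, Theorem \ref{2.4.1} gives $\mathfrak{x}:=\min\{\bfrak,\blc_{b,h}\}\leq\add(\Ical_f)$ and $\cof(\Ical_f)\leq\max\{\dfrak,\dlc_{b,h}\}=:\mathfrak{y}$, and together with Theorem \ref{2.5} your displayed chain pins the two Yorioka cardinals once one forces $\add(\Ncal)=\mu<\bfrak=\blc_{b,h}=\nu\leq\dfrak=\dlc_{b,h}=\kappa<\cof(\Ncal)=\lambda$. The paper realizes this with a standard matrix iteration interleaving restricted Hechler forcing and restricted $\LOCbb^{\id_\omega}_b$, as you propose, and the delicate point you isolate is handled exactly as you guess it must be: $\LOCbb^{\id_\omega}_b$ is $(\rho,\pi)$-linked (Lemma \ref{Loclinked}), hence by Lemma \ref{linkedpresadd(N)} it is good for $\Lc(\omega,\Hcal)$ for a suitable countable $\Hcal$ with $\bfrak(\Lc(\omega,\Hcal))=\add(\Ncal)$, so iterating it does not raise $\add(\Ncal)$ even though it raises $\blc_{b,\id_\omega}$.

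Two points in your forcing design would fail as written. First, eventually different real forcing cannot do the job you assign it: $\Ebb_b$ adds reals in $\prod b$ eventually different from old ones, which affects $\non(\Mcal)$ and the anti-localization/covering cardinals, not $\cof(\Ncal)$. The lower bound $\cof(\Ncal)\geq\lambda$ comes for free from the same preservation machinery used for $\add(\Ncal)$: since every iterand is $\Lc(\omega,\Hcal)$-good, Theorem \ref{3.13} forces $\dfrak(\Lc(\omega,\Hcal))=\cof(\Ncal)\geq|\delta|=\lambda$, and $\Ebb_b$ does not appear in the paper's construction at all. Second, you need the upper bound $\bfrak\leq\aleph_2$ (it is your only cap on $\add(\Ical_f)$), and this is obtained in the paper from the iteration length $\lambda\kappa\nu$ having \emph{cofinality} $\nu$, so that the cofinally added Cohen reals form an ($\Ed(\omega)$-)unbounded family of size $\nu$, giving $\bfrak\leq\non(\Mcal)\leq\nu$; an iteration literally of length $\aleph_4$ has cofinality $\aleph_4$ and yields no such bound, and you cannot recover it from $\Dbf$-goodness since the restricted Hechler iterands add dominating reals. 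Both issues are repairable without changing your architecture, after which your plan coincides with the paper's proof.
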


This definable cardinal we use to prove the theorem above is a cardinal characteristic, denoted by $\blc_{b,h}$, that is parametrized by functions $b,h\in\omega^\omega$, which form part of what we call  \emph{localization cardinals} (Definition \ref{Defantiloc}). These cardinals are a generalization of the cardinal invariants Bartoszy\'nski used to characterize $\add(\Ncal)$ and $\cof(\Ncal)$ in terms of a slalom structure (Theorem \ref{Barcharloc}), and they were used by Brendle and the second author in \cite{BrM} to show consistency results about cardinal invariants associated with Rothberger gaps in $F_\sigma$ ideals on $\omega$. In one of these results, it was constructed a ccc poset that forces infinitely many cardinals of the form $\blc_{b,h}$ to be pairwise different (even continuum many modulo a weakly inaccessible). An older precedent is known for $\dlc_{b,h}$, the dual of $\blc_{b,h}$: Goldstern and Shelah \cite{GS93} constructed a proper poset (using creatures) to force that $\aleph_1$-many cardinals of the form $\dlc_{b,h}$ are pairwise different, result that was later improved by Kellner \cite{Kell} for continuum many. In this work, to prove Theorem \ref{mainB}, we show in Theorem \ref{2.4.1} a connection between the cardinals of type $\add(\Ical_f)$, $\cof(\Ical_f)$ and the localization cardinals.

A variation of the localization cardinals that we call \emph{anti-localization cardinals}, denoted by $\balc_{b,h}$ and (its dual) $\dalc_{b,h}$ (for $b,h\in\omega^\omega$), also play an important role in the study of cardinal invariants associated with Yorioka ideals. The cardinal $\dalc_{b,h}$ is known from Miller's \cite{Mi} characterization $\non(\SNcal)=\min\{\dalc_{b,h}:b\in\omega^\omega\}$ (for any $h\geq^*1$). On the other hand, Kellner and Shelah \cite{KellS} constructed a proper poset that forces that, for the types $\balc_{b,h}$ and $\dlc_{b,h}$, continuum many cardinals of each type are pairwise different. In fact, Kamo and Osuga \cite{KO} discovered a connection between the cardinals of type $\cov(\Ical_f)$, $\non(\Ical_f)$ and the anti-localization cardinals (Lemmas \ref{2.7} and \ref{2.8}), which they use to construct a finite support iteration of ccc posets that forces that, for the types $\cof(\Ical_f)$ and $\balc_{b,1}$, infinitely many cardinals of each type are pairwise different (even continuum many when a weakly inaccessible cardinal is assumed).

We use the connections between the cardinal invariants associated with Yorioka ideals and the localization and anti-localization cardinals to prove Theorems \ref{mainA} and \ref{mainB}. Even more, in the consistency result of the first theorem we can additionally include that infinitely many cardinals of each type $\blc_{b,h}$ and $\balc_{b,1}$ are pairwise different.

\begin{thm}[Theorem \ref{main}]\label{mainC}
   There is a function $f_0\in\omega^\omega$ and a ccc poset forcing that
   \begin{enumerate}[(a)]
       \item the four cardinal invariants associated with $\Ical_f$ are pairwise different for each increasing $f\geq^*f_0$,
       \item infinitely many cardinals of the form $\blc_{b,h}$ are pairwise different, and
       \item infinitely many cardinals of the form $\balc_{b,h}$ and $\cov(\Ical_f)$ are pairwise different.
   \end{enumerate}
\end{thm}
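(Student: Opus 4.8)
The plan is to obtain all the separations in (a), (b) and (c) simultaneously inside a single model built by a finite-support (FS) matrix iteration of ccc posets, reducing the control of the Yorioka-ideal invariants to the control of localization and anti-localization cardinals by means of the ZFC connections established earlier in the paper. First I would fix regular cardinals $\aleph_1\le\theta_0<\theta_1<\theta_2<\theta_3\le\cfrak$ (together with further regular cardinals below $\cfrak$ to accommodate the infinitely-many separations of (b) and (c)) and a sufficiently fast increasing $f_0\in\omega^\omega$; here ``sufficiently fast'' means fast enough that the parameters $(b,h)$ for which Theorem \ref{2.4.1} squeezes $\add(\Ical_f)$ between $\blc_{b,h}$ and a $\cof(\Ncal)$-side cardinal, and for which Lemmas \ref{2.7} and \ref{2.8} squeeze $\cov(\Ical_f)$ and $\non(\Ical_f)$ between the corresponding $\balc$- and $\dalc$-cardinals, can be read off from $f$ alone and vary tamely as $f$ ranges over the increasing functions $\ge^*f_0$. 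This is precisely the point that lets one poset work for all such $f$ at once. Together with Bartoszy\'nski's characterization (Theorem \ref{Barcharloc}) and the inequalities of Figure \ref{FigaddetcYorio}, the construction then reduces to forcing prescribed values for $\add(\Ncal)$, $\cof(\Ncal)$, $\bfrak$, $\dfrak$ and for countably many localization cardinals $\blc_{b_n,h_n}$, $\dlc_{b_n,h_n}$ and anti-localization cardinals $\balc_{b_n,h_n}$, $\dalc_{b_n,h_n}$, arranged so that the four invariants of $\Ical_f$ land on $\theta_0<\theta_1<\theta_2<\theta_3$ for every increasing $f\ge^*f_0$, while (b) and (c) are witnessed by the separation of these countable families together with the associated $\cov(\Ical_{f_n})$.

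Second, I would set up the matrix iteration as a two-dimensional system $\la \Pbb_{\xi,\eta}:\xi\le\delta,\ \eta\le\pi\ra$. The vertical coordinate (the $\xi$-axis) is an increasing sequence of FS iterations/products of Cohen forcing, of the various lengths $\theta_i$: this is the standard device for pinning down the ``small'' side --- a $\Delta$-system plus isomorphism-of-names argument, exploiting that Cohen reals are added cofinally often, bounds from above the uniformities and coverings that must stay $\le\theta_i$. The horizontal coordinate (length $\pi$, long enough that every required cofinality is hit cofinally often) is an FS iteration, with careful bookkeeping, of the generic forcings that push the ``large'' side up: the localization forcing $\Lc_{b,h}$ (to make $\dlc_{b,h}$, hence the cofinality side of $\Ical_f$, large), anti-localization and eventually-different real forcings such as $\Ed$ together with amoeba-type forcings (to handle $\balc_{b,h}$, $\dalc_{b,h}$ and thereby $\cov(\Ical_f)$, $\non(\Ical_f)$), and Hechler forcing for $\bfrak$, $\dfrak$. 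For (b) and (c) the horizontal iteration is organized into $\omega$-many consecutive blocks associated with a carefully chosen $\le^*$-increasing sequence of parameters $(b_n,h_n)$ and matching $f_n\ge^*f_0$, the $n$-th block driving $\blc_{b_n,h_n}$, resp.\ $\balc_{b_n,h_n}$ and $\cov(\Ical_{f_n})$, up to its own target value while preservation keeps the earlier ones from catching up.

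Third, correctness rests on a package of preservation theorems applied uniformly along the matrix. All iterands must be $\theta$-cc and must preserve the relevant unboundedness- and non-meagerness-type relational systems, so that the small-side invariants do not grow; in particular the localization and anti-localization iterands must be chosen ``good'' in the standard sense (so that suitable ultrafilters stay generic up the matrix --- for instance $\Lc_{b,h}$ being $\sigma$-linked and suitably good, $\Ed$ being $\sigma$-centered and preserving the appropriate eventually-different structure), and they must satisfy the preservation properties that prevent them from adding the slaloms and anti-slaloms that would collapse the gaps we intend to create. One then reads off the values along a well-chosen cofinal configuration: the $\Delta$-system/reflection argument over the Cohen columns gives the upper bounds for the ``small'' cardinals, the cofinally-added generics give the lower bounds for the ``large'' cardinals, and the ZFC connections of step one translate this into (a), (b) and (c) for all increasing $f\ge^*f_0$.

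The main obstacle I expect is exactly this preservation bookkeeping. One must simultaneously (i) keep every anti-localization and $\Ed$-type step from adding dominating reals and from enlarging the localization cardinals that have to stay small, and dually keep every localization and amoeba step from enlarging the anti-localization cardinals; (ii) verify that all of these preservation properties survive the FS matrix iteration, limit stages included, and are compatible with the Cohen columns; and (iii) do this uniformly across the infinitely many parameter blocks demanded by (b) and (c). Producing one family of iterands that is simultaneously good for every ultrafilter used in the matrix and preserves every relational system needed --- across all the blocks at once --- is the delicate part; the remainder is the by-now-standard matrix-iteration machinery together with the ZFC connections already proved in the paper.
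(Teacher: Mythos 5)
Your plan is essentially the paper's proof of Theorem \ref{main}: a standard matrix iteration with Cohen columns, a bookkept FS iteration of restricted localization, eventually-different and Hechler posets whose parameters form a $\leq^*$-dominating sequence (so that linkedness gives goodness for the earlier relational systems), and the ZFC connections of Theorem \ref{2.4.1} and Lemmas \ref{2.7}--\ref{2.8} to transfer the values to the Yorioka invariants. The only cosmetic discrepancies are that the paper controls the ``small'' side via goodness of gPrs preserving strongly unbounded families of Cohen reals rather than an isomorphism-of-names argument, and it uses no ultrafilters or amoeba forcing ($\add(\Ncal)=\mu$ comes from forcing $\mathrm{MA}_{<\mu}$ with small ccc posets).
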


Within this result, we merge with Theorem \ref{mainA} the consistency result of infinitely many localization cardinals from \cite{BrM}, and the consistency result of infinitely many anti-localization cardinals and $\cov(\Ical_f)$ from \cite{KO}.

This result is proved by using a FS iteration, so it forces that $\non(\Mcal)\leq\cov(\Mcal)$. Because of this, as ZFC proves that $\balc_{b,h}\leq\non(\Mcal)$ and $\cov(\Mcal)\leq\non(\SNcal)\leq\non(\Ical_f)$ for any $b,h,f$, we cannot expect continuum many values in (b) and (c) above. However, Theorem \ref{main} is stated in such generality that (c) can be obtained for continuum many values when a weakly inaccessible is assumed (of course, in this case (a) can not forced). The same applies for (b) but not simultaneously with (c) because of the limitation in our result that the values for the cardinals in (b) appear below those of (c). A curious fact is that, in our result, any single value from (b) and (c) can be repeated continuum many times for different parameters.

The forcing method we use to prove Theorems \ref{mainA}, \ref{mainB} and \ref{mainC} is the technique of \emph{matrix iterations} to construct two dimensional arrays of posets by FS iterations. This method has been very useful to obtain models where several cardinal characteristics of the continuum are pairwise different. It was introduced for the first time by Blass and Shelah \cite{B1S} to prove the consistency of $\ufrak<\dfrak$ with large continuum (i.e. $\cfrak>\aleph_2$) where $\ufrak$ is the \emph{ultrafilter number}. Later on, this method was improved by Brendle and Fischer \cite{BrF} when they proved the consistency of $\bfrak=\afrak<\sfrak$ and $\aleph_1<\sfrak=\bfrak<\afrak$ with large continuum, the latter assuming the existence of a measurable cardinal in the ground model. The second author \cite{M} induced known preservation results in such type of iterations to construct models where several cardinals in Cicho\'n's diagram are pairwise different. Just a while ago, Dow and Shelah \cite{DowShelah} used this technique to prove that the splitting number $\sfrak$ is consistently singular.



To guarantee that the matrix iteration constructed to prove the theorem forces the desired values for the cardinal invariants, we propose a more general version of the classical preservation theory of Judah and Shelah \cite{JS} and Brendle \cite{Br}, along with the corresponding version for matrix iterations of the second author \cite{M}. This generalization also looks to describe the preservation property that Kamo and Osuga \cite{KO} proposed to control the covering of Yorioka ideals. Concretely, they proposed a preservation property for cardinal invariants of the form $\balc_{b,h^{\id_\omega}}$ in order to decide values of $\cov(\Ical_f)$ thanks to the relations they discovered between both types of cardinals. Although this preservation property is similar to Judah-Shelah's and Brendle's preservation theory, it is not a particular case of it. In view of this, we generalize this classical preservation theory so that the preservation property of Kamo and Osuga becomes a particular case (Example \ref{3.18}). Even more, our theory also covers the preservation property defined in \cite[Sect. 5]{BrM} to decide cardinal invariants related to Rothberger gaps by forcing (Example \ref{ExmBM}). In addition, we add a particular case of our theory to deal directly with the preservation of cardinals of the form $\blc_{b,h}$ (Example \ref{ExLc*}).


This paper is structured as follows. In Section \ref{SecPre} we review the essential notions of this paper: localization and anti-localization cardinals, Yorioka ideals and related forcing notions and their properties. In Section \ref{SecZFC} we show the connection between $\add(\Ncal)$, $\cof(\Ncal)$ and the localization cardinals, and we review some inequalities between the cardinal invariants associated to Yorioka ideals that are known in ZFC. Additionally, we show what happens to the localization and anti-localization cardinals in non-standard cases, that is, in cases where the function $b$ is allowed to take uncountable values. Section \ref{SecPreservation} is devoted to our general preservation theory. In Section \ref{4.2} we prove the main results of this paper. Finally, we present in Section \ref{SecQ} discussions and open questions related to this work.


\section{Notation and preliminaries}\label{SecPre}

Throughout this text, we refer to members of any uncountable Polish space as \textit{reals}. We write $\exists^{\infty}{n<\omega}$ and $\forall^{\infty}{n<\omega}$ to abbreviate `for infinitely many natural numbers' and `for all but
finitely many natural numbers', respectively. For sets $A,B$, denote by $B^A$ the set of functions from $A$ to $B$. For functions $f,g$ from $\omega$ into the ordinals,  $f\leq g$  means that $f(n) \leq g(n)$ for all $n <\omega $. We say that $g$ \textit{(eventually) dominates} $f$, denoted by $f\leq^* g$, if $\forall^\infty_{n<\omega}(f(n)\leq g(n))$. Define $f<g$ and $f<^*g$ similarly. $F\subseteq \omega^\omega$ is \textit{bounded} if it is dominated by a single function in $\omega^\omega$, i.e, there is a $g\in \omega^\omega$ such that $f\leq ^* g$ for all $f\in F$. A not bounded set is called \textit{unbounded}. A set $D\subseteq\omega^\omega$ is \textit{dominating} if every $f\in\omega^{\omega}$  is dominated by some member of $D$.

We extend some known operations in the natural numbers as operations between functions of natural numbers defined point-wise. For example, for $f,g \in \omega^{\omega}$, $f+g\in\omega^\omega$ is defined as $(f+g)(n)=f(n)+g(n)$, likewise for the product and exponentiation. Also, we use natural numbers (and even ordinal numbers) to denote constant functions with domain $\omega$. We denote by $f^+$ and $\log f$ the functions from $\omega$ to $\omega$ defined by:
\[ f^+(n)=\sum_{j\leq n}f(j)\textrm{\ and }  \log f (n)=\min\{ k<\omega : f(n)\leq 2^k\}.\]
For any set $A$, $\id_{A}$ denotes the \textit{identity function on $A$}.

Given a function $b$ with domain $\omega$ such that $b(i)\neq\emptyset$ for all $i<\omega$, $h\in \omega^\omega$ and $n<\omega$, define

\[\Scal_n(b,h)=\prod_{i<n}[b(i)]^{\leq h(i)},\ \Scal_{<\omega}(b,h)=\bigcup_{n<\omega}\Scal_{n}(b,h)\textrm{\ and } \Scal(b,h)=\prod_{n<\omega}[b(n)]^{\leq h(n)}.\]

Depending on the context, for a set $A$ we denote $\Scal_{<\omega}(A,h)=\Scal_{<\omega}(b,h)$ where $b:\omega\to \{A\}$. Similarly, we use $\Scal_n(A,h)$ and $\Scal(A,h)$.

\newcommand{\Seq}{\mathrm{seq}_{<\omega}}

Denote $\prod b:=\prod_{i<\omega}b(i)$ and $\Seq(b):=\bigcup_{n<\omega}\prod_{i<n}b(i)$. As a topological space, we endow $\prod b$ with the product topology where each $b(i)$ has the discrete topology. Note that the sets of the form $[s]:=[s]_b:=\{x\in\prod b: s\subseteq x\}$ form a basis of this topology. In particular, if each $b(i)$ is countable then $\prod b$ is a Polish space, and it is perfect iff $\exists^\infty i<\omega(|b(i)|\geq 2)$.

\subsection{Relational systems and cardinal invariants}\label{SubsecInv}

Many of the classical cardinal invariants can be expressed by relational systems, and inequalities between these cardinals are induced by the \textit{Tukey-Galois order} between the corresponding relational systems. These notions where defined by Votjas \cite{V}.

\begin{definition}\label{1.40.1} A \textit{relational system} is a triple $\Abf=\langle A_-,A_+,\sqsubset\rangle$ consisting of two non-empty sets $A_{+}, A_{-}$ and a binary relation $\sqsubset \subseteq A_{-}\times A_{+}$. If $\Abf=\langle A_-,A_+,\sqsubset\rangle$ is a relational system, define \textit{the dual of $\Abf$} as the relational system $\Abf^{\perp}=\langle A_+,A_-,\not\sqsupset\rangle$.

For $x\in A_-$ and $y\in A_+$, $x\sqsubset y$ is often read $y\sqsubset$-\textit{dominates} $x$. A family $X\subseteq A_-$ is \textit{$\Abf$-bounded} if there is a member of $A_{+}$ that $\sqsubset$-dominates every member of $X$, otherwise we say that the set is \emph{$\Abf$-unbounded}. On the other hand, $Y\subseteq A_+$ is \textit{$\Abf$-dominating} if every member of $A_{-}$ is $\sqsubset$-dominated by some member of $Y$.
Define $\dfrak(\Abf)$ as the smallest size of an $\Abf$-dominating family, and  $\bfrak(\Abf)$ as the smallest size of an $\Abf$-unbounded family.
\end{definition}

Note that $\dfrak(\Abf)=\bfrak(\Abf^{\perp})$ and $\bfrak(\Abf)=\dfrak(\Abf^{\perp})$. Depending on the relational system, $\bfrak(\Abf)$ or $\dfrak(\Abf)$ may not exist, case in which the non-existent cardinal is treated as `something' above all the cardinal numbers. Clearly, $\bfrak(\Abf)$ does not exist iff $\dfrak(\Abf)=1$, and $\dfrak(\Abf)$ does not exist iff $\bfrak(\Abf)=1$.

Many classical cardinal invariants can be expressed through relational systems.

\begin{Example}\label{ExmCichonrelsys}
\begin{enumerate}[(1)]
    \item Let $\Ical$ be a family of subsets of a non-empty set $X$ which is downwards $\subseteq$-closed and $\emptyset\in\Ical$. Clearly, $\la\Ical,\Ical,\subseteq\ra$ and $\la X,\Ical,\in\ra$ are relational systems, $\add(\Ical)=\bfrak\la\Ical,\Ical,\subseteq\ra$, $\cof(\Ical)=\dfrak\la\Ical,\Ical,\subseteq\ra$, $\non(\Ical)=\bfrak\la X,\Ical,\in\ra$, and $\cov(\Ical)=\dfrak\la X,\Ical,\in\ra$.

    \item $\Dbf:=\la\omega^\omega,\omega^\omega,\leq^{*}\ra$ is a relational system, $\bfrak=\bfrak(\Dbf)$ and $\dfrak=\dfrak(\Dbf)$.

    \item Let $b$ be a function with domain $\omega$ such that $b(i)\neq\emptyset$ for all $i<\omega$. Define the relation $\neq^*$ on $\prod b$ as $x\neq^* y$ iff $\forall^\infty i<\omega(x(i)\neq y(i))$. This means that \emph{$x$ and $y$ are eventually different}. Denote its negation by $=^\infty$. Put $\Ed(b):=\la\prod b,\prod b,\neq^*\ra$ ($\Ed$ stands for \emph{eventually different}). It is clear that $\Ed(b)^\perp=\la\prod b,\prod b,=^\infty\ra$.
\end{enumerate}
\end{Example}

Functions $\varphi:\omega\to [\omega]^{<\omega}$ are often called \emph{slaloms}.

\begin{definition}\label{Defantiloc}
\begin{enumerate}[(1)]
    \item For two functions $x$ and $\varphi$ with domain $\omega$, define
     \begin{enumerate}[({1.}1)]
         \item $x\in^{*}\varphi$ by $\forall^{\infty}{n<\omega}(x(n)\in \varphi(n))$, which is read \textit{$\varphi$ localizes $x$};
         \item $x\in^{\infty}\varphi$ by $\exists^{\infty}{n<\omega}(x(n)\in \varphi(n))$. Denote its negation by $\varphi\not\ni^*x$, which is read \emph{$\varphi$ anti-localizes $x$}.
     \end{enumerate}
    \item Let $b$ be a function with domain $\omega$ such that $b(i)\neq\emptyset$ for all $i<\omega$, and let $h\in\omega^\omega$.
    \begin{enumerate}[({2.}1)]
        \item Define $\Lc(b,h):=\la\prod b,\Scal(b,h),\in^*\ra$ ($\Lc$ stands for \emph{localization}), which is a relational system. Put $\blc_{b,h}:=\bfrak(\Lc(b,h))$ and $\dlc_{b,h}:=\dfrak(\Lc(b,h))$, to which we often refer to as \emph{localization cardinals}.
        \item Define $\aLc(b,h):=\la\Scal(b,h),\prod b,\not\ni^*\ra$ ($\aLc$ stands for \emph{anti-localization}), which is a relational system. Note that $\aLc(b,h)^\perp=\la\prod b,\Scal(b,h),\in^\infty\ra$. Define $\balc_{b,h}:=\bfrak(\aLc(b,h))$ and $\dalc_{b,h}:=\dfrak(\aLc(b,h))$, to which we refer to as \emph{anti-localization cardinals}.
    \end{enumerate}
\end{enumerate}
\end{definition}



Goldstern and Shelah \cite{GS93}, and Kellner and Shelah \cite{Kell,KellS} have studied cardinal coefficients of the form $\dlc_{b,h}$ and $\balc_{b,h}$ for $b\in\omega^\omega$ (with a different notation in their work) to provide the first examples of models where continuum many cardinal characteristics of the continuum are pairwise different. On the other hand, Brendle and Mej\'ia \cite{BrM} investigated cardinals of the form $\blc_{b,h}$ for $b\in\omega^\omega$ in relation with gap numbers for $F_\sigma$-ideals. In this paper we also look at $\dalc_{b,h}$, the dual cardinal of $\balc_{b,h}$.

When $b$ is the constant function $\omega$, the localization and anti-localization cardinals provide the following well-known characterizations of classical cardinal invariants.

\begin{theorem}[Bartoszy\'nski {\cite[Thm. 2.3.9]{BJ}}]\label{1.6}\label{Barcharloc}
If $h\in\omega^\omega$ goes to infinity then $\add(\Ncal)=\blc_{\omega,h}$ and
$\cof(\Ncal)=\dlc_{\omega,h}$.
\end{theorem}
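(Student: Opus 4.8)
The plan is to recover Bartoszy\'nski's characterization of $\add(\Ncal)$ and $\cof(\Ncal)$ via slaloms and then observe that the width function $h$ can be replaced by any function going to infinity. First I would recall the standard theorem (e.g. \cite[Thm. 2.3.9]{BJ}) in its usual form: $\add(\Ncal)=\min\{|F|:F\subseteq\omega^\omega,\ \neg\exists\varphi\in\Scal(\omega,\id_\omega)\ \forall x\in F\ (x\in^*\varphi)\}$ and $\cof(\Ncal)=\min\{|\Phi|:\Phi\subseteq\Scal(\omega,\id_\omega),\ \forall x\in\omega^\omega\ \exists\varphi\in\Phi\ (x\in^*\varphi)\}$; in the notation of the paper this says $\add(\Ncal)=\blc_{\omega,\id_\omega}$ and $\cof(\Ncal)=\dlc_{\omega,\id_\omega}$. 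So the content to be proved is that the value of $\blc_{\omega,h}$ and $\dlc_{\omega,h}$ does not depend on the choice of $h$, as long as $h$ goes to infinity.

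The key step is a Tukey-Galois comparison between $\Lc(\omega,h)$ and $\Lc(\omega,h')$ for two functions $h,h'$ going to infinity, which reduces by symmetry to comparing $\Lc(\omega,h)$ with $\Lc(\omega,\id_\omega)$, or even just with $\Lc(\omega,h')$ for $h'\leq^* h$. In one direction this is trivial: if $h'\leq h$ then $\Scal(\omega,h')\subseteq\Scal(\omega,h)$, so every $\Lc(\omega,h')$-dominating family is $\Lc(\omega,h)$-dominating and every $\Lc(\omega,h)$-unbounded family is $\Lc(\omega,h')$-unbounded, giving $\blc_{\omega,h}\leq\blc_{\omega,h'}$ and $\dlc_{\omega,h'}\leq\dlc_{\omega,h}$ (modulo the harmless shift by a finite modification when $h'\leq^* h$ rather than $h'\leq h$). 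The substantive direction is the reverse inequality, and the standard trick is \emph{block coding}: given $h$ going to infinity, partition $\omega$ into consecutive intervals $I_k$ so that $\min_{n\in I_k}h(n)\geq |I_{k-1}|$ (possible precisely because $h\to\infty$), identify $\prod_{n\in I_k}\omega$ with $\omega$ via a bijection, and thereby convert a slalom $\varphi$ of width $h$ into a slalom of width $1$ (i.e.\ a real) on the "blocked" space, and conversely. A width-$1$ slalom localizing the blocked real corresponds to a width-$h$ slalom localizing the original, and a width-$h$ slalom on the original space, restricted appropriately, can be repackaged as a slalom of bounded width (then absorbed) on the blocked space. This yields the Tukey equivalence $\Lc(\omega,h)\equiv_{\mathrm{T}}\Lc(\omega,\id_\omega)$, hence equality of both $\bfrak$ and $\dfrak$.

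Concretely I would carry out the steps in this order: (1) state the classical characterization in slalom form; (2) prove the easy monotonicity $\blc_{\omega,h}\leq\blc_{\omega,h'}$, $\dlc_{\omega,h'}\leq\dlc_{\omega,h}$ whenever $h'\leq^* h$; (3) for arbitrary $h,h'$ both going to infinity, find a common lower bound $h''\leq^*\min(h,h')$ still going to infinity (e.g.\ $h''=\min(h,h',\id_\omega)$ after shifting), reducing everything to comparing $\Lc(\omega,h)$ with $\Lc(\omega,\id_\omega)$ when $h\leq^*\id_\omega$; and (4) run the block-coding argument to get $\Lc(\omega,\id_\omega)\equiv_{\mathrm{T}}\Lc(\omega,h)$ under the further hypothesis $h\to\infty$, which completes the proof. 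The main obstacle is step (4): one must set up the interval partition and the bijective blocking carefully so that a slalom of width $h$ on the original coordinates induces a slalom of width at most $1$ on the blocks (after choosing the intervals long enough that $\prod_{n\in I_k}h(n)$-sized products are dominated by the lengths of later intervals), and one must check that $\in^*$ is preserved in both directions — the "eventually" quantifier cooperates with the blocking because a tail of coordinates is covered by a tail of blocks. Everything else is bookkeeping with the Tukey-Galois order already set up in Definition~\ref{1.40.1}.
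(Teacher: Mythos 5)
The paper offers no proof of this theorem --- it is quoted from Bartoszy\'nski--Judah --- so your proposal has to be judged on its own terms; the closest thing in the paper to your step (4) is the Claim inside the proof of Lemma \ref{Lcsup}. Your overall strategy (quote the classical characterization and then show that $\blc_{\omega,h}$ and $\dlc_{\omega,h}$ do not depend on $h$ as long as $h\to\infty$, via a Tukey equivalence obtained from an interval partition) is the standard and correct route. However, the pivotal step (4) is described incorrectly, in a way that would fail if carried out literally: blocking does \emph{not} convert a width-$h$ slalom into a width-$1$ slalom on the blocked space. On a block $I_k$ the set $\prod_{n\in I_k}\varphi(n)$ has up to $\prod_{n\in I_k}h(n)$ elements, and conversely a width-$1$ slalom is just a real, so localization by it is eventual equality. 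If your conversion worked you would obtain $\Lc(\omega,h)\eqT\Lc(\omega,1)$, hence $\blc_{\omega,h}=2$ and $\dlc_{\omega,h}=\cfrak$ (see Theorem \ref{Lochnotinfty}), contradicting the very theorem being proved. The fallback phrase ``repackaged as a slalom of bounded width (then absorbed)'' does not rescue this: for constant width $N$ one has $\blc_{\omega,N}=N+1$ and $\dlc_{\omega,N}\geq\cfrak$, so bounded-width slaloms cannot be absorbed either.

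The correct blocking runs differently. To show $\Lc(\omega,h)\leqT\Lc(\omega,h')$ for $h\to\infty$ (take $h'=\id_\omega$), choose the interval partition $\la I_n:n<\omega\ra$ so that $h(k)\geq h'(n)$ for all $k\in I_n$ --- this is where $h\to\infty$ is used --- send $x$ to $\la x\frestr I_n:n<\omega\ra$ on the domain side, and send a width-$h'$ slalom $S$ on the blocked space to $\varphi_+(S)(k):=\{s(k):s\in S(n)\}$ for $k\in I_n$, whose width at $k$ is $\leq|S(n)|\leq h'(n)\leq h(k)$; the width of the induced slalom is the width of $S$ at the block index, never $1$. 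Swapping the roles of $h$ and $\id_\omega$ gives the reverse Tukey inequality, hence the equivalence and the theorem. A secondary slip: your ``easy monotonicity'' in step (2) is stated backwards --- for $h'\leq^*h$ the correct conclusions are $\blc_{\omega,h'}\leq\blc_{\omega,h}$ and $\dlc_{\omega,h}\leq\dlc_{\omega,h'}$ (Example \ref{ExTukeyorder}(3)), which is the opposite of what you wrote, even though the two set-inclusion facts you state just before are correct. This second slip would not derail the plan, since the blocking argument done correctly yields both Tukey inequalities without any appeal to steps (2)--(3), but as written those steps point in the wrong direction.
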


\begin{theorem}[Bartoszy\'nski {\cite[Lemmas 2.4.2 and 2.4.8]{BJ}}]\label{1.7}\label{Barcharalc}
    If $h\in\omega^\omega$ and $h\geq^*1$ then
    $\balc_{\omega,h}=\non(\Mcal)$ and $\dalc_{\omega,h}=\cov(\Mcal)$.
\end{theorem}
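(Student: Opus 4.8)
The two equalities are formally dual under $\bfrak\leftrightarrow\dfrak$, so the natural strategy is to show that the relational system $\aLc(\omega,h)$ is Tukey equivalent to $\Ed:=\Ed(\omega)=\la\omega^\omega,\omega^\omega,\neq^*\ra$ from Example \ref{ExmCichonrelsys}(3); granting this, both statements follow at once from Bartoszy\'nski's combinatorial characterization of the meager ideal, $\bfrak(\Ed)=\non(\Mcal)$ and $\dfrak(\Ed)=\cov(\Mcal)$ (see \cite{BJ}, Section 2.4). So the task reduces to exhibiting Tukey--Galois morphisms in both directions.

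The first (easy) direction uses $h\geq^*1$ only. Fix $n_0$ with $h(n)\geq 1$ for $n\geq n_0$. To each $f\in\omega^\omega$ attach the slalom $\psi_f\in\Scal(\omega,h)$ with $\psi_f(n)=\{f(n)\}$ for $n\geq n_0$ and $\psi_f(n)=\emptyset$ for $n<n_0$; then $\psi_f\not\ni^* x$ holds exactly when $f\neq^* x$. Hence $(f\mapsto\psi_f,\ \id)$ is a morphism from $\Ed$ into $\aLc(\omega,h)$, witnessing that $\Ed$ is Tukey-below $\aLc(\omega,h)$, which gives $\balc_{\omega,h}=\bfrak(\aLc(\omega,h))\leq\bfrak(\Ed)=\non(\Mcal)$ and $\cov(\Mcal)=\dfrak(\Ed)\leq\dfrak(\aLc(\omega,h))=\dalc_{\omega,h}$. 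Intuitively, wider slaloms only make ``catching'' easier and ``anti-localizing'' harder, so these are the expected inequalities.

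The second (main) direction is the reverse reduction $\aLc(\omega,h)\to\Ed$, and I expect this to be the real obstacle, since a single real can be caught by arbitrarily many width-$h$ slaloms, so one cannot hope for a naive coordinatewise coding. The plan is to pass through $\Ed(c)$ for a block alphabet: fix a fast-growing interval partition $\la I_k:k<\omega\ra$ of $\omega$, put $c(k):=\prod_{n\in I_k}\omega$, and use that $\Ed(c)\equiv_{\mathrm T}\Ed$ (blocks grow, so this is the standard equivalence, \cite{BJ}). One then builds a map $\Phi\colon\Scal(\omega,h)\to\prod c$ and a map $\Psi\colon\prod c\to\omega^\omega$ so that, block by block, if $\varphi$ catches $\Psi(y)$ anywhere on $I_k$ then $\Phi(\varphi)(k)=y(k)$; consequently $\Phi(\varphi)\neq^* y$ (blockwise) implies $\varphi\not\ni^*\Psi(y)$. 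The design exploits that $\varphi\restriction I_k$ has finite total capacity $\sum_{n\in I_k}h(n)$: choosing $I_k$ large relative to $h$ on $I_k$ and spreading the finitely many $c(k)$-many ``probes'' $\Psi(y)\restriction I_k$ across the block via an almost-disjointness encoding, one arranges that a hit on $I_k$ pins down $y(k)$ from $\varphi\restriction I_k$ alone. Feeding this morphism into the Tukey calculus yields $\non(\Mcal)=\bfrak(\Ed)\leq\bfrak(\aLc(\omega,h))=\balc_{\omega,h}$ and $\dalc_{\omega,h}=\dfrak(\aLc(\omega,h))\leq\dfrak(\Ed)=\cov(\Mcal)$, which together with the first direction gives the two claimed equalities. (An equivalent route is to prove directly that $\aLc(\omega,h)\equiv_{\mathrm T}\aLc(\omega,1)$, the nontrivial half being exactly this ``wide-to-narrow'' coding, and then note $\aLc(\omega,1)\cong\Ed$ via $\psi\mapsto$ the function selecting from $\psi$.)

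The delicate point, and the step I would spend most effort on, is the combinatorial lemma underlying $\Phi,\Psi$: getting the interval lengths and the probe-encoding right so that the reconstruction of $y(k)$ from $\varphi\restriction I_k$ is genuinely well-defined for every width-$h$ slalom, including unbounded $h$. This is precisely the content of \cite[Lemmas 2.4.2 and 2.4.8]{BJ}, and I would cite it rather than reprove it in full.
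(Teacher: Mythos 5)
Two preliminary remarks: the paper itself gives no proof of this theorem (it is quoted from Bartoszy\'nski, \cite{BJ}), so the comparison is with the standard argument rather than with a proof in the text; and your overall strategy is reasonable. Your easy half is correct and is exactly the paper's Example \ref{ExTukeyorder}(2): $\Ed(\omega)\leqT\aLc(\omega,h)$ via $f\mapsto\la\{f(n)\}\ra_n$, giving $\balc_{\omega,h}\leq\non(\Mcal)$ and $\cov(\Mcal)\leq\dalc_{\omega,h}$. Also, the target of your hard half, $\aLc(\omega,h)\leqT\Ed(\omega)$, is in fact true, so reducing everything to Tukey morphisms plus $\bfrak(\Ed(\omega))=\non(\Mcal)$, $\dfrak(\Ed(\omega))=\cov(\Mcal)$ is a legitimate route.

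The gap is in the coding you propose for that hard half. You require that, for almost every block $I_k$, a hit of $\Psi(y)$ against $\varphi$ \emph{anywhere} on $I_k$ pins down $y(k)$ as the single value $\Phi(\varphi)(k)$, where $\Psi(y)\frestr I_k$ is a probe depending only on $y(k)$. This is unachievable for every choice of probes: an adversary slalom with $h\geq 1$ can, on each block with at least two coordinates, put one element of the probe of $v_1$ into $\varphi(n_1)$ and one element of the probe of $v_2\neq v_1$ into $\varphi(n_2)$ (or both into one $\varphi(n)$ if $h(n)\geq2$), so two distinct values are hit on $I_k$ and no single $\Phi(\varphi)(k)$ works. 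Nor does retreating to the actual requirement ($\Psi(y)\in^\infty\varphi\Rightarrow\Phi(\varphi)=^\infty y$) save the architecture: letting $H_k$ be the set of $v\in c(k)$ whose probe is hit on $I_k$, choose $y$ with $y(k)\in H_k\smallsetminus\{\Phi(\varphi)(k)\}$ whenever this set is non-empty and $y(k)\neq\Phi(\varphi)(k)$ otherwise; if $|H_k|\geq2$ infinitely often (which the adversary arranges), this $y$ is infinitely often hit yet never agrees with $\Phi(\varphi)$, contradicting the morphism condition. (A minor additional slip: $c(k)=\prod_{n\in I_k}\omega$ is countably infinite, not finite; making it finite would land you in the $\Ed(b)$, $b\in\omega^\omega$, setting, whose $\bfrak$ and $\dfrak$ this very paper separates from $\non(\Mcal)$ and $\cov(\Mcal)$.) The standard coding is the transpose of yours: assuming wlog $h\geq1$, partition $\omega$ into consecutive intervals $J_n$ with $|J_n|=h(n)$, let $\varphi_+(y)(n)$ be a code for the whole block $y\frestr J_n$, and define $\varphi_-(\varphi)$ by decoding the at most $h(n)$ guesses contained in $\varphi(n)$ and writing the value of the $i$-th guess on the $i$-th coordinate of $J_n$. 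A hit at coordinate $n$ then forces $\varphi_-(\varphi)$ to agree with $y$ at some coordinate of $J_n$, so $\varphi_+(y)\in^\infty\varphi$ implies $\varphi_-(\varphi)=^\infty y$; this witnesses $\aLc(\omega,h)\leqT\Ed(\omega)$ and yields $\non(\Mcal)\leq\balc_{\omega,h}$ and $\dalc_{\omega,h}\leq\cov(\Mcal)$. In short: you must compress a block of $y$ into a single test coordinate and spread the slalom's finitely many guesses over the original coordinates, not ask a single hit on a block to determine the block value.
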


\newcommand{\leqT}{\preceq_{\mathrm{T}}}
\newcommand{\eqT}{\cong_{\mathrm{T}}}

\begin{remark}\label{notrivLoc-aLoc}
   Fix $b$ and $h$ as in Definition \ref{Defantiloc}. The following items justify that the study of localization and anti-localization cardinals can be reduced to the case when $1\leq h$ and $h(i)<|b(i)|$ for all $i<\omega$.
   \begin{enumerate}[(1)]
       \item If $|b(i)|\leq h(i)$ for all but finitely many $i<\omega$, $\dlc_{b,h}=1$ and $\blc_{b,h}$ is undefined. On the other hand, if $\exists^\infty{i<\omega}(h(i)=0)$ then $\blc_{b,h}=1$ and $\dlc_{b,h}$ is undefined. Hereafter, both localization cardinals for $b,h$ are defined only when $1\leq^*h$ and $\exists^\infty{i<\omega}(h(i)<|b(i)|)$, even more, if $A:=\{i<\omega:1\leq h(i)<|b(i)|\}$ (which is infinite) and $\iota_A:\omega\to A$ is the increasing enumeration of $A$, then $\blc_{b,h}=\blc_{b\circ\iota_A,h\circ\iota_A}$ and $\dlc_{b,h}=\dlc_{b\circ\iota_A,h\circ\iota_A}$ (actually, $\Lc(b,h)\eqT\Lc(b\circ\iota_A,h\circ\iota_A)$, see Definition \ref{1.45.1}).

      \item If $\exists^\infty i<\omega(|b(i)|\leq h(i))$ then $\balc_{b,h}=1$ and $\dalc_{b,h}$ is undefined. On the other hand, if $\forall^\infty i<\omega(h(i)=0)$ then $\dalc_{b,h}=1$ and $\balc_{b,h}$ is undefined. Hence, both anti-localization cardinals for $b,h$ are defined iff $h(i)<|b(i)|$ for all but finitely many $i<\omega$, and $\exists^\infty i<\omega(h(i)\geq 1)$, even more, if $A$ and $\iota_A$ are as in (1) then $\balc_{b,h}=\balc_{b\circ\iota_A,h\circ\iota_A}$ and $\dalc_{b,h}=\dalc_{b\circ\iota_A,h\circ\iota_A}$ (actually, $\aLc(b,h)\eqT\aLc(b\circ\iota_A,h\circ\iota_A)$, see Definition \ref{1.45.1}).
   \end{enumerate}
\end{remark}

\begin{definition}[{\cite[Def. 4.8]{B10}}]\label{1.45.1}
   Let $\Abf=\langle A_-,A_+,\sqsubset\rangle$ and $\Bbf=\langle B_-,B_+,\sqsubset^{\prime}\rangle$ be relational systems. Say that \emph{$\Abf$ is Tukey-Galois below $\Bbf$}, denoted by $\Abf\leqT \Bbf$, if there exist functions $\varphi_{-}:A_-\to B_-$ and $\varphi_{+}:B_+\to A_+$ such that, for all $x\in A_-$ and $b\in B_+$, if $\varphi_{-}(x)\sqsubset^{\prime}b$ then $x\sqsubset\varphi_{+}(b)$. Here, we say that the pair \emph{$(\varphi_-,\varphi_+)$ witnesses $\Abf\leqT \Bbf$}. Say that $\Abf$ and $\Bbf$ are \textit{Tukey-Galois equivalent}, denoted by  $\Abf\eqT\Bbf$, if $\Abf\leqT \Bbf$ and $\Bbf\leqT \Abf$.
\end{definition}

\begin{theorem}[{\cite[Thm. 4.9]{B10}}]\label{1.46.1} Assume $\Abf\leqT \Bbf$ and that this is witnessed by $(\varphi_-,\varphi_+)$.
\begin{enumerate}[(a)]
\item If $D\subseteq B_+$ is $\Bbf$-dominating then $\varphi_{+}[D]$ is $\Abf$-dominating.
\item If $C\subseteq A_-$ is $\Abf$-unbounded then $\varphi_{-}[D]$ is $\Bbf$-unbounded.
\end{enumerate}
In particular, $\dfrak(\Abf)\leq \dfrak(\Bbf)$ and $\bfrak(\Bbf)\leq \bfrak(\Abf)$.
\end{theorem}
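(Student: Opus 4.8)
The plan is to unwind the definitions directly; no clever idea is required, and the two parts are near-mirror images of one another. Fix witnesses $\varphi_-:A_-\to B_-$ and $\varphi_+:B_+\to A_+$ for $\Abf\leqT\Bbf$, so that for all $x\in A_-$ and $b\in B_+$, $\varphi_-(x)\sqsubset' b$ implies $x\sqsubset\varphi_+(b)$.

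For (a), let $D\subseteq B_+$ be $\Bbf$-dominating and take an arbitrary $x\in A_-$. Then $\varphi_-(x)\in B_-$, so by hypothesis there is $b\in D$ with $\varphi_-(x)\sqsubset' b$; by the defining property of the witness, $x\sqsubset\varphi_+(b)\in\varphi_+[D]$. Since $x$ was arbitrary, $\varphi_+[D]$ is $\Abf$-dominating.

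For (b), let $C\subseteq A_-$ be $\Abf$-unbounded and suppose towards a contradiction that $\varphi_-[C]$ is $\Bbf$-bounded, say $b\in B_+$ $\sqsubset'$-dominates every element of $\varphi_-[C]$. Then for every $x\in C$ we have $\varphi_-(x)\sqsubset' b$, hence $x\sqsubset\varphi_+(b)$; thus the single element $\varphi_+(b)\in A_+$ $\sqsubset$-dominates all of $C$, contradicting $\Abf$-unboundedness. Hence $\varphi_-[C]$ is $\Bbf$-unbounded.

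Finally, the cardinal inequalities are immediate: if $D$ is a $\Bbf$-dominating family of size $\dfrak(\Bbf)$, then $\varphi_+[D]$ is $\Abf$-dominating and $|\varphi_+[D]|\leq|D|$, so $\dfrak(\Abf)\leq\dfrak(\Bbf)$; dually, an $\Abf$-unbounded family of size $\bfrak(\Abf)$ maps under $\varphi_-$ to a $\Bbf$-unbounded family of no larger size, so $\bfrak(\Bbf)\leq\bfrak(\Abf)$. The one point deserving a word of care is the convention on non-existent cardinals: if, e.g., $\dfrak(\Bbf)$ fails to exist it is treated as lying above every cardinal and the inequality is vacuous, while the argument above shows that whenever a $\Bbf$-dominating family exists so does an $\Abf$-dominating one (and symmetrically for unbounded families and $\bfrak$). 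There is no genuine obstacle here; the only thing to get right is the order of the quantifiers ``for all $x$'' versus ``there exists $b$'' in the two dual statements.
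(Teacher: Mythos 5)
Your proof is correct and is the standard unwinding of the definition of a Tukey--Galois morphism; the paper itself gives no proof, simply citing the result from Blass's handbook chapter, and your argument (including the care about non-existent cardinals and the observation that the statement's ``$\varphi_-[D]$'' in (b) should read $\varphi_-[C]$) is exactly the intended one.
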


\begin{Example}\label{1.46}\label{ExTukeyorder}
\begin{enumerate}[(1)]
    \item Let $\Ical\subseteq \Jcal$ be two downwards $\subseteq$-closed families of subsets of a non-empty set $X$ such that $\emptyset\in\Ical$. Clearly, $\langle X, \Jcal, \in \rangle\leqT\langle X, \Ical, \in \rangle$. In particular, $\cov(\Jcal)\leq\cov(\Ical)$ and $\non(\Ical)\leq \non(\Jcal)$.

    \item Let $b$ be a sequence of length $\omega$ of non-empty sets and $h\in\omega^\omega$ such that $h\geq^{*}1$. Define $\varphi_+=\id_{\prod b}$ and
          $\varphi_{-}: \prod b \to \Scal(b,h)$ where $\varphi_-(x):=s_{x}$ is defined as  $s_x(i):=\{x(i)\}$ if $h(i)\neq 0$, or $s_x(i):=\emptyset$ otherwise.
     Note that, for any $x,y\in\prod{b}$, $s_x\not\ni^*y$ implies $x\neq^*y$, so $\Ed(b)\leqT\aLc(b,h)$. Even more, $\Ed(b)\eqT\aLc(b,1)$. Hence
      $\balc_{b,h}\leq\balc_{b,1}=\bfrak\la\prod b,\prod b,\neq^*\ra$ and $\dalc_{b,h}\geq\dalc_{b,1}=\dfrak\la\prod b,\prod b,\neq^*\ra$.

     In addition, if $b\in\omega^\omega$ then $\aLc(b,1)\eqT\Lc(b,b-1)$, so $\balc_{b,1}=\blc_{b,b-1}$ and $\dalc_{b,1}=\dalc_{b,b-1}$.

    \item Let $b, b'$ be sequences of length $\omega$ of non-empty sets, and let $h,h' \in \omega^\omega$. If $\forall^\infty i<\omega(|b(i)|\leq|b'(i)|)$ and $h'\leq^* h$ then $\Lc(b,h)\leqT\Lc(b',h')$ and $\aLc(b',h')\leqT\aLc(b,h)$. Hence
    \begin{enumerate}[(i)]
        \item $\blc_{b',h'}\leq\blc_{b,h}$ and $\dlc_{b,h}\leq\dlc_{b',h'}$,
        \item $\balc_{b,h}\leq\balc_{b',h'}$ and $\dalc_{b',h'}\leq \dalc_{b,h}$.
    \end{enumerate}
     By Theorem \ref{1.7}, if each $b(i)$ is countable and $h\geq^*1$ then $\balc_{b,h}\leq\non(\Mcal)$ and $\cov(\Mcal)\leq\dalc_{b,h}$. In addition, if $h$ goes to infinity then $\add(\Ncal)\leq\blc_{b,h}$ and $\dlc_{b,h}\leq\cov(\Ncal)$ by Theorem \ref{1.6}.

     \item Let $b$ and $h$ be as in (2). If $\iota:\omega\to\omega$ is a one-to-one function, then $\Lc(b\circ\iota,h\circ\iota)\leqT\Lc(b,h)$ and $\aLc(b\circ\iota,h\circ\iota)\leqT\aLc(b,h)$. Hence $\blc_{b,h}\leq\blc_{b\circ\iota,h\circ\iota}$ and $\dlc_{b\circ\iota,h\circ\iota}\leq\dlc_{b,h}$, likewise for the anti-localization cardinals.

     Even more, if $\omega\smallsetminus\ran\iota$ is finite then $\Lc(b\circ\iota,h\circ\iota)\eqT\Lc(b,h)$ and $\aLc(b\circ\iota,h\circ\iota)\eqT\aLc(b,h)$.

     \item If $b$ and $h$ are as in (2), then $\aLc(b,h)^\perp\leqT\Lc(b,h)$, so $\balc_{b,h}\leq\dlc_{b,h}$ and $\blc_{b,h}\leq\dalc_{b,h}$.
\end{enumerate}
\end{Example}


\begin{definition}[{\cite[Def. 4.10]{B10}}]\label{1.47}
Let $\Abf=\langle A_-,A_+,\sqsubset\rangle$ and $\Bbf=\langle B_-,B_+,\sqsubset^{\prime}\rangle$ be relational systems. Define the following relational systems.
\begin{enumerate}[(1)]
\item The \textit{conjunction} $\Abf\wedge\Bbf=\langle A_-\times B_-,A_+\times B_+,\sqsubset_{\wedge}\rangle$
where the binary relation $\sqsubset_{\wedge}$ is defined as $(x,y)\sqsubset_{\wedge} (a,b)$ iff $x\sqsubset a$ and $y\sqsubset^{\prime} b$.
\item The \textit{sequential composition} $(\Abf;\Bbf)=\langle A_-\times B_-^{A_+},A_+\times B_+,\sqsubset_{;}\rangle$ where the binary relation $(x,f)\sqsubset_{;}(a,b)$ means $x\sqsubset a$ and $f(a)\sqsubset^{\prime} b$.
\end{enumerate}
\end{definition}

The following result describes the effect of the conjunction and sequential composition on the corresponding cardinal invariants.

\begin{theorem}[{\cite[Thm. 4.11]{B10}}]\label{1.48}\label{seqcompinv}
Let $\Abf=\langle A_-,A_+,\sqsubset\rangle$ and $\Bbf=\langle B_-,B_+,\sqsubset^{\prime}\rangle$ be relational systems. Then:
\begin{enumerate}[(a)]
\item $\max\{\dfrak(\Abf),\dfrak(\Bbf)\}\leq\dfrak(\Abf\wedge\Bbf)\leq\dfrak(\Abf)\cdot\dfrak(\Bbf)$ and $\bfrak(\Abf\times\Bbf)=\min\{\bfrak(\Abf),\bfrak(\Bbf)\}$.
\item $\dfrak(\Abf;\Bbf)=\dfrak(\Abf)\cdot\dfrak(\Bbf)$ and $\bfrak(\Abf;\Bbf)=\min\{\bfrak(\Abf),\bfrak(\Bbf)\}$.
\end{enumerate}
\end{theorem}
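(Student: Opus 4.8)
The plan is to derive the four assertions from two elementary constructions -- taking products (or unions) of witnessing families, and reading off Tukey--Galois reductions from the projection and constant--function maps -- plus a single genuine diagonalization. The latter, the lower bound $\dfrak(\Abf)\cdot\dfrak(\Bbf)\leq\dfrak(\Abf;\Bbf)$, is the one step I expect to require real work; everything else is bookkeeping. (Here the $\bfrak(\Abf\times\Bbf)$ in item (a) is read as $\bfrak(\Abf\wedge\Bbf)$.)

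For the routine directions I would argue as follows. \emph{Product upper bounds:} if $D_A\subseteq A_+$ is $\Abf$-dominating and $D_B\subseteq B_+$ is $\Bbf$-dominating, then $D_A\times D_B$ is dominating in both $\Abf\wedge\Bbf$ and $(\Abf;\Bbf)$: given $(x,y)$ (resp. $(x,f)$), choose $a\in D_A$ with $x\sqsubset a$, then $b\in D_B$ with $y\sqsubset' b$ (resp. with $f(a)\sqsubset' b$), and $(a,b)$ dominates. This gives $\dfrak(\Abf\wedge\Bbf)\leq\dfrak(\Abf)\cdot\dfrak(\Bbf)$ and $\dfrak(\Abf;\Bbf)\leq\dfrak(\Abf)\cdot\dfrak(\Bbf)$. \emph{Tukey--Galois reductions:} fixing $y_0\in B_-$, $x_0\in A_-$ and the constant function $c_{y_0}\in B_-^{A_+}$, the pairs $(x\mapsto(x,y_0),\,(a,b)\mapsto a)$ and $(x\mapsto(x,c_{y_0}),\,(a,b)\mapsto a)$ witness $\Abf\leqT\Abf\wedge\Bbf$ and $\Abf\leqT(\Abf;\Bbf)$, and symmetrically $\Bbf\leqT\Abf\wedge\Bbf$ and (using $c_y(a)=y$) $\Bbf\leqT(\Abf;\Bbf)$ via $(y\mapsto(x_0,c_y),\,(a,b)\mapsto b)$; by Theorem \ref{1.46.1} this yields $\dfrak(\Abf),\dfrak(\Bbf)\leq\dfrak(\Abf\wedge\Bbf),\dfrak(\Abf;\Bbf)$ and $\bfrak(\Abf\wedge\Bbf),\bfrak(\Abf;\Bbf)\leq\bfrak(\Abf),\bfrak(\Bbf)$, i.e. the ``$\max\leq$'' and ``$\leq\min$'' halves. \emph{$\bfrak$ lower bounds:} let $X$ have $|X|<\min\{\bfrak(\Abf),\bfrak(\Bbf)\}$; its projection to $A_-$ is $\Abf$-bounded, say by $a\in A_+$, so $x\sqsubset a$ for all members of $X$; then $\{y:(x,y)\in X\}$ (conjunction) resp. $\{f(a):(x,f)\in X\}$ (sequential composition -- this is the only point where the first-coordinate bound $a$ is fed into the second coordinate) has size $\leq|X|<\bfrak(\Bbf)$, hence is $\Bbf$-bounded by some $b$, and $(a,b)$ bounds $X$. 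Thus $\bfrak(\Abf\wedge\Bbf)=\bfrak(\Abf;\Bbf)=\min\{\bfrak(\Abf),\bfrak(\Bbf)\}$.

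The main obstacle is $\dfrak(\Abf)\cdot\dfrak(\Bbf)\leq\dfrak(\Abf;\Bbf)$. Let $E\subseteq A_+\times B_+$ be $(\Abf;\Bbf)$-dominating; write $E_A$ for its projection to $A_+$ and $E_a:=\{b\in B_+:(a,b)\in E\}$, so that $|E|=\sum_{a\in E_A}|E_a|$. Call $a$ \emph{good} if $E_a$ is $\Bbf$-dominating, and let $G$ be the set of good elements. The crux is that $G$ is $\Abf$-dominating: given $x\in A_-$, suppose no good $a$ satisfies $x\sqsubset a$; then for each $a$ with $x\sqsubset a$ the set $E_a$ is not $\Bbf$-dominating, so we may pick $y_a\in B_-$ with $y_a\not\sqsubset' b$ for every $b\in E_a$; define $f\in B_-^{A_+}$ by $f(a):=y_a$ when $x\sqsubset a$ and arbitrarily otherwise, and apply domination of $E$ to $(x,f)$ to obtain $(a,b)\in E$ with $x\sqsubset a$ and $f(a)\sqsubset' b$; but then $b\in E_a$ and $f(a)=y_a$, contradicting the choice of $y_a$. (If some $x$ has no $\sqsubset$-dominator, then $\bfrak(\Abf)=1$ and $\dfrak(\Abf)$ is undefined, a degenerate case excluded by convention.) Hence $|G|\geq\dfrak(\Abf)$, and since $|E_a|\geq\dfrak(\Bbf)$ for every $a\in G$, we get $|E|\geq\sum_{a\in G}|E_a|\geq|G|\cdot\dfrak(\Bbf)\geq\dfrak(\Abf)\cdot\dfrak(\Bbf)$, as desired. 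Combining this with the product upper bound gives $\dfrak(\Abf;\Bbf)=\dfrak(\Abf)\cdot\dfrak(\Bbf)$, and together with the preceding paragraph all four statements follow.

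I would close with a brief remark that the product bounds have genuine content only when the cardinals are finite -- for infinite $\kappa,\lambda$ one has $\kappa\cdot\lambda=\max\{\kappa,\lambda\}$, so there the equalities already follow from the Tukey reductions alone -- and that the standard conventions on non-existent $\bfrak(\cdot)$ and $\dfrak(\cdot)$ make the remaining degenerate instances (a relational system with a one-point unbounded or dominating family) hold trivially.
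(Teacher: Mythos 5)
Your proof is correct. The paper itself gives no argument for this statement — it is quoted verbatim from Blass \cite[Thm.~4.11]{B10} — and your proof is exactly the standard one from that source: projections and constant functions give the Tukey reductions for the ``$\max\leq$'' and ``$\leq\min$'' halves, products of witnesses give the upper bounds, and the only nontrivial step, $\dfrak(\Abf)\cdot\dfrak(\Bbf)\leq\dfrak(\Abf;\Bbf)$, is handled correctly by your ``good coordinates'' diagonalization (including the cardinal-arithmetic step $\sum_{a\in G}|E_a|\geq|G|\cdot\dfrak(\Bbf)$ and the degenerate cases).
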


\subsection{Yorioka ideals}\label{SubsecYideal}

\begin{definition}\label{DefYorionotation}
For $\sigma \in (2^{<\omega})^{\omega}$ define
\[[\sigma]_\infty:=\{x \in 2^{\omega}:\exists^{\infty}{n < \omega}^{}(\sigma(n) \subseteq x)\}=\bigcap_{n<\omega} \bigcup_{m \geqslant n}[\sigma(m)].
\]
Also define  $h_{\sigma}\in\omega^{\omega}$ such that $h_{\sigma}(i)=|\sigma(i)|$ for each $i<\omega$.

Define the relation $\ll$ on $\omega^\omega$ by
$f\ll g$  iff $\forall{k<\omega}\forall^{\infty}{n<\omega}(f(n^k)\leq g(n))$.
\end{definition}

\begin{definition}[Yorioka {\cite{Yorioka}}]\label{DefYorioId}
    For each $f \in \omega^\omega$ define the families
    \[\mathcal{J}_{f}:=\{X\subseteq 2^{\omega}:\exists{\sigma \in (2^{<\omega})^{\omega}}(X \subseteq [\sigma]_\infty\text{\ and }h_{\sigma}=f )\}\textrm{\ and }
    \mathcal{I}_{f}:=\bigcup_{g\gg f}\mathcal{J}_{g}.\]
    Any family of the form $\mathcal{I}_{f}$ with $f$ increasing is called a \textit{Yorioka ideal}.
\end{definition}

It is clear that both $\Jcal_g$ and $\Ical_f$ contain all the finite subsets of $2^{\omega}$ and that they are downwards $\subseteq$-closed. Note that $f\leq^* f'$ implies $\mathcal{J}_{f'}\subseteq \mathcal{J}_{f}$ and $\mathcal{I}_{f'}\subseteq \mathcal{I}_{f}$, so $\cov(\mathcal{I}_{f})\leq \cov(\mathcal{I}_{f'})$ and $\non(\Ical_{f})\geq\non(\Ical_{f'})$, likewise for $\Jcal_f$ and $\Jcal_{f'}$. Moreover $\Ical_f\subseteq\Jcal_f$ and $f\ll g $ implies $\mathcal{J}_{g}\subseteq  \mathcal{I}_{f}$, so $\cov(\mathcal{J}_{f})\leq \text{cov}(\mathcal{I}_{f})\leq \text{cov}(\mathcal{J}_{g})$ and $\non(\mathcal{J}_{g})\leq \text{non}(\mathcal{I}_{f})\leq \text{non}(\mathcal{J}_{f})$. On the other hand, $\Jcal_f\subseteq\Ncal$ iff the series $\sum_{i<\omega}2^{-f(i)}$ converges, and $\SNcal\subseteq\Jcal_f$. Hence $\SNcal\subseteq\Ical_f\subseteq\Ncal$ when $f$ is increasing, so $\cov(\mathcal{N})\leq \cov(\mathcal{I}_f)\leq \cov(\mathcal{SN})$ and $\non(\mathcal{SN})\leq \non(\mathcal{I}_f)\leq \non(\mathcal{N})$.

\begin{theorem}[Yorioka \cite{Yorioka}]\label{Yoriosigmaid}
If $f \in \omega^\omega$ is an increasing function then $\mathcal{I}_{f}$ is a $\sigma$-ideal. Moreover, $\mathcal{SN}=\bigcap\{\Ical_f:f\textrm{\ increasing}\}$.
\end{theorem}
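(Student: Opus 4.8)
The plan is to prove the two assertions separately: first $\sigma$-additivity of $\Ical_f$ (the downward $\subseteq$-closure and containment of the finite sets being already observed, and properness following from $\Ical_f\subseteq\Ncal$ for increasing $f$), and then the identity $\SNcal=\bigcap\{\Ical_f:f\text{ increasing}\}$. For $\sigma$-additivity, suppose $X_n\in\Ical_f$ for $n<\omega$ and fix $g_n\gg f$ together with $\sigma_n\in(2^{<\omega})^\omega$ with $X_n\subseteq[\sigma_n]_\infty$ and $h_{\sigma_n}=g_n$. Fix once and for all a partition $\la A_n:n<\omega\ra$ of $\omega$ into infinite sets with the property that the increasing enumeration $\la a^n_j:j<\omega\ra$ of $A_n$ satisfies $a^n_j\leqslant(j+2)^{n+2}$ (e.g.\ $A_n=\{2^n(2j+1)-1:j<\omega\}$ works). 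For each $n$ pick a nondecreasing $M_n\in\omega^\omega$ with $f(j^l)\leqslant g_n(j)$ for all $j\geqslant M_n(l)$ (this exists as $g_n\gg f$), and put $J_n:=\max\{2,M_n(2n(n+2))\}$. Now I would define $\sigma$ by declaring $\sigma(a^n_j):=\sigma_n(j)$ for $j\geqslant J_n$ (the \emph{kept} slots) and $\sigma(m):=0^{f(m^m)}$ at every other $m$. Then $h_\sigma\in\omega^\omega$, and it suffices to check that $h_\sigma\gg f$ and $\bigcup_nX_n\subseteq[\sigma]_\infty$, since these give $\bigcup_nX_n\in\Jcal_{h_\sigma}\subseteq\Ical_f$ by the already-noted implication $f\ll h_\sigma\Rightarrow\Jcal_{h_\sigma}\subseteq\Ical_f$.

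To see $h_\sigma\gg f$, fix $k<\omega$ and bound $\{m:h_\sigma(m)<f(m^k)\}$. At a non-kept $m$ one has $h_\sigma(m)=f(m^m)\geqslant f(m^k)$ whenever $m\geqslant k$. At a kept slot $m=a^n_j$ (so $j\geqslant J_n\geqslant 2$), I would use $a^n_j\leqslant(j+2)^{n+2}\leqslant j^{2(n+2)}$, hence $(a^n_j)^k\leqslant j^{2k(n+2)}$ and $f((a^n_j)^k)\leqslant f(j^{2k(n+2)})\leqslant g_n(j)=h_\sigma(m)$ as soon as $j\geqslant M_n(2k(n+2))$; and for $n\geqslant k$ this is automatic because $J_n\geqslant M_n(2n(n+2))\geqslant M_n(2k(n+2))$. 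Thus every ``bad'' $m$ lies in the finite set $\{m<k\}\cup\bigcup_{n<k}\{a^n_j:J_n\leqslant j<M_n(2k(n+2))\}$, proving $h_\sigma\gg f$. For the covering, fix $x\in X_n$: the set $\{j:\sigma_n(j)\subseteq x\}$ is infinite, hence so is $\{j\geqslant J_n:\sigma_n(j)\subseteq x\}$, and for each such $j$ we have $\sigma(a^n_j)=\sigma_n(j)\subseteq x$ with $a^n_j\to\infty$, so $x\in[\sigma]_\infty$.

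For the identity, the inclusion $\SNcal\subseteq\bigcap_f\Ical_f$ goes as follows: given $X\in\SNcal$ and an increasing $f$, use the same partition $\la A_n\ra$ (now exploiting $\min A_n\to\infty$) and apply the strong-measure-zero property of $X$ separately inside each block, obtaining for every $n$ a sequence $\la\sigma(a^n_j):j<\omega\ra$ with $|\sigma(a^n_j)|=f(a^n_j)$ and $X\subseteq\bigcup_j[\sigma(a^n_j)]$. This defines $\sigma$ with $h_\sigma=f$, and for $x\in X$ and $N<\omega$, choosing $n$ with $\min A_n>N$ yields some $j$ with $\sigma(a^n_j)\subseteq x$ and $a^n_j>N$, so $x\in[\sigma]_\infty$; hence $X\in\Jcal_f$, and picking an increasing $g\gg f$ gives $X\in\Jcal_g\subseteq\Ical_f$ for all increasing $f$. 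Conversely, if $X\in\Ical_f$ for every increasing $f$, then given any $\phi\in\omega^\omega$ (recall that strong measure zero in $2^\omega$ means: for every such $\phi$, $X\subseteq\bigcup_n[s_n]$ for some $s_n$ with $|s_n|\geqslant\phi(n)$), we may assume $\phi$ increasing; then $X\in\Ical_\phi$ gives $X\subseteq[\sigma]_\infty$ with $h_\sigma=g\geqslant^*\phi$, and since $X\subseteq\bigcup_{m\geqslant N}[\sigma(m)]$ for every $N$, a bounded shift of indices turns this into a $\phi$-witness, so $X\in\SNcal$.

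I expect the only genuinely delicate point to be the estimate $h_\sigma\gg f$ in the $\sigma$-additivity argument: the ``early'' slots of ``late'' blocks necessarily sit at large positions $m$ while carrying the short strings $\sigma_n(j)$, so they must be discarded, and the construction succeeds precisely because the discard thresholds $J_n$ are taken to grow (via the diagonal exponent $2n(n+2)$) faster than the thresholds $M_n$ witnessing $g_n\gg f$. The remaining verifications — existence of the partition, well-definedness of $\sigma$, and the two easy inclusions in the identity — are routine.
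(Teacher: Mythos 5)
Your proposal is correct, and it takes a genuinely different route from the paper. The paper states this theorem without proof (it is imported from Yorioka), and the only argument it offers for the $\sigma$-ideal half is indirect: Theorem \ref{2.4.1} builds a Tukey connection $\langle\Ical_f,\Ical_f,\subseteq\rangle\leqT(\langle\omega^{\uparrow\omega},\omega^{\uparrow\omega},\leq^*\rangle;\Lc(b,h))$ \emph{without} assuming closure under unions, so Corollary \ref{YorioaddN} yields $\aleph_1\leq\add(\Ncal)\leq\add(\Ical_f)$ and hence every countable subfamily of $\Ical_f$ is $\subseteq$-bounded in $\Ical_f$, which together with downward closure gives the $\sigma$-ideal property (this is exactly the remark following Corollary \ref{YorioaddN}, and footnote 2 notes that this is meant as a simplification of Yorioka's direct argument). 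Your construction is instead the direct interleaving argument in the spirit of Yorioka's original Lemma 3.4: a partition $\la A_n:n<\omega\ra$ with polynomially controlled enumerations $a^n_j\leq(j+2)^{n+2}$, discard thresholds $J_n\geq M_n(2n(n+2))$ that outpace the $\gg$-witnesses, and padding $0^{f(m^m)}$ on the discarded slots; I checked the key estimate ($m=a^n_j\leq j^{2(n+2)}$ for $j\geq 2$, hence $f(m^k)\leq f(j^{2k(n+2)})\leq g_n(j)$ for $j\geq M_n(2k(n+2))$, automatic for $n\geq k$) and the bad set is indeed finite for each $k$, so $h_\sigma\gg f$ holds. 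What each approach buys: the paper's route is shorter given the machinery it develops anyway and yields the quantitative strengthening $\add(\Ncal)\leq\add(\Ical_f)$, but it says nothing about the second assertion; your direct proof is self-contained, elementary, and also supplies the identity $\SNcal=\bigcap\{\Ical_f:f\text{ increasing}\}$ (block-wise application of strong measure zero for $\subseteq$, and $g\gg\phi\Rightarrow g\geq^*\phi$ plus a tail shift for $\supseteq$), which the paper leaves entirely to the citation. Both halves of your argument are sound; only cosmetic tightening (e.g.\ spelling out the index shift at the very end) would be needed.
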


In contrast, Kamo and Osuga \cite{kamo-osuga} proved that $\Jcal_f$ is not closed under unions when $f(i+1)-f(i)\geq 3$ for all but finitely many $i<\omega$.

Denote $\omega^{\uparrow\omega}:=\{d\in\omega^{\omega}:d(0)=0\textrm{\ and }d\textrm{\ is increasing}\}$. For $d \in \omega^{\uparrow \omega}$ and an increasing $f\in\omega^\omega$, define $g_{d}^{f} \in \omega^\omega$ by $g_{d}^{f}(n)=f(n^{k+10})$  when  $  n \in [d(k),d(k+1))$.
Note that $d\leq^* e$ iff $g_{e}^{f}\leq^{*}g_{d}^{f}$, and $d\leq e$ iff $g_{e}^{f}\leq g_{d}^{f}$.

\begin{lemma}[Osuga \cite{O}]\label{lemosuga} For each $d\in\omega^{\uparrow \omega}$, $g^f_{d}$ is increasing and $g_{d}^{f}\gg f$ for each $d \in \omega^{\uparrow \omega}$. Conversely, for each $g\gg f$ there exists a $d \in \omega^{\uparrow \omega}$ such that $g_{d}^{f}\leq^* g$. In particular, if $D\subseteq\omega^{\uparrow\omega}$ is a dominating family, then  $\mathcal{I}_{f}=\bigcup_{d \in D}\mathcal{J}_{g_{d}^{f}}$.
\end{lemma}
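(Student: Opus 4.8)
The plan is to establish the three assertions separately, using only that $f$ is increasing together with the elementary arithmetic of the blocks $I_k:=[d(k),d(k+1))$, which partition $\omega$ because $d\in\omega^{\uparrow\omega}$ is strictly increasing with $d(0)=0$.

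First I would check that $g^f_d$ is increasing. Within a single block $I_k$ this is clear, since $f$ and $n\mapsto n^{k+10}$ are both increasing, so the only point to verify is the jump from the last element $n=d(k+1)-1$ of $I_k$ to the first element $n'=d(k+1)$ of $I_{k+1}$: here $n'\geq 1$, so $(n')^{k+11}\geq (n')^{k+10}>n^{k+10}$, whence $g^f_d(n)=f(n^{k+10})\leq f((n')^{k+11})=g^f_d(n')$ (the degenerate cases $k=0$ or $n=0$ are immediate). For $g^f_d\gg f$, fix $k$: if $n\in I_j$ and $n\geq d(k)$, then $d(k)\leq n<d(j+1)$ forces $j\geq k$, so $f(n^k)\leq f(n^{j+10})=g^f_d(n)$; since this holds for all but finitely many $n$, we obtain $\forall k\,\forall^\infty n\,(f(n^k)\leq g^f_d(n))$, i.e. $g^f_d\gg f$.

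Next, for the converse, given $g\gg f$ I would choose for each $m<\omega$ a threshold $N_m<\omega$ with $f(n^m)\leq g(n)$ for all $n\geq N_m$, and define $d$ recursively by $d(0)=0$ and $d(k+1)=\max\{d(k)+1,N_{k+11}\}$; then $d\in\omega^{\uparrow\omega}$ and $d(k)\geq N_{k+10}$ for every $k\geq 1$. Hence for $k\geq 1$ and $n\in I_k$ we have $n\geq d(k)\geq N_{k+10}$, so $g^f_d(n)=f(n^{k+10})\leq g(n)$; the only exceptions lie in the finite block $I_0=[0,d(1))$, and therefore $g^f_d\leq^* g$. Finally, for the ``in particular'' claim, the inclusion $\bigcup_{d\in D}\Jcal_{g^f_d}\subseteq\Ical_f$ follows at once from the previous step, since each $g^f_d\gg f$ and $\Ical_f=\bigcup_{g\gg f}\Jcal_g$; for the reverse inclusion, take $X\in\Jcal_g$ with $g\gg f$, use the converse to get $d_0\in\omega^{\uparrow\omega}$ with $g^f_{d_0}\leq^* g$, then use that $D$ is dominating to pick $d\in D$ with $d_0\leq^* d$, so $g^f_d\leq^* g^f_{d_0}\leq^* g$ by the monotonicity $d_0\leq^* d\iff g^f_d\leq^* g^f_{d_0}$ noted after the definition of $g^f_d$; since $h\leq^* h'$ implies $\Jcal_{h'}\subseteq\Jcal_h$, we conclude $X\in\Jcal_g\subseteq\Jcal_{g^f_d}\subseteq\bigcup_{d\in D}\Jcal_{g^f_d}$.

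I do not expect a genuine obstacle: the argument is a bookkeeping exercise with the block structure. The two places that deserve care are the block-boundary comparison in the first step (including the degenerate subcases $k=0$ and $n=0$) and, in the converse, the observation that passing to $\leq^*$ rather than $\leq$ is exactly what lets the exceptional first block be absorbed — this is why the recursive choice of $d$ only needs to beat $N_{k+11}$ from the block $I_k$ onward.
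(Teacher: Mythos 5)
Your proof is correct. The paper itself gives no proof of this lemma (it is cited to Osuga \cite{O} without argument), and your block-by-block bookkeeping — monotonicity inside and across the intervals $[d(k),d(k+1))$, the threshold construction $d(k+1)=\max\{d(k)+1,N_{k+11}\}$ for the converse, and the use of the noted monotonicity $d_0\leq^* d\Rightarrow g^f_d\leq^* g^f_{d_0}$ together with $h\leq^* h'\Rightarrow\Jcal_{h'}\subseteq\Jcal_h$ for the ``in particular'' clause — is exactly the standard argument one would expect.
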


\begin{lemma}\label{f+c=f}
If $f\in \omega^\omega$ is increasing and $c\in\omega$ then $f\ll g$ iff $f+c\ll g$ for all $g\in \omega^\omega$. In particular $\Ical_{f}=\Ical_{f+c}$.
\end{lemma}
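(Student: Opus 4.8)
The plan is to establish the biconditional $f\ll g \iff f+c\ll g$ directly from the definition of $\ll$, and then read off $\Ical_f=\Ical_{f+c}$ from the fact that $\Ical_f=\bigcup_{g\gg f}\Jcal_g$ depends only on the set $\{g:f\ll g\}$.

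One direction is immediate: since $f\leq f+c$ pointwise, for every $k<\omega$ and every $n<\omega$ we have $f(n^k)\leq(f+c)(n^k)$, so if $f+c\ll g$, i.e. $\forall k\,\forall^\infty n\,((f+c)(n^k)\leq g(n))$, then also $\forall k\,\forall^\infty n\,(f(n^k)\leq g(n))$, that is, $f\ll g$. For the converse, assume $f\ll g$ and fix $k<\omega$; I must show $(f+c)(n^k)=f(n^k)+c\leq g(n)$ for all but finitely many $n$. The two ingredients are: (i) because $f$ is increasing we have $f(m+1)\geq f(m)+1$ for all $m$, hence $f(m)+c\leq f(m+c)$ for all $m$; and (ii) because $n^{k+1}=n\cdot n^k\geq 2n^k\geq n^k+c$ once $n\geq 2$ and $n^k\geq c$, we have $n^k+c\leq n^{k+1}$ for all but finitely many $n$, so monotonicity of $f$ gives $f(n^k+c)\leq f(n^{k+1})$ for all but finitely many $n$. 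Now apply the hypothesis $f\ll g$ with the exponent $k+1$ to get $f(n^{k+1})\leq g(n)$ for all but finitely many $n$. Chaining these, $f(n^k)+c\leq f(n^k+c)\leq f(n^{k+1})\leq g(n)$ for all but finitely many $n$. Since $k$ was arbitrary, $f+c\ll g$.

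For the last assertion, note that $f+c$ is again increasing, and by what was just proved $\{g\in\omega^\omega:g\gg f\}=\{g\in\omega^\omega:g\gg f+c\}$, whence $\Ical_f=\bigcup_{g\gg f}\Jcal_g=\bigcup_{g\gg f+c}\Jcal_g=\Ical_{f+c}$. There is no genuine obstacle in this argument; the only point worth isolating is the device of bumping the polynomial exponent from $k$ to $k+1$ in order to absorb the additive constant $c$, which is legitimate precisely because $n^{k+1}$ eventually dominates $n^k+c$, together with the elementary observation that an increasing integer function satisfies $f(m+c)\geq f(m)+c$.
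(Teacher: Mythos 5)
Your proof is correct and follows essentially the same route as the paper's: absorb the additive constant by bumping the exponent from $k$ to $k+1$, using that an increasing integer-valued $f$ satisfies $f(m)+c\leq f(m+c)$ and that $n^{k}+c\leq n^{k+1}$ for all but finitely many $n$ (the paper chains $f(n^k)+c\leq f(n^k+c)\leq f(n^k+n)\leq f(n^{k+1})\leq g(n)$ for $n>c+2$). You additionally write out the trivial direction and the passage to $\Ical_f=\Ical_{f+c}$, which the paper leaves implicit.
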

\begin{proof}
   Assume $f\ll g$. Fix a natural number $k\geq 1$ and choose $m>c+2$ such that $\forall{n\geq m}(f(n^{k+1})\leq g(n))$. As $f$ is increasing, for all $n\geq m$,
   \[f(n^k)+c\leq f(n^{k}+c)\leq f(n^{k}+n)\leq f(n^{k+1})\leq g(n).\]
\end{proof}

\subsection{Forcing}\label{SubsecForcing}

The basics of forcing can be found in \cite{Je2}, \cite{Ke2} and \cite{Ku}. See also \cite{BJ} for further information about Suslin ccc forcing. Unless otherwise stated,  we denote the ground model by $V$. When dealing with an iteration over a model $V$, we denote by $V_\alpha$ the generic extension at the $\alpha$-th stage.

Recall the following stronger versions of the countable chain condition of a poset.

\begin{definition}\label{Defcc}
Let $\Pbb$ be a forcing notion and $\kappa$ an infinite cardinal. 
\begin{enumerate}[(1)]
\item For $n<\omega$, $B\subseteq \Pbb$ is $n$-\textit{linked} if, for every $F\subseteq B$ of size $\leq n$, $\exists q\in \Pbb\forall p\in F(q\leq p)$.
\item $C\subseteq \Pbb$ is  \textit{centered} if it is $n$-linked for every $n<\omega$.
\item $\Pbb$ is $\kappa$-\textit{linked} if $\Pbb=\bigcup_{\alpha<\kappa}P_\alpha$ where each $P_{\alpha}$ is $2$-linked. When $\kappa=\omega$, we say that $\Pbb$ is $\sigma$-\textit{linked}.
\item $\Pbb$ is $\kappa$-\textit{centered} if $\Pbb=\bigcup_{\alpha<\kappa}P_\alpha$ where each $P_{\alpha}$ is centered. When $\kappa=\omega$, we say that $\Pbb$ is $\sigma$-\textit{centered}.
\item \emph{$\Pbb$ has $\kappa$-cc (the $\kappa$-chain condition)} if every antichain in $\Pbb$ has size $<\kappa$. \textit{$\Pbb$ has ccc (the countable chain contidion)} if it has $\aleph_{1}$-cc.
\end{enumerate}
\end{definition}

Any $\kappa$-centered poset is $\kappa$-linked and any $\kappa$-linked poset has $\kappa^{+}$-cc.

The following generalization of the notion of $\sigma$-linkedness is fundamental in this work.

\begin{definition}[Kamo and Osuga {\cite{KO}}]\label{link}
Let $\rho,\pi \in \omega^\omega$. A forcing notion $\Pbb$ is \textit{$(\rho,\pi)$-linked} if there exists a sequence $\langle Q_{n,j}:n<\omega, j<\rho(n)\rangle$ of subsets of $\Pbb$ such that
\begin{enumerate}[(i)]
\item $Q_{n,j}$ is $\pi(n)$-linked for all $n<\omega$ and $j<\rho(n)$, and
\item $\forall p\in \Pbb\forall^{\infty}{n<\omega}\exists{j<\rho(n)}(p\in Q_{n,j})$.
\end{enumerate}
Here, condition (ii) can be replaced by
\begin{enumerate}[(i')]
\setcounter{enumi}{1}
    \item $\forall p\in \Pbb\forall^{\infty}{n<\omega}\exists{j<\rho(n)}\exists q\leq p(q\in Q_{n,j})$
\end{enumerate}
because (i) and (ii') imply that the sequence of $Q'_{n,j}:=\{q\in\Pbb:\exists p\in Q_{n,j}(q\leq p)\}$ ($n<\omega$ and $j<\rho(n)$) satisfies (i) and (ii).
\end{definition}

\begin{lemma}\label{centrlink}
If $\Pbb$ is $\sigma$-centered then $\Pbb$ is $(\rho,\pi)$-linked when $\rho:\omega\to\omega$ goes to $+\infty$.
\end{lemma}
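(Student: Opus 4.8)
The plan is to simply re-index the centered pieces of a $\sigma$-centered decomposition into the slots provided by $\rho$. First I would fix a decomposition $\Pbb=\bigcup_{k<\omega}P_k$ witnessing that $\Pbb$ is $\sigma$-centered, so that each $P_k$ is centered, hence $n$-linked for every $n<\omega$; in particular each $P_k$ is $\pi(n)$-linked for every $n<\omega$, regardless of what the value $\pi(n)$ is.

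Next I would define $Q_{n,j}:=P_j$ for every $n<\omega$ and every $j<\rho(n)$. By the previous observation, each $Q_{n,j}$ is $\pi(n)$-linked, so condition~(i) of Definition~\ref{link} holds. For condition~(ii), given $p\in\Pbb$ choose $k<\omega$ with $p\in P_k$. Since $\rho$ goes to $+\infty$, there is $N<\omega$ such that $\rho(n)>k$ for all $n\geq N$; then for every $n\geq N$ we have $k<\rho(n)$ and $p\in P_k=Q_{n,k}$, which is precisely what~(ii) requires.

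There is essentially no obstacle here: the only point to watch is that $\rho(n)$ might be $0$ (or just too small to accommodate the index $k$) for finitely many $n$, but this is harmless because~(ii) is only required to hold for all but finitely many $n$, and the hypothesis that $\rho$ goes to $+\infty$ guarantees that $\rho(n)$ is as large as needed for all sufficiently large $n$.
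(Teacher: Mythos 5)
Your proof is correct and is essentially the paper's own argument: decompose $\Pbb=\bigcup_{k<\omega}P_k$ into centered pieces, set $Q_{n,j}:=P_j$ for $j<\rho(n)$, and use that $\rho\to+\infty$ to verify condition (ii) of Definition \ref{link} for all but finitely many $n$. You have simply spelled out the verification that the paper leaves as "it is clear".
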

\begin{proof} Suppose that $\Pbb=\bigcup_{n<\omega}P_n$ where each $P_{n}$ is centered. For each $n\in \omega$, define $Q_{n,j}=P_{j}$ for $j<\rho(n)$. It is clear that  $\langle Q_{n,j}:n<\omega, j<h(n)\rangle$  satisfies (i) and (ii) of  Definition \ref{link}.
\end{proof}

\begin{lemma}[{\cite[Lemma 6]{KO}}]\label{linksigmalink}
If $\Pbb$ is $(\rho,\pi)$-linked and $\pi\not\leq^{*}1$ then $\Pbb$ is $\sigma$-linked.
\end{lemma}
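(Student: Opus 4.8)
The plan is to extract from the witnessing sequence a countable subfamily of $2$-linked sets that already covers $\Pbb$. Let $\langle Q_{n,j}:n<\omega,\ j<\rho(n)\rangle$ witness that $\Pbb$ is $(\rho,\pi)$-linked, and set
\[
A:=\{n<\omega:\pi(n)\geq 2\}.
\]
Since $\pi\not\leq^*1$, the set $A$ is infinite. For each $n\in A$ and $j<\rho(n)$, the set $Q_{n,j}$ is $\pi(n)$-linked, hence $2$-linked (being $m$-linked for some $m\geq 2$). Moreover, for fixed $n<\omega$ there are only finitely many indices $j<\rho(n)$, so $\{Q_{n,j}:n\in A,\ j<\rho(n)\}$ is a countable family of $2$-linked subsets of $\Pbb$.

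It remains to check that this family covers $\Pbb$, i.e. $\Pbb=\bigcup_{n\in A}\bigcup_{j<\rho(n)}Q_{n,j}$. Fix $p\in\Pbb$. By condition (ii) of Definition \ref{link}, there is $n_0<\omega$ such that for every $n\geq n_0$ there is $j<\rho(n)$ with $p\in Q_{n,j}$. Because $A$ is infinite, we may pick some $n\in A$ with $n\geq n_0$, and for this $n$ there is $j<\rho(n)$ with $p\in Q_{n,j}$. Hence $p$ lies in a member of our family, as desired. Enumerating the (nonempty members of the) countable family $\{Q_{n,j}:n\in A,\ j<\rho(n)\}$ as $\langle P_k:k<\omega\rangle$ then exhibits $\Pbb=\bigcup_{k<\omega}P_k$ with each $P_k$ being $2$-linked, so $\Pbb$ is $\sigma$-linked.

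There is essentially no obstacle here: the argument is a bookkeeping exercise, and the only point that needs a moment's attention is that $A$ being infinite lets us absorb the finitely many ``bad'' levels $n<n_0$ coming from condition (ii). If one prefers, one can instead invoke condition (i$'$) in the remark following Definition \ref{link} and pass to the sets $Q'_{n,j}$, but this is not needed since we only require a covering by $2$-linked sets, not by $2$-linked sets that are downward closed.
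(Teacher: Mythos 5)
Your argument is correct and is the standard one; the paper itself gives no proof here (it simply cites \cite[Lemma 6]{KO}), and your bookkeeping — restricting to the infinite set $A=\{n:\pi(n)\ge 2\}$, noting each level contributes only finitely many $2$-linked pieces, and using condition (ii) together with the infinitude of $A$ to see that the resulting countable family covers $\Pbb$ — is exactly what is intended. No gaps.
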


To fix some notation, for each set $\Omega$ let $\Cbb_\Omega$ be the finite support product of \emph{Cohen forcing} $\Cbb:=\omega^{<\omega}$ (ordered by end-extension) indexed by $\Omega$; $\Dbb$ is \emph{Hechler forcing}, i.e. the standard $\sigma$-centered poset that adds a dominating real; and $\mathds{1}$ denotes the trivial poset. These three posets are Suslin ccc forcing notions.

The following poset was defined by Kamo and Osuga to increase the cardinal $\balc_{b,h}$.

\begin{definition}[Kamo and Osuga {\cite{KO}}]\label{DefEb}
Let $b,h\in\omega^\omega$ such that $b\geq1$ and assume  $\lim_{i\to+\infty}\frac{h(i)}{b(i)}=0$.
Define the \textit{$(b,h)$-eventually different real forcing $\Ebb_{b}^{h}$} as the poset whose conditions are of the form $(s,F)$ and satisfy:
\begin{enumerate}[(i)]
  \item $s\in\Seq(b)$,
  \item $F\subseteq \Scal(b,h)$ is finite, and
  \item $|F|h(n)< b(n)$ for each $n\geq |s|.$
\end{enumerate}
It is ordered by $(t,F^{\prime})\leq(s,F)$ iff $s\subseteq t$, $F\subseteq F^{\prime}$ and $\forall{i\in|t|\smallsetminus|s|}\forall{\varphi\in F}(t(i)\not\in \varphi(i))$.

If $S \subseteq \Scal(b,h)$, define $\Ebb^h_{b}(S)=\{ (s,F) \in \Ebb^h_b : F \subseteq S \}$ with the same order as $\Ebb_{b}^{h}$. Denote $\Ebb_b:=\Ebb^1_b$ and $\Ebb_b(S):=\Ebb^1_b(S)$.
\end{definition}

If $G$ is $\Ebb_{b}^{h}(S)$-generic over $V$ and $r:=\bigcup\{s:\exists{F}((s,F)\in G)\}$, then $r\in \prod b$ and $\forall \varphi\in S(\varphi\not\ni^* r)$. Also, $V[G]=V[r]$. In particular, $\Ebb^h_b$ adds an  $r\in\prod b$ such that $\forall\varphi\in\Scal(b,h)\cap V(\varphi\not\ni^* r)$. Hence, when $h\geq^*1$, $r$ is eventually different from the members of $V\cap\prod b$.



The following result is a generalization of \cite[Lemma 9]{KO} about the linkedness of $\Ebb^h_b(S)$

\begin{lemma}\label{genlink}
   Let $b,h\in\omega^\omega$ with $b\geq 1$. Let $\pi,\rho\in\omega^\omega$ and assume that there is a non-decreasing function $f\in\omega^\omega$ going to infinity and an $m^*<\omega$  such that, for all but finitely many $k<\omega$,
   \begin{enumerate}[(i)]
       \item $k\pi(k)h(i)<b(i)$ for all $i\geq f(k)$ and
       \item $k\prod_{i=m^*}^{f(k)-1}((\min\{k,f(k)\}-1)h(i)+1)\leq\rho(k)$.
   \end{enumerate}
   Then, for any $S\subseteq\Scal(b,h)$, $\Ebb^h_b(S)$ is $(\rho,\pi)$-linked.
\end{lemma}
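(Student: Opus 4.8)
The plan is to reduce the lemma to the special case $S = \Scal(b,h)$ (so it suffices to show $\Ebb^h_b$ itself is $(\rho,\pi)$-linked, since any $\Ebb^h_b(S)$ is a suborder and a $(\rho,\pi)$-linked partition of $\Ebb^h_b$ restricts to one of $\Ebb^h_b(S)$), and then to cook up the sets $Q_{n,j}$ by hand. The natural first move is to realize that a condition $(s,F)\in\Ebb^h_b$ has two ``components'': the stem $s\in\Seq(b)$, which lives in a countable set, and the finite set $F\subseteq\Scal(b,h)$, which is harder to pin down. The key observation making (ii) usable is that, given a \emph{bound} on $|s|$ and a \emph{bound} on $|F|$, the relevant data of each $\varphi\in F$ below the coordinate $f(k)$ is just a choice, for each $i<f(k)$, of a set in $[b(i)]^{\leq h(i)}$; and two conditions $(s,F)$, $(s,F')$ with the same stem $s$ whose members of $F,F'$ agree on all coordinates $i<|s|$ are compatible with common lower bound $(s, F\cup F')$ — wait, more carefully: conditions sharing a stem are compatible iff the union $F\cup F'$ still satisfies clause (iii), i.e. $|F\cup F'|h(n)<b(n)$ for $n\ge|s|$. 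So $\pi(k)$-linkedness will come from packing at most $k$-ish many conditions together, and clause (i) guarantees clause (iii) survives.

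Concretely, I would fix $m^*$ and the large $K<\omega$ beyond which (i) and (ii) hold, and define, for $k\ge K$ and an appropriate index $j$, the set $Q_{k,j}$ to consist of all conditions $(s,F)\in\Ebb^h_b$ such that: $|s|\le m^*$ (equivalently $|s|<f(k)$, using that $f$ is non-decreasing going to infinity — shrink $K$ so $f(k)>m^*$), $|F|<\min\{k,f(k)\}$, and the pair $\bigl(s,\ \langle\, \varphi\!\restriction\! f(k) : \varphi\in F\,\rangle\bigr)$ equals the $j$-th element of a fixed enumeration of the (finite!) set of all such pairs. Here I use that the number of possible stems $s$ with $|s|\le m^*$ is some fixed finite number, and that once $|F|<\min\{k,f(k)\}$, the number of possible ``restricted'' finite sets $\{\varphi\!\restriction\! f(k):\varphi\in F\}$ is at most $\prod_{i<f(k)}\bigl(\text{\#subsets of }[b(i)]^{\le h(i)}\text{ realizable}\bigr)$ — and the cleanest bound is to count, for each $i\in[m^*,f(k))$, the at most $(\min\{k,f(k)\}-1)h(i)+1$ possible \emph{unions} $\bigcup_{\varphi\in F}\varphi(i)\in[b(i)]^{\le(\min\{k,f(k)\}-1)h(i)}$, and handle the finitely many $i<m^*$ by the fixed stem-count factor $k$ (or absorb it). This is exactly the product appearing in clause (ii), so $\rho(k)$ indices suffice.

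**Then I verify the two requirements of Definition \ref{link}.** For clause (ii') (the ``$\exists q\le p$'' version, which suffices): given $p=(s,F)\in\Ebb^h_b$, for all large $k$ we have $|s|\le f(k)$... but actually $|s|$ need not be $\le m^*$, so instead I pass to an extension: for $k$ large, let $q=(s',F)\le p$ where $s'$ is obtained by padding the stem... no — the stem can only grow. The right fix: for all but finitely many $k$, $|F|<\min\{k,f(k)\}$ automatically (since $|F|$ is fixed and $f(k)\to\infty$), and $|s|\le f(k)$ for large $k$; but I need $|s|\le m^*$. So I should instead index $Q_{k,j}$ by stems of length exactly the (fixed, but condition-dependent) value — this doesn't work uniformly. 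The correct adjustment is to \emph{not} bound $|s|$ by $m^*$ but to record $s$ in full and note that for large $k$, $|s|<f(k)$, then use $f(k)$ as the cutoff and count stems of length $<f(k)$: there are $\prod_{i<f(k)}b(i)$ of them, which can be folded into the product in (ii) by absorbing into the factors for $i<m^*$... Hmm. **The main obstacle I anticipate** is precisely this bookkeeping: matching the exact form of the product in clause (ii) — which starts the product at $m^*$ and uses $\min\{k,f(k)\}$ — against the count of (stem, restricted-$F$) pairs, while correctly splitting coordinates below $m^*$ from those in $[m^*,f(k))$ and ensuring the leading factor $k$ absorbs the stem count and anything else. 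The linkedness direction (clause (i) of Def. \ref{link}) is the easy part: finitely many conditions in a common $Q_{k,j}$ share a stem $s$ and have $\varphi\!\restriction\! f(k)$ all equal across their $F$'s, and by clause (i) of the hypothesis, $k\pi(k)h(i)<b(i)$ for $i\ge f(k)$, so the union of any $\le\pi(k)$ of their $F$-sets has size $<k\cdot|F_{\max}|\le$ something $<$ ... giving clause (iii) of Definition \ref{DefEb} for the common lower bound $(s,\bigcup F)$; one must also check the members of $\bigcup F$ restricted below $|s|$ are irrelevant and that $F\subseteq S$ is preserved. I would write the count carefully, choosing the enumeration so that the ``$j$-th element'' makes sense and $j$ ranges below $\rho(k)$, and that's the lemma.
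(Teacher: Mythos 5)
There is a genuine gap, and it sits exactly where you flagged it (``the main obstacle I anticipate''): your reading of what the product in hypothesis (ii) counts is wrong, and without the right reading the bookkeeping cannot be made to close. The quantity $(\min\{k,f(k)\}-1)h(i)+1$ is \emph{not} the number of possible unions $\bigcup_{\varphi\in F}\varphi(i)$ (the number of subsets of $b(i)$ of size $\leq(\min\{k,f(k)\}-1)h(i)$ is a binomial-type quantity involving $b(i)$, which is unbounded --- indeed (i) forces $b(i)>k\pi(k)h(i)$ above $f(k)$ --- so enumerating traces of the $F$'s, or all stems of length $f(k)$, blows past $\rho(k)$). In the paper's proof the product counts \emph{stems only}: one fixes an auxiliary slowly increasing $g$ with $g(k)<\min\{k,f(k)\}$ and $\big|\prod_{i<g(k)}b(i)\big|\leq k$ (this is where the leading factor $k$ comes from), and takes as index set $S_k$ the stems $s$ of length exactly $f(k)$ whose entries at coordinates $i\in[g(k),f(k))$ are restricted to the initial segment $\{0,\dots,g(k)h(i)\}$, of size $g(k)h(i)+1\leq(\min\{k,f(k)\}-1)h(i)+1$. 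The side conditions $F$ are not recorded at all: $Q_{k,s}$ consists of the conditions with stem exactly $s$ satisfying the strengthened bound $\pi(k)|F|h(i)<b(i)$ for $i\geq f(k)$. Density (clause (ii') of Definition \ref{link}) then holds because any condition's stem can, for large $k$, be \emph{extended} to length $f(k)$ landing in $S_k$: the forbidden values at coordinate $i$ number at most $|F|h(i)\leq g(k)h(i)$, so something in $\{0,\dots,g(k)h(i)\}$ remains available. This resolves your stem-length dilemma in the opposite direction from both of your attempts: neither bound $|s|$ by $m^*$ nor enumerate all stems, but lengthen every stem to $f(k)$ while confining the new entries to a small prescribed set.

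Your linkedness argument also does not survive with short stems: for conditions $(s,F_1),\dots,(s,F_m)$ with $|s|\leq m^*$, the putative lower bound $(s,\bigcup_jF_j)$ must satisfy clause (iii) of Definition \ref{DefEb} at every $n\geq|s|$, but hypothesis (i) only controls $n\geq f(k)$; on $[|s|,f(k))$ the cardinality $|\bigcup_jF_j|$ (which can be as large as $\sum_j|F_j|$ even when the $\varphi\restriction f(k)$ agree, since distinct slaloms may coincide below $f(k)$) can violate $|F|h(n)<b(n)$. With the paper's stems of length $f(k)$ this problem disappears, and $(s,\bigcup_jF_j)$ is a condition outright by the bound built into $Q_{k,s}$. (Your preliminary reduction to $S=\Scal(b,h)$ is salvageable --- a common lower bound in $\Ebb^h_b$ of conditions of $\Ebb^h_b(S)$ can be shrunk to one in $\Ebb^h_b(S)$ --- but it is unnecessary, since the construction works uniformly in $S$.)
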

\begin{proof}
   Fix $M>m^*$ such that, for all $k\geq M$, (i) and (ii) holds and $f(k)>0$. Find a non-decreasing function $g:\omega\to\omega$ that goes to infinity such that, for all $k\geq M$, $g(k)<\min\{k,f(k)\}$ and $|\prod_{i<g(k)}b(i)|\leq k$. Choose $M'\geq M$ such that $g(k)\geq M$ for all $k\geq M'$. Put
   \[S_k:=\{s\in\prod_{i<f(k)}b(i):\forall i\in[g(k),f(k))(s(i)\leq g(k)h(i))\}\]
   when $k\geq M'$, otherwise put $S_k:=\emptyset$. Note that, for $k\geq M'$,
   \[
       |S_k|\leq k\prod_{i=g(k)}^{f(k)-1}(g(k)h(i)+1)\leq k\prod_{i=m^*}^{f(k)-1}((\min\{k,f(k)\}-1)h(i)+1)\leq\rho(k),
   \]
   so $|S_k|\leq\rho(k)$ for all $k<\omega$. For each $s\in S_k$ put
   \[Q_{k,s}:=\{(t,F)\in\Ebb_b^h(S):t=s,\forall i\geq f(k)(\pi(k)|F|h(i)<b(i))\}.\]
   Clearly, $Q_{k,s}$ is $\pi(k)$-linked.

   It remains to show that $\la Q_{k,s}:k<\omega,s\in S_k\ra$ satisfies (ii') of Definition \ref{link}. If $(t,F)\in\Ebb^h_b(S)$, choose $N$ such that $|t|+|F|+M'\leq g(N)$. We prove that, for all $k\geq N$, there is some $s\in S_k$ such that $(s,F)\leq(t,F)$ and $(s,F)\in Q_{k,s}$. Extend $t$ to $t'$ so that $|t'|=g(k)$ and $(t',F)$ is a condition in $\Ebb^h_b(S)$ stronger than $(t,F)$. Note that $|\bigcup_{\varphi\in F}\varphi(i)|\leq h(i)g(N)\leq h(i)g(k)$ for each $i\geq g(k)$, thus we can extend $t'$ to an $s\in S_k$ so that $(s,F)$ is a condition stronger than $(t',F)$. As $|F|\pi(k)h(i)\leq g(N)\pi(k)h(i)\leq k\pi(k)h(i)<b(i)$ by (i), $(s,F)\in Q_{k,s}$.
\end{proof}

\begin{Corol}\label{Ebsigmalinked}
   If $S\subseteq \Scal(b,h)$ and $\lim_{n\to+\infty}\frac{h(n)}{b(n)}=0$ then $\Ebb_{b}^{h}(S)$ is $\sigma$-linked. In particular, $\Ebb_{b}^{h}$ is a Suslin ccc poset.
\end{Corol}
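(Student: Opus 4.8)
The plan is to derive this directly from Lemma~\ref{genlink} together with Lemma~\ref{linksigmalink}, by choosing the auxiliary functions $\pi,\rho,f$ and the number $m^*$ appropriately. Since $\Ebb_b^h=\Ebb_b^h(\Scal(b,h))$, it suffices to handle an arbitrary $S\subseteq\Scal(b,h)$; and because $b\geq1$, the hypothesis $\lim_{n\to+\infty}\frac{h(n)}{b(n)}=0$ guarantees that, for every $k<\omega$, one has $k^2h(i)<b(i)$ for all sufficiently large $i$.

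First I would fix $m^*:=0$ and $\pi:=\id_\omega$, so that $\pi(k)=k$ and, in particular, $\pi\not\leq^*1$. For each $k<\omega$ let $f'(k)$ be the least $j<\omega$ with $k^2h(i)<b(i)$ for all $i\geq j$ (this exists by the previous remark, and $f'$ is non-decreasing since the defining condition strengthens as $k$ grows), and set $f(k):=f'(k)+k$; thus $f$ is non-decreasing, goes to infinity, and satisfies $f(k)\geq f'(k)$ and $f(k)>0$ for $k\geq1$. Finally let
\[\rho(k):=k\prod_{i=0}^{f(k)-1}\bigl((\min\{k,f(k)\}-1)h(i)+1\bigr)\in\omega.\]
Then clause (i) of Lemma~\ref{genlink} holds for every $k\geq1$, because $i\geq f(k)\geq f'(k)$ implies $k\pi(k)h(i)=k^2h(i)<b(i)$; and clause (ii) holds for every $k$ with equality, directly from the definition of $\rho$ (using $m^*=0$). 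Hence Lemma~\ref{genlink} yields that $\Ebb_b^h(S)$ is $(\rho,\pi)$-linked, and since $\pi=\id_\omega\not\leq^*1$, Lemma~\ref{linksigmalink} gives that $\Ebb_b^h(S)$ is $\sigma$-linked.

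Taking $S=\Scal(b,h)$ shows $\Ebb_b^h$ itself is $\sigma$-linked, hence ccc (a $\sigma$-linked poset has $\aleph_1$-cc). For the ``Suslin'' part I would simply observe that, with $b,h$ regarded as parameters in the ground model, a condition $(s,F)$ is coded by a real and the sets of conditions, of comparable pairs, and of incompatible pairs are all Borel (indeed arithmetical) in those codes. I do not anticipate any real difficulty here: the only mild subtlety is arranging $f$ to be simultaneously non-decreasing and unbounded while still satisfying (i), after which (ii) is obtained for free by defining $\rho$ to be exactly the displayed product.
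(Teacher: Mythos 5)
Your proposal is correct and follows essentially the same route as the paper: extract a suitable non-decreasing unbounded $f$ from the hypothesis $\lim_{n}h(n)/b(n)=0$, apply Lemma~\ref{genlink} with $\rho$ defined to be exactly the product in clause (ii), and conclude via Lemma~\ref{linksigmalink}. The only (immaterial) difference is that the paper takes $\pi\equiv 2$ and $m^*=1$ (so $f$ need only satisfy $2kh(i)<b(i)$ for $i\geq f(k)$), whereas you take $\pi=\id_\omega$ and $m^*=0$.
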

\begin{proof}
   As $\lim_{n\to+\infty}\frac{h(n)}{b(n)}=0$, we can find an increasing function $f:\omega\to\omega$ such that $2kh(i)<b(i)$ for all $i\geq f(k)$ and $k<\omega$. By Lemma \ref{genlink}, $\Ebb^h_b(S)$ is $(\rho,2)$-linked where $\rho(k):=k\prod_{i=1}^{f(k)-1}((\min\{k,f(k)\}-1)h(i)+1)$. The conclusion is a consequence of Lemmas \ref{linksigmalink} and \ref{genlink}.
\end{proof}

\begin{Corol}[{\cite[Lemma 9]{KO}}]\label{linked}
Let $b,\pi,h \in \omega^{\omega}$ such that $\pi$ and $h$ are non-decreasing, $b\geq 1$, both $\pi$ and $h$ are $\geq^*1$ and $b\geq^{*}h\pi\mathrm{id}_{\omega}+1$. If $S\subseteq \Scal(b,h)$ then $\Ebb^h_{b}(S)$ is $((\mathrm{id}_{\omega}h)^{\mathrm{id}_{\omega}},\pi)$-linked. In particular, if $h$ is the constant function $1$ then $\Ebb_b(S)$ is $((\mathrm{id}_{\omega})^{\mathrm{id}_{\omega}},\pi)$-linked.
\end{Corol}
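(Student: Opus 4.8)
The plan is to obtain this as a direct application of Lemma~\ref{genlink} with the choice $f:=\id_\omega$ and $\rho:=(\id_\omega h)^{\id_\omega}$. First I would fix $m^*\geq 1$ large enough that $h(i)\geq 1$ for all $i\geq m^*$ (possible since $h\geq^*1$), and fix $N<\omega$ such that $b(i)>h(i)\pi(i)\,i$ for all $i\geq N$ (possible since $b\geq^* h\pi\,\id_\omega+1$). With $f=\id_\omega$ we have $\min\{k,f(k)\}=k$, so condition~(i) of Lemma~\ref{genlink} reads ``$k\pi(k)h(i)<b(i)$ for all $i\geq k$''. For $k\geq N$ and $i\geq k$ this follows at once from the monotonicity of $\pi$ and $h$: $k\pi(k)h(i)\leq i\,\pi(i)\,h(i)<b(i)$.

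Next I would verify condition~(ii) of Lemma~\ref{genlink}, which with our choices becomes $k\prod_{i=m^*}^{k-1}((k-1)h(i)+1)\leq (kh(k))^k$ for all but finitely many $k$. For $m^*\leq i\leq k-1$ the choice of $m^*$ gives $(k-1)h(i)+1\leq (k-1)h(i)+h(i)=kh(i)$, and since $h$ is non-decreasing $h(i)\leq h(k)$; hence each of the $k-m^*$ factors of the product is at most $kh(k)$. Using once more that $h(k)\geq 1$, so $k\leq kh(k)$, the left-hand side is bounded by $k\cdot(kh(k))^{k-m^*}\leq (kh(k))^{k-m^*+1}\leq (kh(k))^k$, where the last inequality uses $m^*\geq 1$. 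Thus Lemma~\ref{genlink} applies and gives that $\Ebb^h_b(S)$ is $((\id_\omega h)^{\id_\omega},\pi)$-linked for every $S\subseteq\Scal(b,h)$.

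For the ``in particular'' statement one just specializes $h$ to the constant function $1$: the hypothesis $b\geq^* h\pi\,\id_\omega+1$ becomes $b\geq^*\pi\,\id_\omega+1$, and $(\id_\omega h)^{\id_\omega}=(\id_\omega)^{\id_\omega}$, so $\Ebb_b(S)=\Ebb^1_b(S)$ is $((\id_\omega)^{\id_\omega},\pi)$-linked. I do not foresee any real obstacle in this argument; the only points requiring care are the off-by-one in the exponent of the product bound (which is exactly why we may --- and must --- take $m^*\geq 1$) and the systematic use of the non-decreasing hypotheses on $\pi$ and $h$ to pass from the index $k$ to an index $i\geq k$ in~(i). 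One should also note that $(\id_\omega h)^{\id_\omega}\in\omega^\omega$ under the convention $0^0=1$, or simply observe that its value at $k=0$ is irrelevant since Lemma~\ref{genlink} only requires (i) and~(ii) for cofinitely many $k$.
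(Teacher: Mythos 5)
Your proposal is correct and follows exactly the paper's route: the paper's entire proof is ``Use $f=\id_\omega$ and $m^*=1$ in Lemma \ref{genlink}'', and you simply spell out the verification of conditions (i) and (ii) that the paper leaves implicit. Your slightly more cautious choice of $m^*$ (large enough that $h(i)\geq 1$ for $i\geq m^*$) is harmless and arguably tidier than the paper's $m^*=1$, since any $h(i)=0$ factors would contribute $1$ to the product anyway.
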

\begin{proof}
   Use $f=\id_\omega$ and $m^*=1$ in Lemma \ref{genlink}.
\end{proof}

To finish this section, we review from \cite{BrM} the poset that increases $\blc_{b,h}$.

\begin{definition}[Brendle and Mej\'{\i}a {\cite[Def. 4.1]{BrM}}]\label{DefLocposet}
Let $b,h\in \omega^\omega$ such that $b\geq1$. For $R\subseteq \prod b$, define the poset
\[\LOCbb_{b}^{h}(R):=\{(s,F):s\in \Scal_{<\omega}(b,h), F\subseteq R\text{\ and }  \forall i\geq|s|(|F| \leq h(i)) \}\] ordered by $(s',F')\leq (s,F)$ iff $s\subseteq s'$, $F\subseteq F'$ and $\forall{i\in[|s|,|s'|)}(\{x(i):x\in F\}\subseteq s'(i))$. Put $\LOCbb_{b}^{h}:=\LOCbb_{b}^{h}(\prod b)$, which is a Suslin ccc poset.
\end{definition}

\begin{lemma}[Brendle and Mej\'{\i}a {\cite[Lemma 4.2]{BrM}}]\label{Locgeneric}
Let $b,h\in \omega^\omega$ such that $b\geq1$ and let $R\subseteq\prod b$. If $h$ goes to infinity then $\LOCbb_{b}^{h}(R)$ is $\sigma$-linked and it generically adds a slalom in $\Scal(b,h)$ that localizes all reals in $R$. In particular, $\LOCbb_{b}^{h}$ generically adds a slalom in $\Scal(b,h)$ that localizes all the ground model reals in $\prod b$.
\end{lemma}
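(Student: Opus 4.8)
The plan is to check the two assertions separately: that $\LOCbb_b^h(R)$ is $\sigma$-linked, and that forcing with it adjoins a slalom in $\Scal(b,h)$ localizing every element of $R$; the ``in particular'' clause is then the instance $R=\prod b$, since $\LOCbb_b^h=\LOCbb_b^h(\prod b)$.

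\emph{$\sigma$-linkedness.} My first attempt would be to group conditions by their stem $s$, which already gives a countable partition since each $[b(i)]^{\le h(i)}$ is finite; but this fails, because two conditions $(s,F_0),(s,F_1)$ with a common stem can be incompatible, precisely when $h$ takes a small value at some coordinate $i\ge|s|$, so that $\{x(i):x\in F_0\cup F_1\}$ does not fit into any member of $[b(i)]^{\le h(i)}$ and one cannot advance past $i$ either. This is where ``$h$ goes to infinity'' enters: for $\mu_s:=\min_{i\ge|s|}h(i)$, the set $B_s:=\{i\ge|s|:h(i)<2\mu_s\}$ is finite, while every $i\ge|s|$ outside $B_s$ satisfies $h(i)\ge2\mu_s\ge|F_0|+|F_1|$ whenever $(s,F_0),(s,F_1)$ are conditions. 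So I would refine the partition to the sets $P_{s,\vec t}$, indexed by $s\in\Scal_{<\omega}(b,h)$ and $\vec t=\langle t_i:i\in B_s\rangle$ with $t_i\in[b(i)]^{\le h(i)}$, each $P_{s,\vec t}$ consisting of the conditions $(s,F)$ with $\{x(i):x\in F\}=t_i$ for all $i\in B_s$. There are only countably many such pieces (finitely many $\vec t$ per $s$), and they cover $\LOCbb_b^h(R)$. Finally, each $P_{s,\vec t}$ is $2$-linked: given $(s,F_0),(s,F_1)$ in it, fix $n^*\ge|s|$ with $B_s\subseteq[|s|,n^*)$ and let $s'$ extend $s$ to length $n^*$ by $s'(i):=\{x(i):x\in F_0\cup F_1\}$ for $i\in[|s|,n^*)$; on $B_s$ this equals $t_i$, off $B_s$ it has size $\le2\mu_s\le h(i)$, and for $i\ge n^*$ one has $h(i)\ge2\mu_s\ge|F_0\cup F_1|$, so $(s',F_0\cup F_1)\in\LOCbb_b^h(R)$ is a common lower bound.

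\emph{The generic slalom.} Let $G$ be $\LOCbb_b^h(R)$-generic and put $\varphi_G:=\bigcup\{s:\exists F\,((s,F)\in G)\}$. Since $G$ is a filter, any two of its conditions have a common lower bound in $G$ whose stem end-extends both, so the stems occurring in $G$ form a chain and $\varphi_G$ is a function with $\varphi_G(i)\in[b(i)]^{\le h(i)}$ wherever defined. For each $n$ the set $\{(s,F):|s|\ge n\}$ is dense — one pads the stem of any $(s,F)$ by placing $\{x(i):x\in F\}$, of size $\le|F|\le h(i)$, at each new coordinate — so $\dom\varphi_G=\omega$ and $\varphi_G\in\Scal(b,h)$. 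Now fix $x\in R$: the set $D_x:=\{(s,F):x\in F\}$ is dense, since given $(s,F)$ one uses $h\to\infty$ to pick $n\ge|s|$ with $h(i)\ge|F|+1$ for all $i\ge n$, pads the stem to length $n$ as above, and then adjoins $x$ to the side condition. Taking $(s_0,F_0)\in G\cap D_x$, the order clause forces $x(i)\in s_1(i)=\varphi_G(i)$ for all $i\in[|s_0|,|s_1|)$ whenever $(s_1,F_1)\in G$ lies below $(s_0,F_0)$; since such stems have unbounded length, $x(i)\in\varphi_G(i)$ for all $i\ge|s_0|$, i.e.\ $\varphi_G$ localizes $x$.

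\emph{Main obstacle.} The only delicate point will be isolating the correct countable partition for $\sigma$-linkedness: partitioning by stem alone is genuinely insufficient, and the content of the hypothesis is that ``$h\to\infty$'' confines the obstructing coordinates to a finite set $B_s$ just above $|s|$, which one then absorbs into the index by recording the trace of $F$ on $B_s$. The density arguments in the second half should be routine once one adopts the habit of first padding the stem with the current traces of $F$ and only then performing the desired modification.
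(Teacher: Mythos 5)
Your proof is correct. The paper does not reprove this lemma (it is quoted from Brendle--Mej\'{\i}a), but your argument is the standard one: you correctly identify that partitioning by stem alone fails, that $h\to\infty$ confines the obstruction to the finite set $B_s$ of coordinates where $h$ is still below $2\mu_s$, and that recording the trace of $F$ there and then padding the stem past $B_s$ yields a genuine countable cover by $2$-linked pieces; the density arguments for the generic slalom are likewise handled properly (padding the stem with the trace of $F$ before enlarging $F$).
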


\begin{lemma}[Brendle and Mej\'{\i}a {\cite[Lemma 5.10]{BrM}}]\label{Loclinked}
Let $b,h,\pi,\rho \in\omega^\omega$ be non-decreasing functions with $b\geq 1$ and $h$ going to infinity. If $\{m_{k}\}_{k<\omega}$ is a non-decreasing sequence of natural numbers that goes to infinity and, for all but finitely many $k<\omega$, $k\cdot \pi(k)\leq h(m_k)$ and $k\cdot|[b(m_k-1)]^{\leq k}|^{m_k}\leq \rho(k)$, then $\LOCbb_{b}^{h}(R)$ is $(\rho,\pi)$-linked for any $R\subseteq \prod b$.
\end{lemma}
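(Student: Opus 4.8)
The plan is to exploit the standard ``same-stem'' compatibility of the localization poset: if $(s,F)$ and $(s,F')$ both lie in $\LOCbb_b^h(R)$, then $(s,F\cup F')$ is a common extension of them as soon as $|F\cup F'|\leq h(|s|)$ (since $h$ is non-decreasing, this already yields $|F\cup F'|\leq h(i)$ for all $i\geq|s|$). First I would discard the finitely many $k$ for which one of the two displayed hypotheses fails or $m_k=0$. For each remaining $k$, let
\[T_k:=\{s\in\Scal_{m_k}(b,h):|s(i)|\leq k\text{ for all }i<m_k\},\]
enumerate $T_k=\{s^k_j:j<|T_k|\}$, and for $s\in T_k$ set $Q_{k,s}:=\{(s,F)\in\LOCbb_b^h(R):|F|\leq k\}$; for the discarded values of $k$, and for indices $j$ with $|T_k|\leq j<\rho(k)$, take the corresponding $Q$ to be $\emptyset$ (which is vacuously $\pi(k)$-linked). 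Because $b$ is non-decreasing, $|[b(i)]^{\leq k}|\leq|[b(m_k-1)]^{\leq k}|$ for every $i<m_k$, so $|T_k|\leq|[b(m_k-1)]^{\leq k}|^{m_k}\leq\rho(k)$ by the second hypothesis; hence the family $\langle Q_{k,s}:k<\omega,\ s\in T_k\rangle$ is legitimately indexed by $\rho$.

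Next I would verify clause (i) of Definition \ref{link}, namely that each $Q_{k,s}$ is $\pi(k)$-linked. Given conditions $(s,F_0),\dots,(s,F_{l-1})\in Q_{k,s}$ with $l\leq\pi(k)$, put $F:=\bigcup_{j<l}F_j$. Then $|F|\leq l\cdot k\leq\pi(k)\,k\leq h(m_k)$ by the first hypothesis, so $(s,F)\in\LOCbb_b^h(R)$ (its stem has length $|s|=m_k$ and $h$ is non-decreasing), and $(s,F)\leq(s,F_j)$ for every $j$ because all of these conditions share the stem $s$. Thus $Q_{k,s}$ is $\pi(k)$-linked.

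Then I would verify clause (ii') of Definition \ref{link}. Fix $(s,F)\in\LOCbb_b^h(R)$. Since $m_k\to\infty$ and $s,F$ are finite, for every sufficiently large $k$ we have $m_k\geq|s|$, $|F|\leq k$, $|s(i)|\leq k$ for all $i<|s|$, and the two displayed hypotheses hold. For such $k$, extend $s$ to $s'$ of length $m_k$ by setting $s'(i):=\{x(i):x\in F\}$ for $i\in[|s|,m_k)$. This is legal: for $i\in[|s|,m_k)$ we have $|s'(i)|\leq|F|\leq h(i)$ (because $(s,F)$ is a condition), and $|s'(i)|\leq k$ for every $i<m_k$ by the choice of $k$, so $s'\in T_k$. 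Moreover $(s',F)\in\LOCbb_b^h(R)$ with $|F|\leq k$, hence $(s',F)\in Q_{k,s'}$, and $(s',F)\leq(s,F)$ follows immediately from the definition of the order, since the new stem coordinates were chosen to contain $\{x(i):x\in F\}$. This establishes (ii') and therefore that $\LOCbb_b^h(R)$ is $(\rho,\pi)$-linked.

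The step requiring the most care is the bookkeeping that matches the two combinatorial hypotheses to the two clauses of $(\rho,\pi)$-linkedness: one must pick the cap $|F|\leq k$ so that a union of $\pi(k)$ conditions still obeys $|F|\leq h(m_k)$, while keeping $|T_k|\leq\rho(k)$, and then confirm that the stem $s'$ built in the last step actually lands inside $\Scal_{m_k}(b,h)$ and inside the enumerated set $T_k$. (There is a spare factor of $k$ in the $\rho$-hypothesis; here it is merely harmless slack, although it could be absorbed by additionally indexing the sets $Q$ by the cap on $|F|$.)
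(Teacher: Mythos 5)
Your proof is correct, and it is essentially the standard argument for this lemma (the one in the cited reference, and the same scheme the paper itself uses for the analogous Lemma \ref{genlink} about $\Ebb^h_b(S)$): index the linked pieces by short stems with entries of size $\leq k$, cap $|F|\leq k$ so that a union of $\pi(k)$ conditions still satisfies $|F|\leq k\pi(k)\leq h(m_k)$, and verify (ii') by absorbing $\{x(i):x\in F\}$ into an extended stem. Your observation that the extra factor of $k$ in the $\rho$-hypothesis is harmless slack (in the original it is used to index additionally by the bound on $|F|$) is also accurate.
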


\section{ZFC results}\label{SecZFC}

In this section we prove and review some inequalities between the cardinal invariants associated with Yorioka ideals, the cardinals in Cicho\'n's diagram and localization and anti-localization cardinals. Figure \ref{diagram} at the end of this section, which is taken from \cite{K1}, illustrates some of these inequalities.

\subsection{Localization and anti-localization cardinals}\label{SubSecLoc}

For this subsection, fix a function $b$ with domain $\omega$ and $h\in\omega^\omega$ such that $1\leq^* h$, $b(i)\neq\emptyset$ for every $i<\omega$ and $\forall^\infty i<\omega(h(i)<|b(i)|)$. We show that the localization and anti-localization cardinals are characterized by any other known cardinal invariants when $b(i)$ is infinite for infinitely many $i<\omega$. We also show some results for these cardinals when $b\in\omega^\omega$, mostly when taking limit values like $\sup\{\dlc_{b,h}:b\in\omega^\omega\}$.

When $h$ does not go to infinity, the localization cardinals have simple characterizations.

\begin{theorem}\label{Lochnotinfty}
   If $h$ does not go to infinity then $\dlc_{b,h}\geq \cfrak$ and $\blc_{b,h}=N+1$ where $N$ is the minimum natural number such that $A_N:=\{i<\omega:h(i)=N\}$ is infinite. Moreover, $\dlc_{b,h}=\cfrak$ when $\forall^\infty i<\omega(|b(i)|\leq\cfrak)$.
\end{theorem}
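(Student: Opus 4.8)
The plan is to handle $\blc_{b,h}$ and $\dlc_{b,h}$ separately, in both cases pushing everything onto the coordinates where $h$ takes the value $N$. First note that $N$ is well defined: since $h$ does not go to infinity, $\liminf_i h(i)<\omega$, so some value is attained infinitely often; and by minimality of $N$ the set $\{i<\omega:h(i)<N\}=\bigcup_{M<N}A_M$ is finite, so $h(i)\geq N$ for all but finitely many $i$, and (using $\forall^\infty i(h(i)<|b(i)|)$) $|b(i)|\geq N+1$ for all but finitely many $i\in A_N$. Put $A:=\{i\in A_N:N<|b(i)|\}$, a cofinite, hence infinite, subset of $A_N$.

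For $\blc_{b,h}=N+1$ I would prove the two inequalities directly. For $\leq N+1$: choose $x_0,\dots,x_N\in\prod b$ so that $x_0(i),\dots,x_N(i)$ are pairwise distinct for every $i\in A$ (possible since $|b(i)|\geq N+1$ there) and arbitrary elsewhere; given any $\varphi\in\Scal(b,h)$, for each $i\in A$ we have $|\varphi(i)|\leq h(i)=N$, so $\varphi(i)$ omits some $x_{k_i}(i)$, and by pigeonhole a single $k^*\leq N$ works for infinitely many $i\in A$, so $x_{k^*}\notin^*\varphi$; hence $\{x_0,\dots,x_N\}$ is $\Lc(b,h)$-unbounded. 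For $\geq N+1$: given $F\subseteq\prod b$ with $|F|\leq N$, the slalom $\varphi$ with $\varphi(i):=\{x(i):x\in F\}$ when $h(i)\geq N$ and $\varphi(i):=\emptyset$ otherwise lies in $\Scal(b,h)$ (because $|\varphi(i)|\leq N\leq h(i)$ for all but finitely many $i$) and localizes every member of $F$, so $F$ is $\Lc(b,h)$-bounded.

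For $\dlc_{b,h}$ the first move is to restrict coordinates: if $D\subseteq\Scal(b,h)$ is dominating, then $\{\varphi\upharpoonright A:\varphi\in D\}$ dominates $\Lc(b\upharpoonright A,h\upharpoonright A)=\Lc(b\upharpoonright A,N)$ (any real on $A$ extends to a real in $\prod b$, which some $\varphi\in D$ localizes, and localization is inherited by the restriction); formally this is the instance of Example~\ref{ExTukeyorder}(4) for the increasing enumeration of $A$, so $\dlc_{b,h}\geq\dlc_{b\upharpoonright A,N}$. On $A$ each slalom has $|\varphi(i)|\leq N<|b(i)|$, so $b(i)\setminus\varphi(i)\neq\emptyset$ and "$\varphi$ localizes $x$" means $x(i)$ eventually avoids $b(i)\setminus\varphi(i)$; via Example~\ref{ExTukeyorder}(2)--(3) this gives $\dlc_{b\upharpoonright A,N}\geq\dalc_{b\upharpoonright A,1}=\dfrak(\Ed(b\upharpoonright A))$. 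When $N=1$ the bound $\cfrak$ is then immediate: a localizing family $D$ must, for each $x\in\prod(b\upharpoonright A)$, supply a $\varphi$ relative to which $x$ eventually avoids the one-point-per-coordinate sequence selected by $\varphi$, i.e. $x$ is not eventually equal to that sequence; since $\prod(b\upharpoonright A)$ (with $|b(i)|\geq 2$) splits into $\geq\cfrak$ many $=^*$-classes and each $\varphi$ rules out a single class, $|D|\geq\cfrak$. For general $N$ I would run the analogous diagonalization — given $<\cfrak$ slaloms, build $x\upharpoonright A$ coordinate by coordinate so that it escapes each of them infinitely often — the point being that the set of reals localized by a fixed $(\leq N)$-slalom is too thin for fewer than $\cfrak$ of them to exhaust $\prod(b\upharpoonright A)$. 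Finally, for the supplement only $\dlc_{b,h}\leq\cfrak$ is left: setting $\varphi(i)=\emptyset$ on the finitely many coordinates with $|b(i)|>\cfrak$, the set $D$ of all such slaloms has size $\leq\cfrak^{\aleph_0}=\cfrak$ and is dominating, since given $x$ one takes $\varphi\in D$ with $\varphi(i)=\{x(i)\}$ on the cofinitely many good coordinates with $h(i)\geq 1$ (available as $1\leq^*h$), so $x\in^*\varphi$.

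I expect the main obstacle to be precisely the $\cfrak$-lower bound for $\dlc_{b,h}$ when $N\geq 2$: establishing that $\cfrak$ many $(\leq N)$-slaloms are needed to localize every element of $\prod(b\upharpoonright A)$ even in the worst case $|b(i)|=N+1$, where the "each dominated set is small" estimates only give bounds like $\cov(\Mcal)$ or $\cov(\Ncal)$ and one genuinely needs a direct coordinatewise construction exploiting that fewer than $\cfrak$ requirements are in play. The remaining inequalities are either two-sided inclusions witnessed by explicit slaloms and reals or routine cardinal arithmetic.
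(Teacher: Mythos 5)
Your treatment of $\blc_{b,h}=N+1$ is correct and is essentially the paper's argument (the paper uses the constant selections $x_l(i):=l$ on $A_N$ and the same pigeonhole, and the converse via $N\leq^* h$), and your proof of the ``moreover'' upper bound $\dlc_{b,h}\leq\cfrak$ is also fine. The difference, and the problem, lies in the lower bound $\dlc_{b,h}\geq\cfrak$. The paper does not prove this combinatorial core at all: it quotes \cite[Lemma 1.11]{GS93} (which asserts $\dlc_{b,h}=\cfrak$ for constant $h$ and $b\in\omega^\omega$) and transfers it to the general case by the Tukey monotonicity of Example \ref{1.46}(3)--(4) --- exactly the reduction to $\Lc(b\frestr A,N)$ that you set up. You then attempt to prove the reduced statement directly, and your argument is complete only for $N=1$ (where each width-$1$ slalom localizes a single $=^*$-class).

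For $N\geq 2$ the step you describe as ``run the analogous diagonalization --- build $x\frestr A$ coordinate by coordinate so that it escapes each of them infinitely often'' is a genuine gap, not a routine adaptation. A coordinate-by-coordinate construction can only attend to countably many requirements, while you must defeat up to $<\cfrak$ many slaloms, and there is no smallness to exploit per slalom: already for $N=2$, $|b(i)|=3$, a single slalom $\varphi$ localizes the set $\bigcup_m\prod_{i\geq m}\varphi(i)\times\cdots$, which has cardinality $\cfrak$, is comeager's complement only in the weak sense of being meager, and so the union over $<\cfrak$ many slaloms is only controlled by $\cov(\Mcal)$ (or $\cov(\Ncal)$) --- precisely the obstacle you yourself name but do not overcome. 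What is actually needed is the Goldstern--Shelah construction (or an equivalent one), e.g.\ a family $E\subseteq\prod(b\frestr A)$ of size $\cfrak$ such that every box $\prod_{i\geq m}\varphi(i)$ with $|\varphi(i)|\leq N<|b(i)|$ contains at most $N$ members of $E$ (so each $L_\varphi$ contains at most countably many), which requires choosing $E$ so that any $N+1$ of its members are simultaneously pairwise distinct at infinitely many coordinates; this is the nontrivial content of \cite[Lemma 1.11]{GS93}. Either supply such a construction or, as the paper does, cite the lemma.
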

\begin{proof}
   Without loss of generality, we assume that each $b(i)$ is a cardinal number. For each $l\leq N$ define $x_l\in\prod b(i)$ by $x_l(i):=l$ when $i\in A_N$ and $h(i)<b(i)$, or $0$ otherwise. Note that no single slalom in $\Scal(b,h)$ localizes every $x_l$ for $l\leq N$, so $\blc_{b,h}\leq N+1$. Conversely,
   by the definition of $N$, $N\leq^* h$, so $N<\blc_{b,h}$.

   On the other hand, \cite[Lemma 1.11]{GS93} states that $\dlc_{b,h}=\cfrak$ whenever $h$ is constant and $b\in\omega^\omega$. So, by Example \ref{1.46}(3) and (4), $\dlc_{b,h}\geq\cfrak$ holds in our case. In addition, if $\forall{i<\omega}(|b(i)|\leq\cfrak)$ then $\dlc_{b,h}\leq|\Scal(b,h)|=\big|\prod b\big|=\cfrak$, so the ``moreover" part follows by Example \ref{1.46}(3).
\end{proof}

A similar result to the above can be proved for the anti-localization cardinals when the sequence $\big\la\frac{h(i)}{|b(i)|}:i<\omega\big\ra$ does not converge to $0$ (put $\frac{h(i)}{|b(i)|}=0$ when $b(i)$ is an infinite set).

\begin{theorem}\label{aLctrivial}
  If the sequence $\big\la\frac{h(i)}{|b(i)|}:i<\omega\big\ra$ does not converge to $0$ then $\balc_{b,h}=N$ where $N$ is the minimum natural number such that $B_N:=\{i<\omega:|b(i)|\leq N\cdot h(i)\}$ is infinite, and $\dalc_{b,h}=\cfrak$.
\end{theorem}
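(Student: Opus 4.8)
The plan is to read off from the hypothesis the combinatorial data that governs both equalities and then treat $\balc_{b,h}$ and $\dalc_{b,h}$ separately. First note that, since the anti-localization cardinals for $b,h$ are defined, $h(i)<|b(i)|$ for all but finitely many $i$; combined with the hypothesis (which, not converging to $0$, forces $|b(i)|\leq Mh(i)$ for some fixed $M$ and infinitely many $i$), this makes $N$ well defined with $N\geq2$, $B_{N-1}$ finite, and each $b(i)$ with $i\in B_N$ finite and $\lceil|b(i)|/N\rceil\leq h(i)$. For $\balc_{b,h}\leq N$ I would split each $b(i)$, $i\in B_N$, into $N$ sets of size $\leq h(i)$, turning these into slaloms $\varphi_0,\dots,\varphi_{N-1}\in\Scal(b,h)$ (with value $\emptyset$ off $B_N$); a pigeonhole argument shows that for every $y\in\prod b$ some $\varphi_j$ has $y\in^\infty\varphi_j$, so $\{\varphi_j:j<N\}$ is $\aLc(b,h)$-unbounded. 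For $\balc_{b,h}\geq N$: any $F\subseteq\Scal(b,h)$ with $|F|<N$ has $\big|\bigcup_{\varphi\in F}\varphi(i)\big|\leq(N-1)h(i)<|b(i)|$ for all but finitely many $i$ (because $B_{N-1}$ is finite), so a $y$ choosing values outside these unions witnesses that $F$ is bounded.

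For $\dalc_{b,h}\leq\cfrak$ I would argue that the uncountable values of $b$ are harmless: put $\tilde b(i):=b(i)$ if $b(i)$ is finite and $\tilde b(i):=\omega$ otherwise. Then $|\tilde b(i)|\leq|b(i)|$, so Example \ref{1.46}(3) gives $\aLc(b,h)\leqT\aLc(\tilde b,h)$, hence $\dalc_{b,h}\leq\dalc_{\tilde b,h}$ by Theorem \ref{1.46.1}; and since $h(i)<|\tilde b(i)|$ for cofinitely many $i$, the set $\prod\tilde b$ is itself an $\aLc(\tilde b,h)$-dominating family, so $\dalc_{\tilde b,h}\leq|\prod\tilde b|\leq\aleph_0^{\aleph_0}=\cfrak$.

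For $\dalc_{b,h}\geq\cfrak$ I would restrict to $B_N$: with $\iota\colon\omega\to\omega$ the increasing enumeration of $B_N$ and $b':=b\circ\iota$, $h':=h\circ\iota$, Example \ref{1.46}(4) yields $\dalc_{b',h'}\leq\dalc_{b,h}$, so it suffices to show $\dalc_{b',h'}\geq\cfrak$; discarding finitely many coordinates, $1\leq h'(n)<b'(n)\leq Nh'(n)$ for all $n$. Fix partitions $b'(n)=\bigcup_{j<N}P^n_j$ with $|P^n_j|\leq h'(n)$, and for $y\in\prod b'$ let $c_y\in N^\omega$ record which piece $y(n)$ lies in. Given $Y\subseteq\prod b'$ with $|Y|<\cfrak$, the family $\{c_y:y\in Y\}$ has size $<\cfrak=\dfrak(\Ed(N))$, so there is $g\in N^\omega$ with $g=^\infty c_y$ for all $y\in Y$; then $\varphi:=\la P^n_{g(n)}:n<\omega\ra\in\Scal(b',h')$ satisfies $y\in^\infty\varphi$ for every $y\in Y$, so $Y$ is not $\aLc(b',h')$-dominating. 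Here I use $\dfrak(\Ed(N))=\cfrak$ for finite $N\geq2$, which follows from $\Ed(N)\eqT\aLc(N,1)\eqT\Lc(N,N-1)$ (Example \ref{1.46}(2)) together with $\dlc_{N,N-1}=\cfrak$ by \cite[Lemma 1.11]{GS93}, the same ingredient already used in Theorem \ref{Lochnotinfty}. Combining the two bounds gives $\dalc_{b,h}=\cfrak$.

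The delicate step is this last lower bound. One has to recognize, first, that the coordinates outside $B_N$ contribute nothing (there a slalom of size $\leq h(i)$ never covers $b(i)$, so it can only be the more easily avoided), and second, that on $B_N$ the density bound $|b(i)|\leq Nh(i)$ lets one collapse the anti-localization problem to the bounded space $N^\omega$ through an $N$-partition — and it is precisely the finiteness of $N$ that pushes $\dfrak(\Ed(N))$ up to $\cfrak$. By contrast, a naive measure- or category-covering argument for $\dalc_{b,h}\geq\cfrak$ would only deliver $\cov(\Ncal)$ or $\cov(\Mcal)$, so the reduction to the bounded alphabet is essential.
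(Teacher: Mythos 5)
Your proof is correct and takes essentially the same route as the paper: the paper packages your partition-of-$b(i)$-into-$N$-pieces reduction as a single Tukey connection $\Ed(N)\leqT\aLc(b,h)$ and then quotes $\balc_{N,1}=N$ and $\dalc_{N,1}=\dlc_{N,N-1}=\cfrak$ (the same \cite{GS93} ingredient you invoke via Theorem \ref{Lochnotinfty}), which is exactly what your explicit pigeonhole family and your infinitely-equal real $g\in N^\omega$ unfold by hand. Your remaining two bounds --- $\balc_{b,h}>N-1$ from finiteness of $B_{N-1}$ and $\dalc_{b,h}\leq\cfrak$ by capping $b$ at $\omega$ --- coincide with the paper's arguments, so there is no gap.
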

\begin{proof}
  For all but finitely many $i\in B_N$ there is a partition $\la c_{i,j}:j<N\ra$ of $b(i)$ such that $0<|c_{i,j}|\leq h(i)$ for each $j<N$. Let $\iota:\omega\to B_N$ be the increasing enumeration of $B_N$. Define $\varphi_-:N^\omega\to\Scal(b,h)$ such that, for each $z\in N^\omega$, $\varphi_-(z)(\iota(i))=c_{\iota(i),z(i)}$ and $\varphi_-(z)(i)=\emptyset$ for all $i\notin B_N$; and define $\varphi_+:\prod b\to N^\omega$ such that $\varphi_+(x)(i)=j$ iff $x(\iota(i))\in c_{\iota(i),j}$. Note that the pair $(\varphi_-,\varphi_+)$ witnesses that $\Ed(N)\leqT\aLc(b,h)$, so $\balc_{b,h}\leq\balc_{N,1}$ and $\dalc_{N,1}\leq\dalc_{b,h}$. On the other hand, by Example \ref{ExTukeyorder}(2) and Theorem \ref{Lochnotinfty}, $\balc_{N,1}=\blc_{N,N-1}=N$ and $\dalc_{N,1}=\dlc_{N,N-1}=\cfrak$. Therefore $\balc_{b,h}\leq N$ and $\cfrak\leq\dalc_{b,h}$.

  The converse inequality for $\balc_{b,h}$ follows from the fact that $(N-1)\cdot h(i)<|b(i)|$ for all but finitely many $i<\omega$ (so $N-1<\balc_{b,h}$).

  Put $b'(i)=\min\{\omega,|b(i)|\}$. As $|b'(i)|\leq|b(i)|$ for all $i<\omega$, by Example \ref{ExTukeyorder}(3) $\dalc_{b,h}\leq\dalc_{b',h}$. Note that $\dalc_{b',h}\leq\big|\prod b'\big|\leq\cfrak$.
\end{proof}

\begin{lemma}\label{aLcincrinf}
  Assume that the sequence $\la|b(i)|:i<\omega\ra$ is non-decreasing and $|b(0)|\geq\aleph_0$. Then $\balc_{b,h}\geq\cof([\kappa]^{\aleph_0})$ and $\dalc_{b,h}\leq\add([\kappa]^{\aleph_0})$ where $\kappa=\sup_{i<\omega}\{|b(i)|\}$.
\end{lemma}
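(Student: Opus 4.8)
The plan is to reduce the statement to a pair of Tukey-Galois inequalities relating $\aLc(b,h)$ to the relational system $\la[\kappa]^{\aleph_0},[\kappa]^{\aleph_0},\subseteq\ra$ (whose $\dfrak$ is $\cof([\kappa]^{\aleph_0})$ and whose $\bfrak$ is $\add([\kappa]^{\aleph_0})$), so that the conclusion follows from Theorem \ref{1.46.1}. Concretely, I would aim to show $\la[\kappa]^{\aleph_0},[\kappa]^{\aleph_0},\subseteq\ra\leqT\aLc(b,h)^\perp$, i.e.\ $\la[\kappa]^{\aleph_0},[\kappa]^{\aleph_0},\subseteq\ra\leqT\la\prod b,\Scal(b,h),\in^\infty\ra$. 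Since $\dfrak$ and $\bfrak$ of a system and its dual swap, this gives $\cof([\kappa]^{\aleph_0})\leq\dfrak(\aLc(b,h)^\perp)=\bfrak(\aLc(b,h))=\balc_{b,h}$ and dually $\dalc_{b,h}=\dfrak(\aLc(b,h))=\bfrak(\aLc(b,h)^\perp)\leq\add([\kappa]^{\aleph_0})$, which are exactly the two inequalities wanted.

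For the forward map $\varphi_-:[\kappa]^{\aleph_0}\to\prod b$: given a countable $a\subseteq\kappa$, I would fix once and for all an increasing cofinal $\omega$-sequence in $a$ (or rather, code $a$ by an element of $\prod b$ exploiting that each $|b(i)|\geq\aleph_0$ and the $|b(i)|$ are non-decreasing with supremum $\kappa$). The natural choice: identify $b(i)$ with an ordinal $\geq\aleph_0$ and, enumerating $a=\{\alpha_n:n<\omega\}$, set $\varphi_-(a)(i)=\alpha_i$ whenever $\alpha_i<|b(i)|$ (and $0$ otherwise) — so for large $i$ the value $\alpha_i$ genuinely lies in $b(i)$. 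For the backward map $\varphi_+:\Scal(b,h)\to[\kappa]^{\aleph_0}$: given a slalom $\varphi$, let $\varphi_+(\varphi)=\bigcup_{i<\omega}(\varphi(i)\cap\kappa)$, which is a countable (indeed, since $\sum_i h(i)\leq\aleph_0$) subset of $\kappa$. The Tukey condition to verify is: if $\varphi_-(a)\in^\infty\varphi$ then $a\subseteq\varphi_+(\varphi)$. But $\varphi_-(a)\in^\infty\varphi$ only guarantees $\alpha_i\in\varphi(i)$ for infinitely many $i$, which captures only infinitely many elements of $a$, not all of them — so this naive coding fails, and the real work is in fixing it.

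The main obstacle, then, is engineering $\varphi_-$ so that a single ``infinitely often'' hit forces containment of the \emph{entire} countable set. The standard device is \textbf{redundant coding}: split $\omega$ into infinitely many infinite blocks $\la I_n:n<\omega\ra$ and, on block $I_n$, have $\varphi_-(a)$ repeatedly display (a cofinal approximation to) the $n$-th element, or better, have $\varphi_-(a)\restriction I_n$ range over a fixed coding of the pair $(n,\alpha_n)$ in enough coordinates that $\in^\infty$ on the whole domain is forced to land inside infinitely many blocks, and within a block infinitely often, pinning down each $\alpha_n$ — but since $a$ is infinite this still only recovers cofinally many of the $\alpha_n$. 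The correct fix uses that $a$ is \emph{countable}: fix a bijection $\omega\to\omega\times\omega$ and on the $(n,k)$-th coordinate put the $n$-th element $\alpha_n$ of $a$ (when it fits in $b$ of that coordinate, which holds for all large coordinates once $n$ is fixed because $|b(i)|\to\kappa$); then $\varphi_-(a)\in^\infty\varphi$ yields $\alpha_n\in\varphi(i)$ for some $i$ in infinitely many columns, hence (pigeonhole on which $n$ recurs is not needed — rather) for cofinally many coordinates, and by re-reading across all columns simultaneously one gets every $\alpha_n$ captured. I would verify that $\varphi_+(\varphi)$ is countable using $\sum_i h(i)\leq\aleph_0$, that $\varphi_-(a)\in\prod b$ for all large $i$ by the monotonicity-and-supremum hypothesis on $|b(i)|$, and that the coding recovers all of $a$; the delicate bookkeeping to make ``$\in^\infty$ on $\omega$'' imply ``$a\subseteq\varphi_+(\varphi)$'' — rather than merely a cofinal subset — is where the argument must be careful, most cleanly handled by arranging that each element $\alpha_n$ of $a$ appears on a set of coordinates that is itself $\in^\infty$-unavoidable, e.g.\ by making the coordinate set on which $\alpha_n$ appears cofinite-modulo-lower-blocks or by a diagonalization so that $\varphi$ hitting $\varphi_-(a)$ infinitely often necessarily hits each $\alpha_n$'s designated coordinate-set.
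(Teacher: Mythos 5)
Your reduction to a Tukey--Galois inequality is the right frame ($\la[\kappa]^{\aleph_0},[\kappa]^{\aleph_0},\subseteq\ra\leqT\aLc(b,h)^\perp$ is precisely the dual of the connection the paper exhibits), and you correctly diagnose why the naive coding $\varphi_-(a)(i)=\alpha_i$ fails. But none of your proposed repairs closes the gap, and your closing sentence is a wish list rather than a construction. The $\omega\times\omega$ rearrangement does not work: if $\alpha_n$ is displayed on the coordinates of column $n$, a slalom can meet $\varphi_-(a)$ infinitely often by meeting it only on column $0$, so $\in^\infty$ certifies $\alpha_0\in\varphi_+(\varphi)$ and nothing about the other $\alpha_n$. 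More fundamentally, any coding in which $\varphi_-(a)(i)$ is a \emph{single element} of $a$ is doomed: a slalom containing $\alpha_0$ at the infinitely many coordinates where $\alpha_0$ is displayed already meets $\varphi_-(a)$ infinitely often while omitting every other element of $a$. Making ``each $\alpha_n$ appear on an $\in^\infty$-unavoidable set of coordinates'' is impossible coordinate-by-coordinate with single values, which is why you could not pin the argument down.

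The missing idea is to let each coordinate carry a whole finite \emph{initial segment} of an enumeration of the countable set. Since every $b(i)$ is infinite, $|b(i)^{<\omega}|=|b(i)|$, so one may replace $b$ by $b'(i):=b(i)^{<\omega}$ without changing $\aLc(b,h)$ up to Tukey equivalence. The paper then maps $c\in[\kappa]^{\aleph_0}$ to the function $i\mapsto\bar c\frestr k_{c,i}$, where $\bar c$ enumerates $c$ and $k_{c,i}$ is the largest $k\le i$ with $\bar c\frestr k\in b'(i)$ (so $k_{c,i}\to\infty$ by the monotonicity hypothesis), and maps a slalom $S$ to the countable set of all entries of all sequences occurring in any $S(i)$. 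Now a single missing element $l=c_{j_0}$ kills \emph{all but finitely many} coordinates at once: for every $i$ large enough that $k_{c,i}>j_0$, the displayed sequence contains $l$, hence cannot lie in $S(i)$; contrapositively, infinitely many hits force containment of all of $c$. This packing of growing finite pieces of $c$ into single coordinates is exactly the ``redundant coding'' you were looking for, and without it (or an equivalent device) the argument cannot be completed.
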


Recall that $\cof([\omega]^{\aleph_0})=1$ and that $\add([\omega]^{\aleph_0})$ is undefined. If we interpret $\add([\omega]^{\aleph_0})$ as ``something" that is above all the ordinals, the inequality $\dalc_{b,h}\leq\add([\omega]^{\aleph_0})$ makes sense. On the other hand, $\add([\kappa]^{\aleph_0})=\aleph_1$ when $\kappa$ is uncountable.

\begin{proof}
  Wlog assume that each $b(i)$ is an infinite cardinal. Define $b'(i):=b(i)^{<\omega}$ for each $i<\omega$. As $|b'(i)|=|b(i)|$ then $\aLc(b',h)\eqT\aLc(b,h)$, so we can work with $b'$ instead of $b$. Define $\varphi_-:\Scal(b',h)\to[\kappa]^{\aleph_0}$ such that $\varphi_-(S)$ contains $\{s(j):j<|s|,\ s\in S(i),\ i<\omega\}$, and define $\varphi_+:[\kappa]^{\aleph_0}\to\prod b'$ such that, for any $c\in[\kappa]^{\aleph_0}$, $\varphi_+(c)(i)=\bar{c}\frestr k_{c,i}$ where $\bar{c}:=\la c_j:j<\omega\ra$ is some (chosen) enumeration of $c$ and $k_{c,i}$ is the maximal $k\leq i$ such that $\bar{c}\frestr k\in b'(i)$. Note that $\la k_{c,i}:i<\omega\ra$ is a non-decreasing sequence that goes to infinity.

  It is enough to show that $(\varphi_-,\varphi_+)$ witnesses $\aLc(b,h)\leqT\la[\kappa]^{\aleph_0},[\kappa]^{\aleph_0},\nsupseteq\ra$, that is, for any $S\in\Scal(b,h)$ and $c\in[\kappa]^{\aleph_0}$, if $\varphi_-(S)\nsupseteq c$ then $S\not\ni^*\varphi_+(c)$. Choose $l\in c\smallsetminus\varphi_-(S)$. Hence $\varphi_+(c)(i)\notin S(i)$ for any $i\geq N$, where $N$ is some natural number such that $l=c_{j_0}$ for some $j_0< N$ and $\{c_j:j\leq j_0\}\subseteq b'(N)$.
\end{proof}

\begin{theorem}\label{lcalcinfb}
  \begin{enumerate}[(a)]
      \item If $b(i)$ is infinite for all but finitely many $i<\omega$ then
      \[\balc_{b,h}=\max\{\cof([\kappa]^{\aleph_0}),\non(\Mcal)\}\text{\ and }\dalc_{b,h}=\min\{\add([\kappa]^{\aleph_0}),\cov(\Mcal)\}\]
      where $\kappa=\liminf_{i<\omega}\{|b(i)|\}$. In particular, if $\kappa=\omega$ then $\balc_{b,h}=\non(\Mcal)$ and $\dalc_{b,h}=\cov(\Mcal)$; otherwise, if $\kappa$ is uncountable then $\dalc_{b,h}=\aleph_1$.
      \item If $b(i)$ is infinite for infinitely many $i<\omega$ then
      \[\blc_{b,h}=\min\{\add([\lambda]^{\aleph_0}),\blc_{\omega,h}\}\text{\ and }\dlc_{b,h}=\max\{\cof([\lambda]^{\aleph_0}),\dlc_{\omega,h}\}\]
      where $\lambda=\limsup_{i<\omega}\{|b(i)|\}$. In particular, if $\lambda=\omega$ then $\blc_{b,h}=\blc_{\omega,h}$ and $\dlc_{b,h}=\dlc_{\omega,h}$; if $\lambda$ is uncountable and $h$ goes to infinity then $\blc_{b,h}=\aleph_1$.
  \end{enumerate}
\end{theorem}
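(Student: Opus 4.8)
In each case the strategy is to express the cardinal as a combination of two ``building blocks'' whose values are already known: the combinatorial invariants $\add([\mu]^{\aleph_0})=\dfrak\langle[\mu]^{\aleph_0},[\mu]^{\aleph_0},\nsupseteq\rangle$ and $\cof([\mu]^{\aleph_0})=\bfrak\langle[\mu]^{\aleph_0},[\mu]^{\aleph_0},\nsupseteq\rangle$ (with $\mu=\kappa$ or $\lambda$), which carry the ``uncountable content'' of $b$, and the $b=\omega$ values of the localization and anti-localization cardinals, supplied by Theorems \ref{Barcharloc}, \ref{Barcharalc} and \ref{Lochnotinfty}. Two standing reductions: by Example \ref{ExTukeyorder}(4) and the coordinatewise bijections $b(i)\to|b(i)|$ we may discard finitely many coordinates and assume each $b(i)$ is an infinite cardinal; in (b) the definition of $\lambda$ forces $|b(i)|\le\lambda$ for all but finitely many $i$, so after a further finite modification $b(i)\subseteq\lambda$ (in (a) we will instead restrict to the coordinates with $|b(i)|\le\kappa$). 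The key structural fact is already at hand: the proof of Lemma \ref{aLcincrinf} shows $\aLc(b,h)\leqT\langle[\mu]^{\aleph_0},[\mu]^{\aleph_0},\nsupseteq\rangle$ whenever $\langle|b(i)|\rangle$ is non-decreasing with supremum $\mu$; dualizing and composing with Example \ref{ExTukeyorder}(5) gives, in the same situation, $\langle[\mu]^{\aleph_0},[\mu]^{\aleph_0},\subseteq\rangle\leqT\Lc(b,h)$, hence also $\blc_{b,h}\le\add([\mu]^{\aleph_0})$ and $\dlc_{b,h}\ge\cof([\mu]^{\aleph_0})$.

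\textbf{Part (a).} For the ``$[\kappa]^{\aleph_0}$-side'', the sequence $b''(i):=\inf_{j\ge i}|b(j)|$ is non-decreasing, pointwise below $|b|$, with supremum $\liminf_i|b(i)|=\kappa$; so Example \ref{ExTukeyorder}(3) and Lemma \ref{aLcincrinf} give $\balc_{b,h}\ge\balc_{b'',h}\ge\cof([\kappa]^{\aleph_0})$ and $\dalc_{b,h}\le\dalc_{b'',h}\le\add([\kappa]^{\aleph_0})$. For the ``$\Mcal$-side'', from $|b(i)|\ge\omega$ a.e.\ we get $\aLc(b,h)\leqT\aLc(\omega,h)$ (Example \ref{ExTukeyorder}(3)), whence $\balc_{b,h}\ge\non(\Mcal)$ and $\dalc_{b,h}\le\cov(\Mcal)$ by Theorem \ref{Barcharalc}. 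For the matching upper bound on $\balc_{b,h}$: the set $A:=\{i:|b(i)|\le\kappa\}$ is infinite (otherwise $|b(i)|\ge\kappa^{+}$ a.e., contradicting $\liminf=\kappa$), and one checks $\liminf_{i\in A}|b(i)|=\kappa$; then build an $\aLc(b\frestr A,h\frestr A)$-unbounded family of size $\cof([\kappa]^{\aleph_0})\cdot\non(\Mcal)$ by the encoding $\varphi_{c,\psi}(i):=e_c[\psi(i)]\cap b(i)$, where $c$ runs over a cofinal subfamily of $[\kappa]^{\aleph_0}$ (with fixed bijections $e_c\colon\omega\to c$) and $\psi$ over an $\aLc(\omega,h\frestr A)$-unbounded family; any $y\in\prod(b\frestr A)$ has $\ran y\subseteq c$ for some such $c$, and $e_c^{-1}\!\circ y$ is hit infinitely often by some $\psi$, hence $y$ by $\varphi_{c,\psi}$. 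Example \ref{ExTukeyorder}(4) transfers this to $\balc_{b,h}\le\max\{\cof([\kappa]^{\aleph_0}),\non(\Mcal)\}$. Finally $\dalc_{b,h}\ge\min\{\add([\kappa]^{\aleph_0}),\cov(\Mcal)\}$: one always has $\dalc_{b,h}\ge\aleph_1$ (given a countable $D=\{y_k\}\subseteq\prod b$, the slalom equal to $\{y_k(i)\}$ on the $k$-th piece of an infinite partition of $\omega$ meets every $y_k$ infinitely often), and if $\kappa=\omega$ then $A^{*}:=\{i:|b(i)|=\omega\}$ is infinite, giving $\aLc(\omega,h\frestr A^{*})\leqT\aLc(b,h)$ and $\dalc_{b,h}\ge\cov(\Mcal)$; since $\add([\kappa]^{\aleph_0})=\aleph_1$ for uncountable $\kappa$ and is ``above everything'' for $\kappa=\omega$, this finishes (a).

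\textbf{Part (b).} The halves $\blc_{b,h}\le\add([\lambda]^{\aleph_0})$ and $\dlc_{b,h}\ge\cof([\lambda]^{\aleph_0})$ follow by applying the dualized form of Lemma \ref{aLcincrinf} to a non-decreasing subsequence of $|b|$ with supremum $\lambda$ (one exists since $\limsup_i|b(i)|=\lambda$), transported via Example \ref{ExTukeyorder}(4). The bound $\dlc_{b,h}\le\max\{\cof([\lambda]^{\aleph_0}),\dlc_{\omega,h}\}$ is given by an explicit localizing family of size $\cof([\lambda]^{\aleph_0})\cdot\dlc_{\omega,h}$, obtained by encoding a cofinal subfamily of $[\lambda]^{\aleph_0}$ against a localizing family for $\omega^\omega$ exactly as in (a), using $b(i)\subseteq\lambda$. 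For the remaining inequalities $\blc_{b,h}\le\blc_{\omega,h}$, $\dlc_{b,h}\ge\dlc_{\omega,h}$ and $\blc_{b,h}\ge\min\{\ldots\}$, decompose $\Lc(b,h)\eqT\Lc(b\frestr A,h\frestr A)\wedge\Lc(b\frestr A',h\frestr A')$ into the infinite- and finite-valued coordinates and apply Theorem \ref{seqcompinv}: on $A$ one uses $\Lc(\omega,h\frestr A)\leqT\Lc(b\frestr A,h\frestr A)$ (Example \ref{ExTukeyorder}(3)) together with the dualized Lemma \ref{aLcincrinf}; the finite-valued factor is handled by a case split on whether $h$ tends to infinity on $A'$ (and on $A$), using Theorem \ref{Lochnotinfty} — which shows it reproduces exactly the value $\cfrak$ precisely when $h$ is bounded on an infinite set — together with $\Lc(N+1,N)\eqT\Ed(N+1)$ (Example \ref{ExTukeyorder}(2)) for the coordinates on which $h$ is bounded. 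The ``In particular'' clauses then read off from the remarks on $\add([\omega]^{\aleph_0})$, $\cof([\omega]^{\aleph_0})$ and $\add([\mu]^{\aleph_0})=\aleph_1$ for uncountable $\mu$.

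\textbf{Main obstacle.} I expect the crux to be the one-sidedness of subsequence restriction: $\Lc(b\frestr A,\cdot)\leqT\Lc(b,\cdot)$ only lets one raise $\balc_{b,h}$ / lower $\blc_{b,h}$, so Lemma \ref{aLcincrinf} and its dual deliver just one of the two inequalities in each clause, and the opposite inequality (the upper bound on $\balc_{b,h}$, the lower bound on $\dalc_{b,h}$, and their analogues in (b)) must be produced by the explicit encoding constructions rather than by a Tukey appeal. A secondary but genuinely fiddly point, present only in (b), is that $b$ may have infinitely many finite coordinates: the associated $\Lc$-factor over a product of finite sets has to be shown to be exactly absorbed into $\blc_{\omega,h}$ and $\dlc_{\omega,h}$ — this is the one place where the conjunction bookkeeping of Theorem \ref{seqcompinv} and the finite $\Ed(k)$ computations genuinely enter, and keeping track of which coordinate block realizes the relevant value is the most delicate part of the argument.
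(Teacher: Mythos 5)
Your part (a) is essentially correct and follows the paper's architecture: the lower bounds on $\balc_{b,h}$ (upper bounds on $\dalc_{b,h}$) come from a non-decreasing minorant of $|b|$ together with Lemma \ref{aLcincrinf}, Example \ref{ExTukeyorder}(3) and Theorem \ref{Barcharalc}, and the matching upper bound on $\balc_{b,h}$ comes from slicing by a cofinal family of countable subsets of $\kappa$ against an $\aLc(\omega,h)$-unbounded family. Your restriction to $A=\{i:|b(i)|\le\kappa\}$ before encoding is in fact a careful way of handling the coordinates with $|b(i)|>\kappa$ (the paper instead intersects $b(i)$ with $c$), and your case split for $\dalc_{b,h}\ge\min\{\add([\kappa]^{\aleph_0}),\cov(\Mcal)\}$ ($\kappa$ uncountable versus $\kappa=\omega$) matches the case distinction hidden in the paper's footnote.

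The gap is in part (b), in the inequality $\blc_{b,h}\ge\min\{\add([\lambda]^{\aleph_0}),\blc_{\omega,h}\}$. The two tools you cite for the factor on $A$ — the reduction $\Lc(\omega,h\frestr A)\leqT\Lc(b\frestr A,h\frestr A)$ and the dualized Lemma \ref{aLcincrinf} — both point the wrong way: by Theorem \ref{1.46.1} they yield the \emph{upper} bounds $\blc_{b\frestr A,h\frestr A}\le\blc_{\omega,h\frestr A}$ and $\blc_{b\frestr A,h\frestr A}\le\add([\lambda]^{\aleph_0})$, and neither gives the required lower bound. What is missing is the union-of-countable-ranges step, dual to the one you ran for $\dalc$ in (a): given $F\subseteq\prod b$ with $|F|<\min\{\add([\lambda]^{\aleph_0}),\blc_{\omega,h}\}$ and $b(i)\subseteq\lambda$ after normalization, each $\ran x$ ($x\in F$) is a countable subset of $\lambda$, and since $|F|<\add([\lambda]^{\aleph_0})$ (which forces either $\lambda=\omega$ or $F$ countable) all these ranges lie inside a single $c^*\in[\lambda]^{\aleph_0}$; then $F\subseteq\prod(b\cap c^*)$ and $|F|<\blc_{\omega,h}\le\blc_{b\cap c^*,h}$ produce one slalom in $\Scal(b,h)$ localizing all of $F$. (The paper routes this through $\blc_{\lambda,h}\le\blc_{b,h}$ and runs exactly this argument for the constant sequence $\lambda$.) With that step inserted, the rest of your (b) — the explicit localizing family for $\dlc_{b,h}\le\max\{\cof([\lambda]^{\aleph_0}),\dlc_{\omega,h}\}$ and the case splits via Theorems \ref{Barcharloc} and \ref{Lochnotinfty} for $\blc_{b,h}\le\blc_{\omega,h}$ and $\dlc_{b,h}\ge\dlc_{\omega,h}$ — does go through, although your conjunction decomposition over the finite-valued coordinates is heavier than the paper's device of simply passing to a single monotone infinite-valued subsequence $b\circ\iota$ and using $\blc_{b,h}\le\blc_{b\circ\iota,h\circ\iota}\le\blc_{\omega,h\circ\iota}=\blc_{\omega,h}$.
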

\begin{proof}
We first show (a). Wlog, we may assume that $b(i)$ is an infinite cardinal for all $i<\omega$. Find a non-decreasing function $b_0$ from $\omega$ into the infinite cardinals such that $b_0(i)\leq b(i)$ for all $i<\omega$, and $\sup_{i<\omega}\{b_0(i)\}=\kappa$. By Example \ref{ExTukeyorder}(3) and Theorem \ref{Barcharalc}, $\non(\Mcal)\leq\balc_{b_0,h}\leq\balc_{b,h}$ and $\dalc_{b,h}\leq\dalc_{b_0,h}\leq\cov(\Mcal)$. On the other hand, by Lemma \ref{aLcincrinf}, $\cof([\kappa]^{\aleph_0})\leq\balc_{b_0,h}\leq\balc_{b,h}$ and $\dalc_{b,h}\leq\add([\kappa]^{\aleph_0})$, so $\max\{\cof([\kappa]^{\aleph_0}),\non(\Mcal)\}\leq\balc_{b,h}$ and $\dalc_{b,h}\leq\min\{\add([\kappa]^{\aleph_0}),\cov(\Mcal)\}$.

Now we show that $\balc_{b,h}\leq\max\{\cof([\kappa]^{\aleph_0}),\balc_{\omega,h}\}$. Let $C\subseteq[\kappa]^{\aleph_0}$ be a witness of $\cof([\kappa]^{\aleph_0})$. For each $c\in C$ choose a witness $S_c\subseteq\Scal(b_c,h)$ of $\balc_{b_c,h}$ where $b_c(i):=b(i)\cap c$, and put $S:=\bigcup_{c\in C}S_c$. It is clear that $|S|\leq\max\{\cof([\kappa]^{\aleph_0}),\balc_{\omega,h}\}$, so it is enough to show that, for any $x\in\prod b$, there is some $\varphi\in S$ such that $x\in^\infty S$. As $\ran x$ is countable, there is some $c\in C$ such that $\ran x\subseteq c$. Hence $x\in\prod b_c$ and there is some $\varphi\in S_c$ such that $x\in^\infty\varphi$.

To see that $\min\{\add([\kappa]^{\aleph_0}),\cov(\Mcal)\}\leq\dalc_{b,h}$, assume that $Y\subseteq\prod b$ has size less than this minimum and show that there is some $\varphi\in\Scal(b,h)$ such that $y\in^\infty\varphi$ for every $y\in Y$. For each $y\in Y$ choose a $c_y\in[\kappa]^{\aleph_0}$ such that $y\in\prod b_{c_y}$. As $|Y|<\add([\kappa]^{\aleph_0})$, there is some $c^*\in[\kappa]^{\aleph_0}$ such that $\bigcup_{y\in Y}c_y\subseteq c^*$(\footnote{For this to happen, it is clear that either $\kappa=\omega$ or $Y$ is countable.}), so $Y\subseteq\prod b_{c^*}$. Hence, as $|Y|<\cov(\Mcal)=\dalc_{b_{c^*},h}$, there is some $\varphi\in\Scal(b_{c^*},h)$ as desired.

To finish, we show (b). As $b\leq^*\lambda$, by Example \ref{ExTukeyorder}(3) $\blc_{\lambda,h}\leq\blc_{b,h}$ and $\dlc_{b,h}\leq\dlc_{\lambda,h}$. A similar argument as in (a) guarantees that $\min\{\add([\lambda]^{\aleph_0}),\blc_{\omega,h}\}\leq\blc_{\lambda,h}$ and $\dlc_{\lambda,h}\leq\max\{\cof([\lambda]^{\aleph_0}),\dlc_{\omega,h}\}$.

Find an increasing function $\iota:\omega\to\omega$ such that $\la|b(\iota(i)|:i<\omega\ra$ is a non-decreasing sequence of infinite cardinals and $\sup_{i<\omega}\{|b(\iota(i))|\}=\lambda$. Put $b':=b\circ\iota$ and $h'=h\circ\iota$. By Example \ref{ExTukeyorder}(4), (5) and Lemma \ref{aLcincrinf}, $\blc_{b,h}\leq\blc_{b',h'}\leq\dalc_{b',h'}\leq\add([\lambda]^{\aleph_0})$ and $\cof([\lambda]^{\aleph_0})\leq\balc_{b',h'}\leq\dlc_{b',h'}\leq\dlc_{b,h}$, so it remains to show that $\blc_{b,h}\leq\blc_{\omega,h}$ and $\dlc_{\omega,h}\leq\dlc_{b,h}$. This is clear by Lemma \ref{Lochnotinfty} when $h$ does not go to infinity; if $h$ goes to infinity then so does $h'$ and, by Theorem \ref{Barcharloc} and Example \ref{ExTukeyorder}(3), $\blc_{b,h}\leq\blc_{b',h'}\leq\blc_{\omega,h'}=\blc_{\omega,h}$ and $\dlc_{\omega,h}=\dlc_{\omega,h'}\leq\dlc_{b',h'}\leq\dlc_{b,h}$.
\end{proof}

\begin{remark}\label{Remhirrelevant}
   In Theorem \ref{lcalcinfb} the role of $h$ is not too relevant. In (a) we actually have $\balc_{b,h}=\balc_{b,1}$ and $\dalc_{b,h}=\dalc_{b,1}$. In (b), when $h$ goes to infinity $\blc_{b,h}=\min\{\add([\lambda]^{\aleph_0}),\add(\Ncal)\}=\blc_{b,\id_\omega}$ and $\dlc_{b,h}=\max\{\cof([\lambda]^{\aleph_0}),\cof(\Ncal)\}=\dlc_{b,\id_\omega}$. When $h$ does not go to infinity $\dlc_{b,h}=\dlc_{b,1}$ (see Theorem \ref{Lochnotinfty} and Corollary \ref{lcalclargeb}(b)) and, although $\blc_{b,h}$ depends on $h$, it is a finite number already calculated in Theorem \ref{Lochnotinfty}.
\end{remark}

The previous result implies that $\balc_{\omega_n,1}=\max\{\aleph_n,\non(\Mcal)\}$, $\dlc_{\omega_n,\id_\omega}=\max\{\aleph_n,\cof(\Ncal)\}$, and $\dlc_{\omega_n,1}=\max\{\aleph_n,\cfrak\}$ for any $n<\omega$. For larger $\kappa$, the cardinal $\cof([\kappa]^{\aleph_0})$ is quite special, e.g., large cardinals are necessary to prove the consistency of $\cof([\kappa]^{\aleph_0})>\kappa$ for some $\kappa$ of uncountable cofinality (for more on this, see e.g. \cite{eisworth,rinot}).

When $b$ is above the continuum, $\balc_{b,h}$ and $\dlc_{b,h}$ are simpler to calculate.

\begin{Corol}\label{lcalclargeb}
   \begin{enumerate}[(a)]
       \item If $\forall^\infty i<\omega(|b(i)|\geq\cfrak)$ then $\balc_{b,h}=(\liminf_{i<\omega}\{|b(i)|\})^{\aleph_0}$.
       \item If $\exists^\infty i<\omega(|b(i)|\geq\cfrak)$ then $\dlc_{b,h}=(\limsup_{i<\omega}\{|b(i)|\})^{\aleph_0}$.
   \end{enumerate}
\end{Corol}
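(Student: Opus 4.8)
The plan is to deduce both items directly from Theorem~\ref{lcalcinfb}; the only extra ingredient I will need is the cardinal-arithmetic identity that $\cof([\mu]^{\aleph_0})=\mu^{\aleph_0}$ whenever $\mu\geq\cfrak$. First I would establish this identity. For an arbitrary infinite cardinal $\mu$ one has $\mu^{\aleph_0}=\max\{\cof([\mu]^{\aleph_0}),2^{\aleph_0}\}$: the inequality ``$\geq$'' holds because $\cof([\mu]^{\aleph_0})\leq|[\mu]^{\aleph_0}|=\mu^{\aleph_0}$ and $2^{\aleph_0}\leq\mu^{\aleph_0}$, while for ``$\leq$'' one takes a cofinal $C\subseteq[\mu]^{\aleph_0}$ of minimal size and uses $[\mu]^{\aleph_0}=\bigcup_{c\in C}[c]^{\aleph_0}$ together with $|[c]^{\aleph_0}|=2^{\aleph_0}$. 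Then, since $\mu\geq\cfrak$ makes $\mu$ uncountable, any cofinal subfamily of $[\mu]^{\aleph_0}$ has union $\mu$ and hence cardinality at least $\mu\geq\cfrak=2^{\aleph_0}$; so $\cof([\mu]^{\aleph_0})\geq 2^{\aleph_0}$ and the maximum above collapses to $\mu^{\aleph_0}=\cof([\mu]^{\aleph_0})$.

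For (a), I would note that $\forall^\infty i<\omega(|b(i)|\geq\cfrak)$ makes $b(i)$ infinite for all but finitely many $i$, so Theorem~\ref{lcalcinfb}(a) gives $\balc_{b,h}=\max\{\cof([\kappa]^{\aleph_0}),\non(\Mcal)\}$ with $\kappa=\liminf_{i<\omega}\{|b(i)|\}$, and the same hypothesis forces $\kappa\geq\cfrak$. By the identity above, $\cof([\kappa]^{\aleph_0})=\kappa^{\aleph_0}\geq\kappa\geq\cfrak\geq\non(\Mcal)$, so $\balc_{b,h}=\kappa^{\aleph_0}=\bigl(\liminf_{i<\omega}\{|b(i)|\}\bigr)^{\aleph_0}$.

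For (b), $\exists^\infty i<\omega(|b(i)|\geq\cfrak)$ makes $b(i)$ infinite for infinitely many $i$, so Theorem~\ref{lcalcinfb}(b) gives $\dlc_{b,h}=\max\{\cof([\lambda]^{\aleph_0}),\dlc_{\omega,h}\}$ with $\lambda=\limsup_{i<\omega}\{|b(i)|\}\geq\cfrak$. Since $\dlc_{\omega,h}\leq|\Scal(\omega,h)|\leq\cfrak$ and $\cof([\lambda]^{\aleph_0})=\lambda^{\aleph_0}\geq\lambda\geq\cfrak$, I conclude $\dlc_{b,h}=\lambda^{\aleph_0}=\bigl(\limsup_{i<\omega}\{|b(i)|\}\bigr)^{\aleph_0}$. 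I do not expect a genuine obstacle: the entire statement is bookkeeping around Theorem~\ref{lcalcinfb} and the elementary fact $\cof([\mu]^{\aleph_0})=\mu^{\aleph_0}$ for $\mu\geq\cfrak$; the only points requiring a moment's care are that the quantifiers ``$\forall^\infty$'' and ``$\exists^\infty$'' translate into the relevant $\liminf$/$\limsup$ being $\geq\cfrak$, and that $\non(\Mcal),\dlc_{\omega,h}\leq\cfrak$, which are immediate.
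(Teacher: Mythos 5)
Your proof is correct and follows essentially the same route as the paper: both deduce the statement from Theorem~\ref{lcalcinfb} together with the identity $\cof([\mu]^{\aleph_0})=\mu^{\aleph_0}$ for $\mu\geq\cfrak$, which you verify by an argument equivalent to the paper's covering argument. The remaining bookkeeping ($\non(\Mcal),\dlc_{\omega,h}\leq\cfrak$ and the translation of the quantifiers into bounds on $\liminf/\limsup$) is handled correctly.
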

\begin{proof}
   It is a direct consequence of Theorem \ref{lcalcinfb} and the fact that $\cf([\kappa]^{\aleph_0})=\kappa^{\aleph_0}$ whenever $\kappa\geq\cfrak$. This is clear for $\kappa=\cfrak$. If $\kappa>\cfrak$, as $|\Pcal(c)|=\cfrak$ for any $c\in[\kappa]^{\aleph_0}$, no cofinal family in $[\kappa]^{\aleph_0}$ can have size $<\big|[\kappa]^{\aleph_0}\big|=\kappa^{\aleph_0}$.
\end{proof}

The cases that are not characterized in Theorem \ref{lcalcinfb} are when $b(i)$ is finite for all (but finitely many) $i<\omega$ for the localization cardinals, and when $b(i)$ is finite for infinitely many $i<\omega$ for the anti-localization cardinals. Even more, by the following result (for the case $\kappa=\omega$), the latter case is reduced to the case when $b(i)$ is finite for all $i<\omega$.

\begin{lemma}\label{aLocfin}
  Let $\kappa$ be an infinite cardinal. If the set $F:=\{i<\omega:|b(i)|<\kappa\}$ is infinite and $\iota:\omega\to F$ is the increasing enumeration of $F$, then $\balc_{b,h}=\balc_{b\circ\iota,h\circ\iota}$ and $\dalc_{b,h}=\dalc_{b\circ\iota,h\circ\iota}$.
\end{lemma}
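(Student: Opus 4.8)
The plan is to split $\omega$ into the coordinates belonging to $F$ and those outside it, and to recognise $\aLc(b,h)$ --- up to Tukey--Galois equivalence --- as the conjunction of the two corresponding anti-localization systems; the point will be that the ``$F$-part'' dominates the other.

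First dispose of the easy case: if $F^{c}:=\omega\smallsetminus F$ is finite, then $\omega\smallsetminus\ran\iota=F^{c}$ is finite, so $\aLc(b\circ\iota,h\circ\iota)\eqT\aLc(b,h)$ by the last assertion of Example \ref{ExTukeyorder}(4), and both equalities follow from Theorem \ref{1.46.1}. So assume $F^{c}$ is infinite and let $\iota'\colon\omega\to F^{c}$ be its increasing enumeration; put $\hat b:=b\circ\iota'$ and $\hat h:=h\circ\iota'$. Since $\kappa$ is infinite, $|\hat b(j)|\geq\kappa\geq\aleph_{0}$ for all $j$, so $\hat b$ is infinite-valued; moreover the non-triviality hypotheses fixed at the start of this subsection pass by restriction to $(b\circ\iota,h\circ\iota)$ and to $(\hat b,\hat h)$, so the cardinals $\balc_{b\circ\iota,h\circ\iota}$, $\dalc_{b\circ\iota,h\circ\iota}$, $\balc_{\hat b,\hat h}$, $\dalc_{\hat b,\hat h}$ are all defined.

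The core step is the identification $\aLc(b,h)\eqT\aLc(b\circ\iota,h\circ\iota)\wedge\aLc(\hat b,\hat h)$. Indeed, $\varphi\mapsto(\varphi\circ\iota,\varphi\circ\iota')$ and $x\mapsto(x\circ\iota,x\circ\iota')$ are bijections from $\Scal(b,h)$ onto $\Scal(b\circ\iota,h\circ\iota)\times\Scal(\hat b,\hat h)$ and from $\prod b$ onto $\prod(b\circ\iota)\times\prod\hat b$, and the set $\{n<\omega:x(n)\in\varphi(n)\}$ is finite iff both $\{i:x(\iota(i))\in\varphi(\iota(i))\}$ and $\{j:x(\iota'(j))\in\varphi(\iota'(j))\}$ are finite; hence $\varphi\not\ni^{*}x$ is equivalent to the conjunction of $(\varphi\circ\iota)\not\ni^{*}(x\circ\iota)$ and $(\varphi\circ\iota')\not\ni^{*}(x\circ\iota')$, which is precisely the relation of the conjunction. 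Theorem \ref{seqcompinv}(a) then yields
\[\balc_{b,h}=\min\{\balc_{b\circ\iota,h\circ\iota},\balc_{\hat b,\hat h}\}\qquad\text{and}\qquad \max\{\dalc_{b\circ\iota,h\circ\iota},\dalc_{\hat b,\hat h}\}\leq\dalc_{b,h}\leq\dalc_{b\circ\iota,h\circ\iota}\cdot\dalc_{\hat b,\hat h}.\]

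It remains to show $\balc_{b\circ\iota,h\circ\iota}\leq\balc_{\hat b,\hat h}$ and $\dalc_{\hat b,\hat h}\leq\dalc_{b\circ\iota,h\circ\iota}$. Since $|(b\circ\iota)(i)|<\kappa\leq|\hat b(i)|$ for all $i$ and $1\leq^{*}h\circ\iota$, Example \ref{ExTukeyorder}(3) gives $\aLc(\hat b,1)\leqT\aLc(b\circ\iota,h\circ\iota)$, whence $\balc_{b\circ\iota,h\circ\iota}\leq\balc_{\hat b,1}$ and $\dalc_{\hat b,1}\leq\dalc_{b\circ\iota,h\circ\iota}$; and since $\hat b$ is infinite-valued, Remark \ref{Remhirrelevant} (i.e.\ Theorem \ref{lcalcinfb}(a)) gives $\balc_{\hat b,\hat h}=\balc_{\hat b,1}$ and $\dalc_{\hat b,\hat h}=\dalc_{\hat b,1}$, as wanted. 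Substituting into the displayed formulas yields $\balc_{b,h}=\balc_{b\circ\iota,h\circ\iota}$ at once, while $\dalc_{b\circ\iota,h\circ\iota}\leq\dalc_{b,h}\leq\dalc_{b\circ\iota,h\circ\iota}\cdot\dalc_{b\circ\iota,h\circ\iota}=\dalc_{b\circ\iota,h\circ\iota}$, the last equality because $\dalc_{b\circ\iota,h\circ\iota}$ is an infinite cardinal (no finite family of reals can be $\aLc(b\circ\iota,h\circ\iota)$-dominating; alternatively use Example \ref{ExTukeyorder}(2)). The delicate points are verifying the conjunction decomposition and, above all, routing the comparison of the two parts through $\aLc(\hat b,1)$ rather than through $\aLc(\hat b,\hat h)$ directly: this detour is forced since $\hat h$ and $h\circ\iota$ need not be comparable, and it is exactly here that the infiniteness of $\kappa$ --- which renders $h$ irrelevant on the $\hat b$-part --- is used.
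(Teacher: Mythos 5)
Your proof is correct and follows essentially the same route as the paper's: dispose of the case where $\omega\smallsetminus F$ is finite, decompose $\aLc(b,h)$ as the conjunction $\aLc(b\circ\iota,h\circ\iota)\wedge\aLc(b\circ\iota',h\circ\iota')$, show the $F$-part dominates via Example \ref{1.46}(3) together with Theorem \ref{lcalcinfb}, and conclude with Theorem \ref{seqcompinv}. The only (immaterial) difference is that the paper compares the two conjuncts through the constant system $\aLc(\kappa,\cdot)$ while you compare them through $\aLc(b\circ\iota',1)$; both hinge on the same fact that $h$ is irrelevant on the infinite-valued part.
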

\begin{proof}
  If $\omega\smallsetminus F$ is finite, it is clear that $\aLc(b,h)\eqT\aLc(b\circ\iota,h\circ\iota)$. Otherwise, when $\omega\smallsetminus F$ is infinite, $\aLc(b,h)\eqT\aLc(b\circ\iota,h\circ\iota)\wedge\aLc(b\circ\iota',h\circ\iota')$ where $\iota':\omega\to\omega\smallsetminus F$ is the increasing enumeration of $\omega\smallsetminus F$. On the other hand, 
  $\balc_{b\circ\iota,h\circ\iota}\leq\balc_{\kappa,h\circ\iota}=\balc_{\kappa,h\circ\iota'}\leq\balc_{b\circ\iota',h\circ\iota'}$
  by Example \ref{1.46}(3) and Theorem \ref{lcalcinfb}. In a similar way, $\dalc_{b\circ\iota',h\circ\iota'}\leq\dalc_{b\circ\iota,h\circ\iota}$ Hence, the result follows by Theorem \ref{seqcompinv}.
\end{proof}

\newcommand{\minLc}{\mathrm{minLc}}
\newcommand{\supLc}{\mathrm{supLc}}
\newcommand{\supaLc}{\mathrm{supaLc}}
\newcommand{\minaLc}{\mathrm{minaLc}}

Now, we look at limits of localization and anti-localization cardinals.

\begin{definition}\label{DefminLc}
Define the following cardinal characteristics.
   \[\begin{array}{rlrl}
      \minLc:= & \min\{\blc_{b,\id_\omega}:b\in\omega^\omega\}, &
      \supLc:= & \sup\{\dlc_{b,\id_\omega}:b\in\omega^\omega\},\\
      \supaLc:= & \sup\{\balc_{b,1}:b\in\omega^\omega\}, &
      \minaLc:= & \min\{\dalc_{b,1}:b\in\omega^\omega\}.
   \end{array}\]
\end{definition}

\begin{lemma}\label{Lcsup}
   \begin{enumerate}[(a)]
       \item $\min\{\blc_{b,h}:b\in\omega^\omega\}=\minLc$ and $\sup\{\dlc_{b,h}:b\in\omega^\omega\}=\supLc$ when $h$ goes to infinity.
       \item $\sup\{\balc_{b,h}:b\in\omega^\omega\}=\supaLc$ and $\min\{\dalc_{b,h}:b\in\omega^\omega\}=\minaLc$.
   \end{enumerate}
\end{lemma}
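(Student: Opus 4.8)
The plan is to prove each equality by a pair of inequalities, using the monotonicity results from Example \ref{ExTukeyorder}(3) together with the characterizations from Theorem \ref{lcalcinfb}(b) for part (a) and Theorem \ref{lcalcinfb}(a) (via Remark \ref{Remhirrelevant}) for part (b). The point is that, in the definition of $\minLc$, $\supLc$, $\supaLc$ and $\minaLc$ we fixed a particular $h$ (namely $\id_\omega$ or the constant $1$), and we must show that allowing an arbitrary $h$ in the appropriate monotonicity range does not change the infimum or supremum.

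First I would handle part (a). Fix $h$ going to infinity. For the direction $\min\{\blc_{b,h}:b\in\omega^\omega\}\leq\minLc$: given any $b\in\omega^\omega$, by Lemma \ref{Locgeneric}-style reasoning — more directly, by Remark \ref{Remhirrelevant} or Theorem \ref{lcalcinfb}(b) applied to the countable-valued case, combined with Bartoszy\'nski's Theorem \ref{Barcharloc} — one has $\blc_{b,h}=\blc_{b,\id_\omega}$ whenever both $h$ and $\id_\omega$ go to infinity and $b\in\omega^\omega$; this is exactly the content of the ``in particular'' clause of Lemma \ref{lcalcinfb}(b) when $\lambda=\omega$, since there $\blc_{b,h}=\blc_{\omega,h}=\add(\Ncal)=\blc_{\omega,\id_\omega}=\blc_{b,\id_\omega}$. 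Hence the two families $\{\blc_{b,h}:b\in\omega^\omega\}$ and $\{\blc_{b,\id_\omega}:b\in\omega^\omega\}$ coincide, and so do their minima; likewise for the suprema of $\dlc_{b,h}$ and $\dlc_{b,\id_\omega}$. For part (b), the corresponding fact is the first assertion of Remark \ref{Remhirrelevant}: for $b\in\omega^\omega$ with $b(i)$ finite for all but finitely many $i$ (which is automatic here), and for $h$ in the range where $\balc_{b,h}$ is defined, we do not have $\balc_{b,h}=\balc_{b,1}$ in general. So I would instead argue: by Example \ref{ExTukeyorder}(2), $\balc_{b,h}\leq\balc_{b,1}$ and $\dalc_{b,1}\leq\dalc_{b,h}$ for every $b$; this immediately gives $\sup\{\balc_{b,h}:b\in\omega^\omega\}\leq\supaLc$ and $\minaLc\leq\min\{\dalc_{b,h}:b\in\omega^\omega\}$ (noting $h\geq^*1$ on the relevant set, which we may assume by Remark \ref{notrivLoc-aLoc}(2)).

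For the reverse inequalities in part (b), the key trick is to absorb the slalom-width function $h$ into the base function $b$ by blocking coordinates. Given $b\in\omega^\omega$ and $h\in\omega^\omega$ with $h(i)<|b(i)|$ for all but finitely many $i$ and $h\geq^*1$, partition $\omega$ into consecutive intervals $\langle I_n:n<\omega\rangle$ and set $b'(n):=\prod_{i\in I_n}b(i)$; by choosing the intervals so that $\prod_{i\in I_n}b(i)$ is large compared to $\prod_{i\in I_n}h(i)$ one gets, via a Tukey--Galois map analogous to the one in the (commented-out) Lemma \ref{aLckappa} or the one in the proof of Lemma \ref{aLocfin}, that $\aLc(b',1)\leqT\aLc(b,h)$, hence $\balc_{b,1}\le\balc_{b',1}$... but this goes the wrong way. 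The right move is: since $b\in\omega^\omega$, one has $\balc_{b,1}=\blc_{b,b-1}$ and $\balc_{b,h}=\blc_{b,b_h}$ for a suitable $b_h$ by Example \ref{ExTukeyorder}(2), so showing $\balc_{b,1}\le\balc_{b',h'}$ for some $b'\in\omega^\omega$ reduces to the localization case; alternatively and more cleanly, show directly that for every $b\in\omega^\omega$ there is $b^*\in\omega^\omega$ with $\balc_{b^*,h}\geq\balc_{b,1}$ (take $b^*(i):=b(i)^{h(i)}$ or a blocking thereof, so that an anti-localizing slalom of width $h$ for $b^*$ yields an eventually-different real for $b$). This last construction — producing, from $b$ and the prescribed $h$, a base $b^*\in\omega^\omega$ whose anti-localization number with parameter $h$ dominates $\balc_{b,1}$ — is the main obstacle, since it requires the right combinatorial encoding so that the Tukey--Galois maps actually go through with the width constraint $h(i)<|b^*(i)|$ respected. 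Once this is in hand, taking suprema over $b$ gives $\supaLc\leq\sup\{\balc_{b,h}:b\in\omega^\omega\}$, and dually $\min\{\dalc_{b,h}:b\in\omega^\omega\}\leq\minaLc$, completing (b); part (a) is then the analogous (and easier) argument using Theorem \ref{lcalcinfb}(b) and the equality $\blc_{\omega,h}=\add(\Ncal)$ for every $h$ going to infinity.
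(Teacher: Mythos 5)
Your part (a) contains a fatal error. You invoke Theorem \ref{lcalcinfb}(b) to conclude that $\blc_{b,h}=\blc_{\omega,h}=\add(\Ncal)=\blc_{b,\id_\omega}$ for every $b\in\omega^\omega$, but the hypothesis of that theorem is that $b(i)$ is \emph{infinite} for infinitely many $i$, which never holds for $b\in\omega^\omega$. Indeed the paper stresses, right after Lemma \ref{aLocfin}, that the finite-valued case is precisely the one \emph{not} covered by Theorem \ref{lcalcinfb}; and the identity $\blc_{b,h}=\add(\Ncal)$ for all $b\in\omega^\omega$ would contradict Theorem \ref{mainC}(b), which forces infinitely many cardinals $\blc_{b,h}$ to be pairwise different. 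So the two families $\{\blc_{b,h}:b\in\omega^\omega\}$ and $\{\blc_{b,\id_\omega}:b\in\omega^\omega\}$ need not coincide, and your argument for (a) collapses. What is actually true (and is what the paper proves) is that the \emph{minima} agree, via the same blocking device you sketch for (b): given $h\leq h'$ and $b\in\omega^\omega$, choose an interval partition $\la I_n:n<\omega\ra$ with $h(k)\geq h'(n)$ for $k\in I_n$ and set $b'(n):=\prod_{k\in I_n}b(k)$; then $x\mapsto\la x\frestr I_n:n<\omega\ra$ and $S\mapsto\big(k\mapsto\{s(k):s\in S(n)\}\big)$ witness $\Lc(b,h)\leqT\Lc(b',h')$, so $\minLc_{h'}\leq\blc_{b',h'}\leq\blc_{b,h}$; combined with the monotonicity $\minLc_h\leq\minLc_{h'}$ from Example \ref{ExTukeyorder}(3) this gives $\minLc_h=\minLc_{h'}$, and one concludes by taking $h'$ above both $h$ and $\id_\omega$.

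For part (b) your easy direction is fine, and the strategy you name for the hard direction (absorbing $h$ into $b$ by blocking, e.g.\ $b^*(i)=b(i)^{h(i)}$) is exactly the paper's. But you explicitly leave the Tukey connection as ``the main obstacle'' and do not carry it out, so the step is not yet a proof. It does go through, and cleanly: take $\la I_n:n<\omega\ra$ with $|I_n|=h(n)$ and $b'(n):=\prod_{k\in I_n}b(k)$. Given $S\in\Scal(b',h)$, since $|S(n)|\leq h(n)=|I_n|$ one may assign to each $t\in S(n)$ a distinct $k_t\in I_n$ and define $\varphi_-(S)(k_t):=t(k_t)$; with $\varphi_+(y):=\la y\frestr I_n:n<\omega\ra$, if $\varphi_+(y)(n)\in S(n)$ then $y(k)=\varphi_-(S)(k)$ for some $k\in I_n$, which is precisely $\aLc(b',h)\leqT\aLc(b,1)$, hence $\balc_{b,1}\leq\balc_{b',h}$ and $\dalc_{b',h}\leq\dalc_{b,1}$; taking sup/min over $b$ finishes (b). So: rewrite (a) along these lines (it is the same blocking argument, not a consequence of Theorem \ref{lcalcinfb}), and complete the verification in (b).
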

\begin{proof}
   Denote by $\minLc_h:= \min\{\blc_{b,h}:b\in\omega^\omega\}$, $\supLc_h:=  \sup\{\dlc_{b,h}:b\in\omega^\omega\}$, $\supaLc_h:= \sup\{\balc_{b,h}:b\in\omega^\omega\}$, and $\minaLc_h:=  \min\{\dalc_{b,h}:b\in\omega^\omega\}$. To see (a) it is enough to show the following.
   \begin{Claim}
      For any $b,h'\in\omega^\omega$ such that $h\leq h'$ there is some $b'\in\omega^\omega$ such that $\Lc(b,h)\leqT\Lc(b',h')$
   \end{Claim}
   \begin{proof}
      Choose an interval partition $\la I_n:-1\leq n<\omega\ra$ of $\omega$ such that $h(k)\geq h'(n)$ for all $k\in I_n$ (denote $h'(-1):=0$). Put $b'(n):=\prod_{k\in I_n}b(k)$. Define $\varphi_-:\prod b\to\prod b'$ by $\varphi_-(x):=\la x\frestr I_n:0\leq n<\omega\ra$, and define $\varphi_+:\Scal(b',h')\to\Scal(b,h)$ by $\varphi_+(S)(k)=\{s(k):s\in S(n)\}$ whenever $k\in I_n$ (put $S(-1):=\emptyset$). It is easy to show that $(\varphi_-,\varphi_+)$ is the required Tukey connection.
   \end{proof}
   Assume $h\leq h'$. The claim implies that $\minLc_{h'}\leq\blc_{b',h'}\leq\blc_{b,h}$ and $\dlc_{b,h}\leq\dlc_{b',h'}\leq\supLc_{h'}$ for any $b\in\omega$. On the other hand, $\minLc_h\leq\minLc_{h'}$ and $\supLc_{h'}\leq\supLc_h$ by Example \ref{ExTukeyorder}(3), so equality holds. Therefore, (a) follows by using an $h'$ above both $\id_\omega$ and $h$.

   Concerning item (b), we have

   \begin{Claim}
      For any $b,h\in\omega^\omega$ with $h\geq^* 1$ there is some $b'\in\omega^\omega$ such that $\aLc(b',h)\leqT\aLc(b,1)$.
   \end{Claim}
   \begin{proof}
      Let $\la I_n:n<\omega\ra$ be the interval partition of $\omega$ such that $|I_n|=h(n)$ for all $n<\omega$. Put $b'(n):=\prod_{k\in I_n}b(k)$. Define $\varphi_-:\Scal(b',h)\to\prod b$ such that, for any $S\in\Scal(b',h)$, $\varphi_-(S)$ satisfies that $\forall n<\omega\forall t\in S(n)\exists k\in I_n(\varphi_-(S)(k)=t(k))$ (which is fine because $|S(n)|\leq|I_n|$). On the other hand, define $\varphi_+:\prod b\to\prod b'$ by $\varphi_+(y):=\la y\frestr I_n:n<\omega\ra$. It is clear that $(\varphi_-,\varphi_+)$ is the Tukey connection we want.
   \end{proof}
   As in the proof of (a), the claim above can be used to prove (b).
\end{proof}

Other cardinals like $\sup\{\blc_{b,h}:b\in\omega^\omega\}$ are in principle not that interesting, for instance, this supremum above would be $\blc_{1,h}$, which is undefined, or at least if $b$ is restricted to be above $h$, then it would be $\blc_{h+1,h}$. Also, when $h$ does not go to infinity, $\minLc_h$ and $\supLc_h$ are easily characterized by Theorem \ref{Lochnotinfty}.

The following is another characterization of $\add(\Ncal)$ and $\cof(\Ncal)$ in terms of Localization cardinals for $b\in\omega^\omega$.

\begin{lemma}\label{bLoc(b,h)addN}
   $\add(\Ncal)=\min\{\bfrak,\minLc\}$ and $\cof(\Ncal)=\max\{\dfrak,\supLc\}$.
\end{lemma}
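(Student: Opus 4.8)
The plan is to reduce both equalities to Bartoszy\'nski's characterizations $\add(\Ncal)=\blc_{\omega,\id_\omega}$ and $\cof(\Ncal)=\dlc_{\omega,\id_\omega}$ (Theorem~\ref{Barcharloc}, valid since $\id_\omega$ goes to infinity), exploiting that for every $b\in\omega^\omega$ one has the trivial inclusion $\Scal(b,\id_\omega)\subseteq\Scal(\omega,\id_\omega)$ and hence $\Lc(b,\id_\omega)\leqT\Lc(\omega,\id_\omega)$ (a special case of Example~\ref{ExTukeyorder}(3), witnessed by the inclusion $\prod b\hookrightarrow\omega^\omega$ and by $\psi\mapsto\la\psi(n)\cap b(n):n<\omega\ra$). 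By Theorem~\ref{1.46.1} this already gives $\add(\Ncal)\leq\blc_{b,\id_\omega}$ and $\dlc_{b,\id_\omega}\leq\cof(\Ncal)$ for all such $b$, i.e.\ $\add(\Ncal)\leq\minLc$ and $\supLc\leq\cof(\Ncal)$; together with the standard inequalities $\add(\Ncal)\leq\bfrak$ and $\dfrak\leq\cof(\Ncal)$ (which also follow from $\Dbf\leqT\Lc(\omega,\id_\omega)$, witnessed by the identity and by $\psi\mapsto\la\max(\psi(n)\cup\{0\}):n<\omega\ra$), this yields the ``easy'' inequalities $\add(\Ncal)\leq\min\{\bfrak,\minLc\}$ and $\max\{\dfrak,\supLc\}\leq\cof(\Ncal)$.

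For $\add(\Ncal)\geq\min\{\bfrak,\minLc\}$ I would take any $F\subseteq\omega^\omega$ with $|F|<\min\{\bfrak,\minLc\}$ and show it is $\Lc(\omega,\id_\omega)$-bounded. Since $|F|<\bfrak$, fix an increasing $b\in\omega^\omega$ with $b\geq\id_\omega+1$ and $f\leq^*b$ for every $f\in F$; note $\blc_{b,\id_\omega}$ is defined because $b(i)>i$ for all $i$. For each $f\in F$ let $\bar f\in\prod b$ agree with $f$ wherever $f(i)<b(i)$ and be $0$ elsewhere, so $\bar f=^*f$. As $|F|<\minLc\leq\blc_{b,\id_\omega}$, the family $\{\bar f:f\in F\}$ is localized by a single $\psi\in\Scal(b,\id_\omega)$; since $\bar f=^*f$ and $\psi\in\Scal(\omega,\id_\omega)$, this $\psi$ localizes every $f\in F$. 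Hence $\add(\Ncal)=\bfrak(\Lc(\omega,\id_\omega))\geq\min\{\bfrak,\minLc\}$, and equality follows.

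For $\cof(\Ncal)\leq\max\{\dfrak,\supLc\}$ I would build an $\Lc(\omega,\id_\omega)$-dominating family of the right size. Fix a dominating family $D\subseteq\omega^\omega$ of size $\dfrak$ consisting of increasing functions $b\geq\id_\omega+1$, and for each $b\in D$ fix $\Phi_b\subseteq\Scal(b,\id_\omega)$ that is $\Lc(b,\id_\omega)$-dominating with $|\Phi_b|=\dlc_{b,\id_\omega}\leq\supLc$. Put $\Phi:=\bigcup_{b\in D}\Phi_b\subseteq\Scal(\omega,\id_\omega)$, so $|\Phi|\leq\dfrak\cdot\supLc=\max\{\dfrak,\supLc\}$. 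Given $f\in\omega^\omega$, pick $b\in D$ with $f\leq^*b$ and form $\bar f\in\prod b$ as above; some $\varphi\in\Phi_b$ localizes $\bar f$, hence localizes $f$, and $\varphi\in\Phi$. Thus $\Phi$ is $\Lc(\omega,\id_\omega)$-dominating, so $\cof(\Ncal)=\dlc_{\omega,\id_\omega}\leq\max\{\dfrak,\supLc\}$, and equality follows.

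The only points requiring care are bookkeeping ones rather than genuine obstacles: that the auxiliary function $b$ is chosen so that the relevant localization cardinal is defined (hence $b(i)>i$ everywhere), and that the passage from $f$ to $\bar f\in\prod b$ is harmless because $\in^*$ only sees eventual behavior ($\bar f=^*f$). The conceptual content is simply that bounding $F$ (resp.\ using a dominating family) lets one replace $\omega$-valued slaloms by $b$-valued slaloms through the embedding $\Scal(b,\id_\omega)\hookrightarrow\Scal(\omega,\id_\omega)$; this parallels the classical argument that $\add(\Ncal)$ is the minimum of $\bfrak$ and the ``slalom number'', and dually for $\cof(\Ncal)$.
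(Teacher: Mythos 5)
Your proposal is correct and follows essentially the same route as the paper: both directions are reduced to Bartoszy\'nski's characterization $\add(\Ncal)=\blc_{\omega,\id_\omega}$, $\cof(\Ncal)=\dlc_{\omega,\id_\omega}$, the easy inequalities come from the Tukey reductions $\Dbf\leqT\Lc(\omega,\id_\omega)$ and $\Lc(b,\id_\omega)\leqT\Lc(\omega,\id_\omega)$, and the substantive inequalities use a bounding function (resp.\ a dominating family) together with the finite-modification trick $\bar f=^*f$ to pass into $\prod b$, exactly as in the paper's argument.
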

\begin{proof}
    In this proof we use the characterization of $\add(\Ncal)$ and $\cof(\Ncal)$ given in Theorem \ref{1.6}

    Assume that $F\subseteq\omega^\omega$ and $|F|<\min\{\bfrak,\minLc\}$. Therefore, there is some $d\in\omega^\omega$ such that, for every $x\in F$, $\forall^\infty i<\omega(x(i)<d(i))$. On the other hand, as $|F|<\blc_{d,\id_\omega}$, we can find an slalom in $\Scal(d,\id_\omega)\subseteq\Scal(\omega,\id_\omega)$ that localizes all the reals in $F$ (just use a family $F'\subseteq\prod d$ of the same size as $F$ such that each member of $F'$ is a finite modification of some member of $F$ and viceversa). Therefore, $\add(\Ncal)\geq\min\{\bfrak,\minLc\})$.

    Now we prove $\cof(\Ncal)\leq\max\{\dfrak,\supLc\}$. Choose a dominating family $D\subseteq\omega^\omega$ of size $\dfrak$ and, for each $d\in D$, choose  a family of slaloms $S_d\subseteq\Scal(d,\id_\omega)$ that witnesses $\dlc_{d,\id_\omega}$. Note that $S:=\bigcup_{d\in D}S_d\subseteq\Scal(\omega,\id_\omega)$ has size $\leq\max\{\dfrak,\supLc\}$ and that every real in $\omega^\omega$ is localized by some slalom in $S$, so $\cof(\Ncal)\leq|S|$.

    It is easy to see that $\Dbf\leqT\Lc(\omega,\id_\omega)$, so
    $\add(\Ncal)\leq\bfrak$ and $\dfrak\leq\cof(\Ncal)$. On the other hand, as $\add(\Ncal)\leq\blc_{b,\id_\omega}$ and $\dlc_{b,\id_\omega}\leq\cof(\Ncal)$ for any $b\in\omega^\omega$ (see Example \ref{1.46}(3)), the converse inequalities follow.
\end{proof}

The version of this lemma for the anti-localization cardinals is Miller's \cite{Mi} known result $\add(\Mcal)=\min\{\bfrak,\minaLc\}$ and its dual $\cof(\Mcal)=\max\{\dfrak,\supaLc\}$, which is proved in Theorem \ref{2.14}. Miller also proved that $\non(\SNcal)=\minaLc$. At the end of Section \ref{SecMain} we explain why no inequality between each pair of these cardinals can be proved in ZFC .


\subsection{Additivity and cofinality of Yorioka ideals}

Though Corollary \ref{YorioaddN} and Theorem \ref{2.6} are stated in \cite{K1}, their proofs seem not to appear in any existing reference. We offer original proofs of both results, even more, we provide the following general result that gives us Corollary \ref{YorioaddN} as a direct consequence. This is also used in Section \ref{4.2} to prove that, consistently, $\add(\Ncal)<\add(\Ical_f)<\cof(\Ical_f)<\cof(\Ncal)$.



\begin{theorem}\label{2.4.1}
    Let $f\in\omega^\omega$ increasing, $b:\omega\to\omega+1\smallsetminus\{0\}$ and $h\in\omega^\omega$. If $2^f\ll b$ and $\exists l^*<\omega(h\leq^*\id_\omega^{l^*})$ then $\min\{\bfrak,\blc_{b,h}\} \leq \add(\mathcal{I}_{f})$  and $\cof(\mathcal{I}_{f}) \leq \max\{\dfrak,\dlc_{b,h}\}$.
\end{theorem}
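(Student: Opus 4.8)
\medskip

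The plan is to prove the single Tukey--Galois statement
\[
\langle\Ical_f,\Ical_f,\subseteq\rangle\leqT\bigl(\Dbf;\Lc(b,h)\bigr),
\]
where $(\Dbf;\Lc(b,h))$ is the sequential composition of Definition~\ref{1.47} (identifying $\Dbf$ with $\langle\omega^{\uparrow\omega},\omega^{\uparrow\omega},\leqs^*\rangle$, which are Tukey equivalent, so that the first coordinate can be managed via Lemma~\ref{lemosuga}). Granting this, Theorems~\ref{1.46.1} and~\ref{seqcompinv} together with Example~\ref{ExmCichonrelsys}(1) give at once $\min\{\bfrak,\blc_{b,h}\}=\bfrak(\Dbf;\Lc(b,h))\leqs\add(\Ical_f)$ and $\cof(\Ical_f)\leqs\dfrak(\Dbf;\Lc(b,h))=\dfrak\cdot\dlc_{b,h}=\max\{\dfrak,\dlc_{b,h}\}$, the product collapsing to the maximum since the cardinals are infinite. (The degenerate cases are trivial: if $h$ has infinitely many zeros both sides are vacuous; if $h\geqs^*1$ but $h\not\to\infty$, then by Theorem~\ref{Lochnotinfty} $\blc_{b,h}$ is finite and $\dlc_{b,h}\geqs\cfrak\geqs\cof(\Ical_f)$, the latter because $\Ical_f$ has a base of size $\leqs\cfrak$. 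So we may assume $h\geqs^*1$ and $h\to\infty$, though the construction below does not actually need this.)

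The construction hinges on an auxiliary truncation scheme attached to each $e\in\omega^{\uparrow\omega}$. Using $2^f\ll b$, for every $c<\omega$ let $N_c<\omega$ be least with $2^{f(i^c)}\leqs\abs{b(i)}$ for all $i\geqs N_c$, and set $c^e_i:=\min\{\,m^e_i+10,\ \max\{c\leqs i:N_c\leqs i\}\,\}$, where $m^e_i$ is the unique index with $e(m^e_i)\leqs i<e(m^e_i+1)$. Then $c^e_i\leqs m^e_i+10$ (so $f(i^{c^e_i})\leqs g^f_e(i)$), $2^{f(i^{c^e_i})}\leqs\abs{b(i)}$, and $c^e_i\to\infty$ as $i\to\infty$ (both terms of the minimum do). Fix an injection $\eta^e_i\colon 2^{f(i^{c^e_i})}\hookrightarrow b(i)$.

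Now define the connection. For $X\in\Ical_f$ choose $g\gg f$ and $\sigma\in(2^{<\omega})^\omega$ with $X\subseteq[\sigma]_\infty$ and $h_\sigma=g$, and choose $d_X\in\omega^{\uparrow\omega}$ with $g^f_{d_X}\leqs^*g$ (Lemma~\ref{lemosuga}); let $\varphi_-(X):=(d_X,F_X)$, where $F_X\colon\omega^{\uparrow\omega}\to\prod b$ sends $e$ with $d_X\leqs^*e$ (hence $g^f_e\leqs^*g$, so $f(i^{c^e_i})\leqs g^f_e(i)\leqs g(i)$ eventually, making the truncation meaningful) to the real $\langle\eta^e_i(\sigma(i)\frestr f(i^{c^e_i}))\rangle_{i<\omega}$, and to a fixed real otherwise. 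For $(e,\psi)\in\omega^{\uparrow\omega}\times\Scal(b,h)$, let $\tau_{e,\psi}\in(2^{<\omega})^\omega$ list, block by block in $i$, the strings $(\eta^e_i)^{-1}(v)\in 2^{f(i^{c^e_i})}$ for $v\in\psi(i)\cap\ran\eta^e_i$ (plus one default entry of that length per $i$, for totality), so the $i$-th block has constant entry-length $f(i^{c^e_i})$ and size $\leqs h(i)+1$; put $\varphi_+(e,\psi):=[\tau_{e,\psi}]_\infty$. Two checks remain. First, $h_{\tau_{e,\psi}}\gg f$: the blocks up to index $i$ occupy $\leqs\sum_{l\leqs i}(h(l)+1)\leqs i^{l^*+2}$ positions eventually, by $h\leqs^*\id_\omega^{l^*}$, so for each fixed $c$ and every position $j$ lying in a block of index $i$ large enough, $f(j^c)\leqs f(i^{(l^*+2)c})\leqs f(i^{c^e_i})=h_{\tau_{e,\psi}}(j)$, using $f$ increasing and $c^e_i\to\infty$; hence $[\tau_{e,\psi}]_\infty\in\Jcal_{h_{\tau_{e,\psi}}}\subseteq\Ical_f$. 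Second, if $\varphi_-(X)\sqsubset_;(e,\psi)$, i.e.\ $d_X\leqs^*e$ and $F_X(e)\in^*\psi$, then for any $x\in X\subseteq[\sigma]_\infty$ there are infinitely many $i$ with $\sigma(i)\subseteq x$, and for all but finitely many such $i$ we have $\eta^e_i(\sigma(i)\frestr f(i^{c^e_i}))\in\psi(i)$, so $x\frestr f(i^{c^e_i})=\sigma(i)\frestr f(i^{c^e_i})$ occurs as an entry of $\tau_{e,\psi}$; thus $x\in[\tau_{e,\psi}]_\infty$, i.e.\ $X\subseteq\varphi_+(e,\psi)$.

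The main obstacle is exactly the coupling in the previous paragraph: forcing a \emph{single} function $b$ to absorb the truncated pieces $\sigma(i)\frestr f(i^{c^e_i})$ while the decoded sequence $\tau_{e,\psi}$ still has $h_{\tau_{e,\psi}}\gg f$. This is what the three hypotheses are tuned to provide: $2^f\ll b$ gives, for each polynomial exponent $c$, room $2^{f(i^c)}$ inside $b(i)$ from some point on, which a diagonal choice turns into exponents $c^e_i\to\infty$ that still fit; $f$ increasing makes truncation merely enlarge $[\sigma]_\infty$ and allows the comparison $f(j^c)\leqs f(i^{c^e_i})$; and $h\leqs^*\id_\omega^{l^*}$ keeps the block positions polynomially bounded, so that $c^e_i\to\infty$ alone already beats every polynomial, yielding $\gg f$. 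The remaining ingredients (the domination reduction encoded in $d_X$, downward closure of $\Ical_f$, and the passage to cardinal invariants) are routine given Sections~\ref{SubsecInv} and~\ref{SubsecYideal}.
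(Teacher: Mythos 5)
Your proof is correct and takes essentially the same route as the paper: the same Tukey reduction $\langle\Ical_f,\Ical_f,\subseteq\rangle\leqT(\Dbf;\Lc(b,h))$, the same truncate-and-code mechanism exploiting $2^f\ll b$ to fit initial segments of $\sigma(i)$ into $b(i)$, and the same decoding of slaloms into sequences whose length function is $\gg f$ via the polynomial bound $h\leq^*\id_\omega^{l^*}$. The only difference is bookkeeping (your per-index injections $\eta^e_i$ with the diagonal exponents $c^e_i$ versus the paper's single bijection $r^*:\omega\to 2^{<\omega}$ and truncation at level $g^f_{d_X}$), plus the trivially fixable matter of defining $F_X(e)(i)$ at the finitely many $i$ where the truncation formula is not yet meaningful.
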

\begin{proof}
Note that $\exists l^*<\omega(h\leq^*\id_\omega^{l^*})$ is equivalent to $\exists l^*<\omega\forall^\infty n(\sum_{k\leq n}h(k)\leq n^{l^*})$. We use the latter assertion in this proof.

Fix a bijection $r^*:\omega\to2^{<\omega}$ such that, for any $i,j<\omega$, $i\leq j$ implies $|r^*_i|\leq|r^*_j|$. By Theorems \ref{1.6} and \ref{1.48}, it is enough to show that  $\langle \Ical_{f}, \Ical_{f}, \subseteq\rangle\leqT(\langle\omega^{\uparrow\omega},\omega^{\uparrow\omega},\leq^{*}\rangle;\Lc(b,h))$.

Fix $X\in \Ical_{f}$. There exists a $\sigma\in(2^{<\omega})^{\omega}$ with $h_{\sigma}\gg f$ such that  $X\subseteq [\sigma]_\infty$. Define $h_0\in\omega^\omega$ such that $h_{0}(i):=\min \{\lfloor\log_{2}b(i)\rfloor,h_{\sigma}(i)\}$. Note that $h_{0}\gg f+1$ because $2^f\ll b$. Hence, by Lemma \ref{lemosuga}, there is a  $d_{X}\in\omega^{\uparrow\omega}$ such that $g^f_{d_{X}}+1\leq^* h_{0}$. Define
\[\sigma_{X}(i):=\left\{\begin{array}{ll}
    \sigma(i)\frestr g^f_{d_{X}}(i) & \text{if $g^f_{d_{X}}(i)+1\leq h_0(i)$,} \\
    \la\ \ra & \text{otherwise.}
\end{array}\right.\]
Clearly $[\sigma]_\infty\subseteq [\sigma_{X}]_\infty$ and, for some $N_X<\omega$, $g^f_{d_{X}}(i)+1\leq \lfloor\log_{2}b(i)\rfloor$ for all $i\geq N_X$, so $2^{g^f_{d_X}(i)+1}\leq b(i)$. Therefore $2^{\leq g^f_{d_X}(i)}\subseteq\{r^*_j:j<b(i)\}$ for any $i\geq N_X$. Now, define $F_{X}:\omega^{\uparrow \omega} \to  \prod b$ by
\[F_X(e)(i):=\left\{\begin{array}{ll}
 j & \text{if $g^f_{e}(i) \leq  g^f_{d_X}(i)$, $i\geq N_X$, and $r^*_j:=\sigma_{X}(i)\frestr g^f_{e}(i)$,}\\
0 & \text{otherwise.}
\end{array}\right.\]
Note that, if $e\geq^* d_X$, then $[r^{**}_{F_X(e)}]_\infty\in \Jcal_{g_{e}^{f}}\subseteq\Ical_f$ where $r^{**}_{F_X(e)}:=\la r^*_{F_X(e)(i)}:i<\omega\ra$. Besides $[\sigma_X]_\infty\subseteq[r^{**}_{F_X(e)}]_\infty$. Define $\varphi_{-}:\Ical_{f} \to  \omega^{\uparrow \omega}\times (\prod b)^{\omega^{\uparrow\omega}}$ by $\varphi_-(X):=(d_{X},F_{X})$.

Fix $e\in \omega^{\uparrow\omega}$ and $S\in\Scal(b,h)$. Put $S_e(n)=\{j\in S(n):|r^*_j|=g_{e}^{f}(n)\}$ for each $n<\omega$. If $\exists^{\infty}{n<\omega}(S_e(n)\neq \emptyset)$  find the interval partition $\langle I^{S_e}_n : n<\omega\rangle$ of $\omega$ such that $|I^{S_e}_n|=|S_e(n)|$ and enumerate $S_e(n)=\{j^{S_e}_{i}:i \in I^{S_e}_n\}$. Define $\tau_{e,S} \in (2^{<\omega})^\omega$ by $\tau_{e,S}(i):=r^*_{j^{S_e}_{i}}$ when $i\in I^{S_e}_{n}$. We show that $[\tau_{e,S}]_\infty\in\Ical_f$, that is, $h_{\tau_{e,S}}\gg f$. Choose $l^*<\omega$ such that $\forall^\infty n(\sum_{k\leq n}h(k)\leq n^{l^*})$ and fix $k<\omega$. Note that $\forall n\geq e(l^*)(f(n^{l^*})\leq g_{e}^{f}(n))$. For $i \in I^{S_e}_n$ with $n\geq e(l^*)$ large enough, $i<\sum_{k\leq n}|S_e(k)|\leq \sum_{k\leq n}h(k)\leq n^{l^*}$. Hence, for all $i\in I^{S_e}_n$ with $n\geq e(k\cdot l^*)$ large enough, $h_{\tau_{e,S}}(i)=|\tau_{e,S}(i)|=g^{f}_{e}(n)\geq f(n^{k\cdot l^*})\geq f(i^k)$.

Define $\varphi_{+}:\omega^{\uparrow\omega}\times \prod b\to \Ical_{f}$ by $\varphi_+(e,S):=[\tau_{e,S}]_\infty$ if $\exists^{\infty}{n<\omega}(S_e(n)\neq \emptyset)$, or $\varphi_+(e,S):=\emptyset$ otherwise. It remains to show that, if $X\in\Ical_f$, $e\in\omega^{\uparrow\omega}$, $S\in\Scal(b,h)$, $d_{X}\leq^{*}e$, and $F_X(e)\in^{*}S$, then $X\subseteq [\tau_{e,S}]_\infty$. For large enough $n<\omega$, as $F_X(e)(n)\in S(n)$,  $F_X(e)(n)=j^{S_e}_{i_n}$ for some $i_n\in I_n^{S_e}$ by the definition of $S_e(n)$. This implies that $[r^{**}_{F_X(e)}]_\infty\subseteq[\tau_{e,s}]_\infty$. On the other hand, $d_X\leq^* e$ implies that $X\subseteq[\sigma_X]_\infty\subseteq[r^{**}_{F_X(e)}]_\infty$, so $X\subseteq[\tau_{e,s}]_\infty$.
\end{proof}

\begin{Corol}\label{YorioaddN}
   If $f\in\omega^\omega$ is increasing then $\add(\Ncal)\leq\add(\Ical_f)$ and $\cof(\Ical_f)\leq\cof(\Ncal)$.
\end{Corol}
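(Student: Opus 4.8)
The plan is to read the corollary off Theorem \ref{2.4.1} by instantiating its parameters $b,h$ so that the localization cardinals $\blc_{b,h}$, $\dlc_{b,h}$ reduce to Bartoszy\'nski's characterizations of $\add(\Ncal)$ and $\cof(\Ncal)$. First I would set $h:=\id_\omega$, so that $h\leq^*\id_\omega^{1}$ and hence the hypothesis ``$\exists l^*<\omega(h\leq^*\id_\omega^{l^*})$'' holds with $l^*=1$ (and $h$ goes to infinity). For $b$, use Lemma \ref{lemosuga} to fix some $g\gg f$ --- for instance $g:=g^f_{\id_\omega}$, noting $\id_\omega\in\omega^{\uparrow\omega}$ --- and put $b:=2^g\in\omega^\omega$, which indeed takes values in $\omega\smallsetminus\{0\}$. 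Then $2^f\ll b$: unwinding $f\ll g$ gives $\forall k\,\forall^\infty n\,(f(n^k)\leq g(n))$, so $2^{f(n^k)}\leq 2^{g(n)}$, that is $2^f\ll 2^g=b$.

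With these choices Theorem \ref{2.4.1} yields $\min\{\bfrak,\blc_{b,\id_\omega}\}\leq\add(\Ical_f)$ and $\cof(\Ical_f)\leq\max\{\dfrak,\dlc_{b,\id_\omega}\}$. Now, since $b\in\omega^\omega$ we have $|b(i)|\leq\omega$ for all $i$, so Example \ref{ExTukeyorder}(3) gives $\Lc(b,\id_\omega)\leqT\Lc(\omega,\id_\omega)$, whence $\blc_{\omega,\id_\omega}\leq\blc_{b,\id_\omega}$ and $\dlc_{b,\id_\omega}\leq\dlc_{\omega,\id_\omega}$. By Theorem \ref{1.6} (applicable because $\id_\omega$ goes to infinity) these read $\add(\Ncal)\leq\blc_{b,\id_\omega}$ and $\dlc_{b,\id_\omega}\leq\cof(\Ncal)$. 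Combining with the classical bounds $\add(\Ncal)\leq\bfrak$ and $\dfrak\leq\cof(\Ncal)$ --- which also drop out of $\Dbf\leqT\Lc(\omega,\id_\omega)$, as in the proof of Lemma \ref{bLoc(b,h)addN} --- we obtain $\add(\Ncal)\leq\min\{\bfrak,\blc_{b,\id_\omega}\}\leq\add(\Ical_f)$ and $\cof(\Ical_f)\leq\max\{\dfrak,\dlc_{b,\id_\omega}\}\leq\cof(\Ncal)$, as desired.

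There is no genuine obstacle here; the only spot that calls for a moment's care is the choice of $b$. It must be fast-growing enough that $2^f\ll b$ --- so $b=2^f$ itself fails for increasing $f$ --- while remaining in $\omega^\omega$ so that the Tukey comparison $\Lc(b,\id_\omega)\leqT\Lc(\omega,\id_\omega)$ stays available and keeps $\blc_{b,\id_\omega},\dlc_{b,\id_\omega}$ pinned between $\add(\Ncal)$ and $\cof(\Ncal)$. Taking $b=2^g$ for any $g\gg f$ does the job. One could instead let $b$ be the constant function $\omega$, making $2^f\ll b$ automatic and invoking Theorem \ref{1.6} verbatim, but the choice $b=2^g$ keeps the whole argument inside $\omega^\omega$.
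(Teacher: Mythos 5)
Your proposal is correct and is essentially the paper's argument: the paper proves this corollary by applying Theorem \ref{2.4.1} with $h=\id_\omega$ and $b=\omega$ (the constant function), which is exactly the alternative you mention in your last sentence and which makes $2^f\ll b$ automatic and lets Theorem \ref{1.6} identify $\blc_{\omega,\id_\omega}$ and $\dlc_{\omega,\id_\omega}$ with $\add(\Ncal)$ and $\cof(\Ncal)$ directly. Your choice $b=2^g$ with $g\gg f$ also works and is verified correctly, at the cost of the extra Tukey comparison $\Lc(b,\id_\omega)\leqT\Lc(\omega,\id_\omega)$.
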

\begin{proof}
   Apply Theorem \ref{2.4.1} with $b=\omega$ and $h=\id_\omega$.
\end{proof}

In the previous result we did not use the fact that $\Ical_f$ is closed under unions, so it implies that $\Ical_f$ is a $\sigma$-ideal (see Theorem \ref{Yoriosigmaid}).

\begin{theorem}[Kamo and Osuga \cite{kamo-osuga}]\label{2.5}
$\add(\mathcal{I}_f)\leq \bfrak$ and $\dfrak\leq \cof(\mathcal{I}_f)$. Even more, $\langle\omega^\omega,\omega^\omega,\leq^*\rangle\preceq_{\mathrm{T}}\langle\Ical_{f},\Ical_{f},\subseteq\rangle.$
\end{theorem}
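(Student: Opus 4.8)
The plan is to establish the Tukey--Galois connection $\Dbf=\langle\omega^\omega,\omega^\omega,\leq^*\rangle\leqT\langle\Ical_f,\Ical_f,\subseteq\rangle$; by Theorem~\ref{1.46.1} this yields $\add(\Ical_f)=\bfrak\langle\Ical_f,\Ical_f,\subseteq\rangle\leq\bfrak(\Dbf)=\bfrak$ and $\dfrak=\dfrak(\Dbf)\leq\dfrak\langle\Ical_f,\Ical_f,\subseteq\rangle=\cof(\Ical_f)$, using that $\Ical_f$ is a (downward closed) $\sigma$-ideal (Theorem~\ref{Yoriosigmaid}) together with Example~\ref{ExmCichonrelsys}(1). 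Since $\Dbf$ is Tukey equivalent to $\langle\omega^{\uparrow\omega},\omega^{\uparrow\omega},\leq^*\rangle$ (send $x$ to a strictly increasing majorant, with the identity in the other direction), it suffices to produce maps $\varphi_-:\omega^{\uparrow\omega}\to\Ical_f$ and $\varphi_+:\Ical_f\to\omega^{\uparrow\omega}$ with the property that $\varphi_-(d)\subseteq X$ implies $d\leq^*\varphi_+(X)$.

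For $\varphi_+$: given $X\in\Ical_f$, fix (by choice) a witness $\sigma^X\in(2^{<\omega})^\omega$ with $h_{\sigma^X}\gg f$ and $X\subseteq[\sigma^X]_\infty$, and then, by Lemma~\ref{lemosuga}, a $d^X\in\omega^{\uparrow\omega}$ with $g^f_{d^X}\leq^* h_{\sigma^X}$; set $\varphi_+(X):=d^X$, so that $X\in\Jcal_{g^f_{d^X}}$. For $\varphi_-$: I would attach to each $d\in\omega^{\uparrow\omega}$ a ``canonical witness'' $W_d\in\Jcal_{g^f_d}\subseteq\Ical_f$ which is maximally fat at level $d$, i.e. such that $W_d\in\Jcal_{g^f_c}$ forces $d\leq^* c$. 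Concretely $W_d$ would be a \emph{cube set} $\prod_k 2^{[a^d_k,b^d_k)}\subseteq 2^\omega$ (coordinates outside $\bigcup_k[a^d_k,b^d_k)$ frozen to $0$), equivalently encoded by the nondecreasing counting function $Q_d(\ell):=|\{i<\ell:i\text{ lies in some }[a^d_k,b^d_k)\}|$, with the active intervals tuned to $g^f_d$ so that $\sum_n 2^{-Q_d(g^f_d(n))}$ just barely diverges. The key tool is the observation that, $W_d$ being compact and nowhere dense, $W_d\subseteq[\tau]_\infty$ with $h_\tau=g$ holds iff for every $N$ the cylinders $[\tau(n)]$ with $n\geq N$ cover $W_d$, which, by a tiling argument on the tree of $W_d$, is equivalent to $\sum_n 2^{-Q_d(g(n))}=\infty$. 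Granting this, $W_d\in\Jcal_{g^f_c}$ iff $\sum_n 2^{-Q_d(g^f_c(n))}=\infty$, and the active intervals must be chosen so that this divergence is equivalent to $g^f_c\leq^* g^f_d$, i.e. to $d\leq^* c$.

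With these pieces in hand the verification is routine: if $W_d=\varphi_-(d)\subseteq X$, then $W_d\subseteq X\subseteq[\sigma^X]_\infty$ with $g^f_{d^X}\leq^* h_{\sigma^X}$, hence $[\sigma^X]_\infty\in\Jcal_{g^f_{d^X}}$ and so $W_d\in\Jcal_{g^f_{d^X}}$; by the design of $W_d$ this gives $d\leq^* d^X=\varphi_+(X)$, establishing the Tukey connection.

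\textbf{Main obstacle.} The crux is the last clause of the construction: arranging the active intervals of $W_d$ so that divergence of $\sum_n 2^{-Q_d(g^f_c(n))}$ captures $d\leq^* c$ \emph{exactly} — in particular ruling out that a $c$ which is $\leq^*$-incomparable to $d$ (longer than $d$ on some blocks, shorter on others) still produces divergence, while keeping $\sum_n 2^{-Q_d(g^f_d(n))}=\infty$ so that $W_d\in\Ical_f$. This is precisely where the super-polynomial per-block schedule $g^f_d(n)=f(n^{k+10})$ for $n\in[d(k),d(k+1))$ is essential: on a block where $c$ is shorter than $d$, the argument of $f$ is inflated by a polynomial factor, which should collapse that block's contribution to the sum and force convergence unless $c$ dominates $d$ on a cofinite set of blocks. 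I expect balancing the cube sizes $b^d_k-a^d_k$ against $f$ and the $n^{k+10}$ schedule to be the technical heart of the argument; the compactness/tiling characterization of membership in $\Jcal_g$ is comparatively standard.
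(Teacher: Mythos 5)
The reduction to exhibiting $\varphi_-:\omega^{\uparrow\omega}\to\Ical_f$ and $\varphi_+:\Ical_f\to\omega^{\uparrow\omega}$ with $\varphi_-(d)\subseteq X\Rightarrow d\leq^*\varphi_+(X)$ is fine, and so is your $\varphi_+$. The gap is exactly at the point you flag as the crux, and it is not merely technical: the object you need does not exist. Because your $\varphi_+(X)$ is an arbitrary output of Lemma~\ref{lemosuga} applied to an arbitrary witness of $X\in\Ical_f$, your verification genuinely requires ``$W_d\in\Jcal_{g^f_c}\Rightarrow d\leq^* c$'' for \emph{every} $c\in\omega^{\uparrow\omega}$. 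Granting your (correct) tiling characterization, this amounts to finding a non-decreasing $Q_d:\omega\to\omega$ with $\sum_n2^{-Q_d(g^f_d(n))}=\infty$ but $\sum_n2^{-Q_d(g^f_c(n))}<\infty$ whenever $d\not\leq^*c$. Now take any $d$ with $d(k+1)\geq d(k)+2$ for all $k$ and set $c(0)=0$, $c(k)=d(k)-1$ for $k\geq1$: then $c\in\omega^{\uparrow\omega}$, $d\not\leq^*c$, yet $g^f_c(n)=g^f_d(n)$ for every $n$ except the block-ends $n=d(k+1)-1$. Since $Q_d\circ f$ is non-decreasing, the block-end term $2^{-Q_d(f((d(k+1)-1)^{k+10}))}$ is the smallest term of block $k$ of the $d$-sum and is majorized by the term at $n=d(k+1)-2$, which survives unchanged into the $c$-sum; hence
\[\sum_n2^{-Q_d(g^f_d(n))}\;\leq\;\sum_n2^{-Q_d(g^f_c(n))}+\sum_k2^{-Q_d(g^f_d(d(k+1)-1))}\;\leq\;2\sum_n2^{-Q_d(g^f_c(n))},\]
so divergence of the $d$-sum forces divergence of the $c$-sum. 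Thus no cube set (indeed no set whose membership in $\Jcal_g$ is governed by divergence of $\sum_n2^{-Q(g(n))}$ for a monotone $Q$) can be ``maximally fat at level $d$'' in your sense, and the final step of your verification collapses.

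What is true, and what one should aim for instead, is the weaker property that for each $X\in\Ical_f$ the set $\{d:\varphi_-(d)\subseteq X\}$ is $\leq^*$-bounded; one then defines $\varphi_+(X)$ as such a bound rather than through a witness $\sigma^X$. Your measure-theoretic cubes are poorly adapted even to this weaker goal, because containment of a positive-measure compact set in $[\tau]_\infty$ only yields the summability constraint above, which (as the computation shows) cannot separate $g^f_c$ from $g^f_d$ when they differ at one coordinate per block. The argument of Kamo and Osuga instead takes $\varphi_-(d)=[\sigma_d]_\infty$ with the finite sequences $\sigma_d(n)$ chosen pairwise incompatible/combinatorially spread out, so that $[\sigma_d]_\infty\subseteq[\tau]_\infty$ forces, for almost all $n$, some $\tau(m)$ to extend into $[\sigma_d(n)]$ and hence pins $h_\tau$ down by a counting/pigeonhole constraint rather than a summability one. (Note that the paper only cites this theorem from \cite{kamo-osuga} and contains no proof of it, so there is nothing internal to compare against.)
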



\begin{theorem}\label{2.6}
Let $f,f^{\prime}\in \omega^\omega$ be increasing. If $\forall^{\infty}{n<\omega}(f(n+1)-f(n)\leq f^{\prime}(n+1)-f^{\prime}(n))$ then $\add(\Ical_{f})\geq \add(\Ical_{f^{\prime}})$ and $\cof(\Ical_{f})\leq\cof(\Ical_{f^{\prime}})$. In particular  $\add(\Ical_{f})\leq \add(\Ical_{\mathrm{id}_{\omega}})$ and $\cof(\Ical_{\mathrm{id}_{\omega}})\leq\cof(\Ical_f)$.
\end{theorem}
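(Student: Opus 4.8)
The plan is to establish the Tukey--Galois reduction
\[\langle\Ical_f,\Ical_f,\subseteq\rangle\leqT\langle\Ical_{f'},\Ical_{f'},\subseteq\rangle,\]
from which Theorem \ref{1.46.1} yields at once $\cof(\Ical_f)\leq\cof(\Ical_{f'})$ and $\add(\Ical_{f'})\leq\add(\Ical_f)$, i.e.\ exactly the two asserted inequalities; the ``in particular'' clause is then just the instance $f:=\id_\omega$ (note $\id_\omega(n+1)-\id_\omega(n)=1\leq f'(n+1)-f'(n)$ whenever $f'$ is increasing). First I would normalize: since $\Ical_g$ is unchanged by finite modifications of $g$ (because $g=^*g'$ implies $g\ll k\Leftrightarrow g'\ll k$) and by adding a natural constant (Lemma \ref{f+c=f}), one may assume $f(0)=f'(0)=0$ and, writing $\Delta g(n):=g(n+1)-g(n)$, that $\Delta f(n)\leq\Delta f'(n)$ for \emph{all} $n<\omega$ (so also $f\leq f'$, as $f(n)=\sum_{k<n}\Delta f(k)\leq\sum_{k<n}\Delta f'(k)=f'(n)$).

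The core of the argument is a reblocking. For each $n$ reserve the first $\Delta f(n)$ positions of the $n$-th ``$f'$-block'' $[f'(n),f'(n+1))$ (legitimate since $\Delta f(n)\leq\Delta f'(n)$), let $P\subseteq\omega$ be the resulting infinite set of reserved positions, so that $|P\cap f'(n)|=\sum_{k<n}\Delta f(k)=f(n)$ for every $n$, and let $\theta\colon\omega\to P$ be its increasing enumeration. Put $\pi\colon 2^\omega\to 2^\omega$, $\pi(y):=y\circ\theta$, and $\lambda\colon 2^\omega\to 2^\omega$, $\lambda(x)(\theta(i)):=x(i)$ and $\lambda(x)(j):=0$ for $j\notin P$; then $\pi\circ\lambda=\mathrm{id}$. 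For $\sigma\in(2^{<\omega})^\omega$ define its \emph{stretch} $\sigma^{\uparrow}$ by $|\sigma^{\uparrow}(n)|:=\min\{L:|P\cap L|=h_\sigma(n)\}$, with $\sigma^{\uparrow}(n)(\theta(i)):=\sigma(n)(i)$ for $i<h_\sigma(n)$ and $0$ on the remaining positions below that length, and its \emph{compression} $\sigma^{\downarrow}$ by $|\sigma^{\downarrow}(n)|:=|P\cap h_\sigma(n)|$ and $\sigma^{\downarrow}(n)(i):=\sigma(n)(\theta(i))$. Now set $\varphi_-\colon\Ical_f\to\Ical_{f'}$ by $\varphi_-(X):=[\sigma_X^{\uparrow}]_\infty$, where $\sigma_X$ is a fixed witness of $X\in\Ical_f$ (i.e.\ $X\subseteq[\sigma_X]_\infty$ and $h_{\sigma_X}\gg f$), and $\varphi_+\colon\Ical_{f'}\to\Ical_f$ by $\varphi_+(Y):=\pi[Y]$.

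Three things remain to check. (i) \emph{$\varphi_-$, $\varphi_+$ are well defined.} This reduces to: $h_\sigma\gg f$ implies $h_{\sigma^{\uparrow}}\gg f'$, and $h_\tau\gg f'$ implies $h_{\tau^{\downarrow}}\gg f$. For the first, if $m(n)$ is the largest $m$ with $f(m)\leq h_\sigma(n)$ then, using $|P\cap f'(m)|=f(m)$, one gets $|\sigma^{\uparrow}(n)|\geq f'(m(n)-1)$, while $h_\sigma\gg f$ forces $m(n)\geq n^{k+1}$ eventually, so $|\sigma^{\uparrow}(n)|\geq f'(n^{k})$ eventually, for every $k$; the second is analogous and easier, from $|P\cap f'(n^{k})|=f(n^{k})$. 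Hence $[\sigma_X^{\uparrow}]_\infty\in\Jcal_{h_{\sigma_X^{\uparrow}}}\subseteq\Ical_{f'}$, and for any $\tau$ with $Y\subseteq[\tau]_\infty$ and $h_\tau\gg f'$ we get $\pi[Y]\subseteq[\tau^{\downarrow}]_\infty\in\Jcal_{h_{\tau^{\downarrow}}}\subseteq\Ical_f$, so $\pi[Y]\in\Ical_f$. (ii) \emph{$\lambda[[\sigma]_\infty]\subseteq[\sigma^{\uparrow}]_\infty$ and $\pi[[\tau]_\infty]\subseteq[\tau^{\downarrow}]_\infty$}, both immediate from the definitions since $\sigma(n)\subseteq x$ forces $\sigma^{\uparrow}(n)\subseteq\lambda(x)$ and $\tau(n)\subseteq y$ forces $\tau^{\downarrow}(n)\subseteq\pi(y)$. (iii) \emph{The Tukey inequality:} if $\varphi_-(X)\subseteq Y$ then $\lambda[X]\subseteq[\sigma_X^{\uparrow}]_\infty\subseteq Y$ by (ii), whence $X=\pi[\lambda[X]]\subseteq\pi[Y]=\varphi_+(Y)$, as required. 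I expect the only delicate point to be the pair of ``$\gg$'' transfers in (i) --- in particular obtaining $h_{\sigma^{\uparrow}}\gg f'$ and not merely $\gg f$, which is precisely where the hypothesis on the successive differences enters --- where one must mind the edge cases in estimating $|\sigma^{\uparrow}(n)|$; the rest is bookkeeping, and no use is made of $\Ical_f$ being an ideal.
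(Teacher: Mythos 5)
Your proof is correct, and it takes a genuinely different route from the paper's. The paper does not build a direct morphism: its witness for $X\in\Ical_f$ is normalized to have height function $g^f_{d_X}$, and the padding $0$'s are inserted at positions depending on an auxiliary $e\in\omega^{\uparrow\omega}$ dominating $d_X$, so what is actually proved there is the sequential-composition reduction $\la\Ical_f,\Ical_f,\subseteq\ra\leqT(\la\omega^{\uparrow\omega},\omega^{\uparrow\omega},\leq^*\ra;\la\Ical_{f'},\Ical_{f'},\subseteq\ra)$, which must then be combined with Kamo--Osuga's $\Dbf\leqT\la\Ical_{f'},\Ical_{f'},\subseteq\ra$ (Theorem \ref{2.5}) and Theorem \ref{1.48} to absorb the extra dominating factor. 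You instead freeze the insertion positions once and for all via the set $P$ with $|P\cap f'(n)|=f(n)$, making the stretch and compression independent of any dominating parameter, and so obtain the formally stronger, self-contained reduction $\la\Ical_f,\Ical_f,\subseteq\ra\leqT\la\Ical_{f'},\Ical_{f'},\subseteq\ra$ with no appeal to Theorem \ref{2.5}. The cost is exactly the two $\gg$-transfers you single out, and they do go through: for all but finitely many $n$ one has $h_{\sigma^{\uparrow}}(n)=\theta(h_\sigma(n)-1)+1\geq f'(m-1)+1$, where $m$ is largest with $f(m)\leq h_\sigma(n)$, and $h_\sigma\gg f$ gives $m\geq n^{k+1}$ eventually, whence $h_{\sigma^{\uparrow}}(n)\geq f'(n^{k+1}-1)\geq f'(n^k)$; the compression direction is immediate from $|P\cap f'(n^k)|=f(n^k)$. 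The only point to tidy is the normalization: adding a constant to $f'$ (Lemma \ref{f+c=f}) arranges $f(0)\leq f'(0)$ but does not by itself repair the finitely many $n$ where the difference inequality fails, so you should also replace $f$ on an initial segment by a finite modification with unit increments (this leaves $\Ical_f$ unchanged, since $g\gg f$ depends only on the $=^*$-class of an increasing $f$); the paper's own normalization is equally terse here, so this is a cosmetic issue rather than a gap.
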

\begin{proof}
To fix some notation, for each $h\in\omega^\omega$ define $\Delta h(0):= h(0)$ and $\Delta h(n+1):=h(n+1)-h(n)$ for all $n<\omega$.
By Lemma \ref{f+c=f}, we can assume wlog $f(0)\leq f^{\prime}(0)$ and $\forall{n<\omega}(f(n+1)-f(n)\leq f'(n+1)-f'(n))$, that is, $\Delta f(n)\leq\Delta f'(n)$ for all $n<\omega$. Clearly, $m<n$ implies $f(n)-f(m)\leq f'(n)-f'(m)$ (it can be easily proved by induction on $n$).
As a result $f\leq f^{'}$, $\Delta g_{d}^{f}(n)\leq\Delta g_{d}^{f'}(n)$ for all $n<\omega$ and $g_d^f\leq g_d^{f'}$ for any $d\in\omega^{\uparrow\omega}$.

By Theorems \ref{1.48} and \ref{2.5}, it is enough to show $\la\Ical_{f},\Ical_{f},\subseteq\ra\leqT(\mathbf{D}';
\Bbf)$ where $\mathbf{D}':=\langle\omega^{\uparrow\omega},\omega^{\uparrow\omega},\leq^{*}\rangle$ and $\Bbf:=\langle\Ical_{f'},\Ical_{f'},\subseteq\rangle$. 

Fix $X\in\Ical_{f}$. There are $d_{X}\in\omega^{\uparrow\omega}$ and $\sigma_{X}\in(2^{<\omega})^{\omega}$ such that $X\subseteq[\sigma_{X}]_\infty$ and $h_{\sigma_{X}}=g^{f}_{d_{X}}$. For each $e\in\omega^{\uparrow\omega}$ and $n<\omega$, define $\tau_{X,e}(n)\in2^{g_e^{f'}(n)}$ according to the following cases: if $g_e^f(n)\leq d^f_{X}(n)$, put (\footnote{Intuitively, $\tau_{X,e}(n)$ is the sequence formed by inserting blocks of $0$'s inside $\sigma_X(n)$ in the following way: for each $k\leq n$, insert a block of $0$'s of length $\Delta g^{f'}_e(k)-\Delta g^{f}_e(k)$ between $\sigma_X(n)(g^f_e(k)-1)$ and $\sigma_X(n)(g^f_e(k))$ (when $k=0$ and $g^f_e(0)=0$, just insert the corresponding block behind $\sigma_X(n)(0)$, and when $k=n$, just insert the block after $\sigma_X(n)(g^f_e(n)-1)$).})
\[\tau_{X,e}(n)(i):=\left\{\begin{array}{ll}
            \sigma_X(n)(i-g^{f'}_e(m-1)+g^f_e(m-1)) & \text{if $0\leq i-g^{f'}_e(m-1)<\Delta g^f_e(m)$}\\
             & \text{for some $m\leq n$,}\\
            0 & \text{otherwise}
\end{array}\right.\]
(consider $g^{f}_{e}(-1)=g^{f'}_{e}(-1):=0$ and also note that such $m$ is unique with respect to $i$ because $g^{f'}_e(m-1)\leq i <\Delta g^f_e(m)+g^{f'}_e(m-1)\leq g^{f'}_e(m)$),
otherwise $\tau_{X,e}(n)(i)$ is just some fixed member of $2^{g_{e}^{f'}(n)}$. Note that $h_{\tau_{X,e}}=g_{e}^{f'}$, so $[\tau_{X,e}]_\infty\in\Jcal_{g_{e}^{f'}}\subseteq \Ical_{f'}$. Define $F_X:\omega^{\uparrow\omega}\to\Ical_{f'}$ by $F_{X}(e):=[\tau_{X,e}]_\infty$ and
$\varphi_{-}:\Ical_{f} \to \omega^{\uparrow \omega}\times (\Ical_{f'})^{\omega^{\uparrow\omega}}$ by $\varphi_-(X):=(d_{X},F_{X})$.

Fix $e\in\omega^{\uparrow\omega}$ and $Y\in\Ical_{f'}$. There are $e^{*}\geq e$ in $\omega^{\uparrow\omega}$ and $\tau'_{Y,e}\in (2^{<\omega})^{\omega}$ with $h_{\tau'_{Y,e}}=g_{e^{*}}^{f'}$ such that $Y\subseteq[\tau'_{Y,e}]_\infty$. Define $\rho_{Y,e}:\omega\to2^{<\omega}$ by $\rho_{Y,e}(n)(i)=\tau'_{Y,e}(n)(i+g^{f'}_e(m-1)-g^f_e(m-1))$ when $g^f_e(m-1)\leq i<g^{f}_e(m)$ (\footnote{Intuitively, $\rho_{Y,e}(n)$ is the sequence that results by cutting from $\tau'_{Y,e}(n)$ the blocks that are indexed by $[g^{f'}_e(k-1)+\Delta g^f_e(k),g^{f'}_e(k))$ (for $k<\omega$).}). Note that
\[|\rho_{Y,e}(n)|=
\begin{cases}
g^{f^{}}_{e}(m)  & \text{if $g^{f^{\prime}}_{e^{*}}(n)\geq g^{f^{\prime}}_{e}(m-1)+\Delta g^{f^{}}_{e}(m)$}\\
g^{f}_{e}(m-1)+g^{f'}_{e^{*}}(n)-g^{f'}_{e}(m-1) & \text{otherwise},
\end{cases}\]
where $m$ is the minimum natural number such that $g^{f'}_{e^{*}}(n)\leq g^{f'}_{e}(m)$. Clearly, $m\leqslant n$, so $|\rho_{Y,e}(n)|\leq g_e^f(n)$.

\begin{Claim}\label{11}
$[\rho_{Y,e}]_\infty\in\Ical_f$.
\end{Claim}
\begin{proof}
It is enough to show that $h_{\rho_{Y,e}}\gg f$.

Let $k_{0}=-1$ and $k_{m+1}=\min\{k<\omega: \forall n>k(g^{f'}_{e^{*}}(n)>g^{f'}_{e}(m))\}$. Note that $\{k_{m}\}_{m<\omega}$ is a monotone increasing sequence that goes to $+\infty$. Put $I_{m}=(k_{m},k_{m+1}]$. Note that, for $n\in I_{m}$, $m=\min\{k<\omega:g^{f'}_{e^{*}}(n)\leq g^{f'}_{e}(k)\}$ so $|\rho_{Y,e}(n)|\geq g_{e}^{f}(m-1)$.

Fix $c>0$. Find $N>e^{*}(3c)$ such that, for all $m\geq N$, $k_{m}>e^{*}(3c)$ and $(m-1)^{2}\geq m$.

\begin{Subclaim}
If $m\geq N^{3c}$ and $m\in [e(k),e(k+1))$ then $k_{m+1}\leq m^{\frac{k+10}{3c} }$.
\end{Subclaim}
\begin{proof} If $n>m^{\frac{k+10}{3c}}$ then $n^{3c}>m^{k+10}$, so $g^{f'}_{e^{*}}(n)\geq f'(n^{3c})> f'(m^{k+10})=g^{f'}_{e}(m)$ because $e^{*}(3c)<N\leq N^{k+10}\leq m^{(\frac{k+10}{3c})}< n$.
\end{proof}

Fix $n>k_{N^{3c}}$, so $n\in I_{m}$ for some $m\geq N^{3c}$. By the subclaim, $n \leq k_{m+1}\leq m^{\frac{k+10}{3c}}$ where $k$ satisfies $m\in [e(k),e(k+1))$. Thus $h_{\rho_{Y,e}}(n)\geq g_{e}^{f}(m-1)\geq f((m-1)^{k+9})\geq f(n^{c})$ since $(m-1)^{ k+9}\geq m^{\frac{ k+9}{2}}$ and $\frac{ k+9}{2}\geq \frac{ k+10}{3}$, so $(m-1)^{ k+9}\geq m^{\frac{ k+10}{3}}= m^{(\frac{ k+10}{3c})c}\geq n^{c}$.
\end{proof}

Define $\varphi_{+}:\omega^{\uparrow\omega}\times \Ical_{f'}\to \Ical_{f}$ by $\varphi_+(e,Y)=[\rho_{Y,e}]_\infty$.

To finish the proof we need to check that, for $X\in \Ical_f$, $Y\in\Ical_{f'}$ and $e\in\omega^{\uparrow\omega}$, if $d_{X}\leq^{*}e$ and $F_{X}(e)\subseteq Y$ then $X\subseteq[\rho_{Y,e}]_\infty$. Fix $x\in X\subseteq[\sigma_X]_\infty$, i.e. $\exists^{\infty}{n<\omega}(\sigma_{X}(n)\subseteq x)$. Define $x'\in2^\omega$ by (\footnote{Intuitively, $x'$ results by inserting (infinitely many) blocks of $0$'s inside $x$ like in the definition of $\tau_{X,e}$.})
\[x'(i):=\left\{\begin{array}{ll}
            x(i-g^{f'}_e(m-1)+g^f_e(m-1)) & \text{if $0\leq i-g^{f'}_e(m-1)<\Delta g^f_e(m)$,}\\
            0 & \text{otherwise.}
\end{array}\right.\]
Clearly, $x'\in[\tau_{X,e}]_\infty=F_X(e)\subseteq Y\subseteq[\tau'_{Y,e}]_\infty$. As $\tau'_{Y,e}(n)\subseteq x'$ implies $\rho_{Y,e}(n)\subseteq x$, we conclude that $x\in[\rho_{Y,e}]_\infty$.
\end{proof}


We also look at the following cardinal invariants related to Yorioka ideals:
\[\begin{array}{rcl}
    \minadd&= & \min\{\add(\mathcal{I}_{f}):f \in \omega^\omega\textrm{\ increasing}\},\\
    \supcov&= & \sup\{\cov(\mathcal{I}_{f}):f \in \omega^\omega\textrm{\ increasing}\},\\
    \minnon&= & \min\{\non(\mathcal{I}_{f}):f \in \omega^\omega\textrm{\ increasing}\}\textrm{, and}\\
    \supcof&= & \sup\{\cof(\mathcal{I}_{f}):f \in \omega^\omega\textrm{\ increasing}\}.
\end{array}\]

It is not necessary to refer to $\supadd$, $\mincov$, $\mathrm{supnon}$ and $\mincof$ as they are $\add(\Ical_{\id_\omega})$, $\cov(\Ical_{\id_{\omega}})$, $\non(\Ical_{\id_{\omega}})$ and $\cof(\Ical_{\id_{\omega}})$, respectively. This follows from Theorem \ref{2.6} and the fact that $\la2^\omega,\Ical_f,\in\ra\leqT\la2^\omega,\Ical_{f'},\in\ra$ when $f\leq^* f'$. Even more, $\SNcal=\bigcap\{\Ical_f:f\in\omega^\omega\ \textrm{increasing}\}$ implies $\minadd\leq\add(\SNcal)$, $\supcov\leq\cov(\SNcal)$, $\minnon=\non(\SNcal)$ and $\cof(\SNcal)\leq(\supcof)^\dfrak=2^{\dfrak}$ (see \cite{K1}). It is already known from \cite{Mi} that $\non(\SNcal)=\minaLc$, so $\minnon=\non(\SNcal)$ also follows from Theorem \ref{2.9}.

\subsection{Covering and uniformity of Yorioka ideals}\label{SubSeccovY}

The following results shows a relationship between the cardinals of the relational systems of the form $\la2^\omega,\Jcal_g,\in\ra$ and $\aLc(b,h)$.

\begin{lemma}[Kamo and Osuga \cite{KO}]\label{2.7}
Let $b \in \omega^\omega$ with $b\geq^* 2$. If $g\in\omega^\omega$ and $g \geq^* (\log b)^{+}$, then $\balc_{b,1}\leq\cov(\Jcal_{g})$ and $\non(\Jcal_{g})\leq\dalc_{b,1}$. Moreover, $\aLc(b,1)^\perp\leqT\langle 2^\omega, \mathcal{J}_g,\in \rangle$.
\end{lemma}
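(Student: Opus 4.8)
The plan is to prove the ``Moreover'' clause, i.e.\ to produce a Tukey--Galois connection witnessing $\aLc(b,1)^\perp\leqT\la2^\omega,\Jcal_g,\in\ra$; the two stated inequalities are then immediate. Indeed, recall $\aLc(b,1)^\perp=\la\prod b,\Scal(b,1),\in^\infty\ra$, and $\dfrak(\aLc(b,1)^\perp)=\bfrak(\aLc(b,1))=\balc_{b,1}$, $\bfrak(\aLc(b,1)^\perp)=\dfrak(\aLc(b,1))=\dalc_{b,1}$, while $\cov(\Jcal_g)=\dfrak\la2^\omega,\Jcal_g,\in\ra$ and $\non(\Jcal_g)=\bfrak\la2^\omega,\Jcal_g,\in\ra$ (Example~\ref{ExmCichonrelsys}(1)); so Theorem~\ref{1.46.1} would give $\balc_{b,1}\leq\cov(\Jcal_g)$ and $\non(\Jcal_g)\leq\dalc_{b,1}$ at once.

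To build the connection, I would binary-code the elements of $\prod b$. Since $b(j)\leq2^{\log b(j)}$ for every $j$, fix for each $j$ an injection $c_j:b(j)\to2^{\log b(j)}$ together with a left inverse $d_j:2^{\log b(j)}\to b(j)$ (sending the strings outside $\ran c_j$ to $0$); note $|c_j(v)|=\log b(j)$ and $\sum_{j\leq m}\log b(j)=(\log b)^{+}(m)$. For $x\in\prod b$ set $\varphi_-(x):=e_x:=c_0(x(0))^\frown c_1(x(1))^\frown c_2(x(2))^\frown\cdots\in2^\omega$ (an infinite sequence, since $b\geq^*2$ forces $\log b(j)\geq1$ for all large $j$); by construction the length-$\log b(m)$ subword of $e_x$ occupying positions $[(\log b)^{+}(m-1),(\log b)^{+}(m))$ equals $c_m(x(m))$, where $(\log b)^{+}(-1):=0$. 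Since $g\geq^*(\log b)^{+}$, fix $m_0$ with $g(m)\geq(\log b)^{+}(m)$ for all $m\geq m_0$. For $\sigma\in(2^{<\omega})^\omega$ with $h_\sigma=g$, define $y_\sigma\in\prod b$ by letting $y_\sigma(m)$ be $d_m$ applied to the subword of $\sigma(m)$ at positions $[(\log b)^{+}(m-1),(\log b)^{+}(m))$ whenever $m\geq m_0$ (legitimate, as $|\sigma(m)|=g(m)\geq(\log b)^{+}(m)$), and $y_\sigma(m):=0$ for the finitely many $m<m_0$. Finally, for $A\in\Jcal_g$ pick (by the definition of $\Jcal_g$) some $\sigma_A\in(2^{<\omega})^\omega$ with $h_{\sigma_A}=g$ and $A\subseteq[\sigma_A]_\infty$, and set $\varphi_+(A)\in\Scal(b,1)$ by $\varphi_+(A)(m):=\{y_{\sigma_A}(m)\}$.

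It then remains to check the Tukey condition: if $x\in\prod b$ and $A\in\Jcal_g$ satisfy $\varphi_-(x)=e_x\in A$, then $x\in^\infty\varphi_+(A)$. Since $e_x\in A\subseteq[\sigma_A]_\infty$, the set $M:=\{m<\omega:\sigma_A(m)\subseteq e_x\}$ is infinite. For $m\in M$ with $m\geq m_0$ we have $\sigma_A(m)=e_x\frestr g(m)$, so the subword of $\sigma_A(m)$ on $[(\log b)^{+}(m-1),(\log b)^{+}(m))$ coincides with the corresponding subword of $e_x$, namely $c_m(x(m))$; hence $y_{\sigma_A}(m)=d_m(c_m(x(m)))=x(m)$, i.e.\ $x(m)\in\varphi_+(A)(m)$. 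As $M\cap[m_0,\omega)$ is infinite, $x\in^\infty\varphi_+(A)$, as required.

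The argument is essentially bookkeeping. The only point that needs care is the alignment of the coordinate blocks of the binary coding with the bound $g\geq^*(\log b)^{+}$, so that an initial segment of $e_x$ of length $g(m)$ is long enough to carry the $m$-th code block $c_m(x(m))$ in full --- which is exactly why the hypothesis is stated with $(\log b)^{+}$ rather than $\log b$; once this is arranged there is no genuine obstacle. One could alternatively phrase the whole connection via $\Ed(b)^\perp=\la\prod b,\prod b,=^\infty\ra$ using Example~\ref{1.46}(2), replacing the width-$1$ slaloms $\varphi_+(A)$ by the plain functions $y_{\sigma_A}$; this change is purely cosmetic.
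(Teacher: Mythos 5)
Your argument is correct. The paper itself gives no proof of this lemma (it is quoted from Kamo--Osuga \cite{KO}), and your binary block-coding of $\prod b$ into $2^\omega$ --- with the observation that $g\geq^*(\log b)^+$ guarantees each $\sigma(m)$ is long enough to contain the $m$-th code block, so that $\sigma_A(m)\subseteq e_x$ forces $y_{\sigma_A}(m)=x(m)$ --- is exactly the standard argument behind the cited result; the derivation of the two cardinal inequalities from the Tukey connection via Theorem \ref{1.46.1} is also as intended.
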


\begin{lemma}[Kamo and Osuga \cite{KO}]\label{2.8} Let $ h,b\in \omega^\omega$ and $g\in\omega^\omega$ monotone increasing. If $1\leq^* h\leq^* b$ and $b\geq^* 2^{g\circ(h^{+}-1)}$ then $\langle 2^\omega, \mathcal{J}_g,\in \rangle\leqT\aLc(b,h)^\perp$.
In particular, $\cov(\mathcal{J}_g)\leq\balc_{b,h}$ and $\dalc_{b,h}\leq \non(\mathcal{J}_g)$.
\end{lemma}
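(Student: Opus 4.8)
The plan is to establish the Tukey--Galois connection $\la 2^\omega,\Jcal_g,\in\ra\leqT\aLc(b,h)^\perp$ directly, where $\aLc(b,h)^\perp=\la\prod b,\Scal(b,h),\in^\infty\ra$; the two ``in particular'' inequalities then follow from Theorem~\ref{1.46.1}. We may assume $b\geq 1$ (otherwise $\prod b=\emptyset$ and there is nothing to prove). Since $1\leq^* h$, the half-open intervals $J_n:=[h^+(n-1),h^+(n))$ (setting $h^+(-1):=0$; finitely many of them may be empty) partition $\omega$ with $|J_n|=h(n)$, are eventually nonempty, and satisfy $\max J_n=h^+(n)-1$ when $J_n\neq\emptyset$. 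Fix $N^*$ large enough that $J_n\neq\emptyset$ and $b(n)\geq 2^{g(h^+(n)-1)}$ for all $n\geq N^*$; coordinates $n<N^*$ will be handled trivially. The guiding idea is that for $n\geq N^*$ one element of $b(n)$ has enough room to code a binary string of length $g(\max J_n)$, hence simultaneously, by monotonicity of $g$, all of its initial segments of lengths $g(i)$ with $i\in J_n$.

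I would define $\varphi_-\colon 2^\omega\to\prod b$ by $\varphi_-(x)(n):=\sum_{j<g(\max J_n)}2^{j}x(j)$ for $n\geq N^*$ (a natural number $<2^{g(\max J_n)}\leq b(n)$, coding $x\frestr g(\max J_n)$), and $\varphi_-(x)(n):=0$ for $n<N^*$; then for each $i\in J_n$ with $n\geq N^*$ one has that $\varphi_-(x)(n)\bmod 2^{g(i)}$ codes exactly $x\frestr g(i)$, because $g(i)\leq g(\max J_n)$. For $\varphi_+\colon\Scal(b,h)\to\Jcal_g$: given $\varphi$, enumerate each $\varphi(n)$ increasingly as $\varphi(n)=\{v^\varphi_{n,k}:k<\ell_n\}$ with $\ell_n:=|\varphi(n)|\leq h(n)=|J_n|$; for $i\in J_n$ with $n\geq N^*$, writing $i=h^+(n-1)+k$, let $\sigma_\varphi(i)\in 2^{g(i)}$ be the length-$g(i)$ binary expansion of $v^\varphi_{n,k}\bmod 2^{g(i)}$ if $k<\ell_n$ (where the \emph{length-$m$ binary expansion} of $N<2^m$ is the $s\in 2^m$ with $N=\sum_{j<m}2^{j}s(j)$), and the constant-$0$ string of length $g(i)$ otherwise; for all remaining $i$, let $\sigma_\varphi(i)$ be the constant-$0$ string of length $g(i)$. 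Then $h_{\sigma_\varphi}=g$, so $[\sigma_\varphi]_\infty\in\Jcal_g$, and I set $\varphi_+(\varphi):=[\sigma_\varphi]_\infty$.

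For the verification, assume $\varphi_-(x)\in^\infty\varphi$. For each of the infinitely many $n\geq N^*$ with $\varphi_-(x)(n)\in\varphi(n)$, write $\varphi_-(x)(n)=v^\varphi_{n,k}$ for some $k<\ell_n$ and put $i:=h^+(n-1)+k\in J_n$; since $i\leq\max J_n$ and $g$ is increasing, $\sigma_\varphi(i)$ codes $v^\varphi_{n,k}\bmod 2^{g(i)}=\varphi_-(x)(n)\bmod 2^{g(i)}=x\frestr g(i)$, so $\sigma_\varphi(i)\subseteq x$. Distinct such $n$ lie in distinct blocks $J_n$ and hence give distinct $i$, so $x\in[\sigma_\varphi]_\infty=\varphi_+(\varphi)$. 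This yields $\la 2^\omega,\Jcal_g,\in\ra\leqT\aLc(b,h)^\perp$, and therefore $\cov(\Jcal_g)\leq\balc_{b,h}$ and $\dalc_{b,h}\leq\non(\Jcal_g)$.

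I do not expect a genuinely hard step here: the content is entirely the choice of the block partition $\la J_n:n<\omega\ra$ together with a prefix-coherent coding of reals into integers. The two points that need care are that the ``used'' indices arising from different blocks must never collide — which is exactly why the $J_n$ are taken to partition $\omega$ — and that the single value $\varphi_-(x)(n)$ must decode, at every position $i\in J_n$, to the correct prefix $x\frestr g(i)$; this is where the monotonicity of $g$, and the exponent $h^+-1$ in the hypothesis $b\geq^* 2^{g\circ(h^+-1)}$, are used.
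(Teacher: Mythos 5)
Your proof is correct: the block partition $J_n=[h^+(n-1),h^+(n))$ with $|J_n|=h(n)$, the prefix-coherent coding of $x\frestr g(\max J_n)$ into a single value of $b(n)$ (which is exactly what the hypothesis $b\geq^* 2^{g\circ(h^+-1)}$ licenses), and the decoding of a slalom into a $\sigma$ with $h_\sigma=g$ all check out, and the disjointness of the blocks does guarantee infinitely many distinct witnesses $i$. The paper itself states this lemma as a citation from Kamo--Osuga without reproducing a proof, so there is nothing internal to compare against; your argument is the natural one and matches what the cited result requires.
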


As a consequence. the cardinals $\minnon$ and $\supcov$ can be characterized as follows.

\begin{theorem}\label{2.9}
$\supcov=\supaLc$ and $\minnon=\minaLc$.
\end{theorem}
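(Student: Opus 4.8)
The plan is to derive both equalities from Lemmas \ref{2.7} and \ref{2.8}, used with the constant function $h=1$, together with the elementary inclusions
\[\Ical_g\subseteq\Jcal_g\quad(g\text{ increasing}),\qquad \Jcal_g\subseteq\Ical_f\quad(f\ll g),\]
which give $\cov(\Jcal_g)\leq\cov(\Ical_g)$, $\non(\Ical_g)\leq\non(\Jcal_g)$, $\cov(\Ical_f)\leq\cov(\Jcal_g)$ and $\non(\Jcal_g)\leq\non(\Ical_f)$. The key observation is that for $h=1$ we have $h^+-1=\id_\omega$, so the hypothesis of Lemma \ref{2.8} on $b$ becomes simply $b\geq^*2^g$; hence, for any increasing $g\in\omega^\omega$, the choice $b:=2^g$ yields $\cov(\Jcal_g)\leq\balc_{b,1}$ and $\dalc_{b,1}\leq\non(\Jcal_g)$. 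Dually, Lemma \ref{2.7} gives, for any $b\geq^*2$ and any increasing $g\geq^*(\log b)^+$ (such a $g$ exists because $(\log b)^+$ is non-decreasing), the inequalities $\balc_{b,1}\leq\cov(\Jcal_g)$ and $\non(\Jcal_g)\leq\dalc_{b,1}$. Recall that for $h=1$ the anti-localization cardinals are defined exactly when $b\geq^*2$ (Remark \ref{notrivLoc-aLoc}(2)), so the sup and min defining $\supaLc$ and $\minaLc$ range only over such $b$.

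For $\supcov=\supaLc$: given an increasing $f$, pick (by Lemma \ref{lemosuga}) an increasing $g:=g^f_d\gg f$ for a fixed $d\in\omega^{\uparrow\omega}$ and put $b:=2^g$; then $\cov(\Ical_f)\leq\cov(\Jcal_g)\leq\balc_{b,1}\leq\supaLc$, so $\supcov\leq\supaLc$. Conversely, given $b\geq^*2$, pick an increasing $g\geq^*(\log b)^+$; since $g$ is increasing, $\Ical_g$ is a Yorioka ideal, and $\balc_{b,1}\leq\cov(\Jcal_g)\leq\cov(\Ical_g)\leq\supcov$, so $\supaLc\leq\supcov$.

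The argument for $\minnon=\minaLc$ is the exact dual. Given an increasing $f$, choose an increasing $g\gg f$ and set $b:=2^g$; then $\minaLc\leq\dalc_{b,1}\leq\non(\Jcal_g)\leq\non(\Ical_f)$, and as $f$ was arbitrary, $\minaLc\leq\minnon$. Given $b\geq^*2$, choose an increasing $g\geq^*(\log b)^+$; then $\minnon\leq\non(\Ical_g)\leq\non(\Jcal_g)\leq\dalc_{b,1}$, and as $b$ was arbitrary, $\minnon\leq\minaLc$. I do not expect any real obstacle here: all the substance is contained in Lemmas \ref{2.7} and \ref{2.8}, and the remaining work is just keeping track of the directions of the inequalities, plus the two trivial verifications that $h^+-1=\id_\omega$ when $h=1$ and that every increasing $f$ admits an increasing $g\gg f$ (Lemma \ref{lemosuga}).
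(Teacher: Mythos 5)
Your proof is correct and follows essentially the same route as the paper: both directions are obtained by combining Lemmas \ref{2.7} and \ref{2.8} (with $h=1$, so that $h^+-1=\id_\omega$ and the hypothesis of Lemma \ref{2.8} reduces to $b\geq^*2^g$) with the inclusions $\Ical_g\subseteq\Jcal_g$ and $\Jcal_g\subseteq\Ical_f$ for $g\gg f$. The only cosmetic difference is that for $\supaLc\leq\supcov$ the paper applies Lemma \ref{2.7} directly with $f=(\log b)^+$, whereas you pass to an increasing $g\geq^*(\log b)^+$; both are fine.
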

\begin{proof}
We first prove that, for all $b\in \omega^\omega$, $\balc_{b,1}\leqslant\supcov$ and $\minnon\leq \dalc_{b,1}$.
Wlog $b\geq 2$ (By Example \ref{1.46}(3)). Put $f=(\log b)^{+}$. As $\Jcal_{f}\supseteq\Ical_{f}$, $\cov(\Jcal_{f})\leqslant \cov(\Ical_{f})$ and $\non(\Ical_{f})\leqslant \non(\Jcal_{f})$. By Lemma \ref{2.7}, $\balc_{b,1}\leq \cov(\Jcal_{f})$ and $\non(\Jcal_{f})\leq \dalc_{b,1}$.

To prove the converse inequalities, assume that $f\in \omega^{\omega}$ is increasing. Choose some $g\gg f$ monotone increasing, so  $\Jcal_{g}\subseteq \Ical_{f}$. Hence $\cov(\Ical_{f})\leqslant \cov(\Jcal_{g})$ and $\non(\Jcal_{g})\leq\non(\Ical_{f})$. Put $b=2^{g}$. By Lemma \ref{2.8}, $\cov(\Jcal_{g})\leq \balc_{b,1}$ and $\dalc_{b,1}\leq\non(\Jcal_{g})$, so $\cov(\Ical_{f})\leq\balc_{b,1}\leq\supaLc$ and $\minaLc\leq\dalc_{b,1}\leq\non(\Ical_f)$.
\end{proof}

On the other hand, by Example \ref{1.46}(3) and the previous result, we have

\begin{Corol}\label{2.11}
 $\supcov\leq \non(\Mcal)$ and $\cov(\Mcal)\leq\minnon$. In particular $\cov(\Ical_f)\leq \non(\Mcal)$ and $\cov(\Mcal)\leq \non(\mathcal{I}_f)$ for every increasing $f\in\omega^\omega$.
\end{Corol}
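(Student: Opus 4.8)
The plan is to combine Theorem \ref{2.9} with the Tukey--Galois inequalities already recorded in Example \ref{1.46}(3). Recall that, by Theorem \ref{2.9}, $\supcov=\supaLc=\sup\{\balc_{b,1}:b\in\omega^\omega\}$ and $\minnon=\minaLc=\min\{\dalc_{b,1}:b\in\omega^\omega\}$. So it suffices to bound each $\balc_{b,1}$ above by $\non(\Mcal)$ and each $\dalc_{b,1}$ below by $\cov(\Mcal)$, uniformly in $b\in\omega^\omega$.

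First I would fix an arbitrary $b\in\omega^\omega$. Since every $b(i)$ is a natural number (hence countable) and the constant function $1$ satisfies $1\geq^*1$, Example \ref{1.46}(3) (which in turn rests on Bartoszy\'nski's characterization, Theorem \ref{1.7}) gives $\balc_{b,1}\leq\non(\Mcal)$ and $\cov(\Mcal)\leq\dalc_{b,1}$. Taking the supremum over all $b\in\omega^\omega$ in the first inequality yields $\supaLc\leq\non(\Mcal)$, and taking the infimum in the second yields $\cov(\Mcal)\leq\minaLc$. By Theorem \ref{2.9} these read $\supcov\leq\non(\Mcal)$ and $\cov(\Mcal)\leq\minnon$.

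For the ``in particular'' clause, note that for every increasing $f\in\omega^\omega$ we have $\cov(\Ical_f)\leq\supcov$ and $\minnon\leq\non(\Ical_f)$ directly from the definitions of $\supcov$ and $\minnon$; chaining these with the two displayed inequalities gives $\cov(\Ical_f)\leq\non(\Mcal)$ and $\cov(\Mcal)\leq\non(\Ical_f)$. I do not anticipate any real obstacle here: the entire content is already packaged in Theorems \ref{2.9} and \ref{1.7} together with the monotonicity built into Example \ref{1.46}(3). The only point to be slightly careful about is that the hypotheses of Example \ref{1.46}(3) (namely that each $b(i)$ be countable and $h\geq^*1$) are met trivially for $b\in\omega^\omega$ and $h=1$, so no case distinction on whether $b\geq^*2$ is needed.
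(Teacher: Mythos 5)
Your proposal is correct and follows exactly the paper's route: the paper derives Corollary \ref{2.11} precisely by combining Theorem \ref{2.9} with the inequalities $\balc_{b,1}\leq\non(\Mcal)$ and $\cov(\Mcal)\leq\dalc_{b,1}$ from Example \ref{1.46}(3) (i.e.\ Theorem \ref{1.7}), and the ``in particular'' clause is the same appeal to the definitions of $\supcov$ and $\minnon$.
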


We know that $\add(\mathcal{M})= \min\{\bfrak,\cov(\mathcal{M}) \}$ and $\cof(\mathcal{M})= \max\{\dfrak,\non(\mathcal{M}) \}$, but these equalities can be refined as in the following two results. These yield a version of Lemma \ref{bLoc(b,h)addN} for the anti-localization cardinals.



\begin{theorem}[Miller {\cite{Mi}}]\label{2.12}
   $\add(\mathcal{M})= \min\{\bfrak,\minnon\}$.
\end{theorem}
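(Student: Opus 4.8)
The plan is to reduce the statement to the classical identity $\add(\Mcal)=\min\{\bfrak,\cov(\Mcal)\}$ recalled just above, using the anti-localization translations already available in this section. By Theorem~\ref{1.7} we have $\cov(\Mcal)=\dalc_{\omega,1}$, and by Theorem~\ref{2.9} together with Definition~\ref{DefminLc} we have $\minnon=\minaLc=\min\{\dalc_{b,1}:b\in\omega^\omega\}$. Recall also that $\aLc(b,1)\eqT\Ed(b)$ (Example~\ref{1.46}(2)), so $\dalc_{b,1}=\dfrak(\Ed(b))$ is the least size of a family $Y\subseteq\prod b$ such that every $x\in\prod b$ is eventually different from some member of $Y$, and that the relations $\neq^*$ and $=^\infty$ are unaffected by finite modifications of their arguments.

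The inequality $\add(\Mcal)\leq\min\{\bfrak,\minnon\}$ is immediate: Corollary~\ref{2.11} gives $\cov(\Mcal)\leq\minnon$, hence $\add(\Mcal)=\min\{\bfrak,\cov(\Mcal)\}\leq\min\{\bfrak,\minnon\}$.

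For the converse I would show $\min\{\bfrak,\minnon\}\leq\cov(\Mcal)$; since also $\min\{\bfrak,\minnon\}\leq\bfrak$, this yields $\min\{\bfrak,\minnon\}\leq\min\{\bfrak,\cov(\Mcal)\}=\add(\Mcal)$. So fix a cardinal $\kappa<\min\{\bfrak,\minnon\}$ and an arbitrary $Y\subseteq\omega^\omega$ with $|Y|\leq\kappa$; it suffices to produce a single $x\in\omega^\omega$ with $x=^\infty y$ for all $y\in Y$, since then $Y$ is not $\Ed(\omega)$-dominating, whence $\kappa<\dfrak(\Ed(\omega))=\dalc_{\omega,1}=\cov(\Mcal)$. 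As $\kappa<\bfrak$, the family $Y$ is bounded, so there is an increasing $b\in\omega^\omega$ with $\forall y\in Y\,\forall^\infty n\,(y(n)<b(n))$; finitely modifying the members of $Y$ (which alters neither $\neq^*$ nor $=^\infty$) we may assume $Y\subseteq\prod b$. Now $|Y|\leq\kappa<\minnon=\minaLc\leq\dalc_{b,1}=\dfrak(\Ed(b))$, so $Y$ is not $\Ed(b)$-dominating: there is some $x\in\prod b\subseteq\omega^\omega$ with $\neg(x\neq^* y)$, i.e.\ $x=^\infty y$, for every $y\in Y$. This $x$ is as required.

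The argument is essentially routine once the characterizations from Theorems~\ref{1.7} and~\ref{2.9} and Corollary~\ref{2.11} are in place; in particular no interval-partition (``chopped real'') combinatorics in $2^\omega$ are needed, as those characterizations absorb that work. The only step I expect to need genuine care is the boundedness reduction, where $\kappa<\bfrak$ is used to collapse $Y$ into some $\prod b$ with $b\in\omega^\omega$, so that the ``large-$b$'' anti-localization bound $\dalc_{b,1}$ — rather than merely $\cov(\Mcal)=\dalc_{\omega,1}$ — becomes applicable; beyond that bookkeeping I anticipate no real obstacle.
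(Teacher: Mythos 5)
Your proof is correct. The paper itself gives no proof of Theorem \ref{2.12} (it only cites Miller), but it does prove the dual statement, Theorem \ref{2.14}, and your argument is precisely the mirror image of that proof: where the paper takes a dominating family $D$ and glues together witnesses of $\balc_{d,1}$ for $d\in D$ to bound $\non(\Mcal)$ from above, you use $\kappa<\bfrak$ to trap a small $Y\subseteq\omega^\omega$ inside a single $\prod b$ and then invoke $\kappa<\minaLc\leq\dalc_{b,1}$ to find an infinitely-often-equal real, bounding $\cov(\Mcal)$ from below; both then finish via the classical Miller--Truss identities. The only step needing care, the finite modification pushing $Y$ into $\prod b$, is handled correctly since $=^\infty$ is invariant under finite changes.
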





\begin{theorem}\label{2.14}
   $\cof(\mathcal{M})=\max\{\dfrak,\supcov \}$
\end{theorem}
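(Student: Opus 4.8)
The plan is to reduce to the classical identity $\cof(\Mcal)=\max\{\dfrak,\non(\Mcal)\}$ and then show that the $\non(\Mcal)$ on the right can be ``spread out'' along a dominating family as a supremum of anti-localization cardinals. Since $\supcov\leq\non(\Mcal)\leq\cof(\Mcal)$ by Corollary \ref{2.11} and $\dfrak\leq\cof(\Mcal)$, the inequality $\max\{\dfrak,\supcov\}\leq\cof(\Mcal)$ is immediate, so everything rests on proving $\cof(\Mcal)\leq\max\{\dfrak,\supcov\}$, equivalently (using $\cof(\Mcal)=\max\{\dfrak,\non(\Mcal)\}$) on proving $\non(\Mcal)\leq\max\{\dfrak,\supcov\}$. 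By Theorem \ref{2.9} we may replace $\supcov$ by $\supaLc$, and by Theorem \ref{Barcharalc} together with Example \ref{1.46}(2) we have $\non(\Mcal)=\balc_{\omega,1}=\bfrak(\Ed(\omega))$, i.e. $\non(\Mcal)$ is the least size of a family $X\subseteq\omega^\omega$ such that for every $y\in\omega^\omega$ there is $x\in X$ with $\exists^{\infty}n<\omega\,(x(n)=y(n))$.

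First I would fix a dominating family $D\subseteq\omega^\omega$ of increasing functions, each $\geq 2$, with $|D|=\dfrak$, arranged so that every $y\in\omega^\omega$ satisfies $y(n)<d(n)$ for all but finitely many $n$, for some $d\in D$ (this is a routine adjustment of an arbitrary dominating family). For each $d\in D$ the cardinal $\balc_{d,1}=\bfrak(\aLc(d,1))=\bfrak(\Ed(d))$ is defined and $\leq\supaLc$ (recall $\aLc(d,1)\eqT\Ed(d)$), so I may choose an $\Ed(d)$-unbounded family $X_d\subseteq\prod d$ with $|X_d|=\balc_{d,1}$; that is, for every $z\in\prod d$ there is $x\in X_d$ with $x(n)=z(n)$ for infinitely many $n$. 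Put $X:=\bigcup_{d\in D}X_d\subseteq\omega^\omega$, so $|X|\leq\max\{\dfrak,\supaLc\}$.

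It remains to check that $X$ witnesses $\non(\Mcal)=\bfrak(\Ed(\omega))$. Given $y\in\omega^\omega$, pick $d\in D$ with $y(n)<d(n)$ for all but finitely many $n$ and define $y'\in\prod d$ by $y'(n):=y(n)$ whenever $y(n)<d(n)$ and $y'(n):=0$ otherwise; then $y'$ and $y$ agree at all but finitely many $n$. Since $X_d$ is $\Ed(d)$-unbounded there is $x\in X_d\subseteq X$ with $x(n)=y'(n)$ for infinitely many $n$, and for all but finitely many of these $n$ we also have $y'(n)=y(n)$, so $x(n)=y(n)$ infinitely often. Hence $X$ is $\Ed(\omega)$-unbounded, giving $\non(\Mcal)\leq|X|\leq\max\{\dfrak,\supaLc\}=\max\{\dfrak,\supcov\}$, and therefore $\cof(\Mcal)=\max\{\dfrak,\non(\Mcal)\}\leq\max\{\dfrak,\supcov\}$, as required.

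I do not expect a serious obstacle along this route: the only points requiring care are the Tukey--Galois bookkeeping --- keeping straight that $\Ed(b)$-unboundedness corresponds to the ``infinitely often equal'' relation rather than its negation, and that $\aLc(b,1)\eqT\Ed(b)$ so that $\balc_{b,1}$ really is $\bfrak(\Ed(b))$ --- together with the routine verification about the shape of the dominating family. The genuinely non-trivial inputs, namely $\cof(\Mcal)=\max\{\dfrak,\non(\Mcal)\}$ and the characterization $\non(\Mcal)=\balc_{\omega,1}$, are already available (the former as a cited classical fact, the latter as Theorem \ref{Barcharalc}). If one instead wanted a self-contained proof avoiding the classical identity, the real work would shift to the structure theorem expressing $\cof(\Mcal)$ via ``chopped reals'' together with the coarsening needed to fit an arbitrary interval partition inside the blocks of a partition coming from $D$; that is where the only real difficulty would lie.
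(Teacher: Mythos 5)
Your proposal is correct and follows essentially the same route as the paper: both reduce to the classical identity $\cof(\Mcal)=\max\{\dfrak,\non(\Mcal)\}$, use Theorem \ref{2.9} to replace $\supcov$ by $\supaLc$, and then witness $\non(\Mcal)=\bfrak(\Ed(\omega))$ by taking the union over a dominating family $D$ of $\Ed(d)$-unbounded families of size $\balc_{d,1}$. Your extra care with the finite modification $y\mapsto y'\in\prod d$ is a detail the paper leaves implicit, but the argument is the same.
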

\begin{proof}
  The inequality $\geq$  follows from Corollary \ref{2.11}.

  We prove $\leq$. Let $D\subseteq \omega^{\uparrow\omega}$ be a dominating family of size $\dfrak$. By Theorem \ref{2.9}, $\supcov=\supaLc$. For each $d \in D$ let $E_{d}\subseteq \prod d$ be a witness of $\balc_{d,1}$. Then $E=\bigcup_{d \in D}E_d$ satisfies $\forall{x\in\omega^{\omega}}\exists y\in E(x=^{\infty}y)$ 
  so, by Theorem \ref{1.7}, $\non(\mathcal{M})\leq|E|\leq \dfrak\cdot\supcov$. Hence $\cof(\mathcal{M})=\max\{\dfrak,\non(\mathcal{M}) \}\leq\max\{\dfrak,\supcov \}$.
\end{proof}

As a consequence, by Theorems \ref{2.5}, \ref{2.12} and \ref{2.14},

\begin{Corol}
    $\minadd\leq \add(\mathcal{M})$ and $\cof(\mathcal{M})\leq \supcof$
\end{Corol}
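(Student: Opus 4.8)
The plan is to read off both inequalities from the refined formulas $\add(\Mcal)=\min\{\bfrak,\minnon\}$ (Theorem \ref{2.12}) and $\cof(\Mcal)=\max\{\dfrak,\supcov\}$ (Theorem \ref{2.14}), combined with the ZFC bounds $\add(\Ical_f)\leq\bfrak$ and $\dfrak\leq\cof(\Ical_f)$ from Theorem \ref{2.5}, plus the elementary facts $\add(\Ical_f)\leq\non(\Ical_f)$ and $\cov(\Ical_f)\leq\cof(\Ical_f)$ valid for each increasing $f$. No new construction is needed; the whole argument is a short chain of inequalities together with taking suprema/infima over increasing $f$.

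First I would treat the additivity. For every increasing $f\in\omega^\omega$, Theorem \ref{2.5} gives $\minadd\leq\add(\Ical_f)\leq\bfrak$, hence $\minadd\leq\bfrak$. Also, since each $\Ical_f$ (with $f$ increasing) is a proper $\sigma$-ideal on $2^\omega$ containing all singletons (Theorem \ref{Yoriosigmaid} and the remarks after Definition \ref{DefYorioId}), the standard diagram yields $\add(\Ical_f)\leq\non(\Ical_f)$; therefore $\minadd\leq\add(\Ical_f)\leq\non(\Ical_f)$ for every increasing $f$, and taking the minimum over $f$ gives $\minadd\leq\minnon$. Combining, $\minadd\leq\min\{\bfrak,\minnon\}=\add(\Mcal)$ by Theorem \ref{2.12}.

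Dually, for the cofinality, Theorem \ref{2.5} gives $\dfrak\leq\cof(\Ical_f)\leq\supcof$ for every increasing $f$, so $\dfrak\leq\supcof$; and $\cov(\Ical_f)\leq\cof(\Ical_f)\leq\supcof$ (again the standard diagram for each proper ideal $\Ical_f$ containing the singletons), so taking the supremum over $f$ gives $\supcov\leq\supcof$. Hence $\cof(\Mcal)=\max\{\dfrak,\supcov\}\leq\supcof$ by Theorem \ref{2.14}.

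The only point requiring a word of justification — and the closest thing to an obstacle, though a very mild one — is the use of $\add(\Ical_f)\leq\non(\Ical_f)$ and $\cov(\Ical_f)\leq\cof(\Ical_f)$: these hold because, for increasing $f$, $\Ical_f$ is a proper $\sigma$-ideal on $2^\omega$ containing all finite sets, so a witness for $\non(\Ical_f)$ cannot have its (point-wise) union back in $\Ical_f$, and any basis of $\Ical_f$ must also cover $2^\omega$. Everything else is immediate from Theorems \ref{2.5}, \ref{2.12} and \ref{2.14}.
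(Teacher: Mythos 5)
Your proof is correct and follows exactly the route the paper intends: the paper derives this corollary directly from Theorems \ref{2.5}, \ref{2.12} and \ref{2.14}, and your chain of inequalities (including the standard facts $\add(\Ical_f)\leq\non(\Ical_f)$ and $\cov(\Ical_f)\leq\cof(\Ical_f)$ from the basic diagram for each Yorioka ideal) simply spells out the details the paper leaves implicit.
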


Figure \ref{diagram} summarizes some results of this section. As an application, Yorioka's characterization of $\cof(\SNcal)$ can be reformulated as follows.

\begin{theorem}[Yorioka {\cite[Thm. 3.9]{Yorioka}}]
   If $\minadd=\supcof=\kappa$ then $\cof(\SNcal)=\dfrak_\kappa$. In particular, $\add(\Ncal)=\cof(\Ncal)=\kappa$ implies $\cof(\SNcal)=\dfrak_\kappa$.
\end{theorem}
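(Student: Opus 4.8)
The plan is to reduce the main statement to Yorioka's original theorem \cite[Thm.~3.9]{Yorioka}, which yields $\cof(\SNcal)=\dfrak_\kappa$ under the hypothesis that $\add(\Ical_f)=\cof(\Ical_f)=\dfrak=\cov(\Mcal)=\kappa$ for every increasing $f$, and then, for completeness, to recall how that theorem is proved. The ``in particular'' clause is immediate: Corollary \ref{YorioaddN} gives $\add(\Ncal)\le\minadd$ and $\supcof\le\cof(\Ncal)$, while $\minadd\le\supcof$ holds because $\add(\Ical_f)\le\cof(\Ical_f)$ for every increasing $f$; hence $\add(\Ncal)=\cof(\Ncal)=\kappa$ forces $\minadd=\supcof=\kappa$, so the main statement applies.

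So assume $\minadd=\supcof=\kappa$; the core of the reduction is to recover all of Yorioka's hypotheses from this single assumption. First, $\minadd\le\add(\Ical_f)\le\cof(\Ical_f)\le\supcof$ gives $\add(\Ical_f)=\cof(\Ical_f)=\kappa$ for every increasing $f$; being the additivity of a $\sigma$-ideal (Theorem \ref{Yoriosigmaid}), $\kappa$ is regular. By Theorem \ref{2.5}, $\kappa=\add(\Ical_f)\le\bfrak\le\dfrak\le\cof(\Ical_f)=\kappa$, so $\bfrak=\dfrak=\kappa$. For the last parameter, Corollary \ref{2.11} gives $\cov(\Mcal)\le\minnon$, and $\minnon\le\non(\Ical_f)\le\cof(\Ical_f)\le\supcof=\kappa$ (see Figure \ref{FigaddetcYorio}), so $\cov(\Mcal)\le\kappa$; conversely $\kappa=\minadd\le\add(\Mcal)\le\cov(\Mcal)$ by the corollary recorded after Theorem \ref{2.14}. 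Thus $\cov(\Mcal)=\kappa$, all of Yorioka's hypotheses hold, and $\cof(\SNcal)=\dfrak_\kappa$ follows.

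To recall Yorioka's argument in the Tukey language of this paper: since $\bfrak=\dfrak=\kappa$ one fixes a scale $\langle e_\xi:\xi<\kappa\rangle$ of increasing functions, and then $\SNcal=\bigcap_{\xi<\kappa}\Ical_{e_\xi}$ because $f\le^*f'$ implies $\Ical_{f'}\subseteq\Ical_f$. Using $\add(\Ical_{e_\xi})=\cof(\Ical_{e_\xi})=\kappa$ one fixes, for each $\xi$, a $\subseteq$-increasing sequence $\langle X^\xi_\alpha:\alpha<\kappa\rangle$ cofinal in $\Ical_{e_\xi}$ (e.g.\ successive unions of a cofinal family). Then $\langle\SNcal,\SNcal,\subseteq\rangle\leqT\langle\kappa^\kappa,\kappa^\kappa,\le\rangle$ via $\varphi_-(X)(\xi):=\min\{\alpha:X\subseteq X^\xi_\alpha\}$ and $\varphi_+(g):=\bigcap_{\xi<\kappa}X^\xi_{g(\xi)}\in\SNcal$, so $\cof(\SNcal)\le\dfrak_\kappa$ by Theorem \ref{1.46.1}. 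For the reverse inequality one shows $\langle\kappa^\kappa,\kappa^\kappa,\le\rangle\leqT\langle\SNcal,\SNcal,\subseteq\rangle$: with $\psi_+(A)(\xi):=\min\{\alpha:A\subseteq X^\xi_\alpha\}$, given $g\in\kappa^\kappa$ one needs a single set $X_g\in\SNcal$ with $X_g\not\subseteq\bigcup_{\alpha<g(\xi)}X^\xi_\alpha$ for every $\xi<\kappa$, so that $X_g\subseteq A$ forces $g(\xi)\le\psi_+(A)(\xi)$ for all $\xi$; hence the profiles of any $\subseteq$-cofinal family in $\SNcal$ form a $\le$-dominating family in $\kappa^\kappa$, giving $\dfrak_\kappa\le\cof(\SNcal)$.

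The main obstacle, in my plan as in Yorioka's proof, is the construction of that ``spread-out'' strong measure zero set $X_g$: it must simultaneously escape a whole $\kappa$-sequence of sets drawn from the $\subseteq$-decreasing ideals $\Ical_{e_\xi}$ and yet lie in each of them. Picking one point $x_\xi\notin\bigcup_{\alpha<g(\xi)}X^\xi_\alpha$ per coordinate is possible since that union is null, hence proper, but it only produces a set of size $\kappa$, which need not be strong measure zero because we cannot exclude $\non(\SNcal)=\kappa$. One must instead run a transfinite recursion of length $\kappa$ that, at stage $\xi$, chooses $x_\xi$ outside $\bigcup_{\alpha<g(\xi)}X^\xi_\alpha$ (and outside the earlier points) while simultaneously extending, for each $\zeta\le\xi$, a Yorioka code $\sigma^\zeta$ with $h_{\sigma^\zeta}\gg e_\zeta$ that is to witness $X_g\subseteq[\sigma^\zeta]_\infty$; the delicate bookkeeping — ensuring that every chosen point is eventually captured by every $\sigma^\zeta$ while keeping enough room for later stages — is where $\dfrak=\kappa$ and $\cov(\Mcal)=\kappa$ make the diagonalization survive $\kappa$ steps. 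This part has the same flavour as Yorioka's proof that $\Ical_f$ is a $\sigma$-ideal and as the Tukey construction in Theorem \ref{2.4.1}, and I would model it on those.
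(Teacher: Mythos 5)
Your reduction is correct and is exactly what the paper intends: the theorem is a restatement of Yorioka's \cite[Thm. 3.9]{Yorioka}, and the paper's only contribution (flagged in the footnote in the introduction) is that the extra hypotheses $\dfrak=\cov(\Mcal)=\kappa$ follow from $\minadd=\supcof=\kappa$ via Theorem \ref{2.5}, Corollary \ref{2.11} and the corollary after Theorem \ref{2.14}, which is precisely your second paragraph; the ``in particular'' clause via Corollary \ref{YorioaddN} is also right. Your appended sketch of Yorioka's internal argument goes beyond what the paper proves (it simply cites Yorioka), and you correctly identify that its only nontrivial point is the construction of the set $X_g$, which you leave as a plan rather than a completed argument — that is acceptable here since the paper relies on the citation for that step.
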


\begin{figure}
\begin{center}
\includegraphics[scale=0.9]{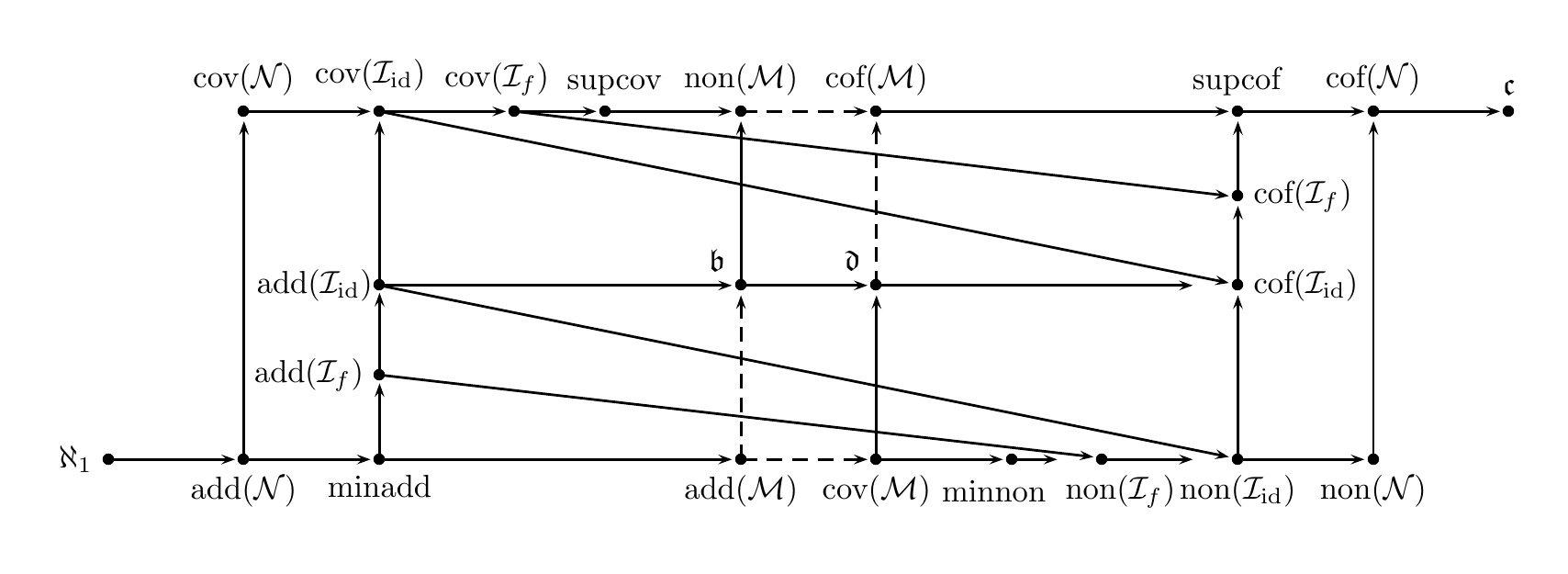}
\caption{Extended Cicho\'n's diagram.}
\label{diagram}
\end{center}
\end{figure}



\section{Preservation properties}\label{SecPreservation}

The preservation properties discussed in this section were developed for FS iterations of ccc posets by Judah-Shelah \cite{JS} and Brendle \cite{Br}, later generalized and summarized in \cite{BJ}, \cite{M} and \cite{FFMM}. We generalize this theory so that preservation properties as in \cite{KO} (see Example \ref{3.18}) become particular cases. Afterwards, we show how to adapt this theory to preserve unbounded reals along FS iterations, which is useful in the context of matrix iterations.

\subsection{The presevation theory}\label{SubSecPresTheory}

Our notation is closer to \cite{FFMM}. The classical preservation theory of Judah-Shelah and Brendle corresponds to the case $|\Omega|=1$ of the definition below. Though the proofs of the facts in this section follow the same ideas as the classical results, the arguments are presented for completeness.

\begin{definition}\label{3.1}
Say that $\Rcal=\langle X,Y,\sqsubset\rangle$ is a \textit{generalized Polish relational system (gPrs)} if
\begin{enumerate}
\item[(I)] $X$ is a Perfect Polish space,
\item[(II)] $Y=\bigcup_{e\in \Omega}Y_e$ where $\Omega$ is a non-empty set and, for some Polish space $Z$, $Y_e$ is non-empty and analytic in $Z$ for all $e\in \Omega$, and
\item[(III)] $\sqsubset=\bigcup_{n<\omega}\sqsubset_{n}$ where $\langle\sqsubset_{n}: n<\omega\rangle$  is some increasing sequence of closed subsets of $X\times Z$ such that, for any $n<\omega$ and for any $y\in Y$,
$(\sqsubset_{n})^{y}=\{x\in X:x\sqsubset_{n}y \}$ is closed nowhere dense.
\end{enumerate}

If $|\Omega|=1$, we just say that $\Rcal$ is a \emph{Polish relational system (Prs)}.

For a set $A$ and $x\in X$ say that $x$ is \textit{$\Rcal$-unbounded over  $A$} if $\forall y \in A\cap Y(x\not\sqsubset y)$.
\end{definition}

Fix, throughout this section, a gPrs $\Rcal=\langle X,Y,\sqsubset\rangle$ as in the previous definition.

\begin{lemma}\label{bRnonM}
$\langle X,\Mcal(X),\in\rangle \leqT \Rcal$. In particular, $\bfrak(\Rcal)\leq \non(\Mcal)$ and $\cov(\Mcal)\leq\dfrak(\Rcal)$.
\begin{proof}
Let $\varphi_-=\id_X$ and $\varphi_{+}:Y\to\Mcal(X)$ defined by $\varphi_{+}(y)=\{x\in X:x \sqsubset y\}$. Clearly, the pair $(\varphi_{-},\varphi_{-})$ witnesses $\langle X,\Mcal(X),\in\rangle \leqT \Rcal$.
\end{proof}
\end{lemma}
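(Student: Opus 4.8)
The plan is to exhibit the Tukey--Galois morphism explicitly and then read off the cardinal inequalities from Theorem \ref{1.46.1} together with the standard identification of the meager-ideal invariants over perfect Polish spaces. First I would set $\varphi_-:=\id_X$ and define $\varphi_+\colon Y\to\Mcal(X)$ by $\varphi_+(y):=\{x\in X:x\sqsubset y\}$. The one substantive point is that $\varphi_+(y)$ indeed lands in $\Mcal(X)$: by clause (III) of Definition \ref{3.1} we have $\varphi_+(y)=\bigcup_{n<\omega}(\sqsubset_n)^y$, and each $(\sqsubset_n)^y$ is closed nowhere dense, so $\varphi_+(y)$ is meager. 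This is the only place where the structure of a gPrs is used; everything else is formal, and the fact that $Y=\bigcup_{e\in\Omega}Y_e$ is irrelevant here since the bound is uniform over all of $Y$.

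Next I would check that $(\varphi_-,\varphi_+)$ witnesses $\langle X,\Mcal(X),\in\rangle\leqT\Rcal$ in the sense of Definition \ref{1.45.1}: for every $x\in X$ and $y\in Y$, if $\varphi_-(x)\sqsubset y$ then $x\in\varphi_+(y)$. But $\varphi_-(x)=x$ and, by the definition of $\varphi_+$, $x\sqsubset y$ is literally equivalent to $x\in\varphi_+(y)$, so the implication is immediate. (Note the harmless misprint in the displayed proof: the pair should read $(\varphi_-,\varphi_+)$, not $(\varphi_-,\varphi_-)$.)

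Finally, for the ``in particular'' clause I would apply Theorem \ref{1.46.1} to get $\dfrak(\langle X,\Mcal(X),\in\rangle)\leq\dfrak(\Rcal)$ and $\bfrak(\Rcal)\leq\bfrak(\langle X,\Mcal(X),\in\rangle)$, and then use $\bfrak(\langle X,\Mcal(X),\in\rangle)=\non(\Mcal(X))$ and $\dfrak(\langle X,\Mcal(X),\in\rangle)=\cov(\Mcal(X))$ from Example \ref{ExmCichonrelsys}(1). Since $X$ is a perfect Polish space, it is standard that the meager-ideal invariants $\non$ and $\cov$ take the same values on $\Mcal(X)$ as on $\Mcal$, whence $\bfrak(\Rcal)\leq\non(\Mcal)$ and $\cov(\Mcal)\leq\dfrak(\Rcal)$. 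There is no real obstacle in this argument; the only thing requiring care is correctly invoking clause (III) of Definition \ref{3.1} to guarantee that $\varphi_+(y)$ is meager, which is precisely the reason that clause was built into the definition of a generalized Polish relational system.
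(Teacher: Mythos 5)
Your proposal is correct and follows exactly the paper's argument: the same maps $\varphi_-=\id_X$ and $\varphi_+(y)=\{x\in X:x\sqsubset y\}$, with meagerness of $\varphi_+(y)$ coming from clause (III) of Definition \ref{3.1}, and the cardinal inequalities read off from Theorem \ref{1.46.1}. You merely spell out details the paper leaves implicit (and correctly flag the misprint $(\varphi_-,\varphi_-)$ for $(\varphi_-,\varphi_+)$).
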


\begin{definition} \label{DefRunb}
   Let $\theta$ be a cardinal. A family $F\subseteq X$ is \textit{$\theta$-$\Rcal$-unbounded} if for any $E\subseteq Y$ of size $<\theta$ there is an $x\in F$ which is $\Rcal$-unbounded over $E$; say that $F$ is \textit{strongly $\theta$-$\Rcal$-unbounded} if $|F|\geq \theta$ and $|\{x\in F:x\sqsubset y\}|<\theta$ for all $y\in Y$.
\end{definition}

For $\theta\geq2$, any $\theta$-$\Rcal$-unbounded family is $\Rcal$-unbounded and, for $\theta$ regular, if $F$ is a strongly $\theta$-$\Rcal$-unbounded family then it is $|F|$-$\Rcal$-unbounded. In consequence,

\begin{lemma}\label{bRdR}
  \begin{enumerate}[(a)]
      \item If $\theta\geq2$ and $F\subseteq X$ is $\theta$-$\Rcal$-unbounded then $\bfrak(\Rcal)\leq|F|$ and $\theta\leq\dfrak(\Rcal)$.
      \item If $\theta$ is regular and $F\subseteq X$ is strongly $\theta$-$\Rcal$-unbounded then $\bfrak(\Rcal)\leq|F|\leq\dfrak(\Rcal)$.
  \end{enumerate}
\end{lemma}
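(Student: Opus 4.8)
The plan is to deduce both parts from the two elementary observations recorded just before the statement, proving those observations in passing.

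For (a), to get $\bfrak(\Rcal)\leq|F|$ it suffices to check that a $\theta$-$\Rcal$-unbounded family $F$ is $\Rcal$-unbounded: given $y\in Y$, apply the defining property of $\theta$-$\Rcal$-unboundedness to the singleton $E=\{y\}$ (legitimate since $1<2\leq\theta$) to find $x\in F$ with $x\not\sqsubset y$; hence no member of $Y$ $\sqsubset$-dominates all of $F$. For $\theta\leq\dfrak(\Rcal)$ I would argue by contradiction: if $D\subseteq Y$ were $\Rcal$-dominating with $|D|<\theta$, then by $\theta$-$\Rcal$-unboundedness some $x\in F$ is $\Rcal$-unbounded over $D$, i.e. $x\not\sqsubset y$ for all $y\in D$, contradicting that $D$ dominates.

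For (b) the key step is that a strongly $\theta$-$\Rcal$-unbounded family $F$ with $\theta$ regular is $|F|$-$\Rcal$-unbounded; granting this, (b) follows by applying (a) with $\theta$ replaced by $|F|$ (which is $\geq\theta\geq2$), yielding $\bfrak(\Rcal)\leq|F|$ and $|F|\leq\dfrak(\Rcal)$. To prove the key step, fix $E\subseteq Y$ with $|E|<|F|$ and set $A:=\bigcup_{y\in E}\{x\in F:x\sqsubset y\}$. Each summand has size $<\theta\leq|F|$ by strong unboundedness, and there are $|E|<|F|$ of them; a short computation — splitting into the cases $|E|<\theta$, where regularity of $\theta$ keeps $|A|<\theta\leq|F|$, and $\theta\leq|E|<|F|$, where $|A|\leq|E|\cdot\theta=|E|<|F|$ — gives $|A|<|F|$, so any $x\in F\setminus A$ is $\Rcal$-unbounded over $E$.

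All steps are routine; the only place needing care is the cardinal-arithmetic case split in (b), where regularity of $\theta$ is precisely what lets a union of fewer than $\theta$ sets of size $<\theta$ stay below $\theta$. (One could equally argue (b) directly without the intermediate notion: $\{x\in F:x\sqsubset y\}\ne F$ for each $y$ gives $\bfrak(\Rcal)\leq|F|$, and if $D\subseteq Y$ dominates then $F=\bigcup_{y\in D}\{x\in F:x\sqsubset y\}$ forces $|D|\geq|F|$ by the same case split.)
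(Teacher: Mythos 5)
Your proof is correct and follows exactly the route the paper intends: the lemma is stated as an immediate consequence of the two observations preceding it (a $\theta$-$\Rcal$-unbounded family is $\Rcal$-unbounded, and for regular $\theta$ a strongly $\theta$-$\Rcal$-unbounded family is $|F|$-$\Rcal$-unbounded), and you supply precisely those verifications, including the regularity-based union bound the paper leaves implicit.
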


The following are useful properties to preserve (strongly) $\theta$-$\Rcal$-unbounded families in forcing generic extensions. In this context, $X,Z$ and $\sqsubset$ are interpreted in transitive models of ZFC as Polish spaces, while $Y$ is interpreted as $Y^{M}=\bigcup_{e\in\Omega}Y^{M}_{e}$ for such a model $M$ containing the information to define $Y$. As in the case of Polish spaces, we also omit the upper indices $M$ on the interpretation of $Y$.

\begin{definition}
Let $\Pbb$ be a  forcing notion and $\theta$ a cardinal.
\begin{enumerate}[(1)]
\item $\Pbb$ is \textit{$\theta$-$\Rcal$-good} if, for any $\Pbb$-name $\dot{h}$ for a member of $Y$, there exists  a non-empty $H\subseteq Y$ (in the ground model) of size $<\theta$ such that, for any $x\in X$, if $x$ is $\Rcal$-unbounded over  $H$ then $\Vdash x\not\not\sqsubset \dot{h}$.
\item $\Pbb$ is  \textit{$\theta$-$\Rcal$-nice} if, for all $e\in\Omega$ and for any $\Pbb$-name $\dot{h}$ for a member of $Y_{e}$, there exists a non-empty $H\subseteq Y$ of size $<\theta$ such that, for any $x\in X$, if $x$ is $\Rcal$-unbounded over  $H$ then $\Vdash x\not\not\sqsubset \dot{h}$.
\end{enumerate}
Say that $\Pbb$ is \textit{$\Rcal$-good} (\textit{$\Rcal$-nice}) if it is $\aleph_1$-$\Rcal$-good ($\aleph_1$-$\Rcal$-nice).
\end{definition}

Note that $\theta<\theta^{\prime}$ implies that any $\theta$-$\Rcal$-good poset is $\theta^{\prime}$-$\Rcal$-good. Also, if $\Pbb\lessdot\Qbb$ and $\Qbb$ is $\theta$-$\Rcal$-good, then $\Pbb$ is $\theta$-$\Rcal$-good. Similar facts hold for niceness. It is clear that any  $\theta$-$\Rcal$-good forcing notion is $\theta$-$\Rcal$-nice. The converse holds in some cases as below.

\begin{lemma}\label{3.7}
   Let $\theta$ be a regular cardinal. If either $\Pbb$ is $\theta$-cc or $|\Omega|<\theta$,  then $\Pbb$ is $\theta$-$\Rcal$-nice iff it is $\theta$-$\Rcal$-good.
\end{lemma}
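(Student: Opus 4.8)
The nontrivial direction is that $\theta$-$\Rcal$-niceness together with ``$\Pbb$ is $\theta$-cc or $|\Omega|<\theta$'' implies $\theta$-$\Rcal$-goodness; the converse was observed just before the statement. So the plan is to fix a $\Pbb$-name $\dot h$ for a member of $Y=\bigcup_{e\in\Omega}Y_e$ and produce a single $H\subseteq Y$ of size $<\theta$ witnessing goodness for $\dot h$. The starting point is the density of $D:=\{q\in\Pbb:\exists e\in\Omega\ (q\Vdash\dot h\in Y_e)\}$: given $p\in\Pbb$, pass to a generic $G\ni p$; since $\dot h[G]\in Y=\bigcup_{e\in\Omega}Y_e$ in $V[G]$ and $\Omega\subseteq V$, we have $\dot h[G]\in Y_e$ for some $e\in\Omega$, and by the truth lemma some $q\in G$ below $p$ forces $\dot h\in Y_e$. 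As in the discussion preceding the definitions of $\theta$-$\Rcal$-goodness and $\theta$-$\Rcal$-niceness, each $Y_e$ is read off its analytic definition in the relevant model; in particular $Y_e\neq\emptyset$ is absolute, so I may fix $y_e^0\in Y_e$ in $V$ for each $e\in\Omega$.

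Next I would, for an index $e\in\Omega$ and any antichain $A^{*}\subseteq\Pbb$ all of whose members force $\dot h\in Y_e$, build a $\Pbb$-name $\dot h_e$ by mixing so that $q\Vdash\dot h_e=\dot h$ for every $q\in A^{*}$ and $q\Vdash\dot h_e=\check y_e^0$ below some maximal antichain of conditions incompatible with all of $A^{*}$. Then $\Vdash\dot h_e\in Y_e$, so $\theta$-$\Rcal$-niceness applied to $\dot h_e$ yields a non-empty $H_e\subseteq Y$ with $|H_e|<\theta$ such that every $x\in X$ that is $\Rcal$-unbounded over $H_e$ satisfies $\Vdash x\not\sqsubset\dot h_e$.

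Then I would choose the index set $I$ according to the hypothesis. If $|\Omega|<\theta$, put $I:=\Omega$ and, for each $e\in I$, let $A^{*}:=A_e$ be a maximal antichain among the conditions forcing $\dot h\in Y_e$. If instead $\Pbb$ is $\theta$-cc, take a maximal antichain $A\subseteq D$ (which is then a maximal antichain of $\Pbb$, as $D$ is dense), so $|A|<\theta$; choose $e_q\in\Omega$ with $q\Vdash\dot h\in Y_{e_q}$ for each $q\in A$, set $I:=\{e_q:q\in A\}$ (so $|I|\le|A|<\theta$), and for $e\in I$ take $A^{*}:=\{q\in A:e_q=e\}$. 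In both cases set $H:=\bigcup_{e\in I}H_e$; since $\theta$ is regular, $|I|<\theta$, and each $|H_e|<\theta$, we get $|H|<\theta$, and $H\neq\emptyset$. To finish, suppose $x\in X$ is $\Rcal$-unbounded over $H$, hence over every $H_e$, so $\Vdash x\not\sqsubset\dot h_e$ for all $e\in I$; I claim $\Vdash x\not\sqsubset\dot h$. If not, some $p\Vdash x\sqsubset\dot h$; in the first case strengthen $p$ to $q\le p$ with $q\Vdash\dot h\in Y_e$ for some $e\in\Omega$, which by maximality of $A_e$ is compatible with some $r\in A_e$, while in the second case $p$ is compatible with some $r\in A$, and we set $e:=e_r$. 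Either way take a common extension $q'\le p,r$; since $r\in A^{*}$ for that $e$, $q'\Vdash\dot h_e=\dot h$, and $q'\le p$ gives $q'\Vdash x\sqsubset\dot h$, so $q'\Vdash x\sqsubset\dot h_e$, contradicting $\Vdash x\not\sqsubset\dot h_e$. Hence $H$ witnesses $\theta$-$\Rcal$-goodness for $\dot h$.

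The only point I expect to need genuine care is the legitimacy of the names $\dot h_e$ — i.e.\ checking $\Vdash\dot h_e\in Y_e$ via the mixing lemma and the absoluteness of $Y_e\neq\emptyset$ — together with, in the $\theta$-cc case, the observation that a single maximal antichain pins down all ``relevant'' indices in a set of size $<\theta$; everything else is routine antichain bookkeeping and the regularity of $\theta$.
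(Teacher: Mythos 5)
Your proposal is correct and follows essentially the same route as the paper: decompose via a maximal antichain deciding which $Y_e$ contains $\dot h$, mix with a fixed element of $Y_e$ to obtain legitimate names $\dot h_e$ for members of $Y_e$, apply niceness to each, and take the union of the witnesses, which has size $<\theta$ by regularity. The only cosmetic difference is that the paper treats both hypotheses uniformly with a single maximal antichain $A$ (noting that $\{e_p:p\in A\}$ has size $<\theta$ under either assumption), whereas you split into two cases.
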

\begin{proof} Assume that  either  $\Pbb$ is $\theta$-cc or $|\Omega|<\theta$. Let $\dot{h}$ an $\Pbb$-name for a member of $Y$. Choose a maximal antichain $A$ in $\Pbb$ and $\{ e_p :p\in A\}\subseteq \Omega$ such that  $p\Vdash \dot{h}\in Y_{e_{p}}$ for all $p\in A$. Put $\Gamma:=\{e_{p}:p \in A\}$. By hypothesis, $\Gamma$ has size $<\theta$. For each $\beta \in \Gamma$ define $A_{\beta}=\{p\in A:e_{p}=\beta\}$. As $p\Vdash \dot{h}\in Y_\beta$ for any $p\in A_\beta$, we can find a $\Pbb$-name $\dot{y}_{\beta}$ of a member of $Y_\beta$ such that $p\Vdash \dot{h}=\dot{y}_\beta$ for any $p\in A_\beta$.

As $\Pbb$ is $\theta$-$\Rcal$-nice, for each $\beta\in \Gamma$ there exists a non-empty $H_{\beta}\subseteq Y$ of size $<\theta$ that witnesses niceness for $\dot{y}_\beta$.
Put $H=\bigcup_{\beta\in \Gamma}H_{\beta}$ which has size $<\theta$ because 
$\theta$ is regular. Assume that  $x\in X$ is $\Rcal$-unbounded over $H$. Given $p\in A$, there is a $\beta\in \Gamma$ such that $p\in A_{\beta}$. As $x\in X$ is $\Rcal$-unbounded over $H_{\beta}$, $p\Vdash x\not\sqsubset\dot{y}_{\beta}$ and, on the other hand, $p\Vdash \dot{h}=\dot{y}_{\beta}$, so $p\Vdash x\not\sqsubset \dot{h}$. As $A$ is a maximal antichain, $\Vdash x\not\sqsubset \dot{h}$.
\end{proof}

\begin{lemma}\label{3.8}
Let $\theta$ be a regular cardinal, $\lambda\geq \theta$ a cardinal and let $\Pbb$ be a $\theta$-$\Rcal$-good poset.
\begin{enumerate}[(a)]
\item If $F\subseteq X$ is $\lambda$-$\Rcal$-unbounded, then $\Pbb$ forces that it is  $\dot{\lambda}'$-$\Rcal$-unbounded where, in the $\Pbb$-extension, $\dot{\lambda}'$ is the smallest cardinal $\geq\lambda$.
\item If $\cf(\lambda)\geq\theta$ and $F\subseteq X$ is strongly $\lambda$-$\Rcal$-unbounded then $\Vdash$``if $\lambda$ is a cardinal then $F$ is strongly $\lambda$-$\Rcal$-unbounded".
\item If $\dfrak(\Rcal)\geq\lambda$ then $\Pbb$ forces that $\dfrak(\Rcal)\geq\dot{\lambda}'$.
\end{enumerate}
\end{lemma}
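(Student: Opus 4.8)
The plan is to treat (a), (b) and (c) uniformly. The engine is $\theta$-$\Rcal$-goodness, which converts a $\Pbb$-name $\dot y$ for a member of $Y$ into a ground-model set $H\subseteq Y$ with $|H|<\theta$ such that $\Pbb$ forces $x\not\sqsubset\dot y$ for every $x$ that is $\Rcal$-unbounded over $H$. To handle a name $\dot E$ for a \emph{family} of members of $Y$ of size $<\dot\lambda'$, I would first record the following absoluteness point: if a condition $p'$ decides $|\dot E|=\kappa$, then $\kappa$ is a cardinal of $V$ with $\kappa<\lambda$. Indeed, in a generic extension containing $p'$, $\kappa$ is a cardinal below $\dot\lambda'$, hence below $\lambda$ (there are no extension-cardinals in $[\lambda,\dot\lambda')$); and extension-cardinals are ground-model cardinals.

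For (a), suppose toward a contradiction that $p\Vdash$``$\dot E\subseteq Y$, $|\dot E|<\dot\lambda'$, and no $x\in F$ is $\Rcal$-unbounded over $\dot E$''. Extend $p$ to $p'$ deciding $|\dot E|=\kappa$; by the remark, $\kappa<\lambda$ is a $V$-cardinal. Below $p'$ enumerate $\dot E=\{\dot y_\xi:\xi<\kappa\}$ (using a name for a surjection $\kappa\to\dot E$), extending each $\dot y_\xi$ to an actual name for a member of $Y$ off $p'$ by fixing some $y_0\in Y$. For each $\xi<\kappa$ let $H_\xi$ witness $\theta$-$\Rcal$-goodness for $\dot y_\xi$, and put $H:=\bigcup_{\xi<\kappa}H_\xi$. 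Since $\theta$ is regular and $\kappa<\lambda$, $|H|<\lambda$: if $\kappa<\theta$ this is a union of fewer than $\theta$ sets each of size $<\theta$, hence of size $<\theta\le\lambda$, while if $\theta\le\kappa$ then $|H|\le\kappa<\lambda$. As $F$ is $\lambda$-$\Rcal$-unbounded, pick $x\in F$ that is $\Rcal$-unbounded over $H$, hence over each $H_\xi$; then $p'\Vdash x\not\sqsubset\dot y_\xi$ for all $\xi$, i.e.\ $p'$ forces $x$ to be $\Rcal$-unbounded over $\dot E$ --- contradicting $p'\le p$. Item (c) is proved by the same argument, with $X$ in place of $F$ and the hypothesis $\dfrak(\Rcal)\ge\lambda$ in place of $\lambda$-$\Rcal$-unboundedness (so that the ground-model family $H$ of size $<\lambda$ fails to be $\Rcal$-dominating, yielding $x\in X$ that is $\Rcal$-unbounded over $H$), and ``$\dot E$ is $\Rcal$-dominating'' in place of ``no $x\in F$ is $\Rcal$-unbounded over $\dot E$''.

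For (b), assume $\lambda$ is still a cardinal in the extension (otherwise there is nothing to prove), so $\dot\lambda'=\lambda$ there; since $|F|^V\ge\lambda$ and $\lambda$ survives, $|F|\ge\lambda$ in the extension. Given a name $\dot y$ for a member of $Y$, take $H$ of size $<\theta$ witnessing goodness, and set $B:=\bigcup_{y'\in H}\{x\in F:x\sqsubset y'\}$ in $V$. This is a union of fewer than $\theta$ sets each of size $<\lambda$, so $|B|<\lambda$ because $\cf(\lambda)\ge\theta$. Every $x\in F\smallsetminus B$ is $\Rcal$-unbounded over $H$, hence $\Pbb$ forces $x\not\sqsubset\dot y$; therefore $\Pbb$ forces $\{x\in F:x\sqsubset\dot y\}\subseteq B$, a set of size $<\lambda$. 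As $\dot y$ was arbitrary, $F$ remains strongly $\lambda$-$\Rcal$-unbounded.

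The main obstacle is purely arithmetical: the bound produced must be strictly below $\lambda$, not merely $\le\lambda$. This is exactly where regularity of $\theta$ enters in (a) and (c) --- collapsing a ``$<\theta$-union of $<\theta$-sized sets'' back below $\theta$ --- and where $\cf(\lambda)\ge\theta$ enters in (b), to keep the bad set $B$ small. The only other point needing care is the absoluteness remark pinning $|\dot E|$ down to a ground-model cardinal $<\lambda$, which is what licenses enumerating $\dot E$ along a ground-model index set of the correct size so that the hypothesis on $F$ (resp.\ on $\dfrak(\Rcal)$) can be applied; everything else is the routine bookkeeping of the classical Judah--Shelah/Brendle preservation arguments.
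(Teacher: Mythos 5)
Your proof is correct and follows essentially the same route as the paper's: reduce a name for a small subset of $Y$ to a ground-model-indexed family of names, take the union of the goodness witnesses, and use the cardinal arithmetic (regularity of $\theta$ for (a), $\cf(\lambda)\geq\theta$ together with strong unboundedness for (b), and the observation that $\dfrak(\Rcal)\geq\lambda$ means $X$ itself is $\lambda$-$\Rcal$-unbounded for (c)). You merely make explicit the bookkeeping the paper leaves implicit, namely that $|\dot E|$ is decided as a ground-model cardinal $<\lambda$ below a condition.
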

\begin{proof}
\begin{enumerate}[(a)]
\item It is enough to consider sets of $\Pbb$-names for members of $Y$ of the form $\{\dot{g}_{\alpha}\}_{\alpha<\eta}$ for some $\eta<\lambda$. For $\alpha<\eta$, let $H_{\alpha}\subseteq Y$ of size $<\theta$ that witnesses the goodness of $\Pbb$ for $\dot{g}_{\alpha}$. Put $H=\bigcup_{\alpha<\eta}H_\alpha$. As  $|H|<\lambda$, there is some $x\in F$ $\Rcal$-unbounded over $H$. Thus $\Vdash x\not\sqsubset \dot{g}_{\alpha}$ for any $\alpha<\eta$.


\item Repeat the argument above with $\eta=1$ and find $H$. Hence $\Vdash\{x\in F:x\sqsubset\dot{g}_0\}\subseteq\bigcup_{h\in H}\{x\in F:x\sqsubset h\}$. That union has size $<\lambda$ in the ground model. On the other hand, $F$ is forced to have size $\geq|\lambda|$.

\item Consequence of (a) because $V\models$``$X$ is $\lambda$-$\Rcal$-unbounded".
\end{enumerate}
\end{proof}

We now aim to prove that $\theta$-$\Rcal$-goodness is respected in FS iterations of $\theta$-cc posets.

\begin{definition} Let $\Pbb$ be a forcing notion and let $\dot{z}$ be a $\Pbb$-name for a real in $\omega^\omega$. A  pair $(\la p_{n}\ra_{n<\omega},g)$ is called an \textit{interpretation of $\dot{z}$ in $\Pbb$} if $g\in\omega^{\omega}$ and, for all $n<\omega$,
\begin{enumerate}[(i)]
\item $p_n\in\Pbb$, $p_{n+1}\leq p_{n}$, and
\item $p_{n}\Vdash \dot{z}\frestr n=g\frestr n$.
\end{enumerate}
Say that this interpretation is \textit{below $p\in\Pbb$} if, additionally, $p_{0}\leq p$.
\end{definition}


\begin{lemma}\label{3.10.2.1}
Assume that $\Pbb$ is a poset , $e\in \Omega$, $f:\omega^\omega\to Y_e$ is a continuous function, $\dot{z}$ is a $\Pbb$-name for a real in $\omega^\omega$ and $(\la p_{n}\ra_{n<\omega},g)$ is an interpretation of $\dot{z}$ in $\Pbb$. If $x\in X$, $n<\omega$ and $x\not\sqsubset_n f(g)$, then there is a $k<\omega$ such that $p_{k}\Vdash x \not\sqsubset_{n} f(\dot{z})$.
\end{lemma}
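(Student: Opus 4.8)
The plan is to transfer an open-neighbourhood condition around $g$ to the name $\dot z$ through the interpretation, using that $\sqsubset_n$ is closed and $f$ is continuous. First I would work in the ground model: by clause (III) of Definition \ref{3.1}, $\sqsubset_n$ is a closed subset of $X\times Z$, so its vertical section $(\sqsubset_n)_x:=\{z'\in Z: x\sqsubset_n z'\}$ is closed in $Z$. Since $x\not\sqsubset_n f(g)$, the point $f(g)$ lies in the open set $V:=Z\smallsetminus(\sqsubset_n)_x$, which has the property that $x\not\sqsubset_n z'$ for every $z'\in V$.

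Next, by continuity of $f$ the preimage $f^{-1}[V]$ is open in $\omega^\omega$ and contains $g$, so there is $k<\omega$ with $[g\frestr k]\subseteq f^{-1}[V]$, where $[g\frestr k]:=\{y\in\omega^\omega: g\frestr k\subseteq y\}$ is the corresponding basic clopen set. In particular $f(y)\in V$, hence $x\not\sqsubset_n f(y)$, for every real $y$ extending $g\frestr k$; call this statement $(\ast)$.

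Finally I would pass to a $\Pbb$-generic extension $V[G]$ with $p_k\in G$ and put $z:=\dot z[G]$. By clause (ii) of the definition of an interpretation, $p_k\Vdash\dot z\frestr k=g\frestr k$, so $z\frestr k=g\frestr k$ in $V[G]$. Since $(\ast)$ — once the Borel codes of $f$ and of the closed set $\sqsubset_n$ are fixed — is a $\Pi^1_1$ assertion with real parameters, it is absolute between $V$ and $V[G]$ by Shoenfield's theorem, so it still holds in $V[G]$; applying it with $y=z$ yields $x\not\sqsubset_n f(z)$. As $p_k$ was the only condition used, $p_k\Vdash x\not\sqsubset_n f(\dot z)$, as required. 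The only step deserving any care is this last absoluteness argument, but it is harmless because $X$, $Z$, $\sqsubset_n$, $V$ and $f$ are all coded by ground-model reals and the assertion is of low projective complexity; I do not see a genuine obstacle. This lemma is precisely the ``one-step'' fact that will afterwards be iterated to show that $\theta$-$\Rcal$-goodness is preserved along FS iterations of $\theta$-cc posets.
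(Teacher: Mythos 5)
Your proof is correct and follows essentially the same route as the paper's: both arguments observe that $C_x:=\{w\in\omega^\omega:x\sqsubset_n f(w)\}$ is closed (continuity of $f$ plus closedness of the sections of $\sqsubset_n$), extract a basic clopen neighbourhood $[g\frestr k]$ disjoint from it, and use $p_k\Vdash\dot z\frestr k=g\frestr k$ to conclude. The paper leaves the transfer of the disjointness statement to the extension implicit, whereas you spell it out via $\Pi^1_1$-absoluteness; that is a presentational difference only.
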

\begin{proof} As $\{y\in Y_e:x\sqsubset_{n}y\}$ is closed in $Y_{e}$ (see Definition \ref{3.1}) and $f:\omega^\omega\to Y_e$ is continuous, $f^{-1}[\{y\in Y_e:x\sqsubset_{n}y\}]$ is closed in $\omega^\omega$. Define $C_{x}:=\{w\in \omega^\omega:x\sqsubset_{n}f(w)\}$ and note that $f^{-1}[\{y\in Y_e:x\sqsubset_{n}y\}]=C_{x}$. If $x\not\sqsubset_n f(g)$ then there is a $k<\omega$ such that $[g\frestr k] \cap C_{x}=\emptyset$. On the other hand $p_{k}\Vdash \dot{z}\frestr k=g\frestr k$, so $p_{k}\Vdash [\dot{z}\frestr k] \cap C_{x}=\emptyset$. Hence $p_{k}\Vdash\dot{z} \notin C_{x}$, that is, $p_{k}\Vdash x\not\sqsubset_n f(\dot{z})$.
\end{proof}

\begin{lemma}\label{3.10}
  If $\theta$ is a cardinal then any poset of size $<\theta$ is $\theta$-$\Rcal$-nice.
  Moreover, if $\theta$ is regular then any such poset is $\theta$-$\Rcal$-good. In particular, $\Cbb$ is $\Rcal$-good.
\end{lemma}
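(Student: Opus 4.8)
The plan is to reduce a name for an element of the analytic set $Y_e$ to a name for a real in $\omega^\omega$ and then exploit interpretations through Lemma \ref{3.10.2.1}. First I would fix a poset $\Pbb$ with $|\Pbb|<\theta$, an index $e\in\Omega$, and a $\Pbb$-name $\dot h$ for a member of $Y_e$, and produce a witness $H$ to $\theta$-$\Rcal$-niceness. Since $Y_e$ is a nonempty analytic subset of $Z$, in $V$ there is a continuous surjection $f:\omega^\omega\to Y_e$; the assertion that $f$ maps onto $Y_e$ is absolute (Shoenfield), so $f$ still surjects onto the recomputed set $Y_e$ in any generic extension, and hence I may choose a $\Pbb$-name $\dot z$ for an element of $\omega^\omega$ with $\Vdash\dot h=f(\dot z)$. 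Then, for each $p\in\Pbb$, I would build an interpretation $(\la p_n\ra_{n<\omega},g_p)$ of $\dot z$ below $p$ by the usual recursion deciding $\dot z$ coordinate by coordinate, and set $H:=\{f(g_p):p\in\Pbb\}$, which is a nonempty subset of $Y_e\subseteq Y$ of size $\leq|\Pbb|<\theta$.

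Next I would verify that $H$ works: if $x\in X$ is $\Rcal$-unbounded over $H$, then $\Vdash x\not\sqsubset\dot h$. Assuming not, some $q\in\Pbb$ forces $x\sqsubset f(\dot z)$, and since $\sqsubset=\bigcup_{n}\sqsubset_n$ with the $\sqsubset_n$ increasing, I can pass to $q'\leq q$ and $n<\omega$ with $q'\Vdash x\sqsubset_n f(\dot z)$. Applying Lemma \ref{3.10.2.1} to the interpretation $(\la q'_m\ra_{m<\omega},g_{q'})$ of $\dot z$ below $q'$, and using that $f(g_{q'})\in H$ (so $x\not\sqsubset f(g_{q'})$, whence $x\not\sqsubset_n f(g_{q'})$), I obtain $k<\omega$ with $q'_k\Vdash x\not\sqsubset_n f(\dot z)$; since $q'_k\leq q'_0\leq q'$, this contradicts the choice of $q'$. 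This proves $\theta$-$\Rcal$-niceness for every cardinal $\theta$.

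For the ``moreover'' part, if $\theta$ is regular then any poset of size $<\theta$ is $\theta$-cc (each of its antichains is a subset of it), so Lemma \ref{3.7} gives that $\theta$-$\Rcal$-niceness is equivalent to $\theta$-$\Rcal$-goodness, and hence $\Pbb$ is $\theta$-$\Rcal$-good; taking $\theta=\aleph_1$ and the countable poset $\Cbb=\omega^{<\omega}$ shows $\Cbb$ is $\Rcal$-good. The main obstacle I anticipate is not the forcing argument itself — which is the standard interpretation bookkeeping once Lemma \ref{3.10.2.1} is available — but the first step: ensuring that the continuous surjection onto $Y_e$ chosen in $V$ still does its job in the extension, so that $\dot z$ exists. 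This is exactly where the structural hypotheses of Definition \ref{3.1} (that each $Y_e$ is analytic and each $\sqsubset_n$ is closed) and Shoenfield absoluteness are needed; everything else reduces to routine name manipulation.
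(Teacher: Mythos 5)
Your argument is correct and is essentially the paper's proof: both reduce $\dot h$ to a name $\dot z$ for a real via a continuous surjection $f:\omega^\omega\to Y_e$, take one interpretation of $\dot z$ below each condition of $\Pbb$, let $H$ be the resulting set of $f$-images (of size $\le|\Pbb|<\theta$), and conclude via Lemma \ref{3.10.2.1}, with the ``moreover'' part coming from Lemma \ref{3.7}. The only cosmetic differences are that you argue by contradiction where the paper argues directly by density, and that you make explicit the absoluteness point about $f$ remaining surjective in the extension, which the paper leaves implicit.
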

\begin{proof}
Put $\Pbb=\{p_\alpha:\alpha<\mu\}$ where $\mu:=|\Pbb|<\theta$. Let $e\in \Omega$ and $\dot{h}$ be a $\Pbb$-name for a member of $Y_e$. Choose a continuous and surjective function $f:\omega^\omega\to Y_e$ and a $\Pbb$-name for a real $\dot{z}$ in $\omega^\omega$ such that $\Pbb$ forces that $f(\dot{z})=\dot{h}$. For each $\alpha<\mu$, choose an interpretation $(\la p_{\alpha,n}\ra_{n<\omega},z_{\alpha})$ of $\dot{z}$ below $p_{\alpha}$. We prove that, if $x\in X$ and $\forall{\alpha<\mu}(x\not\sqsubset f(z_{\alpha}))$, then $\Vdash x\not\sqsubset \dot{h}$. Fix $p\in \Pbb$ and $m<\omega$, so there exists an $\alpha<\mu$ such that $p=p_{\alpha}$.
By  Lemma \ref{3.10.2.1} there exists a $k<\omega$ such that $p_{\alpha,k}\Vdash x\not\sqsubset_{m} f(\dot{z})$. Therefore $p_{\alpha,k}\Vdash x\not\sqsubset_{m}\dot{h}$ and $p_{\alpha,k}\leq p_{\alpha}=p$.

The `moreover' part follows by Lemma \ref{3.7}.
\end{proof}

\begin{lemma}\label{3.10.2}
Let $\theta$ be a regular cardinal, $\Pbb$ a poset and $\dot{\Qbb}$ a $\Pbb$-name for a poset. If $\Pbb$ is $\theta$-$\mathrm{cc}$, $\theta$-$\Rcal$-good and it forces that $\dot{\Qbb}$ is $\theta$-$\Rcal$-good, then $\Pbb\ast\dot{\Qbb}$ is $\theta$-$\Rcal$-good
\end{lemma}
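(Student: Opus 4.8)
The plan is to treat a $\Pbb\ast\dot\Qbb$-name as an iterated name and apply $\theta$-$\Rcal$-goodness twice, once for $\dot\Qbb$ (inside $V^\Pbb$) and once for $\Pbb$, gluing the resulting witnesses together. So fix a $\Pbb\ast\dot\Qbb$-name $\dot h$ for a member of $Y$, and regard it as a $\Pbb$-name for a $\dot\Qbb$-name for a member of $Y$. Since $\Pbb$ forces that $\dot\Qbb$ is $\theta$-$\Rcal$-good, there is a $\Pbb$-name $\dot H_1$ such that $\Pbb$ forces: ``$\dot H_1\subseteq Y$ is non-empty of size $<\theta$, and for every $x\in X$, if $x$ is $\Rcal$-unbounded over $\dot H_1$ then $\dot\Qbb\Vdash x\not\sqsubset\dot h$''.

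Next I would convert $\dot H_1$ into a genuine ground-model list of names. Using that $\Pbb$ is $\theta$-$\mathrm{cc}$ and $\theta$ is regular, the possible values of $|\dot H_1|$ form a family indexed by an antichain of size $<\theta$, each value being an ordinal $<\theta$, hence bounded by some fixed $\mu<\theta$; so we may fix $\mu<\theta$ and $\Pbb$-names $\la\dot y_\xi:\xi<\mu\ra$ for members of $Y$ (allowing repetitions via a default element of $Y$ when the enumeration overshoots in a given generic) with $\Vdash_\Pbb \dot H_1=\{\dot y_\xi:\xi<\mu\}$. For each $\xi<\mu$, apply $\theta$-$\Rcal$-goodness of $\Pbb$ to the name $\dot y_\xi$ to get a non-empty $H_\xi\subseteq Y$ in $V$ of size $<\theta$ such that any $x\in X$ which is $\Rcal$-unbounded over $H_\xi$ satisfies $\Vdash_\Pbb x\not\sqsubset\dot y_\xi$. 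Put $H:=\bigcup_{\xi<\mu}H_\xi$; since $\theta$ is regular, $\mu<\theta$, and each $|H_\xi|<\theta$, we have $H\neq\emptyset$ and $|H|<\theta$.

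Finally I would check that $H$ witnesses $\theta$-$\Rcal$-goodness of $\Pbb\ast\dot\Qbb$ for $\dot h$. Suppose $x\in X$ is $\Rcal$-unbounded over $H$. Then $x$ is $\Rcal$-unbounded over every $H_\xi$, so $\Vdash_\Pbb x\not\sqsubset\dot y_\xi$ for all $\xi<\mu$; since $\Pbb$ forces $\dot H_1=\{\dot y_\xi:\xi<\mu\}\subseteq Y$, $\Pbb$ forces that $x$ is $\Rcal$-unbounded over $\dot H_1$. By the choice of $\dot H_1$, $\Pbb$ then forces that $\dot\Qbb\Vdash x\not\sqsubset\dot h$, i.e. $\Pbb\ast\dot\Qbb\Vdash x\not\sqsubset\dot h$, as required.

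The only genuinely delicate point — the ``hard part'' — is the middle step: passing from ``$\Pbb$ forces $\dot H_1$ has size $<\theta$'' to an honest ground-model list of fewer than $\theta$ names $\dot y_\xi$ whose set of values $\Pbb$ forces to equal $\dot H_1$. This is exactly where the hypotheses that $\Pbb$ is $\theta$-$\mathrm{cc}$ and that $\theta$ is regular are used (to get a uniform bound $\mu<\theta$ on the length of the enumeration); everything else is routine unwinding of the definitions together with the elementary fact that a union of $<\theta$ many sets each of size $<\theta$ has size $<\theta$ for $\theta$ regular. A minor bookkeeping point is the use of a default element of $Y$ when the enumeration $\la\dot y_\xi:\xi<\mu\ra$ overshoots the actual size of $\dot H_1$ in some generic; this is harmless, since $\theta$-$\Rcal$-goodness of $\Pbb$ is applied to each name $\dot y_\xi$ individually, whatever that name happens to denote.
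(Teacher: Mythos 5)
Your proposal is correct and follows essentially the same route as the paper's proof: apply goodness of $\dot\Qbb$ inside $V^{\Pbb}$ to get a name $\dot H$ for a small witness, use $\theta$-cc plus regularity of $\theta$ to replace $\dot H$ by fewer than $\theta$ ground-model names $\dot y_\xi$, apply goodness of $\Pbb$ to each, and take the union. The paper states the $\theta$-cc step without elaboration; your antichain argument (uniform bound $\mu<\theta$ on the length of the enumeration, with a default element of $Y$ to pad) is exactly the right justification.
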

\begin{proof}
Let $\dot{h}$ be a $\Pbb\ast\dot{\Qbb}$-name for a member of $Y$. Wlog $\Pbb$ forces that $\dot{h}$ is a $\dot{\Qbb}$-name for a member for $Y$. As $\Pbb$ forces that $\dot{\Qbb}$ is $\theta$-$\Rcal$-good, $\Pbb$ forces that there is a nonempty $\dot{H}\subseteq Y$ of size $<\theta$ such that, for any $x\in X$, if $x$ $\Rcal$-unbounded over $\dot{H}$, then $\Vdash_{\dot{\Qbb}} x \not\sqsubset \dot{h}$. As $\Pbb$ is $\theta$-cc, we can find $\nu<\theta$ in the ground model so that $\dot{H}=\{\dot{y}_{\alpha}:\alpha<\nu\}$. For each $\alpha<\nu$ let $B_\alpha$ be a witness of goodness of $\Pbb$ for $\dot{y}_{\alpha}$. Put $B:=\bigcup_{\alpha<\nu}B_\alpha$, which has size $<\theta$. It is easy to see that, if $x\in X$ is $\Rcal$-unbounded over $B$, then $\Vdash_{\Pbb\ast\dot{\Qbb}}x\not\sqsubset \dot{h}$. 
\end{proof}

We show that goodness is preserved along direct limits in quite a general way so that the theory of this section can be applied to template iterations as in \cite[Sect. 5]{mejia-temp}. Say that a partial order $I$ is \emph{directed} if, for any $i,j\in I$, there is a $k\in I$ such that $i,j\leq k$. A system  $\langle\Pbb_{i}\rangle_{i\in I}$ of posets indexed by a directed partial order $I$ is called a \emph{directed system of posets} if $\Pbb_i$ is a complete subposet of $\Pbb_j$ for all $i\leq j$ in $I$. For such a directed system, define its \emph{direct limit} by $\limdir_{i\in I}\Pbb_i:=\bigcup_{i\in I}\Pbb_i$.

\begin{theorem}\label{3.12.1}
Let $\theta$ be a regular cardinal, $\langle\Pbb_{i}\rangle_{i\in I}$ a directed system of posets and $\Pbb:=\limdir_{i\in I}\Pbb_i$. If $|I|<\theta$ and $\Pbb_i$ is $\theta$-$\Rcal$-nice for all $i\in I$, then $\Pbb$ is $\theta$-$\Rcal$-nice.
\end{theorem}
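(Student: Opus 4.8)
The plan is to break a name over $\Pbb$ into contributions from the factors $\Pbb_i$ and then glue together the witnesses obtained from each factor, using that $\theta$ is regular and $|I|<\theta$. So fix $e\in\Omega$ and a $\Pbb$-name $\dot h$ for a member of $Y_e$. As in the proof of Lemma \ref{3.10}, choose a continuous surjection $f\colon\omega^\omega\to Y_e$ and a $\Pbb$-name $\dot z$ for a real in $\omega^\omega$ with $\Vdash_\Pbb f(\dot z)=\dot h$. First I would record the elementary reduction that, since $\sqsubset=\bigcup_n\sqsubset_n$ with the $\sqsubset_n$ increasing, in order to prove $\Vdash_\Pbb x\not\sqsubset\dot h$ it suffices to check that for every $p\in\Pbb$ and every $n<\omega$ there is $q\leq p$ with $q\Vdash_\Pbb x\not\sqsubset_n\dot h$. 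Because $\Pbb=\bigcup_{i\in I}\Pbb_i$, $|I|<\theta$ and $\theta$ is regular, it is then enough to produce, for each $i\in I$, a non-empty $H_i\subseteq Y$ with $|H_i|<\theta$ such that every $x\in X$ which is $\Rcal$-unbounded over $H_i$ satisfies: for all $p\in\Pbb_i$ and all $n<\omega$ there is $q\leq p$ in $\Pbb$ with $q\Vdash_\Pbb x\not\sqsubset_n\dot h$; the set $H:=\bigcup_{i\in I}H_i$ will then witness $\theta$-$\Rcal$-niceness of $\Pbb$.

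Next I would treat a fixed $i\in I$. Since $\Pbb_i\lessdot\Pbb$ (the defining property of a directed system), identify $\Pbb$ with $\Pbb_i\ast(\Pbb/\dot G_i)$, where $\Pbb/\dot G_i$ is the usual $\Pbb_i$-name for the quotient; under this identification $\dot z$ becomes a $\Pbb_i$-name for a $(\Pbb/\dot G_i)$-name for a real in $\omega^\omega$. Working inside a $\Pbb_i$-generic extension $V[G_i]$ and using a well-ordering fixed there, choose in a definable way an interpretation $(\langle p_k\rangle_{k<\omega},g)$ of $\dot z$ in $\Pbb/G_i$ below its trivial condition; let $\dot z_i$ be the resulting $\Pbb_i$-name for the real $g$, and put $\dot g_i:=f(\dot z_i)$, a $\Pbb_i$-name for a member of $Y_e$. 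Now apply $\theta$-$\Rcal$-niceness of $\Pbb_i$ to $\dot g_i$: this yields a non-empty $H_i\subseteq Y$ with $|H_i|<\theta$ such that any $x\in X$ that is $\Rcal$-unbounded over $H_i$ satisfies $\Vdash_{\Pbb_i}x\not\sqsubset\dot g_i$, hence $\Vdash_{\Pbb_i}x\not\sqsubset_n f(\dot z_i)$ for every $n$.

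Finally I would verify that this $H_i$ has the required property. Fix such an $x$, a condition $p\in\Pbb_i$ and $n<\omega$. In $V[G_i]$ with $p\in G_i$ we have $x\not\sqsubset_n f(g)$ for $g=\dot z_i[G_i]$, so Lemma \ref{3.10.2.1}, applied in $V[G_i]$ to the poset $\Pbb/G_i$, the continuous map $f$, the name $\dot z$ and the chosen interpretation, gives some $k<\omega$ with $p_k\Vdash_{\Pbb/G_i}x\not\sqsubset_n f(\dot z)=\dot h$. Reflecting this to the ground model: there are $p'\leq p$ in $\Pbb_i$ and a $\Pbb_i$-name $\dot q$ for a condition of $\Pbb/\dot G_i$ with $p'\Vdash_{\Pbb_i}\dot q\Vdash_{\Pbb/\dot G_i}x\not\sqsubset_n\dot h$; under $\Pbb\cong\Pbb_i\ast(\Pbb/\dot G_i)$, the pair $(p',\dot q)$ is a condition $q^*\leq p$ of $\Pbb$ forcing $x\not\sqsubset_n\dot h$, as wanted. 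The main obstacle is the bookkeeping around the factorization $\Pbb\cong\Pbb_i\ast(\Pbb/\dot G_i)$ — translating the forcing statements back and forth across it and checking that the interpretation can be chosen definably inside $V[G_i]$ so that $\dot z_i$ is a bona fide $\Pbb_i$-name; once that is in place, the argument is just Lemma \ref{3.10.2.1} together with $\theta$-$\Rcal$-niceness of the $\Pbb_i$, in the same spirit as Lemma \ref{3.10}.
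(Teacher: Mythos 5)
Your proposal is correct and follows essentially the same route as the paper: for each $i\in I$ one interprets $\dot z$ in the quotient $\Pbb/\Pbb_i$ via a $\Pbb_i$-name $\dot z_i$, applies $\theta$-$\Rcal$-niceness of $\Pbb_i$ to $f(\dot z_i)$ to get $H_i$, takes $H=\bigcup_{i\in I}H_i$ (using $|I|<\theta$ and regularity of $\theta$), and transfers via Lemma \ref{3.10.2.1}. The only difference is cosmetic: the paper finishes by contradiction working directly with $\Pbb/\Pbb_i$, whereas you argue by density through the explicit factorization $\Pbb\cong\Pbb_i\ast(\Pbb/\dot G_i)$.
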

\begin{proof}
Let $e\in\Omega$ and let $\dot{h}$ be a $\Pbb$-name for member in $Y_e$. Choose a continuous and surjective function $f:\omega^\omega\to Y_e$ and a $\Pbb$-name for a real $\dot{z}$ in $\omega^\omega$ such that $\Pbb$ forces that $f(\dot{z})=\dot{h}$. For each $i\in I$, find a $\Pbb_i$-name for a real $\dot{z}_i$ in $\omega^\omega$ and a sequence $\langle \dot{p}_{i,k}\ra_{k<\omega}$ of $\Pbb_i$-names such that $\Pbb_i$ forces that $(\la\dot{p}_{i,k}\ra_{k<\omega},\dot{z}_i)$ is an interpretation of $\dot{z}$ in $\Pbb/\Pbb_i$. Choose $H_i\subseteq Y$ of size $<\theta$ such that it witnesses goodness of $\Pbb_i$ for $f(\dot{z}_i)$. Put $H=\bigcup_{i\in I}H_i$, which has size $<\theta$ since $|I|<\theta$ and $\theta$ is regular.

We prove that, if $x\in X$ is $\Rcal$-unbounded over $H$, then $\Vdash_{\Pbb} x\not\sqsubset \dot{h}$. Assume towards a contradiction that there are $p\in\Pbb$ and $n<\omega$ such that $p\Vdash_{\Pbb}x\sqsubset_{n}\dot{h}$. Choose $i\in I$ such that
$p\in\Pbb_i$. Let $G$ be a $\Pbb_i$-generic over the ground model $V$ with $p\in G$. By the choice of $H_i$, $x\not\sqsubset f(\dot{z}_i[G])$, in particular, $x\not\sqsubset_{n} f(\dot{z}_i[G])$.
By  Lemma \ref{3.10.2.1}, there is a $k<\omega$ such that  $\dot{p}_{i,k}[G]\Vdash_{\Pbb/\Pbb_i}x\not\sqsubset_{n}f(\dot{z})=\dot{h}$. On the other hand, by hypothesis, $p\Vdash_{\Pbb_i}``\Vdash_{\Pbb/\Pbb_i}x\sqsubset_n \dot{h}$'', a contradiction.
\end{proof}

\begin{remark}
   We can replace the hypothesis $|I|<\theta$ by $\cf(I)<\theta$ in the previous result. This is because $\limdir_{i\in I}\Pbb_i=\limdir_{i\in C}\Pbb_i$ for any cofinal $C\subseteq I$.
\end{remark}



\begin{Corol}\label{3.11}
Let $\theta$ be an uncountable regular cardinal and $\mathbb{P}_{\delta}=\langle\mathbb{P}_{\alpha},\dot{\mathbb{Q}}_{\alpha}\rangle_{\alpha<\delta}$ a FS iteration of $\theta$-$\mathrm{cc}$ forcing notions. If, for each $\alpha<\delta$, $\Pbb_{\alpha}$ forces that $\dot{\mathbb{Q}}_{\alpha}$ is $\theta$-$\Rcal$-good, then $\Pbb_\delta$ is $\theta$-$\Rcal$-good.
\end{Corol}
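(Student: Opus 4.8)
The plan is to prove by induction on $\delta$ that $\Pbb_\delta$ is $\theta$-$\Rcal$-good, using throughout as a standard black box (a consequence of the $\Delta$-system lemma; see e.g.\ \cite{Ku}) that a finite support iteration of $\theta$-$\mathrm{cc}$ posets is again $\theta$-$\mathrm{cc}$ when $\theta$ is regular uncountable, so that every $\Pbb_\alpha$ with $\alpha\leq\delta$ is $\theta$-$\mathrm{cc}$. The case $\delta=0$ is immediate since $\Pbb_0$ is trivial, hence of size $<\theta$, so $\theta$-$\Rcal$-good by Lemma \ref{3.10}. For a successor $\delta=\gamma+1$ we have $\Pbb_{\gamma+1}=\Pbb_\gamma\ast\dot{\Qbb}_\gamma$, where $\Pbb_\gamma$ is $\theta$-$\mathrm{cc}$ (black box) and $\theta$-$\Rcal$-good (induction hypothesis), and $\Pbb_\gamma$ forces $\dot{\Qbb}_\gamma$ to be $\theta$-$\Rcal$-good (hypothesis of the corollary); so Lemma \ref{3.10.2} gives that $\Pbb_{\gamma+1}$ is $\theta$-$\Rcal$-good.

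The real work is the limit case. Since $\Pbb_\delta$ is $\theta$-$\mathrm{cc}$, by Lemma \ref{3.7} it suffices to show that $\Pbb_\delta$ is $\theta$-$\Rcal$-\emph{nice}, and I would split on $\cf(\delta)$. If $\cf(\delta)<\theta$, fix a cofinal $C\subseteq\delta$ with $|C|=\cf(\delta)<\theta$; then $\langle\Pbb_\alpha\rangle_{\alpha\in C}$ is a directed system of complete subposets with $\limdir_{\alpha\in C}\Pbb_\alpha=\Pbb_\delta$ (any condition has finite, hence bounded, support), and each $\Pbb_\alpha$ ($\alpha\in C$) is $\theta$-$\Rcal$-good, hence $\theta$-$\Rcal$-nice, by the induction hypothesis; so Theorem \ref{3.12.1} applied to this system gives that $\Pbb_\delta$ is $\theta$-$\Rcal$-nice.

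If instead $\cf(\delta)\geq\theta$, fix $e\in\Omega$ and a $\Pbb_\delta$-name $\dot{h}$ for a member of $Y_e$, and write $\dot{h}=f(\dot{z})$ with $f\colon\omega^\omega\to Y_e$ continuous surjective and $\dot{z}$ a $\Pbb_\delta$-name for a real in $\omega^\omega$. Using the $\theta$-$\mathrm{cc}$ of $\Pbb_\delta$ I would reflect $\dot{z}$: for each $n$ decide $\dot{z}\restriction n$ along a maximal antichain of size $<\theta$; the union of these countably many antichains still has size $<\theta$ since $\theta$ is regular uncountable, so the union of the finite supports of its conditions is a subset of $\delta$ of size $<\theta\leq\cf(\delta)$, hence bounded by some $\alpha<\delta$. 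Thus there is a $\Pbb_\alpha$-name $\dot{z}'$ with $\Pbb_\delta\Vdash\dot{z}=\dot{z}'$, and $\dot{h}':=f(\dot{z}')$ is a $\Pbb_\alpha$-name for a member of $Y_e$ with $\Pbb_\delta\Vdash\dot{h}=\dot{h}'$. By the induction hypothesis $\Pbb_\alpha$ is $\theta$-$\Rcal$-nice, so there is a nonempty $H\subseteq Y$ of size $<\theta$ such that every $x\in X$ that is $\Rcal$-unbounded over $H$ satisfies $\Vdash_{\Pbb_\alpha}x\not\sqsubset\dot{h}'$; since each $\sqsubset_n$ is a fixed (absolute) closed set and $\dot{h}'$ is a $\Pbb_\alpha$-name, this is preserved to the $\Pbb_\delta$-extension, i.e.\ $\Vdash_{\Pbb_\delta}x\not\sqsubset\dot{h}$. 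So the same $H$ witnesses niceness of $\Pbb_\delta$ for $\dot{h}$. In either sub-case $\Pbb_\delta$ is $\theta$-$\Rcal$-nice, hence $\theta$-$\Rcal$-good by Lemma \ref{3.7}, completing the induction.

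The main obstacle is the limit case with $\cf(\delta)\geq\theta$, where one cannot appeal to Theorem \ref{3.12.1} and must instead reflect the name to a bounded stage. The trick that makes this work is to run the induction through \emph{niceness} rather than directly through goodness: with $e\in\Omega$ and $f$ fixed in advance, the reflected name $f(\dot{z}')$ automatically names a member of $Y_e$, so no reflection over the (possibly wild) parameter set $\Omega$ is needed here --- that reflection is carried out exactly once, at the end, by Lemma \ref{3.7}, which is where $\theta$-$\mathrm{cc}$ is used. The remaining points, namely $\theta$-$\mathrm{cc}$ of the finite support iteration and the absoluteness of ``$x\not\sqsubset\dot{h}'$'' between $V[G\cap\Pbb_\alpha]$ and $V[G]$, are routine.
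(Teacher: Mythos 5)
Your proof is correct and follows essentially the same route as the paper: induction via Lemma \ref{3.10} at the base, Lemma \ref{3.10.2} at successors, Theorem \ref{3.12.1} plus Lemma \ref{3.7} at limits of small cofinality, and reflection of the name to a bounded stage via $\theta$-cc at limits of cofinality $\geq\theta$. The only difference is cosmetic: the paper reflects $\dot{h}$ directly and applies goodness of $\Pbb_\xi$, whereas you reflect the underlying real name $\dot z$ and route the final step through niceness before invoking Lemma \ref{3.7}; both hinge on the same bounded-support argument and the same absoluteness of $\sqsubset_n$.
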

\begin{proof}
   We prove that $\Pbb_\alpha$ is $\theta$-$\Rcal$-good by induction on $\alpha\leq\delta$. The step $\alpha=0$ follows by Lemma \ref{3.10} and the successor step follows by Lemma \ref{3.10.2}. Assume that $\alpha$ is a limit ordinal. If $\cf(\alpha)<\theta$ then $\Pbb_\alpha$ is $\theta$-$\Rcal$-good by Theorem \ref{3.12.1} and Lemma \ref{3.7}.

   Assume that $\mathrm{cf}(\alpha)\geq \theta$. Let $\dot{h}$ be a $\Pbb_{\alpha}$-name for a member of $Y$. By $\theta$-cc-ness, there exists a $\xi<\alpha$ such that $\dot{h}$ is a $\Pbb_{\xi}$-name. As $\Pbb_{\xi}$ is $\theta$-$\Rcal$-good, there is a non-empty $H\subseteq Y$ of size $<\theta$ that witnesses goodness of $\Pbb_{\xi}$ for $\dot{h}$. It is clear that $H$ also witnesses goodness of $\Pbb_{\alpha}$ for $\dot{h}$.
\end{proof}

Recall that $c\in X$ is a \emph{Cohen real over a model $M$} if $c$ does not belong to any Borel meager set coded in $M$. It is clear that Cohen forcing adds such a real over the ground model. Indeed, given a metric $d$ on $\omega$ such that $X$, as a complete metric space, is a completion of $\la\omega,d\ra$, consider $\Cbb^d:=\{t\in\omega^{<\omega}:\forall{i<|t|-1}(d(t_i,t_{i+1})<2^{-(i+2)})\}$ ordered by end-extension. This is a countable atomless poset (because $\la\omega,d\ra$ is perfect), so it is forcing equivalent to $\Cbb$. It is not hard to see that $\Cbb^d$ adds a Cauchy-sequence that converges to a Cohen real in $X$ over the ground model.

By Definition \ref{3.1}(III), any Cohen real in $X$ over the ground model is $\Rcal$-unbounded over the ground model. Hence, it is possible to add (strongly) $\Rcal$-unbounded families with Cohen reals through FS iterations. 

\begin{lemma}\label{3.12}
If $\nu$ is a cardinal with uncountable cofinality and $\mathbb{P}_{\nu}=\langle\mathbb{P}_{\alpha},\dot{\mathbb{Q}}_{\alpha}\rangle_{\alpha<\nu}$ is a FS iteration of non-trivial $\cf(\nu)$-cc posets, then $\Pbb_{\nu}$ adds a strongly $\nu$-$\Rcal$-unbounded family of size $\nu$.
\end{lemma}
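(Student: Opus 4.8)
The plan is to build the family from the Cohen reals that a finite support iteration automatically produces at its limit stages of countable cofinality. Fixing a $\Pbb_\nu$-generic $G$ and working in $V_\nu=V[G]$, let $L:=\{\gamma<\nu:\cf(\gamma)=\omega\}$; since $\nu$ is a limit ordinal, $|L|=\nu$. For each $\gamma\in L$ I would produce a real $c_\gamma\in X\cap V_\gamma$ that is a Cohen real over $V_\beta$ for every $\beta<\gamma$, and then set $F:=\{c_\gamma:\gamma\in L\}$.

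To obtain $c_\gamma$, fix an increasing sequence $\langle\gamma_n:n<\omega\rangle$ cofinal in $\gamma$. Since the iteration has finite support and every $\dot\Qbb_\alpha$ is forced to be non-trivial, I would invoke the standard fact that a FS iteration of non-trivial posets of length $\omega$ adds a Cohen real over its base model: applied to $\langle\gamma_n\rangle$ it yields a real $c_\gamma\in V_\gamma$, and applied to the tail $\langle\gamma_n:n\geq m\rangle$ it shows that the corresponding tail of $c_\gamma$ is Cohen over $V_{\gamma_m}$. Hence, given $\beta<\gamma$, choosing $m$ with $\gamma_m>\beta$ shows $c_\gamma$ is Cohen over $V_{\gamma_m}\supseteq V_\beta$, so over $V_\beta$. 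Passing through $\Cbb\cong\Cbb^d$ (the poset from the paragraph preceding the lemma) I may take $c_\gamma$ to be an element of $X$. By Definition \ref{3.1}(III) and the observation just before the lemma, $c_\gamma$ is then $\Rcal$-unbounded over $V_\beta$ for every $\beta<\gamma$; in particular $c_\gamma\notin V_\beta$ whenever $\beta<\gamma$.

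It then remains to verify the two clauses of Definition \ref{DefRunb} with $\theta=\nu$. For the size: if $\gamma<\gamma'$ in $L$ then $\gamma+1<\gamma'$ (as $\gamma'$ is a limit), $c_\gamma\in V_{\gamma+1}$, and $c_{\gamma'}$ is Cohen over $V_{\gamma+1}$, so $c_{\gamma'}\neq c_\gamma$; hence $|F|=|L|=\nu$. For the sections: given $y\in Y$ lying in $V_\nu$, I would use that $\cf(\nu)$ is an uncountable regular cardinal and the iteration is $\cf(\nu)$-cc to see that a nice name for a real coding $y$ involves only countably many antichains, each of size $<\cf(\nu)$; since each condition has finite support, the union of all supports occurring has size $<\cf(\nu)$ and is therefore bounded below $\nu$, so $y\in V_\xi$ for some $\xi<\nu$. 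Then for every $\gamma\in L$ with $\gamma>\xi$ the real $c_\gamma$ is $\Rcal$-unbounded over $V_\xi\ni y$, i.e. $c_\gamma\not\sqsubset y$, so $\{x\in F:x\sqsubset y\}\subseteq\{c_\gamma:\gamma\in L,\ \gamma\leq\xi\}$, a set of size $\leq|\xi|+1<\nu$. This shows $F$ is strongly $\nu$-$\Rcal$-unbounded of size $\nu$.

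The step I expect to require the most care is the first one: correctly citing and using the fact that FS iterations of non-trivial posets add Cohen reals over every strictly earlier intermediate model, and that such a Cohen real can be taken inside the perfect Polish space $X$ via $\Cbb^d$. The remaining points are bookkeeping, with the uncountability of $\cf(\nu)$ entering precisely at the reflection step that places $y$ in some $V_\xi$ with $\xi<\nu$.
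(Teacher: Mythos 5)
Your proposal is correct and is exactly the argument the paper intends: its one-line proof ("the Cohen reals added at the limit steps form a strongly $\nu$-$\Rcal$-unbounded family of size $\nu$") is precisely what you spell out, including the standard facts that FS iterations of non-trivial posets add Cohen reals in $X$ at limit stages of countable cofinality and that the $\cf(\nu)$-cc plus finite supports place every $y\in Y$ in some $V_\xi$ with $\xi<\nu$. No gaps.
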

\begin{proof} The Cohen reals (in $X$) added at the limit steps of the iteration form a strongly $\nu$-$\Rcal$-unbounded family of size $\nu$.
\end{proof}

\begin{theorem}\label{3.13}
Let $\theta$ be an uncountable regular cardinal, $\delta\geq\theta$ an ordinal, and let $\mathbb{P}_{\delta}=\langle\mathbb{P}_{\alpha},\dot{\mathbb{Q}}_{\alpha}\rangle_{\alpha<\delta}$ be a FS iteration such that, for each $\alpha<\delta$, $\dot{\Qbb}_{\alpha}$ is a $\Pbb_{\alpha}$-name of a  non-trivial $\theta$-$\Rcal$-good $\theta$-cc poset. Then:
\begin{enumerate}[(a)]
    \item For any cardinal $\nu\in[\theta,\delta]$ with $\cf(\nu)\geq\theta$, $\Pbb_\nu$ adds a strongly $\nu$-$\Rcal$-unbounded family of size $\nu$ which is still strongly $\nu$-$\Rcal$-unbounded in the $\Pbb_\delta$-extension.
    \item For any cardinal $\lambda\in[\theta,\delta]$, $\Pbb_\lambda$ adds a $\lambda$-$\Rcal$-unbounded family of size $\lambda$ which is still $\lambda$-$\Rcal$-unbounded in the $\Pbb_\delta$-extension.
    \item $\mathbb{P}_{\delta}$ forces that $\bfrak(\Rcal)\leq\theta$ and  $|\delta|\leq\dfrak(\Rcal)$.
\end{enumerate}
\end{theorem}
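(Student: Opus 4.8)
The plan is to produce the unbounded families with Lemma~\ref{3.12} and then transport them to the final extension: the goodness hypothesis survives along the tail of the iteration by Corollary~\ref{3.11}, and Lemma~\ref{3.8} then shows the tail cannot destroy the unboundedness.

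For (a), fix a cardinal $\nu\in[\theta,\delta]$ with $\cf(\nu)\geq\theta$. Since $\theta\leq\cf(\nu)$, every $\dot{\Qbb}_\alpha$ with $\alpha<\nu$ is forced to be non-trivial and $\theta$-cc, hence $\cf(\nu)$-cc, and $\cf(\nu)$ is uncountable; so Lemma~\ref{3.12} applied to $\Pbb_\nu$ yields a strongly $\nu$-$\Rcal$-unbounded family $F\subseteq X$ of size $\nu$ -- concretely the set of Cohen reals in $X$ added at the limit stages below $\nu$. I would then factor $\Pbb_\delta\cong\Pbb_\nu\ast\dot{\Pbb}_{[\nu,\delta)}$, where $\dot{\Pbb}_{[\nu,\delta)}$ is (forced to be) the tail FS iteration, which in $V_\nu$ is a FS iteration of non-trivial $\theta$-cc $\theta$-$\Rcal$-good posets. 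By Corollary~\ref{3.11}, $V_\nu$ sees that $\dot{\Pbb}_{[\nu,\delta)}$ is $\theta$-$\Rcal$-good, and being $\theta$-cc it preserves the cardinal $\nu$. Hence Lemma~\ref{3.8}(b), applied in $V_\nu$ with $\lambda=\nu$ (its hypothesis $\cf(\nu)\geq\theta$ holding by assumption), gives that $F$ is still strongly $\nu$-$\Rcal$-unbounded in the $\Pbb_\delta$-extension.

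For (b), fix a cardinal $\lambda\in[\theta,\delta]$. If $\lambda$ is regular then $\cf(\lambda)=\lambda\geq\theta$ and (a) supplies a strongly $\lambda$-$\Rcal$-unbounded family $F$ of size $\lambda$ added by $\Pbb_\lambda$ and still such in the $\Pbb_\delta$-extension; by the remark following Definition~\ref{DefRunb}, a strongly $\lambda$-$\Rcal$-unbounded family of size $\lambda$ with $\lambda$ regular is $\lambda$-$\Rcal$-unbounded, so we are done. If $\lambda$ is singular, I would pick a strictly increasing cofinal sequence $\langle\lambda_i:i<\cf(\lambda)\rangle$ of regular cardinals in $[\theta,\lambda)$, apply (a) at each $\lambda_i$ to get families $F_i$ of size $\lambda_i$ (taking the $F_i$ to be the Cohen reals below $\lambda_i$, so they are increasing) that remain strongly $\lambda_i$-$\Rcal$-unbounded in $V_\delta$, and set $F:=\bigcup_i F_i$, a size-$\lambda$ subset lying in $V_\lambda$. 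For $E\subseteq Y$ in $V_\delta$ with $|E|<\lambda$: as $\lambda$ is singular it is not a successor cardinal, so $|E|<\lambda_i$ for some $i$, and the regularity of $\lambda_i$ makes $F_i$ $\lambda_i$-$\Rcal$-unbounded, producing $x\in F_i\subseteq F$ that is $\Rcal$-unbounded over $E$. Thus $F$ is $\lambda$-$\Rcal$-unbounded in $V_\delta$ (and already in $V_\lambda$ by the same computation).

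For (c), applying (b) with $\lambda=\theta$ gives a $\theta$-$\Rcal$-unbounded family of size $\theta$ in $V_\delta$, whence $\bfrak(\Rcal)\leq\theta$ by Lemma~\ref{bRdR}(a); and since $\delta\geq\theta$ implies $|\delta|$ is a cardinal in $[\theta,\delta]$ preserved by the $\theta$-cc forcing $\Pbb_\delta$, applying (b) with $\lambda=|\delta|$ together with Lemma~\ref{bRdR}(a) gives $|\delta|\leq\dfrak(\Rcal)$. The step I expect to be the main obstacle is the factorization $\Pbb_\delta\cong\Pbb_\nu\ast\dot{\Pbb}_{[\nu,\delta)}$ together with the verification that the tail is, in $V_\nu$, genuinely a FS iteration of non-trivial $\theta$-cc $\theta$-$\Rcal$-good posets (so that Corollary~\ref{3.11} and Lemma~\ref{3.8}(b) really apply) and the attendant bookkeeping of which cardinals survive at each stage; the cardinal-arithmetic point in the singular case of (b) -- that a singular $\lambda$ is captured below some $\lambda_i$ precisely because it is not a successor -- is the other place demanding a little care.
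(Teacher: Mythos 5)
Your proposal is correct and follows essentially the same route as the paper: Lemma \ref{3.12} produces the Cohen-real family, Corollary \ref{3.11} gives goodness of the tail $\Pbb_\delta/\Pbb_\nu$, Lemma \ref{3.8} preserves (strong) unboundedness, and the singular case of (b) is handled exactly as in the paper by taking a union over a cofinal sequence of regular cardinals and using that a singular $\lambda$ is a limit cardinal. The only cosmetic difference is that the paper derives $\bfrak(\Rcal)\leq\theta$ in (c) from (a) with $\nu=\theta$ rather than from (b) with $\lambda=\theta$, which is immaterial.
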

\begin{proof}
   (a) is a direct consequence of Lemmas \ref{3.8}, \ref{3.12} and the fact that $\Pbb_\delta/\Pbb_\nu$, the remaining part of the iteration from stage $\nu$, is $\theta$-$\Rcal$-good (by Corollary \ref{3.11}). On the other hand, by Lemma \ref{bRdR}, $\bfrak(\Rcal)\leq\theta$ follows from (a) for $\nu=\theta$ and $\dfrak(\Rcal)\geq|\delta|$ follows from (b) for $\lambda=|\delta|$.

   It remains to prove (b) for the case when $\lambda$ is singular (for $\lambda$ regular it just follows from (a)). Work in $V_\lambda=V^{\Pbb_\lambda}$. Let $\{\delta_\xi:\xi<\lambda\}$ be the increasing enumeration of 0 and the limit ordinals below $\lambda$ and, for each $\xi<\lambda$, denote by $c_\xi$ the Cohen real in $X$ over $V_{\delta_\xi}$ added by $\Pbb_{\delta_{\xi+1}}$. As explained in the proof of Lemma \ref{3.12}, for each $\nu\in[\theta,\lambda)$ regular, $\{c_\xi:\xi<\nu\}\in V_\nu$ is (strongly) $\nu$-$\Rcal$-unbounded in $V_\nu$, and also in $V_\lambda$ by (a). Thus $\{c_\xi:\xi<\lambda\}$ is $\lambda$-$\Rcal$-unbounded. Indeed, if $A\subseteq Y$ has size $<\lambda$ then it has size $<\nu$ for some regular $\nu\in[\theta,\lambda)$, so there is some $\xi<\nu$ such that $c_\xi$ is $\Rcal$-unbounded over $A$.

   As $\Pbb_\delta/\Pbb_\lambda$ is $\theta$-$\Rcal$-good, then $\{c_\xi:\xi<\lambda\}$ is still $\lambda$-$\Rcal$-unbounded in the $\Pbb_\delta$-extension by Lemma \ref{3.8}.
\end{proof}

\subsection{Examples of preservation properties}\label{3.2}

We start with examples of the classical framework, that is, with instances of Polish relational systems.

\begin{Example}\label{ExmPrs}
  \begin{enumerate}[(1)]
     \item For every $b:\omega\to\omega+1\smallsetminus\{0\}$ with $b\not\leq^*1$, the relational system $\Ed(b)$ (Example \ref{ExmCichonrelsys}(3)) is a Prs. Indeed, $\neq^*=\bigcup_{n<\omega}\neq^*_n$ where $\neq^*_n$ is defined as $x\neq^*_n y$ iff $x(i)\neq y(i)$ for all $i\geq n$. If $b:\omega\to\omega$ and $\nu<\theta$ are infinite cardinals, then any $\nu$-centered poset is $\theta$-$\Ed(b)$-good (similar to Lemma \ref{3.21} when $h=1$).

     \item For every $b:\omega\to\omega+1\smallsetminus\{0\}$ and $h\in\omega^\omega$ with $h\geq^*1$, the relational system $\aLc(b,h)$ (Definition \ref{Defantiloc}) is a Prs. Indeed, $\not\ni^*=\bigcup_{n<\omega}\not\ni^*_n$ where $\not\ni^*_n$ is defined as $\varphi\not\ni^*_n y$ iff $y(i)\notin\varphi(i)$ for all $i\geq n$. When $b:\omega\to\omega$, a similar proof as Lemma \ref{3.21} yields that any $\nu$-centered poset is $\theta$-$\aLc(b,h)$-good when $\nu<\theta$ are infinite cardinals.

     \item The relational system $\mathbf{D}=\la\omega^\omega,\omega^\omega,\leq^*\ra$ is a Prs. Clearly, any $\omega^\omega$-bounding poset is $\mathbf{D}$-good. Miller \cite{Mi} proved that $\Ebb$, the standard $\sigma$-centered poset that adds an eventually different real in $\omega^\omega$, is $\mathbf{D}$-good. A similar proof yields that $\Ebb^h_b$ is $\mathbf{D}$-good for any $b:\omega\to\omega\smallsetminus\{0\}$ and $h\in\omega^\omega$ with $\lim_{n\to+\infty}\frac{h(n)}{b(n)}=0$. Likewise, $\LOCbb^h_b$ is $\Dbf$-good for any $b:\omega\to\omega\smallsetminus\{0\}$ and $h\in\omega^\omega$ that goes to infinity.

     \item For $\mathcal{H}\subseteq\omega^\omega$ denote $\Lc(\omega,\mathcal{H}):=\langle\omega^\omega,\Scal(\omega,\mathcal{H}),\in^{*}\rangle$ where
     $\Scal(\omega,\mathcal{H}):=\bigcup_{h\in\mathcal{H}}\Scal(\omega,h)$.
     If $\mathcal{H}$ is countable and non-empty then $\Lc(\omega,\mathcal{H})$ is a Prs because $\Scal(\omega,\mathcal{H})$ is $F_\sigma$ in $([\omega]^{<\omega})^\omega$. In addition, if $\mathcal{H}$ contains a function that goes to infinity then $\bfrak(\Lc(\omega,\mathcal{H}))=\add(\Ncal)$ and $\dfrak(\Lc(\omega,\mathcal{H}))=\cof(\Ncal)$ (as a consequence of Theorem \ref{1.6}). If $\nu<\theta$ are infinite cardinals and $\theta$ is regular then any $\nu$-centered poset is $\theta$-$\Lc(\omega,\mathcal{H})$-good (\cite{JS}, see also \cite[Lemma 6]{Br}). Moreover, if all the members of $\Hcal$ go to infinity then any Boolean algebra with a strictly positive finitely additive measure is $\Lc(\omega,\Hcal)$-good (\cite{Ka}). In particular, any subalgebra of random forcing is $\mathbf{Lc}(\omega,\Hcal)$-good.
  \end{enumerate}
\end{Example}




\begin{Example}\label{Exam}
If $\Hcal$ is non-empty, then $\Lc(\omega,\Hcal)$ is a gPrs where $\Omega=\mathcal{H}$, $Z=([\omega]^{<\omega})^{\omega}$ and $Y_{h}=\Scal(\omega,h)$ for each $h\in\mathcal{H}$. Like in Example \ref{ExmPrs}(4), if $\nu<\theta$ are infinite cardinals, then any $\nu$-centered poset is $\theta$-$\Lc(\omega,\Hcal)$-nice (see \cite[Lemma 5.13]{BrM}) and, by Lemma \ref{3.7}, it is $\theta$-$\Lc(\omega,\Hcal)$-good when $\theta$ is regular (because any $\nu$-centered poset is $\theta$-cc).
\end{Example}

\begin{lemma}[Brendle and Mej\'ia {\cite[Lemma 5.14]{BrM}}]\label{linkedpresadd(N)}
   For any $\pi,\rho,g_0\in\omega^\omega$ with $\pi$ and $g_0$ going to $+\infty$, there is a $\leq^*$-increasing sequence $\la g_n:n<\omega\ra$ such that any $(\rho,\pi)$-linked poset is 2-$\Lc(\omega,\{g_n:n<\omega\})$-nice (hence $\Lc(\omega,\{g_n:n<\omega\})$-good).
\end{lemma}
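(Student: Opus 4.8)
The plan is to construct the sequence $\langle g_n:n<\omega\rangle$ from $\pi,\rho,g_0$ alone, and then, for an arbitrary $(\rho,\pi)$-linked poset $\Pbb$ with witness $\langle Q_{m,j}:m<\omega,\ j<\rho(m)\rangle$, run the classical ``bound the possible values of the slalom'' argument, adapted to this linked structure. Since $\pi\to\infty$, the function $M(v):=1+\max\{m<\omega:\pi(m)<v\}$ is well defined, non-decreasing, tends to infinity as $v\to\infty$, and satisfies $\pi(M(v))\geq v$ for every $v$. Define $\langle g_n\rangle$ recursively, starting from the given $g_0$: if $g_n\in\omega^\omega$ goes to infinity, put $m^{(n)}_k:=M(g_n(k)+1)$ and $g_{n+1}(k):=(\rho(m^{(n)}_k)+1)\cdot g_n(k)$. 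Then $\pi(m^{(n)}_k)\geq g_n(k)+1$ for all $k$; $m^{(n)}_k\to\infty$ as $k\to\infty$ (because $g_n\to\infty$ and $M\to\infty$); and $g_{n+1}\geq g_n$ pointwise with $g_{n+1}\to\infty$. Hence the recursion yields a $\leq^{*}$-increasing sequence of functions going to infinity, depending only on $\pi,\rho,g_0$.

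Now fix a $(\rho,\pi)$-linked poset $\Pbb$. Since $|\Omega|=|\{g_n:n<\omega\}|\leq\aleph_0<\aleph_1$, Lemma~\ref{3.7} reduces the statement to showing that $\Pbb$ is $2$-$\Lc(\omega,\{g_n:n<\omega\})$-nice (this yields $\aleph_1$-niceness, hence $\Lc(\omega,\{g_n:n<\omega\})$-goodness). So fix $n<\omega$ and a $\Pbb$-name $\dot\varphi$ for a member of $\Scal(\omega,g_n)$, i.e. $\Vdash\forall k\,(|\dot\varphi(k)|\leq g_n(k))$. For each $k<\omega$ and $j<\rho(m^{(n)}_k)$, set
\[A_{j,k}:=\{i<\omega:\exists q\in Q_{m^{(n)}_k,j}\ (q\Vdash i\in\dot\varphi(k))\},\]
and define the slalom $\psi$ by $\psi(k):=\bigcup_{j<\rho(m^{(n)}_k)}A_{j,k}$. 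If some $A_{j,k}$ had more than $g_n(k)$ elements, pick $g_n(k)+1$ distinct members together with witnessing conditions in $Q_{m^{(n)}_k,j}$; since this set is $\pi(m^{(n)}_k)$-linked and $\pi(m^{(n)}_k)\geq g_n(k)+1$, a common lower bound $r$ of those conditions would satisfy $r\Vdash|\dot\varphi(k)|\geq g_n(k)+1$, contradicting $r\Vdash|\dot\varphi(k)|\leq g_n(k)$. Hence $|A_{j,k}|\leq g_n(k)$, so $|\psi(k)|\leq\rho(m^{(n)}_k)\,g_n(k)<g_{n+1}(k)$ and $\psi\in\Scal(\omega,g_{n+1})\subseteq\Scal(\omega,\{g_n:n<\omega\})$. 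Put $H:=\{\psi\}$, of size $<2$.

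It remains to check: if $x\in\omega^\omega$ is $\Lc(\omega,\{g_n:n<\omega\})$-unbounded over $H$, that is $\exists^{\infty}k\,(x(k)\notin\psi(k))$, then $\Vdash x\not\sqsubset\dot\varphi$. Equivalently, suppose some $p\in\Pbb$ forces $x\in^{*}\dot\varphi$; we show $x\in^{*}\psi$. Fix a $\Pbb$-name $\dot N$ with $p\Vdash\forall k\geq\dot N\,(x(k)\in\dot\varphi(k))$, and extend $p$ to $p^{*}\leq p$ with $p^{*}\Vdash\dot N=N^{*}$ for some $N^{*}<\omega$; then $p^{*}\Vdash x(k)\in\dot\varphi(k)$ for every $k\geq N^{*}$. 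Let $m_{p^{*}}$ be least with $\forall m\geq m_{p^{*}}\,\exists j<\rho(m)\,(p^{*}\in Q_{m,j})$, which exists by condition~(ii) of Definition~\ref{link}. For every $k$ with $m^{(n)}_k\geq m_{p^{*}}$ there is $j<\rho(m^{(n)}_k)$ with $p^{*}\in Q_{m^{(n)}_k,j}$; if moreover $k\geq N^{*}$, then $p^{*}\Vdash x(k)\in\dot\varphi(k)$ gives $x(k)\in A_{j,k}\subseteq\psi(k)$. Since $m^{(n)}_k\to\infty$, both requirements hold for all large $k$, whence $x(k)\in\psi(k)$ for all large $k$, i.e. $x\in^{*}\psi$ — as required (this is a statement about the ground-model reals $x,\psi$ and the forcing relation, so no generic or absoluteness argument is needed). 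The ``hence $\Lc(\omega,\{g_n:n<\omega\})$-good'' clause is Lemma~\ref{3.7} once more.

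The crux — and the reason the levels $m^{(n)}_k$ must go to infinity while staying large enough that $\pi(m^{(n)}_k)>g_n(k)$ — is reconciling the eventual-localization quantifier with a \emph{single} slalom $\psi$: one first collapses the quantifier by passing to a condition $p^{*}$ deciding the ``threshold'' $\dot N$, and then needs $m^{(n)}_k\geq m_{p^{*}}$ for all large $k$ (so that the covering property~(ii) applies to $p^{*}$ at level $m^{(n)}_k$) while simultaneously needing $\pi(m^{(n)}_k)>g_n(k)$ at that same $k$ for the counting bound on $|A_{j,k}|$ to hold. Everything else — the definability of $M$, the recursion producing a $\leq^{*}$-increasing sequence, and the size estimate for $\psi$ — is routine.
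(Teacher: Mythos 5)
Your proof is correct, and it is essentially the standard argument: the paper itself only cites \cite[Lemma 5.14]{BrM} for this lemma, and both that proof and the paper's analogous in-house arguments (Lemmas \ref{3.20} and \ref{3.21}) proceed exactly as you do — choose levels $m^{(n)}_k\to\infty$ with $\pi(m^{(n)}_k)>g_n(k)$, bound the values forceable by each $\pi$-linked piece $Q_{m^{(n)}_k,j}$ by $g_n(k)$, and union over $j<\rho(m^{(n)}_k)$ to land in $\Scal(\omega,g_{n+1})$, finishing with Lemma \ref{3.7} for goodness. The only cosmetic points are the degenerate case $\max\emptyset$ in the definition of $M$ and the non-strict inequality $|\psi(k)|\leq g_{n+1}(k)$ when $g_n(k)=0$, neither of which affects the argument.
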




\newcommand{\Slm}{\mathbf{aLc}^*}

\begin{Example}[Kamo and Osuga {\cite{KO}}]\label{3.18}
Fix a family $\mathcal{E}\subseteq\omega^\omega$ of size $\aleph_1$ of non-decreasing functions which satisfies
\begin{enumerate}[(i)]
\item $\forall e \in \mathcal{E}(e \leq \text{id}_\omega)$,
\item $\forall e \in \mathcal{E}($ $\lim_{n\to+\infty}e(n)=+\infty$ and $\lim_{n\to+\infty}(n-e(n))=+\infty$),
\item $\forall e \in \mathcal{E} \exists^{}e^{'} \in \mathcal{E}(e+1\leq^{*} e^{'})$ and
\item $\forall \mathcal{E}^{'} \in [\mathcal{E}]^{\leq \aleph_0} \exists e\in \mathcal{E}\forall e^{'}\in\mathcal{E}^{'}(e^{'}\leq^{*}e)$.
\end{enumerate}
The existence of the family $\mathcal{E}$ is a consequence of Lemma \ref{genbounding} applied to $H:=\id_\omega+1$ and $g:=\id_\omega$.

For $b,h \in \omega^\omega$ such that $b>0$ and $h\geq^*1$, we define $\hat{\Scal}(b,h)=\hat{\Scal}_{\mathcal{E}}(b,h)$ by
\[\hat{\Scal}(b,h):=\bigcup_{e\in\mathcal{E}}\Scal(b,h^e)=\Big\{\varphi \in \prod_{n<\omega}\mathcal{P}(b(n)):\exists e\in \mathcal{E}\forall n<\omega (|\varphi(n)|\leq h(n)^{e(n)})\Big\}\]

Let $n<\omega$. For $\psi, \varphi :\omega\to[\omega]^{<\omega}$,
define the relation $\psi\blacktriangleright_{n}\varphi$ iff $\forall{k\geq n}(\psi(k)\nsupseteq\varphi(k))$, and define
$\psi\blacktriangleright\varphi$ iff $\forall^{\infty}{k<\omega}(\psi(k)\nsupseteq\varphi(k))$, i.e., $\blacktriangleright=\bigcup_{n<\omega}\blacktriangleright_{n}$. Put $\Slm(b,h):=\langle \Scal(b,h^{\mathrm{id}_{\omega}}), \hat{\Scal}(b,h), \blacktriangleright\rangle$ which is a gPrs where $\Omega=\mathcal{E}$, $Z=\Scal(b,h^{\id_\omega})$ and $Y_e=\Scal(b,h^{e})$ for each $e\in\mathcal{E}$. Note that $Y_e$ is closed in $Z$.

The property $\theta$-$\Slm(b,h)$-good is what Kamo and Osuga \cite[Def. 6]{KO} denote by $(\ast^{<\theta}_{b,h})$. However, they use $\hat{\Scal}(b,h)$ instead of $\Scal(b,h^{\id_\omega})$ for the first coordinate of $\Slm(b,h)$ (implicitly), which we think does not work for the second claim in the proof of \cite[Thm. 1]{KO}. We believe this can be corrected with the gPrs $\Slm(b,h)$ we propose here.

When $h=1$, $\Slm(b,1)=\la\Scal(b,1),\Scal(b,1),\blacktriangleright\ra\eqT\Ed(b)$, so $\Slm(b,1)$ can be described as a Prs. Even more, $\bfrak(\Slm(b,1))=\balc_{b,1}$ and $\dfrak(\Slm(b,1))=\dalc_{b,1}$.
\end{Example}

\begin{Example}\label{ExLc*}
   Define $\Lc^*(b,h):=\la \prod b,\hat{\Scal}(b,h),\in^* \ra$. When $h\geq^*1$ and $b>^* h^e$ for any $e\in\mathcal{E}$, it is clear that $\Lc^*(b,h)$ is a gPrs.
\end{Example}

\begin{lemma}\label{Slmcard}
\begin{enumerate}[(a)]
    \item $\Slm(b,h)\leqT\aLc(b,h^{\id_\omega})$. In particular, $\balc_{b,h^{\mathrm{id}_{\omega}}}\leq\bfrak(\Slm(b,h))$ and $\dfrak(\Slm(b,h))\leq\dalc_{b,h^{\mathrm{id}_{\omega}}}$.
    \item $\Lc^*(b,h)\leqT\Lc(b,h)$. In particular, $\blc_{b,h}\leq\bfrak(\Lc^*(b,h))$ and $\dfrak(\Lc^*(b,h))\leq\dlc_{b,h}$.
\end{enumerate}
\end{lemma}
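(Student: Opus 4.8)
The plan is to write down explicit Tukey--Galois connections for each item; the displayed cardinal inequalities then follow immediately from Theorem \ref{1.46.1}. Throughout I use that $h\geq^*1$, so I fix $N<\omega$ with $h(n)\geq 1$ for all $n\geq N$, and I recall that $\mathcal{E}\neq\emptyset$ and that every $e\in\mathcal{E}$ satisfies $e(n)\to\infty$, hence $e\geq^*1$.

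For (a) I would take $\varphi_-:=\id_{\Scal(b,h^{\id_\omega})}$ and define $\varphi_+\colon\prod b\to\hat{\Scal}(b,h)$ by $\varphi_+(x)(n):=\{x(n)\}$ if $h(n)\geq 1$ and $\varphi_+(x)(n):=\emptyset$ otherwise. Then $|\varphi_+(x)(n)|\leq 1\leq h(n)^{e(n)}$ for all $n<\omega$ and all $e\in\mathcal{E}$, so $\varphi_+(x)\in\Scal(b,h^e)\subseteq\hat{\Scal}(b,h)$ for any fixed $e\in\mathcal{E}$ and $\varphi_+$ is well defined. Given $\psi\in\Scal(b,h^{\id_\omega})$ and $x\in\prod b$, since $\varphi_+(x)(k)=\{x(k)\}$ for every $k\geq N$, the statement $\psi\blacktriangleright\varphi_+(x)$, that is $\forall^{\infty}k\,(\psi(k)\nsupseteq\varphi_+(x)(k))$, is equivalent to $\forall^{\infty}k\,(x(k)\notin\psi(k))$, that is to $\psi\not\ni^*x=\varphi_-(\psi)\not\ni^*x$. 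In particular $\varphi_-(\psi)\not\ni^*x$ implies $\psi\blacktriangleright\varphi_+(x)$, so $(\varphi_-,\varphi_+)$ witnesses $\Slm(b,h)\leqT\aLc(b,h^{\id_\omega})$; by Theorem \ref{1.46.1} this gives $\dfrak(\Slm(b,h))\leq\dfrak(\aLc(b,h^{\id_\omega}))=\dalc_{b,h^{\id_\omega}}$ and $\balc_{b,h^{\id_\omega}}=\bfrak(\aLc(b,h^{\id_\omega}))\leq\bfrak(\Slm(b,h))$.

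For (b) the only subtlety is that $\Scal(b,h)$ is not literally a subset of $\hat{\Scal}(b,h)=\bigcup_{e\in\mathcal{E}}\Scal(b,h^e)$, since no $e\in\mathcal{E}$ need be $\geq 1$ on its first coordinates; this is repaired by a finite modification. I would fix $e_0\in\mathcal{E}$, pick $M<\omega$ with $h(n)\geq 1$ and $e_0(n)\geq 1$ for all $n\geq M$, set $\varphi_-:=\id_{\prod b}$ and define $\varphi_+\colon\Scal(b,h)\to\hat{\Scal}(b,h)$ by $\varphi_+(\varphi)(n):=\varphi(n)$ for $n\geq M$ and $\varphi_+(\varphi)(n):=\emptyset$ for $n<M$. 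Then $|\varphi_+(\varphi)(n)|\leq h(n)\leq h(n)^{e_0(n)}$ for $n\geq M$ while $\varphi_+(\varphi)(n)=\emptyset$ for $n<M$, so $\varphi_+(\varphi)\in\Scal(b,h^{e_0})\subseteq\hat{\Scal}(b,h)$; and since $\varphi$ and $\varphi_+(\varphi)$ agree on $[M,\omega)$, we have $x\in^*\varphi$ iff $x\in^*\varphi_+(\varphi)$ for every $x\in\prod b$. Hence $\varphi_-(x)=x\in^*\varphi$ implies $x\in^*\varphi_+(\varphi)$, so $(\varphi_-,\varphi_+)$ witnesses $\Lc^*(b,h)\leqT\Lc(b,h)$, and Theorem \ref{1.46.1} gives $\blc_{b,h}=\bfrak(\Lc(b,h))\leq\bfrak(\Lc^*(b,h))$ and $\dfrak(\Lc^*(b,h))\leq\dfrak(\Lc(b,h))=\dlc_{b,h}$. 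There is no real obstacle here: the lemma reduces to choosing these maps and keeping straight, when applying Theorem \ref{1.46.1}, which side of each relational system is the bounded one and which is the dominating one; the only point requiring a little care is making sure the chosen maps genuinely land inside $\hat{\Scal}(b,h)$, which is exactly what forces the minor finite adjustments above.
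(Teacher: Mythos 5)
Your proposal is correct and is essentially the paper's own proof: the same identity maps on the minus sides and the same plus-side maps (singletons $\{x(n)\}$ for (a), the tail restriction of $S$ guided by a fixed $e_0\in\Ecal$ for (b)), with the cardinal inequalities then read off from Theorem \ref{1.46.1}. The extra care you take about finite modifications to land inside $\hat{\Scal}(b,h)$ is exactly what the paper's phrase ``whenever $e_0(i)>0$'' is doing implicitly.
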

\begin{proof}
   Put $\varphi_-:=\id_{\Scal(b,h^{\id_\omega})}$ and $\psi_-=\id_{\prod b}$. Define $\varphi_{+}:\prod{b}\to \hat{\Scal}(b,h)$ by $\varphi_+(y)(i)=\{y(i)\}$ whenever $h(i)>0$, and, for some fixed $e_0\in\Ecal$, define $\psi_+:\Scal(b,h)\to\hat{\Scal}(b,h)$ by $\psi_+(S)(i)=S(i)$ whenever $e_0(i)>0$. It is clear that $(\varphi_-,\varphi_+)$ is a Tukey-Galois connection for (a), and $(\psi_-,\psi_+)$ is one for (b).
\end{proof}



By the previous result, we can use the preservation property of $\Slm(b,h)$ to decide the values of $\balc_{b,h^{\mathrm{id}_{\omega}}}$ and $\dalc_{b,h^{\mathrm{id}_{\omega}}}$ by forcing, likewise for $\Lc^*(b,h)$ and the localization cardinals.

\begin{lemma}\label{1.19} Let $n<\omega$ and $B\subseteq \Pbb$ be $n$-linked. If $F\in V$ has size $\leq n$ and $\dot{a}$ is a $\Pbb$-name for a member of $F$, then there exists a $c \in F$ such that no $p \in B$ forces $\dot{a}\neq c$.
\end{lemma}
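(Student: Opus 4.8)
The plan is to argue by contradiction, using only the definition of $n$-linkedness (Definition \ref{Defcc}(1)). Suppose the conclusion fails, so for every $c\in F$ there is a condition $p_c\in B$ with $p_c\Vdash\dot a\neq c$. Since $|F|\leq n$, the set $\{p_c:c\in F\}$ is a subset of $B$ of size $\leq n$ (it does not matter if some of the $p_c$ coincide). As $B$ is $n$-linked, there is a single $q\in\Pbb$ with $q\leq p_c$ for every $c\in F$, hence $q\Vdash\dot a\neq c$ for all $c\in F$.

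On the other hand, $\dot a$ is a $\Pbb$-name for a member of $F$, so $\Vdash\dot a\in F$, and in particular $q\Vdash\dot a\in F$. Taking a $\Pbb$-generic filter $G$ with $q\in G$, we get $\dot a[G]\in F$ while $\dot a[G]\neq c$ for every $c\in F$, which is absurd since $F\in V$. Therefore such a $c$ must exist. There is no real obstacle here: the only point requiring a moment's care is that the family $\{p_c:c\in F\}$ has size at most $n$ regardless of repetitions, so that $n$-linkedness applies directly.
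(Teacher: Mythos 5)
Your proof is correct; the contradiction argument via $n$-linkedness applied to $\{p_c:c\in F\}$ is exactly the standard one, and your remark that repetitions among the $p_c$ are harmless is the only point needing care. The paper itself gives no proof, merely citing \cite[Lemma 7]{KO}, and your argument is the expected one behind that citation.
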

\begin{proof}
   See e.g. \cite[Lemma 7]{KO}.
\end{proof}

\begin{lemma}\label{3.20}
If $h,b \in \omega^\omega$ with $b>0$, $b\not\leq^*1$ and $h\geq^*1$, then any $(h,b^{h^{\text{id}_\omega}})$-linked poset is both $2$-$\Slm(b,h)$-good and $2$-$\Lc^*(b,h)$-good.
\end{lemma}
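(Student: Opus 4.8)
The plan is to prove the two goodness statements simultaneously, using that $\Slm(b,h)$ and $\Lc^*(b,h)$ have the same second coordinate $\hat{\Scal}(b,h)=\bigcup_{e\in\Ecal}\Scal(b,h^e)$ and that the relations $\blacktriangleright$ and $\in^*$ are both cofinitary. Fix an $(h,b^{h^{\id_\omega}})$-linked poset $\Pbb$, witnessed by $\la Q_{n,j}:n<\omega,\ j<h(n)\ra$ as in Definition \ref{link}: each $Q_{n,j}$ is $b(n)^{h(n)^n}$-linked, and every $p\in\Pbb$ belongs to some $Q_{n,j}$ for all but finitely many $n$. Since $b\not\leq^*1$ and $h\geq^*1$ give $b^{h^{\id_\omega}}\not\leq^*1$, Lemma \ref{linksigmalink} makes $\Pbb$ $\sigma$-linked, hence ccc; this is used twice below. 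Given a $\Pbb$-name $\dot\varphi$ for a member of $\hat{\Scal}(b,h)$, the task is to produce a single $\varphi^*\in\hat{\Scal}(b,h)$ such that every element of the first coordinate that is $\Rcal$-unbounded over $\{\varphi^*\}$ is forced to remain $\Rcal$-unbounded over $\{\dot\varphi\}$.

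First I would normalize $\dot\varphi$. A name for a witness $e_0\in\Ecal$ of $\dot\varphi\in\hat{\Scal}(b,h)$ is decided by a countable maximal antichain (ccc), hence takes only countably many values in $\Ecal$; by axiom (iv) of $\Ecal$ in Example \ref{3.18} these are dominated $\leq^*$ by a single $e\in\Ecal$, so $\Vdash\exists m\,\forall n\geq m\,(|\dot\varphi(n)|\leq h(n)^{e(n)})$. Now replace $\dot\varphi$ by the name $\dot\varphi'$ defined (for $n$ with $h(n)\geq1$) by $\dot\varphi'(n):=\dot\varphi(n)$ if $1\leq|\dot\varphi(n)|\leq h(n)^{e(n)}$ and $\dot\varphi'(n):=\{0\}$ otherwise. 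Then $\Vdash\dot\varphi'\in\Scal(b,h^e)$ with every coordinate nonempty, and $\Vdash\dot\varphi'(n)\supseteq\dot\varphi(n)$ for all large $n$; since $\blacktriangleright$ and $\in^*$ depend only on cofinitely many coordinates and enlarging the second argument only makes both desired conclusions harder to force, it suffices to prove the statement for $\dot\varphi'$. Renaming $\dot\varphi':=\dot\varphi$, we may assume $\Vdash\dot\varphi\in\Scal(b,h^e)$ with $\dot\varphi(n)$ forced nonempty for all relevant $n$.

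Next I build $\varphi^*$ coordinatewise. For all but finitely many $n$, axiom (ii) of $\Ecal$ gives $e(n)\leq n-2$, and a routine count then yields that $F_n:=[b(n)]^{\leq h(n)^{e(n)}}\smallsetminus\{\emptyset\}$ has size $\leq b(n)^{h(n)^n}$: when $h(n)=1$ it has exactly $b(n)=b(n)^{h(n)^n}$ members, and when $h(n)\geq2$ one has crudely $|F_n|\leq b(n)^{h(n)^{e(n)}+1}\leq b(n)^{h(n)^n}$ (using $e(n)\leq n-2$; in the subcase $h(n)^{e(n)}\geq b(n)$ bound $|F_n|$ by $2^{b(n)}$ instead). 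Since $\dot\varphi(n)$ is a name for a member of $F_n$ and $Q_{n,j}$ is $b(n)^{h(n)^n}$-linked, Lemma \ref{1.19} gives $t_{n,j}\subseteq b(n)$ with $1\leq|t_{n,j}|\leq h(n)^{e(n)}$ such that no $p\in Q_{n,j}$ forces $\dot\varphi(n)\neq t_{n,j}$. Put $\varphi^*(n):=\bigcup_{j<h(n)}t_{n,j}$ (and $\varphi^*(n):=\emptyset$ on the finitely many exceptional $n$); then $|\varphi^*(n)|\leq h(n)^{e(n)+1}$ for large $n$, so choosing $e'\in\Ecal$ with $e+1\leq^*e'$ (axiom (iii)) gives $\varphi^*\in\Scal(b,h^{e'})\subseteq\hat{\Scal}(b,h)$. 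Take $H:=\{\varphi^*\}$.

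The verification is then identical for the two systems. Suppose $\psi$ (a slalom in the $\Slm$ case, a point $x\in\prod b$ in the $\Lc^*$ case) is $\Rcal$-unbounded over $H$, i.e.\ $\exists^\infty n\,(\psi(n)\supseteq\varphi^*(n))$ resp.\ $\exists^\infty n\,(x(n)\notin\varphi^*(n))$, yet some $p\in\Pbb$ and $m<\omega$ force the opposite eventual behaviour from stage $m$ on. Pick $N\geq m$ with $p\in Q_{n,j(n)}$ for some $j(n)<h(n)$ whenever $n\geq N$, and pick such an $n\geq N$ witnessing the relevant ``$\exists^\infty$''. As no condition of $Q_{n,j(n)}$ forces $\dot\varphi(n)\neq t_{n,j(n)}$, there is $q\leq p$ with $q\Vdash\dot\varphi(n)=t_{n,j(n)}\subseteq\varphi^*(n)$; since $q\leq p$ and $n\geq m$, in the $\Slm$ case $q$ forces $\psi(n)\not\supseteq t_{n,j(n)}$, contradicting $t_{n,j(n)}\subseteq\varphi^*(n)\subseteq\psi(n)$, and in the $\Lc^*$ case $q$ forces $x(n)\in t_{n,j(n)}\subseteq\varphi^*(n)$, contradicting $x(n)\notin\varphi^*(n)$. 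Hence $\Vdash\psi\not\blacktriangleright\dot\varphi$ (resp.\ $\Vdash x\notin^*\dot\varphi$), which is what $2$-$\Slm(b,h)$-goodness and $2$-$\Lc^*(b,h)$-goodness demand. I expect the genuinely delicate points to be the normalization in the second paragraph --- where ccc-ness of $\Pbb$ (forced precisely by $b^{h^{\id_\omega}}\not\leq^*1$) together with axioms (iii)--(iv) of $\Ecal$ do the work that Lemma \ref{3.7} cannot do at $\theta=2$ --- and the count $|F_n|\leq b(n)^{h(n)^n}$, which really needs both the exact removal of $\emptyset$ (note $|[b(n)]^{\leq1}|=b(n)+1>b(n)$) and the gap $n-e(n)\to\infty$.
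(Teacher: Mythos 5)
Your proof is correct and follows essentially the same route as the paper's (which is itself the argument of \cite[Lemma 10]{KO}): use ccc-ness together with axioms (iii)--(iv) of $\Ecal$ to normalize the name to a member of $\Scal(b,h^e)$ with nonempty coordinates, apply Lemma \ref{1.19} to each $Q_{n,j}$ against $[b(n)]^{\leq h(n)^{e(n)}}\smallsetminus\{\emptyset\}$, and take the union over $j<h(n)$ to obtain a single slalom in $\Scal(b,h^{e'})$ witnessing $2$-goodness for both relational systems at once. Your write-up merely fills in two details the paper leaves implicit --- the cardinality bound $\big|[b(n)]^{\leq h(n)^{e(n)}}\smallsetminus\{\emptyset\}\big|\leq b(n)^{h(n)^n}$ (where the case split on $h(n)=1$ and the use of $n-e(n)\to\infty$ are indeed needed) and the final density argument --- and both are carried out correctly.
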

\begin{proof}
   This was proved for the gPrs $\Slm(b,h)$ in \cite[Lemma 10]{KO}. The case of $\Lc^*(b,h)$ follows from the same proof, which we include for completeness. Let $\Pbb$ be a poset and let $\la Q_{n,j}:n<\omega, j<h(n)\ra$ be a sequence that witnesses that $\Pbb$ is $(h,b^{h^{\text{id}_\omega}})$-linked. Assume that $\dot{\varphi}$ is a $\Pbb$-name of a member of $\hat{\Scal}(b,h)$. As $\Pbb$ is ccc (by Lemma \ref{linksigmalink}), we can find an $e\in\Ecal$ such that $\Pbb$ forces that $\varphi\in\Scal(b,h^e)$. Furthermore, choose $e'\in\Ecal$ and $N<\omega$ such that $h(n)\cdot e(n)>0$ and $e(n)+1\leq e'(n)$ for all $n\geq N$, so we can find a $\Pbb$-name $\dot{\varphi}'$ of a member of $\Scal(b,h^{e})$ such that $\Pbb$ forces $\dot{\varphi}(n)\subseteq\dot{\varphi}'(n)\neq\emptyset$ for all $n\geq N$.

   For each $n\geq N$ and $j<h(n)$, as $Q_{n,j}$ is $b(n)^{h(n)^n}$-linked and $[b(n)]^{\leq h(n)^{e(n)}}\smallsetminus\{\emptyset\}$ has size $\leq b(n)^{h(n)^n}$, by Lemma \ref{1.19} there is an $a_{n,j}\subseteq b(n)$ of size $\leq h(n)^{e(n)}$ such that $p\nVdash a_{n,j}\neq\dot{\varphi}'(n)$ for all $p\in Q_{n,j}$. Note that $\psi(n):=\bigcup_{j<h(n)}a_{n,j}$ has size $\leq h(n)^{e(n)+1}\leq h(n)^{e'(n)}$, which gives us a $\psi\in\Scal(b,h^{e'})$. It is clear that
   \begin{enumerate}[(i)]
       \item if $\vartheta\in\Scal(b,h^{\id_\omega})$ and $\vartheta\not\blacktriangleright\psi$ then $\Vdash\vartheta\not\blacktriangleright\dot{\varphi}'$ (which implies $\Vdash\vartheta\not\blacktriangleright\dot{\varphi}$), and
       \item if $x\in\prod b$ and $\neg(x\in^*\psi)$ then $\Vdash\neg(x\in^*\dot{\varphi}')$ (so $\Vdash\neg(x\in^*\dot{\varphi})$).
   \end{enumerate}
   This concludes the proof.
\end{proof}

\begin{lemma}\label{3.21} If $\mu<\theta$ are infinite cardinals then any $\mu$-centered poset is both $\theta$-$\Slm(b,h)$-nice and $\theta$-$\Lc^*(b,h)$-nice. In addition, if $\theta$ is regular, then any $\mu$-centered poset is both $\theta$-$\Slm(b,h)$-good and $\theta$-$\Lc^*(b,h)$-good.
\end{lemma}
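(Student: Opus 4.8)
The plan is to establish $\theta$-niceness for both gPrs by a direct argument and then obtain $\theta$-goodness from Lemma \ref{3.7}. Fix a $\mu$-centered poset $\Pbb=\bigcup_{\alpha<\mu}P_\alpha$ with each $P_\alpha$ centered, fix $e\in\mathcal{E}$, and let $\dot\varphi$ be a $\Pbb$-name for a member of $\Scal(b,h^e)$ (this is $Y_e$ for both $\Slm(b,h)$ and $\Lc^*(b,h)$). The main construction is carried out coordinate by coordinate: for each $\alpha<\mu$ and $k<\omega$ the set $[b(k)]^{\leq h(k)^{e(k)}}$ is finite, and $P_\alpha$, being centered, is $|[b(k)]^{\leq h(k)^{e(k)}}|$-linked, so Lemma \ref{1.19} yields some $c_{\alpha,k}\in[b(k)]^{\leq h(k)^{e(k)}}$ such that no $p\in P_\alpha$ forces $\dot\varphi(k)\neq c_{\alpha,k}$. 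Put $\psi_\alpha:=\la c_{\alpha,k}:k<\omega\ra$; since each $c_{\alpha,k}$ has size $\leq h(k)^{e(k)}$, we get $\psi_\alpha\in\Scal(b,h^e)\subseteq\hat{\Scal}(b,h)$, and $H:=\{\psi_\alpha:\alpha<\mu\}$ is a non-empty subset of $\hat{\Scal}(b,h)$ with $|H|\leq\mu<\theta$.

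Next I would check that this $H$ witnesses niceness of $\Pbb$ for $\dot\varphi$. Consider first $\Slm(b,h)$ and let $\vartheta\in\Scal(b,h^{\id_\omega})$ be $\Slm(b,h)$-unbounded over $H$, i.e.\ $\exists^\infty k\,(\psi_\alpha(k)\subseteq\vartheta(k))$ for every $\alpha<\mu$. Given $p\in\Pbb$, choose $\alpha$ with $p\in P_\alpha$, and given $m<\omega$, choose $k\geq m$ with $c_{\alpha,k}=\psi_\alpha(k)\subseteq\vartheta(k)$; since $p$ does not force $\dot\varphi(k)\neq c_{\alpha,k}$, some $q\leq p$ forces $\dot\varphi(k)=c_{\alpha,k}\subseteq\vartheta(k)$, hence $q\Vdash\vartheta(k)\supseteq\dot\varphi(k)$. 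As $p$ and $m$ were arbitrary, for every $m$ the set of conditions forcing ``$\vartheta(k)\supseteq\dot\varphi(k)$ for some $k\geq m$'' is dense, so $\Pbb$ forces $\exists^\infty k\,(\vartheta(k)\supseteq\dot\varphi(k))$, that is, $\vartheta\not\blacktriangleright\dot\varphi$. The case of $\Lc^*(b,h)$ is literally the same argument with $x\in\prod b$ in place of $\vartheta$: if $x$ is $\Lc^*(b,h)$-unbounded over $H$ then $\exists^\infty k\,(x(k)\notin\psi_\alpha(k))$ for every $\alpha$, and choosing $q\leq p$ with $q\Vdash\dot\varphi(k)=c_{\alpha,k}$ for a suitable $k\geq m$ with $x(k)\notin c_{\alpha,k}$ gives $q\Vdash x(k)\notin\dot\varphi(k)$, whence $\Pbb$ forces $\neg(x\in^*\dot\varphi)$. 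Since $e\in\mathcal{E}$ and $\dot\varphi$ were arbitrary, $\Pbb$ is $\theta$-$\Slm(b,h)$-nice and $\theta$-$\Lc^*(b,h)$-nice.

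For the ``in addition'' clause: a $\mu$-centered poset is $\mu$-linked, hence $\mu^+$-cc (as remarked after Definition \ref{Defcc}), and because $\theta$ is an infinite cardinal with $\theta>\mu$ we have $\theta\geq\mu^+$, so $\Pbb$ is $\theta$-cc. When $\theta$ is additionally regular, Lemma \ref{3.7} (applied with $\Pbb$ being $\theta$-cc) upgrades the niceness just proved to $\theta$-$\Slm(b,h)$-goodness and $\theta$-$\Lc^*(b,h)$-goodness.

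I do not anticipate a genuine obstacle: the one thing to keep straight is that the coordinate names $\dot\varphi(k)$ take values in the \emph{finite} set $[b(k)]^{\leq h(k)^{e(k)}}$, so that Lemma \ref{1.19} can be invoked at exactly the finite degree of linkedness that centeredness provides, and that the slalom $\psi_\alpha$ assembled from the $c_{\alpha,k}$ stays inside $\Scal(b,h^e)$ (hence inside $\hat{\Scal}(b,h)$) rather than only in some coarser layer. It is worth flagging that the more naive recipe of letting $\psi_\alpha(k)$ be the union of all values forced on $\dot\varphi(k)$ by conditions in $P_\alpha$ does not work directly here, because an extension of a condition in $P_\alpha$ need not lie in $P_\alpha$; performing the selection pointwise via Lemma \ref{1.19} is precisely what circumvents this.
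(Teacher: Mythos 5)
Your proof is correct and follows essentially the same route as the paper's: the same pointwise application of Lemma \ref{1.19} to the finite sets $[b(k)]^{\leq h(k)^{e(k)}}$ within each centered piece $P_\alpha$, the same witness family $H=\{\psi_\alpha:\alpha<\mu\}$, the same density argument for both relational systems, and the same appeal to Lemma \ref{3.7} (via $\theta$-cc) for the goodness clause. The extra detail you supply on why $\Pbb$ is $\theta$-cc, and the closing remark on why the selection must be done via Lemma \ref{1.19} rather than by taking unions of forced values, are both accurate.
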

\begin{proof}
The latter part is a consequence of  Lemma \ref{3.7}.

Let $\Pbb$ be a poset such that $\Pbb=\bigcup_{\alpha<\mu}P_\alpha$ where each $P_\alpha$ is centered. Fix $e\in\mathcal{E}$ and a $\Pbb$-name $\dot{\varphi}$ for a member of $\Scal(b,h^e)$. For each $\alpha<\mu$ and $m\in \omega$, by Lemma \ref{1.19} find a $\psi_{\alpha}(m)\in[b(m)]^{\leq(h(m)^{e(m)})}$ such that no $p\in P_\alpha$ forces  $\psi_{\alpha}(m)\neq \dot{\varphi}(m)$. Put $H:=\{\psi_{\alpha}:\alpha<\mu\}$, which is a subset of $\Scal(b,h^e)$. Assume that $\vartheta \in\Scal(b,h^{\mathrm{id}_{\omega}})$ and $\vartheta\not \blacktriangleright\psi_\alpha$ for all $\alpha<\mu$. Fix $p\in \Pbb$ and $m<\omega$. Choose $\alpha<\mu$ such that $p\in P_\alpha$ and find a $k\geq m$ such that $\vartheta(k)\supseteq \psi_{\alpha}(k)$. As $p\not\Vdash\psi_{\alpha}(k)\neq\dot{\varphi}(k)$, there is a $q\leq p$ that forces $\vartheta(k)\supseteq \psi_{\alpha}(k)=\dot{\varphi}(k)$.

A similar argument yields that, whenever $x\in\prod b$ and $\neg(x\in^*\psi_\alpha)$ for any $\alpha<\mu$, $\Pbb$ forces that $\neg(x\in^*\dot{\varphi})$.
\end{proof}


\newcommand{\Fr}{\mathbf{Fr}}

The following example abbreviates many facts about the main preservation result in \cite{BrM}. This will not be used in any other part of this text.

\begin{Example}[Brendle and Mej\'{\i}a {\cite{BrM}}]\label{ExmBM}
   Let $\bar{a}=\la a_i:i<\omega\ra$ be a partition of $\omega$ into non-empty finite sets and $\bar{L}=\la L_n:n<\omega\ra$ a partition of $\omega$ into infinite sets. For each $i<\omega$ let $\varphi_i:\Pcal(a_i)\to[0,+\infty)$ be a submeasure. Fix $h\geq^*1$ in $\omega^\omega$ and let $b_{\bar{a}}(i):=\Pcal(a_i)$ for each $i<\omega$.

   Define $P_m(i):=\{a\subseteq a_i:\varphi_i(a)\leq m\}$ for each $i,m<\omega$. Put $\Omega:=\omega\times\omega\times\mathcal{E}$ ($\mathcal{E}$ as in Example \ref{3.18}) and, for each $(m,n,e)\in\Omega$, put $Y_{m,n,e}:=\{n\}\times\Scal(P_m,h^e)$, which is closed in $Z:=\omega\times\Scal(b_{\bar{a}},h^{\id_\omega})$.

   For each $k<\omega$ define the relation $\blacktriangleright'_k\subseteq([\omega]^{<\omega})^\omega\times (\omega\times([\omega]^{<\omega})^\omega))$ by $\vartheta\blacktriangleright'_k(n,\psi)$ iff $\vartheta(i)\nsupseteq\psi(i)$ for all $i\in L_n\smallsetminus k$. Put $\blacktriangleright':=\bigcup_{k<\omega}\blacktriangleright'_k$. It is not hard to see that $\Fr(\bar{a},\bar{\varphi},\bar{L},h):=\la\Scal(b_{\bar{a}},h^{\id_\omega}),Y,\blacktriangleright'\ra$ is a gPrs where $Y:=\bigcup_{p\in\Omega}Y_p$.

   The property $\theta$-$\Fr(\bar{a},\bar{\varphi},\bar{L},h)$-good was studied in \cite[Sect. 5]{BrM} to preserve Rothberger gaps through FS iterations. Some of its results can be simplified by the theory presented in Subsection \ref{SubSecPresTheory}. Note that $\theta$-$\Fr(\bar{a},\bar{\varphi},\bar{L},h)$-goodness corresponds to \cite[Def. 5.5]{BrM}.
\end{Example}

\newcommand{\sig}{\boldsymbol{\Sigma}}
\newcommand{\cosig}{\boldsymbol{\Pi}}

\subsection{Preservation of $\Rcal$-unbounded reals}\label{3.3}


All the results of this subsection are versions of the contents of \cite[Sect. 4]{M} in the context of gPrs. Though the proofs are similar, we still present them for completeness. Fix, throughout this section, transitive models $M$ and $N$ of (a sufficient large finite fragment of) ZFC with $M\subseteq N$.

\begin{definition}
Given two posets $\Pbb\in M$  and $\Qbb$ (not necessarily in $M$) say that \textit{$\Pbb$ is a complete suborder of $\Qbb$ with respect to $M$}, denoted by $\Pbb \lessdot_{M}\Qbb$, if $\Pbb$ is a suborder of $\Qbb$ and every maximal antichain in $\Pbb$ that belongs to $M$ is also a maximal antichain in $\Qbb$.
\end{definition}

Clearly, if $\Pbb\lessdot_{M} \Qbb$ and $G$ is $\Qbb$-generic over $N$, then $G \cap \Pbb$ is $\Pbb$-generic over $M$ and $M[G \cap \Pbb]\subseteq N[G]$. Recall that, if $\Sbb$ is a Suslin ccc poset coded in $M$, then $\Sbb^{M}\lessdot_{M}\Sbb^{N}$. Also, if $\Pbb\in M$ is a poset, then $\Pbb\lessdot_{M}\Pbb$.

For the following results, fix a gPrs $\Rcal=\langle X,Y,\sqsubset\rangle$ coded in $M$ (in the sense that
all its components are coded in $M$).  We are interested in preserving $\Rcal$-unbounded reals between forcing extensions of $M$ and $N$.

\begin{lemma}[{\cite[Theorem 7]{M}}]\label{3.24} Let $\Sbb$  be a Suslin ccc poset coded in $M$. If $M\models$``$\Sbb$ is $\Rcal$-good" then, in $N$, $\Sbb^N$ forces that every $c\in X^N$ that is $\Rcal$-unbounded over $M$ is $\Rcal$-unbounded over $M^{\Sbb^{M}}$.
\end{lemma}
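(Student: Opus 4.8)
The plan is to work in $N$, fix an $\Sbb^N$-name $\dot{h}$ for a member of $Y^{M[G_M]}$ where $G_M := G\cap\Sbb^M$, and show that any $c\in X^N$ that is $\Rcal$-unbounded over $M$ remains forced by $\Sbb^N$ to be $\Rcal$-unbounded over $M[G_M]$; that is, $\Vdash_{\Sbb^N} c\not\sqsubset\dot{h}$. Since $\Sbb$ is Suslin ccc and $Y$ is coded in $M$, we may assume without loss of generality that $\dot{h}$ is (coded by) an $\Sbb^M$-name: more precisely, choose in $M$ a continuous surjection $f:\omega^\omega\to Y_e$ (for the appropriate $e\in\Omega$, which can be read off a maximal antichain of $\Sbb^M$ by ccc-ness in $M$, as in Lemma \ref{3.7}) and an $\Sbb^M$-name $\dot{z}$ for a real in $\omega^\omega$ with $\Vdash_{\Sbb^M}f(\dot z)=\dot h$. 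The point is that $\Sbb^M\lessdot_M\Sbb^N$, so $\dot z$ is also interpreted in $N$ via the generic, and $\dot z[G_M]=\dot z[G_N]$ when we identify generics appropriately.

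First I would take, in $M$, an interpretation $(\la p_n\ra_{n<\omega}, w)$ of $\dot z$ in $\Sbb^M$, i.e. $w\in(\omega^\omega)^M$ and $p_n\in\Sbb^M$ decreasing with $p_n\Vdash_{\Sbb^M}\dot z\restriction n = w\restriction n$. Then $f(w)\in Y^M$. Since $c$ is $\Rcal$-unbounded over $M$ and $f(w)\in Y^M\cap Y$, we have $c\not\sqsubset f(w)$, hence $c\not\sqsubset_n f(w)$ for every $n<\omega$. By Lemma \ref{3.10.2.1} (applied in $N$, noting that $f$, being coded in $M$, is still continuous in $N$ and $\sqsubset_n$ is still closed), for each $n<\omega$ there is $k_n<\omega$ with $p_{k_n}\Vdash_{\Sbb^N} c\not\sqsubset_n f(\dot z)$. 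So $p_{k_n}$ witnesses, for each $n$, that a condition below $p$ (for whatever $p$ we started the interpretation below) forces $c\not\sqsubset_n\dot h$; a density/genericity argument then gives $\Vdash_{\Sbb^N} c\not\sqsubset_n\dot h$ for all $n$, i.e. $\Vdash_{\Sbb^N} c\not\sqsubset\dot h$.

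The subtlety — and the place where $M\models$``$\Sbb$ is $\Rcal$-good'' is actually used — is that a single interpretation $(\la p_n\ra,w)$ only handles conditions below $p_0$, and the $k_n$'s, together with the choice of $w$, depend on which dense set / which condition we are trying to meet. To get the statement for \emph{all} conditions I would instead argue: in $M$, by $\Rcal$-goodness of $\Sbb^M$ applied to the name $\dot h$ (equivalently $f(\dot z)$), there is $H\in M$, $H\subseteq Y$ nonempty of size $<\aleph_1$ (so countable), such that whenever $x\in X$ is $\Rcal$-unbounded over $H$ then $\Vdash_{\Sbb^M} x\not\sqsubset\dot h$. But this last statement is $\Sigma^1_2$ in the parameters coding $\Sbb$, $\dot h$, $H$ and $x$ — "for every $x$ avoiding the closed nowhere dense sets $(\sqsubset_n)^{\cdot}$ indexed by members of the countable set $H$, and for every condition and every $n$, some stronger condition forces $x\not\sqsubset_n\dot h$" — and hence absolute between $M$ and $N$ by Shoenfield absoluteness. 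Since $H\in M$ and $c$ is $\Rcal$-unbounded over $M\supseteq H$, applying the (absolute) goodness statement in $N$ yields exactly $\Vdash_{\Sbb^N} c\not\sqsubset\dot h$, as required.

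I expect the main obstacle to be the bookkeeping in the absoluteness step: one must check that the witness $H$ can be taken countable (this is where $\aleph_1$-$\Rcal$-goodness, rather than some larger $\theta$, matters, so that $H$ is coded by a real), that the goodness statement genuinely has complexity $\Sigma^1_2$ (using that each $\sqsubset_n$ is closed and $Y_e$ is analytic, so "$x$ is $\Rcal$-unbounded over $H$" and "$p\Vdash x\not\sqsubset_n\dot h$" are both sufficiently simple — the forcing relation for a Suslin ccc poset restricted to a $\Sigma^0_2$ statement is itself projective of bounded complexity), and that all the relevant parameters ($\Sbb$, $\dot h$, $f$, $\dot z$, $H$) lie in $M$ so that Shoenfield applies. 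Everything else is a routine transfer of the argument of Lemma \ref{3.10.2.1} and the goodness machinery from $M$ into $N$ via $\Sbb^M\lessdot_M\Sbb^N$.
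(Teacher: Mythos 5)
Your overall architecture coincides with the paper's: fix the $\Sbb^M$-name $\dot h$, use $\Rcal$-goodness in $M$ to produce a countable witness $H=\{y_n:n<\omega\}\subseteq Y$ lying in $M$, express the goodness conclusion as a projective statement in the codes of $\Sbb$, $\dot h$ and $\la y_n:n<\omega\ra$, and transfer that statement to $N$ by absoluteness. Your first, interpretation-based attempt is indeed insufficient for exactly the reason you give, and the pivot to the absoluteness argument is the right move.

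The gap is in the step you yourself flag as the main obstacle: the complexity computation and the choice of absoluteness theorem. The statement you propose to transfer --- ``for every $x$ avoiding the sets $(\sqsubset_n)^{y}$ for $y\in H$, and for every condition $p$ and every $n$, some stronger condition forces $x\not\sqsubset_n\dot h$'' --- has prenex form $\forall x\,\forall p\,\exists q\,(\dots)$ with $x,p,q$ ranging over reals, so it is (at best) $\cosig^1_2$, not $\sig^1_2$. $\cosig^1_2$ statements are downward absolute; transferring them upward from $M$ to $N$ via Shoenfield requires $\omega_1^N\subseteq M$, which is not among the hypotheses (the lemma is stated for arbitrary transitive $M\subseteq N$). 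The paper's proof avoids the existential quantifier over conditions altogether: coding $\dot h$ as a matrix $(p^{\dot h},k^{\dot h})$ of maximal antichains deciding a Cauchy sequence in $Z$, it rewrites ``$\Vdash_{\Sbb} x\not\sqsubset\dot h$'' as ``for every $p\in\Sbb$ and $l<\omega$ there are rationals $r,\varepsilon>0$ and $k,m,n<\omega$ such that $p$ is compatible with $p^{\dot h}(m,n)$, $B(k,r)\cap\{z\in Z:x\sqsubset_l z\}=\emptyset$ and $d(k,k^{\dot h}(m,n))<r-2^{-(m+1)}-\varepsilon$''. Here the only real quantifier is the universal one over $p$, compatibility in a Suslin ccc poset is a $\cosig^1_1$ relation, and all remaining quantifiers range over numbers; hence the whole statement $\Psi(\dot h,\bar y)$ is a conjunction of a $\sig^1_1$ with a $\cosig^1_1$ statement (the $\sig^1_1$ part coming from maximality of the antichains coded in $\dot h$), and this is absolute between $M$ and $N$ by Mostowski absoluteness alone. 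Your argument would be repaired either by carrying out this computation, or by invoking the standard fact that for Suslin ccc posets the forcing relation restricted to $\cosig^1_1$ statements about nice names is itself $\cosig^1_1$; as written, the appeal to Shoenfield for a ``$\sig^1_2$'' statement does not go through.
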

\begin{proof}
Let $Z'$ be the Polish space where $\Sbb$ is defined, and recall the Polish space $Z$ that contains $Y$ (see Definition \ref{3.1}). Choose a metric space $\la\eta,d\ra$ with $\eta\leq\omega$ such that $Z$, as a complete metric space with metric $d^*$, is a completion of $\la\eta,d\ra$. Note that any (good) $\Sbb$-name of a member of $Z$ can be seen as a name of a Cauchy sequence $\la\dot{k}_m:m<\omega\ra$ in $\la\eta,d\ra$ such that $d(\dot{k}_m,\dot{k}_{m+1})<2^{-(m+2)}$ for all $m<\omega$. This can be coded by a member of $(Z'\times\eta)^{\omega\times\omega}$. Therefore, ``$\dot{h}$ is a $\Sbb$-name of a member of $Z$" is a conjunction between a $\sig^1_1$-statement and a $\cosig^1_1$-statement in $(Z'\times\eta)^{\omega\times\omega}$. Indeed, a $\Sbb$-name $\dot{h}$ of a member of $Z$ is a function $\dot{h}=(p^{\dot{h}},k^{\dot{h}}):\omega\times\omega\to Z'\times\eta$ such that, for each $m<\omega$,
\begin{enumerate}[(i)]
    \item  $\{p^{\dot{h}}(m,n):n<\omega\}$ is a maximal antichain in $\Sbb$ (the point is that $p^{\dot{h}}(m,n)$ decides that the $m$-th term of the Cauchy-sequence in $\la\eta,d\ra$ converging to $\dot{h}$ is $k^{\dot{h}}(m,n)$), and
    \item for each $n,n'<\omega$, if $p^{\dot{h}}(m,n)$ and $p^{\dot{h}}(m+1,n')$ are compatible in $\Sbb$ then
    \[d(k^{\dot{h}}(m,n),k^{\dot{h}}(m+1,n'))<2^{-(m+2)}\]
    (which means that $\dot{h}$ is a name of a Cauchy-sequence as described before).
\end{enumerate}
The statement in (i) can be expressed as a conjunction between a $\sig^1_1$-statement and a $\cosig^1_1$-statement, while (ii) is a $\cosig^1_1$-statement. Even more, if $\Sbb$ is a Borel subset of $Z'$ then (i) is a $\cosig^1_1$-statement.

\begin{Claim}[{\cite[Claim 1]{M}}]\label{claim1} The statement $\Psi(\dot{h},\bar{y})$ that says ``$\dot{h}$ is a $\Sbb$-name for a member of $Z$ and, for all $x\in X$, if $x\not\sqsubset y_{n}$ for each $n<\omega$ then $\Vdash_{\Sbb} x\not\sqsubset\dot{h}$" is a conjunction of a $\sig^{1}_{1}$-statement with a $\cosig^{1}_{1}$-statement  in $(Z^{'}\times\eta)^{\omega\times\omega}\times Z^{\omega}$. Even more, if $\Sbb$ is Borel in $Z'$, then the statement is $\cosig_{1}^{1}$.
\end{Claim}
\begin{proof}
    It is just enough to look at the complexity of ``$\Vdash_{\Sbb}x\not\sqsubset\dot{h}$". This is equivalent to say that ``for every $p\in\Sbb$ and $l<\omega$ there are positive rational numbers $r,\varepsilon$ and $k,m,n<\omega$ such that $p$ is compatible with $p^{\dot{h}}(m,n)$, $B(k,r)\cap\{z\in Z:x\sqsubset_l z\}=\emptyset$ and $d(k,k^{\dot{h}}(m,n))<r-2^{-(m+1)}-\varepsilon$" where $B(k,r):=\{z\in Z:d^*(k,z)<r\}$. This statement can be written in the form $\forall p\in Z(p\notin\Sbb\text{\ or }\Theta(p,x,\dot{h}))$ where $\Theta(p,x,\dot{h})$ is $\cosig^1_1$ (recall that compatibility in $\Sbb$ is a $\cosig^1_1$-relation in $Z$ and that ``$B(k,r)\cap\{z\in Z:x\sqsubset_l z\}=\emptyset$" is also $\cosig^1_1$). Hence, as ``$p\in\Sbb$" is $\sig^1_1$, the whole statement is $\cosig^1_1$. On the other hand, as discussed before the claim, ``$\dot{h}$ is a $\Sbb$-name for a member of $Z$" is a conjunction of a $\sig^1_1$-statement with a $\cosig^1_1$-statement.
\end{proof}
In $M$, fix a $\Sbb$-name $\dot{h}$ for a real in $Y$ and a countable $H\subseteq Y$ that witnesses the goodness of $\Sbb$ for $\dot{h}$. Enumerate $H=\{y_{n}:n <\omega\}$. Now, as $\Psi(\dot{h},\la y_n:n<\omega\ra)$ is true in $M$, it is also true in $N$ by Claim \ref{claim1} and $\cosig^1_1$-absoluteness. In $N$, as $c$ is $\Rcal$-unbounded over $M$, $\forall{n<\omega}(c\not\sqsubset y_n)$, so  $\Vdash^N_{\Sbb} c\not\sqsubset \dot{h}$.
\end{proof}

\begin{lemma}[{\cite[Lemma 11]{BrF}}]\label{3.25}
   Assume that $\Pbb\in M$ is a poset. Then, in $N$,  $\Pbb$ forces that every $c\in X^N$ that is $\Rcal$-unbounded over $M$ is $\Rcal$-unbounded over $M^{\Pbb}$.
\end{lemma}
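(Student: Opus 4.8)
The plan is to transcribe the proof of \cite[Lemma 11]{BrF} into the gPrs setting, letting Lemma \ref{3.10.2.1} play the role that the ``continuous reading of names'' plays in the proof of Lemma \ref{3.24}. Work in $N$ and fix $c\in X^N$ that is $\Rcal$-unbounded over $M$; the goal is to show that $\Pbb$ forces $c$ to remain $\Rcal$-unbounded over $M^\Pbb$. Since $\Pbb\in M\subseteq N$, every $\Pbb$-generic filter over $N$ is also $\Pbb$-generic over $M$ (maximal antichains of the fixed set $\Pbb$ lying in $M$ stay maximal in $\Pbb$, this being absolute), and every element of $M^\Pbb$ has a $\Pbb$-name in $M$. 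For each $e\in\Omega$ fix in $M$ a continuous surjection $f_e:\omega^\omega\to Y_e$, and read $Y_e$ in an arbitrary forcing extension as the range of $f_e$ on that model's reals; this is unambiguous because $f_e$ is Borel-coded in $M$. Hence it suffices to prove: $(\star)$ \emph{for every $e\in\Omega$ and every $\Pbb$-name $\dot z\in M$ for a member of $\omega^\omega$, $\Vdash_\Pbb^N c\not\sqsubset f_e(\dot z)$.} Indeed, granting $(\star)$, for a $\Pbb$-generic $G$ over $N$ any $y\in Y^{M[G]}$ lies in some $Y_e^{M[G]}$, hence $y=f_e(\dot z[G])$ for some $\Pbb$-name $\dot z\in M$, so $c\not\sqsubset y$ in $N[G]$ by $(\star)$; as $y$ was arbitrary, $c$ is $\Rcal$-unbounded over $M^\Pbb$.

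To prove $(\star)$, suppose towards a contradiction that some $p\in\Pbb$ forces $c\sqsubset f_e(\dot z)$. Since $\sqsubset=\bigcup_{n<\omega}\sqsubset_n$ is an increasing union (Definition \ref{3.1}), after replacing $p$ by a stronger condition we may assume $p\Vdash_\Pbb^N c\sqsubset_n f_e(\dot z)$ for some fixed $n<\omega$. Working inside $M$ (possible since $\Pbb,p\in M$), build an interpretation $(\la p_k\ra_{k<\omega},w)$ of $\dot z$ below $p$; then $w\in(\omega^\omega)^M$, so $f_e(w)\in Y_e^M\subseteq Y^M$, and $\Rcal$-unboundedness of $c$ over $M$ yields $c\not\sqsubset f_e(w)$, in particular $c\not\sqsubset_n f_e(w)$. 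Now apply Lemma \ref{3.10.2.1} in $N$ --- all the relevant objects ($\Pbb$, $\dot z$, $f_e$, $\sqsubset_n$, the interpretation) lie in $M\subseteq N$, and the fact ``$p_k\Vdash\dot z\frestr k=w\frestr k$'' is absolute between $M$, $N$ and $V$ --- with $x:=c$: we obtain a $k<\omega$ with $p_k\Vdash_\Pbb^N c\not\sqsubset_n f_e(\dot z)$. But $p_k\leq p_0\leq p$, contradicting $p\Vdash_\Pbb^N c\sqsubset_n f_e(\dot z)$. So no condition forces $c\sqsubset f_e(\dot z)$, i.e.\ $\Vdash_\Pbb^N c\not\sqsubset f_e(\dot z)$, which is $(\star)$.

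I do not expect a genuine obstacle here: the substance of the argument already resides in Lemma \ref{3.10.2.1}, and what is left is the routine bookkeeping --- constructing interpretations of $\dot z$ inside $M$, and the absoluteness of the forcing relation for elementary statements about names lying in $M$ --- of the same (in fact simpler) kind as the $\sig^1_1$/$\cosig^1_1$ analysis in the proof of Lemma \ref{3.24}. The one point I would be careful to fix explicitly is the uniform reading of $Y$ and of the sets $Y_e$ across transitive models, so that ``$y\in Y^{M[G]}$'' can always be written as ``$y=f_e(\dot z[G])$ with $e\in\Omega$ and $\dot z\in M$ a $\Pbb$-name''; choosing the continuous surjections $f_e\in M$ once and for all and declaring $Y_e$ to be their ranges achieves this, consistently with the conventions of Subsection \ref{SubSecPresTheory}.
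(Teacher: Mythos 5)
Your proposal is correct and follows essentially the same route as the paper's proof: reduce a name for a member of $Y_e$ to a name $\dot z$ for a real via a continuous surjection $f_e\in M$, take an interpretation $(\la p_k\ra_{k<\omega},w)$ of $\dot z$ in $M$ below a given condition, use $\Rcal$-unboundedness of $c$ over $M$ to get $c\not\sqsubset_n f_e(w)$, and apply Lemma \ref{3.10.2.1} to contradict any condition forcing $c\sqsubset_n f_e(\dot z)$. The only cosmetic difference is that you phrase it as a proof by contradiction while the paper shows directly that conditions forcing $c\not\sqsubset_n\dot h$ are dense; the substance is identical.
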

\begin{proof} Work within $M$. Let $e\in \Omega$ and $\dot{h}$ be a $\Pbb$-name for a member of $Y_e$. Fix $p\in \Pbb$ and $n<\omega$. Choose a continuous and surjective function $f:\omega^\omega\to Y_e$ and a $\Pbb$-name $\dot{z}$ for a real in $\omega^\omega$ such that $\Pbb$ forces that $f(\dot{z})=\dot{h}$. Choose an interpretation $(\la p_k\ra_{k<\omega},g)$ of $\dot{z}$ below $p$. In $N$, as $c$ is $\Rcal$-unbounded over $M$, then $c\not\sqsubset f(g)$, so $c\not\sqsubset_n f(g)$. By Lemma \ref{3.10.2.1}, there is a $k<\omega$ such that $p_k\Vdash^{N}_\Pbb c \not\sqsubset_n f(\dot{z})=\dot{h}$.
\end{proof}

Let $\Pbb_0,\Pbb_1,\Qbb_0,\Qbb_1$ be partial orders with $\Pbb_0,\Pbb_1\in M$. Recall that $\la\Pbb_0,\Pbb_1,\Qbb_0,\Qbb_1\ra$ is \emph{correct with respect to $M$} if $\Pbb_0$ is a complete subposet of $\Pbb_1$, $\Qbb_0$ is a complete subposet of $\Qbb_1$, $\Pbb_i\lessdot_M\Qbb_i$ for each $i<2$ and, whenever $p_0\in\Pbb_0$ is a reduction of $p_1\in\Pbb_1$, then $p_0$ is a reduction of $p_1$ with respect to $\Qbb_0,\Qbb_1$ (see \cite[Def. 2.8]{mejia-temp}).

\begin{lemma}[{\cite[Lemma 5.14]{mejia-temp}}]\label{presunbdirlimit}
   Let $I \in M$ be a directed partial order, $\langle \Pbb_i\rangle_{i\in I}\in M$ and $\langle \Qbb_i\rangle_{i\in I}\in N$ directed systems of posets such that, for any $i,j\in I$, $\Pbb_i\lessdot_{M}\Qbb_i$ and $\la\Pbb_i,\Pbb_j,\Qbb_i,\Qbb_j\ra$ is correct with respect to $M$ whenever $i\leq j$. Assume that $c\in X^N$ is $\Rcal$-unbounded over $M$ and that, for each $i\in I$, $\Qbb_i$ forces (in $N$) that $c$ is $\Rcal$-unbounded over $M^{\Pbb_i}$. If $\Pbb=\limdir_{i\in I}\Pbb_i$ and $\Qbb=\limdir_{i\in I}\Pbb_i$ then $\Pbb\lessdot_M\Qbb$, $\la\Pbb_i,\Pbb,\Qbb_i,\Qbb\ra$ is correct with respect to $M$ for each $i\in I$, and $\Qbb$ forces (in $N$) that $c$ is $\Rcal$-unbounded over $M^{\Pbb}$.
\end{lemma}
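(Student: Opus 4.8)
The plan is to separate the three conclusions. The first two — that $\Pbb\lessdot_M\Qbb$ and that each quadruple $\la\Pbb_i,\Pbb,\Qbb_i,\Qbb\ra$ is correct with respect to $M$ — are statements about direct limits of correct systems of posets that do not mention $\Rcal$ at all, and I would simply invoke the standard facts from \cite{mejia-temp} (alternatively: if $A\in M$ is a maximal antichain of $\Pbb=\limdir_{i\in I}\Pbb_i$, then by directedness of $I$ every relevant reduction situation already occurs inside some $\Pbb_i$, so $A$ remains maximal in $\Qbb=\limdir_{i\in I}\Qbb_i$, and the reduction clause of correctness for $\la\Pbb_i,\Pbb,\Qbb_i,\Qbb\ra$ is inherited from the quadruples $\la\Pbb_i,\Pbb_j,\Qbb_i,\Qbb_j\ra$). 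The only genuinely new content is then the preservation of the $\Rcal$-unbounded real $c$, and for this I would reduce to the one-step Lemma \ref{3.25} by factoring the tail of the iteration.

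Fix any $i\in I$. Since $\la\Qbb_j\ra_{j\in I}$ is a directed system of complete suborders, $\Qbb_i$ is a complete suborder of $\Qbb$, so $\Qbb$-forcing factors as $\Qbb_i\ast(\Qbb/\Qbb_i)$; similarly $\Pbb_i\lessdot\Pbb$ in $M$, so $\Pbb/\Pbb_i$ is a poset lying in $M^{\Pbb_i}$, and iterating $\Pbb_i$ and then $\Pbb/\Pbb_i$ over $M$ yields the same extensions as $\Pbb$ over $M$. Force with $\Qbb_i$ over $N$ and pass to $N^{\Qbb_i}$, which contains $M^{\Pbb_i}$ because $\Pbb_i\lessdot_M\Qbb_i$. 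By hypothesis $c$ is $\Rcal$-unbounded over $M^{\Pbb_i}$ there, so Lemma \ref{3.25}, applied inside $N^{\Qbb_i}$ to the pair $M^{\Pbb_i}\subseteq N^{\Qbb_i}$ and the poset $\Pbb/\Pbb_i\in M^{\Pbb_i}$, yields that $\Pbb/\Pbb_i$ forces (in $N^{\Qbb_i}$) that $c$ stays $\Rcal$-unbounded over $(M^{\Pbb_i})^{\Pbb/\Pbb_i}=M^{\Pbb}$. On the other hand, correctness of $\la\Pbb_i,\Pbb,\Qbb_i,\Qbb\ra$ over $M$ relativizes, again by the basic theory of correct systems, to $\Pbb/\Pbb_i\lessdot_{M^{\Pbb_i}}\Qbb/\Qbb_i$ in $N^{\Qbb_i}$; thus a $\Qbb/\Qbb_i$-generic over $N^{\Qbb_i}$ restricts to a $\Pbb/\Pbb_i$-generic over $M^{\Pbb_i}$, and we conclude that $\Qbb/\Qbb_i$ forces (in $N^{\Qbb_i}$) that $c$ is $\Rcal$-unbounded over $M^{\Pbb}$. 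Unfolding the factorization, $\Qbb\cong\Qbb_i\ast(\Qbb/\Qbb_i)$ forces (in $N$) that $c$ is $\Rcal$-unbounded over $M^{\Pbb}$, which is the third conclusion.

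The point that requires care — and which I regard as the main (though routine) obstacle — is the relativization of correctness: that correctness of $\la\Pbb_i,\Pbb,\Qbb_i,\Qbb\ra$ over $M$ gives, after forcing with $\Qbb_i$, the relation $\Pbb/\Pbb_i\lessdot_{M^{\Pbb_i}}\Qbb/\Qbb_i$, together with the bookkeeping that the $\Pbb$-generic over $M$ (resp. the $\Qbb$-generic over $N$) decomposes as a $\Pbb_i$-generic followed by a $\Pbb/\Pbb_i$-generic (resp. a $\Qbb_i$-generic followed by a $\Qbb/\Qbb_i$-generic); all of this is exactly the content of \cite[Sect.\ 2]{mejia-temp}. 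Notably, no gPrs-specific difficulty appears here: Lemma \ref{3.25} is already stated and proved for generalized Polish relational systems — its proof handles the decomposition $Y=\bigcup_{e\in\Omega}Y_e$ by choosing, for each $e\in\Omega$, a continuous surjection $\omega^\omega\to Y_e$ and reading the relevant name through it — so the multi-indexed $Y$ needs no further attention in this argument.
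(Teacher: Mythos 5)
Your handling of the first two conclusions (citing the structural lemmas of \cite{mejia-temp}) matches the paper. The gap is in the preservation step, at the sentence ``thus a $\Qbb/\Qbb_i$-generic over $N^{\Qbb_i}$ restricts to a $\Pbb/\Pbb_i$-generic over $M^{\Pbb_i}$, and we conclude that $\Qbb/\Qbb_i$ forces \dots''. Lemma \ref{3.25} applied in $N^{\Qbb_i}$ to $\Pbb/\Pbb_i\in M^{\Pbb_i}$ is a statement about forcing with $\Pbb/\Pbb_i$ \emph{itself} over $N^{\Qbb_i}$: it only tells you something about models $M^{\Pbb_i}[G']$ where $G'$ is $\Pbb/\Pbb_i$-generic over $N^{\Qbb_i}$. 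But when $H$ is $\Qbb/\Qbb_i$-generic over $N^{\Qbb_i}$, the restriction $H\cap(\Pbb/\Pbb_i)$ is generic only over $M^{\Pbb_i}$, not over $N^{\Qbb_i}$, so the conclusion of Lemma \ref{3.25} says nothing about the model $M^{\Pbb}$ that actually arises inside $N^{\Qbb}$. What you need is a genuinely stronger ``two-poset'' lemma: $\Pbb'\in M'$, $\Pbb'\lessdot_{M'}\Qbb'$, $\Qbb'\in N'$, $c$ $\Rcal$-unbounded over $M'$ implies $\Qbb'$ forces $c$ $\Rcal$-unbounded over $M'[G_{\Qbb'}\cap\Pbb']$. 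That statement is true, but its proof is not a formal consequence of Lemma \ref{3.25}; it requires taking a \emph{reduction} $p_0\in\Pbb'$ of an arbitrary $q\in\Qbb'$, interpreting the name below $p_0$ in $M'$, and using that conditions below $p_0$ in $\Pbb'$ remain compatible with $q$ in $\Qbb'$ --- i.e.\ exactly the correctness data you set aside as routine.

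The paper avoids stating the two-poset lemma by running the interpretation argument directly and exploiting the direct-limit structure: given $q\in\Qbb$ with $q\Vdash c\sqsubset_n\dot h$, it chooses $i$ with $q\in\Qbb_i$ (so $q$ is absorbed into the first factor and the offending condition in the quotient is trivial), forces with $\Qbb_i$ below $q$, takes an interpretation $(\la p_k\ra,g)$ of the name in $\Pbb/\Pbb_i$ inside $M^{\Pbb_i}$, and uses \cite[Lemma 2.13]{mejia-temp} (correctness gives $\Pbb/\Pbb_i\lessdot_{M^{\Pbb_i}}\Qbb/\Qbb_i$, so the interpretation is also one in $\Qbb/\Qbb_i$) together with Lemma \ref{3.10.2.1} to get $p_k\Vdash_{\Qbb/\Qbb_i}c\not\sqsubset_n\dot h$, a contradiction. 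So your factorization strategy can be repaired, but as written the reduction to Lemma \ref{3.25} does not go through, and the step you flagged as routine (relativizing correctness to the quotients) is in fact the essential ingredient, not bookkeeping.
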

\begin{proof}
By \cite[Lemma 2.15]{mejia-temp}, $\Pbb\lessdot_M\Qbb$ and $\la\Pbb_i,\Pbb,\Qbb_i,\Qbb\ra$ is correct with respect to $M$ for each $i\in I$.

Work within $M$. Let $e\in \Omega$ and $\dot{h}$ be a $\Pbb$-name for a member in $Y_e$. Choose a continuous and surjective function $f:\omega^\omega\to Y_e$ and a $\Pbb$-name $\dot{z}$ for a real in $\omega^\omega$ such that $\Pbb$ forces that $f(\dot{z})=\dot{h}$.  Work in $N$. Assume, towards a contradiction, that there are $q\in \Qbb$ and $n<\omega$ such that $q\Vdash^{N}_{\Qbb}c\sqsubset_{n}\dot{h}$. Choose $i\in I$ such that $q\in \Qbb_i$.

Let $G$ be $\Qbb_i$-generic over $N$ such that $q\in G$. By assumption, $\Vdash^{N[G]}_{\Qbb/\Qbb_i}c\sqsubset_n \dot{h}$. In $M[G\cap \Pbb_i]$, find and interpretation $(\la p_{k}\ra_{k<\omega},g)$ of $\dot{z}$ in $\Pbb/\Pbb_i$. In $N[G]$, as $c\not\sqsubset f(g)$, by Lemma \ref{3.10.2.1} there is a $k<\omega$  such that $p_k\Vdash^{N[G]}_{\Qbb/\Qbb_i}c \not\sqsubset_{n} f(\dot{z})$ (by \cite[Lemma 2.13]{mejia-temp} $\Pbb/\Pbb_i\lessdot_{M[G\cap \Pbb_i]}\Qbb/\Qbb_i$, so the previous interpretation of $\dot{z}$ in $\Pbb/\Pbb_i$ is also an interpretation in $\Qbb/\Qbb_i$). Thus $p_{k}\Vdash^{N[G]}_{\Qbb/\Qbb_i}c \not\sqsubset_{n}\dot{h}$ which contradicts $\Vdash^{N[G]}_{\Qbb/\Qbb_i}c\sqsubset_n\dot{h}$.
\end{proof}

\begin{Corol}\label{3.26}
   Let $\delta$ be an ordinal, $c\in X^N$, $\langle\mathbb{P}_{\alpha}^{0},\dot{\mathbb{Q}}_{\alpha}^{0}\rangle_{\alpha<\delta}\in M$ and $\langle\mathbb{P}_{\alpha}^{1},\dot{\mathbb{Q}}_{\alpha}^{1}\rangle_{\alpha<\delta}\in N$ both FS iterations such that, for any $\alpha<\delta$, if $\mathbb{P}_{\alpha}^{0}\lessdot_{M}\mathbb{P}_{\alpha}^{1}$ then, in $N$, $\mathbb{P}_{\alpha}^{1}$ forces that $\dot{\mathbb{Q}}_{\alpha}^{0}\lessdot_{M^{\mathbb{P}_{\alpha}^{0}}}\dot{\mathbb{Q}}_{\alpha}^{1}$ and that $\dot{\mathbb{Q}}^1_\alpha$ forces that $c$ is $\Rcal$-unbounded over $M^{\Pbb^0_{\alpha+1}}$. Then,  $\mathbb{P}_{\alpha}^{0}\lessdot_{M}\mathbb{P}_{\alpha}^{1}$ for all $\alpha\leq\delta$ and $\Pbb^1_\delta$ forces, in $N$, that $c$ is $\Rcal$-unbounded over $M^{\Pbb^0_\delta}$.
\end{Corol}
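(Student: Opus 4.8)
The plan is to argue by induction on $\alpha\leq\delta$ that the following three clauses hold simultaneously: (a) $\mathbb{P}^0_\alpha\lessdot_M\mathbb{P}^1_\alpha$; (b) for every $\beta\leq\alpha$ the quadruple $\langle\mathbb{P}^0_\beta,\mathbb{P}^0_\alpha,\mathbb{P}^1_\beta,\mathbb{P}^1_\alpha\rangle$ is correct with respect to $M$; and (c) in $N$, $\mathbb{P}^1_\alpha$ forces that $c$ is $\Rcal$-unbounded over $M^{\mathbb{P}^0_\alpha}$ (here $c$ is of course taken to be $\Rcal$-unbounded over $M$, which is exactly clause (c) at $\alpha=0$). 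The instance $\alpha=\delta$ then gives precisely the two assertions of the corollary. Clause (a) is the heart of the statement; clause (b) is carried along only so that Lemma~\ref{presunbdirlimit} becomes literally applicable at limit stages; and clause (c) feeds both the final conclusion and that same lemma.

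For $\alpha=0$ the two iterations begin with the trivial poset, so (a) and (b) are immediate and (c) is the running hypothesis on $c$. For a successor $\alpha=\beta+1$, clause (a) at $\beta$ activates the hypothesis of the corollary, which tells us that $\mathbb{P}^1_\beta$ forces $\dot{\mathbb{Q}}^0_\beta\lessdot_{M^{\mathbb{P}^0_\beta}}\dot{\mathbb{Q}}^1_\beta$ and that $\dot{\mathbb{Q}}^1_\beta$ forces $c$ to be $\Rcal$-unbounded over $M^{\mathbb{P}^0_{\beta+1}}$. Writing $\mathbb{P}^i_{\beta+1}=\mathbb{P}^i_\beta\ast\dot{\mathbb{Q}}^i_\beta$ for $i<2$, the first fact together with clauses (a), (b) at $\beta$ and the standard behaviour of $\lessdot_M$ and of correctness under two-step iterations yields (a) and (b) at $\beta+1$; the second fact says exactly that $\mathbb{P}^1_{\beta+1}$ forces $c$ to be $\Rcal$-unbounded over $M^{\mathbb{P}^0_{\beta+1}}$, i.e.\ clause (c) at $\beta+1$ --- so the successor step requires no further forcing argument, the hypothesis being already phrased to deliver clause (c) one level up.

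For $\alpha$ a limit ordinal the finite support convention gives $\mathbb{P}^0_\alpha=\limdir_{\beta<\alpha}\mathbb{P}^0_\beta$ and $\mathbb{P}^1_\alpha=\limdir_{\beta<\alpha}\mathbb{P}^1_\beta$. I would then apply Lemma~\ref{presunbdirlimit} with directed index set $I=\alpha$ and the directed systems $\langle\mathbb{P}^0_\beta\rangle_{\beta<\alpha}\in M$ and $\langle\mathbb{P}^1_\beta\rangle_{\beta<\alpha}\in N$: its hypotheses --- namely $\mathbb{P}^0_\beta\lessdot_M\mathbb{P}^1_\beta$ and correctness of $\langle\mathbb{P}^0_\beta,\mathbb{P}^0_\gamma,\mathbb{P}^1_\beta,\mathbb{P}^1_\gamma\rangle$ for $\beta\leq\gamma<\alpha$, $\Rcal$-unboundedness of $c$ over $M$, and each $\mathbb{P}^1_\beta$ forcing $c$ to be $\Rcal$-unbounded over $M^{\mathbb{P}^0_\beta}$ --- are precisely clauses (a), (b), (c) at the stages $\beta<\alpha$ from the induction. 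Its conclusion delivers $\mathbb{P}^0_\alpha\lessdot_M\mathbb{P}^1_\alpha$, correctness of $\langle\mathbb{P}^0_\beta,\mathbb{P}^0_\alpha,\mathbb{P}^1_\beta,\mathbb{P}^1_\alpha\rangle$ for $\beta\leq\alpha$, and that $\mathbb{P}^1_\alpha$ forces $c$ to be $\Rcal$-unbounded over $M^{\mathbb{P}^0_\alpha}$, i.e.\ clauses (a), (b), (c) at $\alpha$, completing the induction.

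The substance has already been absorbed into the quoted lemmas: Lemma~\ref{3.10.2.1} (on interpretations of names) underlies both the successor step and, via Lemma~\ref{presunbdirlimit}, the limit step, while in concrete applications Lemma~\ref{3.24} is what verifies the corollary's running hypothesis for $\Rcal$-good Suslin ccc iterands. Consequently I expect the only mild obstacle to be organizational: setting up the induction with the correctness clause (b) explicitly present, rather than invoked ad hoc, so that Lemma~\ref{presunbdirlimit} applies verbatim, and keeping straight that the hypothesis ``$\dot{\mathbb{Q}}^1_\alpha$ forces $c$ is $\Rcal$-unbounded over $M^{\mathbb{P}^0_{\alpha+1}}$'' is deliberately engineered to be clause (c) one step up, so that no separate preservation computation is needed at successors.
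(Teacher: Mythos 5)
Your proof is correct and follows essentially the same route as the paper's: an induction whose successor step rests on the standard two-step correctness lemma and whose limit step is exactly Lemma~\ref{presunbdirlimit}, with the hypothesis on $\dot{\Qbb}^1_\alpha$ supplying the unboundedness clause at successors for free. The only difference is organizational --- the paper fixes $\alpha$ and inducts on $\beta\in[\alpha,\delta]$ to establish correctness of the quadruples, whereas you carry all of them along a single induction on $\alpha$ --- which changes nothing of substance.
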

\begin{proof}
   Fix $\alpha\leq\delta$. When $\Pbb^0_\alpha\lessdot_M\Pbb^1_\alpha$ it is not hard to see by induction on $\beta\in[\alpha,\delta]$ that $\la\Pbb^0_\alpha,\Pbb^0_\beta,\Pbb^1_\alpha,\Pbb^1_\beta\ra$ is correct with respect to $M$ (the limit step follows by Lemma \ref{presunbdirlimit}, the successor step by \cite[Lemma 13]{BrF}). The result follows by the case $\alpha=0$ and by Lemma \ref{presunbdirlimit}.
\end{proof}


\section{Consistency results}\label{4.2}\label{SecMain}


Before we prove the consistency results of this section, we review some facts about matrix iterations, including one result about preservation in the context of general Polish relational systems.

\newcommand{\mbf}{\mathbf{m}}
\newcommand{\Qnm}{\dot{\mathbb{Q}}}

\begin{definition}[Blass and Shelah {\cite{B1S}}]\label{4.1.1}
A \textit{matrix iteration} $\mbf$ consists of 
\begin{enumerate}[(I)]
\item two ordinals $\gamma^\mbf$ and $\delta^\mbf$,
\item for each $\alpha\leq\gamma^\mbf$, a FS iteration $\Pbb^\mbf_{\alpha,\delta^\mbf}=\la\Pbb^\mbf_{\alpha,\xi},\Qnm^\mbf_{\alpha,\xi}:\xi<\delta^\mbf\ra$ such that, for any $\alpha\leq\beta\leq\gamma^\mbf$ and $\xi<\delta^\mbf$, if $\Pbb^\mbf_{\alpha,\xi}\lessdot\Pbb^\mbf_{\beta,\xi}$ then $\Pbb^\mbf_{\beta,\xi}$ forces $\Qnm^\mbf_{\alpha,\xi}\lessdot_{V^{\Pbb^\mbf_{\alpha,\xi}}}\Qnm^\mbf_{\beta,\xi}$.
\end{enumerate}
According to this notation, $\Pbb^\mbf_{\alpha,0}$ is the trivial poset and $\Pbb^\mbf_{\alpha,1}=\Qnm^\mbf_{\alpha,0}$. 
By Corollary \ref{3.26}, $\Pbb^\mbf_{\alpha,\xi}$ is a complete suborder of $\Pbb^\mbf_{\beta,\xi}$ for all $\alpha\leq\beta\leq\gamma$ and $\xi\leq\delta$.

We drop the upper index $\mbf$ when it is clear from the context. If $G$ is $\mathbb{P}_{\gamma,\delta}$-generic over $V$ we denote  $V_{\alpha,\xi}=V[G\cap\Pbb_{\alpha,\xi}]$ for all $\alpha\leq\gamma$ and $\xi\leq\delta$ . Clearly, $V_{\alpha,\xi}\subseteq V_{\beta,\eta}$ for all $\alpha\leq\beta\leq\gamma$ and $\xi\leq\eta\leq\delta$. The idea of such a construction is to obtain a matrix $\langle V_{\alpha,\xi}:\alpha\leq\gamma,\xi\leq\delta\rangle$ of generic extensions as illustrated in Figure \ref{matrix}.
\end{definition}


\begin{figure}
\begin{center}
\includegraphics[scale=1]{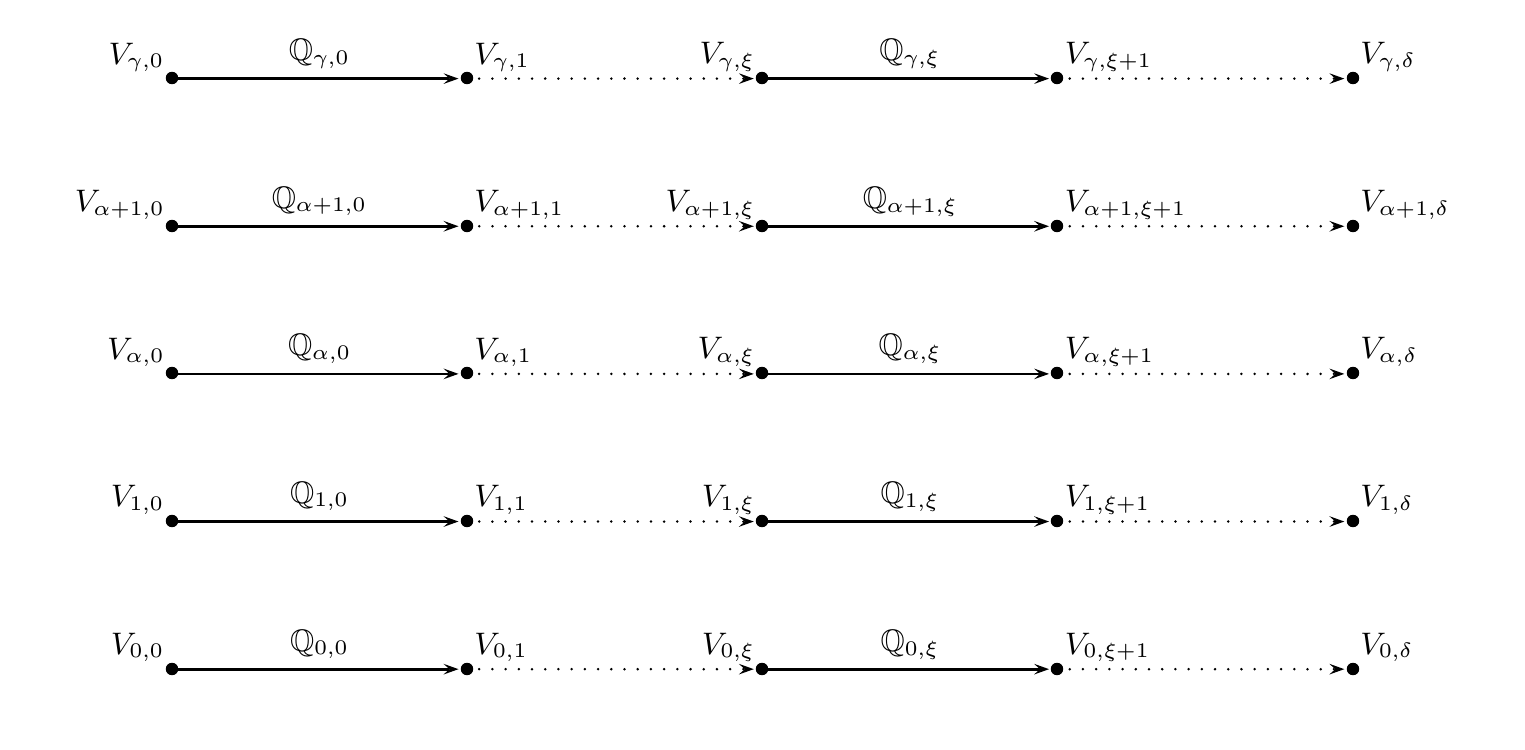}
\caption{Matrix iteration}
\label{matrix}
\end{center}
\end{figure}


The construction of the matrix iterations in our consistency results corresponds to the following particular case.

\newcommand{\Tnm}{\dot{\mathbb{T}}}

\begin{definition}\label{Defstandmat}
A matrix iteration $\mbf$ is \textit{standard} if
\begin{enumerate}[(I)]
 \item each $\Pbb^\mbf_{\alpha,1}$ ($\alpha\leq\gamma^\mbf$) is ccc,
 \item it consists, additionally, of
  \begin{enumerate}[(i)]
      \item a partition $\la S^\mbf,T^\mbf\ra$ of $[1,\delta^\mbf)$,
      \item a function $\Delta^{\mathbf{m}}:T^{\mathbf{m}}\to\{\alpha\leq\gamma^{\mathbf{m}}:\alpha\text{\ is not limit}\}$,
      \item a sequence $\langle \Sbb_{\xi}^{\mathbf{m}}:\xi\in S^{\mathbf{m}}\rangle$ of Suslin ccc posets coded in the ground model $V$,
      \item a sequence $\la\Tnm^\mbf_\xi:\xi\in T^\mbf\ra$ such that each $\Tnm^\mbf_\xi$ is a $\Pbb^\mbf_{\Delta^\mbf(\xi),\xi}$-name of a poset such that it is forced by $\Pbb^\mbf_{\gamma,\xi}$ to be ccc, and
  \end{enumerate}
\item for each $\alpha\leq\gamma$ and $1\leq\xi<\delta$,

\[\dot{\mathbb{Q}}_{\alpha,\xi}^{\mathbf{m}}:=
\begin{cases}
    (\Sbb^\mbf_\xi)^{V_{\alpha,\xi}} & \textrm{if $\xi\in S^\mbf$,}\\
    \Tnm^\mbf_\xi & \textrm{if $\xi\in T^\mbf$ and $\alpha\geq\Delta^\mbf(\xi)$,}\\
    \mathds{1}         & \textrm{otherwise.}
\end{cases}\]
\end{enumerate}
In practice, $\la\Pbb^\mbf_{\alpha,\xi}:\alpha\leq\gamma\ra$ is constructed by induction on $\xi\leq\delta$ (before the $\xi+1$-th step for $\xi\in T^\mbf$, $\Tnm_\xi^\mbf$ is defined). So it is clear that the constructed system $\mbf$ is a matrix iteration. Moreover, each $\Pbb^\mbf_{\alpha,\xi}$ is ccc. Again, when there is no place for confusion, we omit the upper index $\mathbf{m}$.
\end{definition}


\begin{lemma}[{\cite[Lemma 5]{BrF}}, see also {\cite[Cor. 3.9]{FFMM}}]\label{4.3}
Let $\mbf$ be a standard matrix iteration. Assume that
\begin{enumerate}[(i)]
   \item $\gamma_0\leq\gamma^\mbf$ has uncountable cofinality and
   \item $\Pbb_{\gamma_0,1}=\limdir_{\alpha<\gamma_0}\Pbb_{\alpha,1}$
\end{enumerate}
Then, for any $\xi\leq\delta^\mbf$, $\Pbb_{\gamma_0,\xi}=\limdir_{\alpha<\gamma_0}\Pbb_{\alpha,\xi}$. In particular, for any Polish space $X$ coded in $V$ (by a countable metric space), $X^{V_{\gamma_0,\xi}}=\bigcup_{\alpha<\gamma_0}X^{V_{\alpha,\xi}}$.
\end{lemma}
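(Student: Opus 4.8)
The plan is to show $\Pbb_{\gamma_0,\xi}=\limdir_{\alpha<\gamma_0}\Pbb_{\alpha,\xi}$ — that is, $\Pbb_{\gamma_0,\xi}=\bigcup_{\alpha<\gamma_0}\Pbb_{\alpha,\xi}$, the order and compatibility being inherited — by induction on $\xi\le\delta^\mbf$, and then to read off the ``in particular'' clause by one further nice-name argument. Since $\Pbb_{\alpha,\xi}$ is a complete suborder of $\Pbb_{\gamma_0,\xi}$ for every $\alpha\le\gamma_0$ (Definition \ref{4.1.1} and Corollary \ref{3.26}), only the inclusion $\Pbb_{\gamma_0,\xi}\subseteq\bigcup_{\alpha<\gamma_0}\Pbb_{\alpha,\xi}$ requires work. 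The two standing facts I will keep using are: $\cf(\gamma_0)$ uncountable forces $\gamma_0$ to be a limit ordinal and makes every countable subset of $\gamma_0$ — hence every countable union of countable subsets — bounded below $\gamma_0$; and each $\Pbb_{\gamma_0,\xi}$ is ccc, being a FS iteration of ccc posets (Definition \ref{Defstandmat}).

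The cases $\xi=0$ (trivial poset) and $\xi=1$ (hypothesis (ii)) are immediate. For $\xi$ a limit ordinal I use finite support: as the iteration is FS, $\Pbb_{\gamma_0,\xi}=\limdir_{\eta<\xi}\Pbb_{\gamma_0,\eta}$, so any $p\in\Pbb_{\gamma_0,\xi}$ has support bounded by some $\eta<\xi$ and therefore belongs to $\Pbb_{\gamma_0,\eta}$; the induction hypothesis puts it into some $\Pbb_{\alpha,\eta}\subseteq\Pbb_{\alpha,\xi}$ with $\alpha<\gamma_0$. For $\xi=\zeta+1$ I take $p\in\Pbb_{\gamma_0,\zeta+1}$, so $p\restriction\zeta\in\Pbb_{\gamma_0,\zeta}$ and $p(\zeta)$ is a $\Pbb_{\gamma_0,\zeta}$-name for a condition in $\Qnm_{\gamma_0,\zeta}$. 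The induction hypothesis gives $\alpha_1<\gamma_0$ with $p\restriction\zeta\in\Pbb_{\alpha_1,\zeta}$; if $\Qnm_{\gamma_0,\zeta}=\mathds{1}$ we are done, so assume not. I set $\alpha^*:=\alpha_1$ if $\zeta\in S^\mbf$ and $\alpha^*:=\max\{\alpha_1,\Delta(\zeta)\}$ if $\zeta\in T^\mbf$; in the latter case $\Delta(\zeta)\le\gamma_0$ (since $\Qnm_{\gamma_0,\zeta}\ne\mathds{1}$) and $\Delta(\zeta)\ne\gamma_0$ (since $\gamma_0$ is limit and $\Delta(\zeta)$ is not), so $\alpha^*<\gamma_0$. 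The key structural observation is that $\Qnm_{\alpha,\zeta}$ is ``the same'' for all $\alpha\in[\alpha^*,\gamma_0]$: namely $(\Sbb_\zeta)^{V_{\alpha,\zeta}}$ for the fixed $V$-coded Suslin ccc poset $\Sbb_\zeta$ when $\zeta\in S^\mbf$, and the fixed $\Pbb_{\Delta(\zeta),\zeta}$-name $\Tnm_\zeta$ when $\zeta\in T^\mbf$.

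It then remains to reflect the name $p(\zeta)$. Since $\Pbb_{\gamma_0,\zeta}$ is ccc I may take $p(\zeta)$ to be a nice name for a condition in $\Qnm_{\gamma_0,\zeta}$, coded by countably many countable antichains of $\Pbb_{\gamma_0,\zeta}=\bigcup_{\alpha<\gamma_0}\Pbb_{\alpha,\zeta}$; by uncountable cofinality all of these lie in a single $\Pbb_{\alpha_2,\zeta}$ with $\alpha_2\in[\alpha^*,\gamma_0)$, so $p(\zeta)$ is (equivalent to) a $\Pbb_{\alpha_2,\zeta}$-name. Because $\Pbb_{\alpha_2,\zeta}\lessdot\Pbb_{\gamma_0,\zeta}$, any statement about $\Pbb_{\alpha_2,\zeta}$-names with ground-model parameters is forced identically by the two posets; applying this to ``$p(\zeta)\in\Qnm_{\gamma_0,\zeta}$'' — which in both cases is exactly such a statement (about $p(\zeta)$ and either the $V$-code of $\Sbb_\zeta$ or the shared name $\Tnm_\zeta$, the latter legitimate as $\Delta(\zeta)\le\alpha_2$) — yields $\Pbb_{\alpha_2,\zeta}\Vdash p(\zeta)\in\Qnm_{\alpha_2,\zeta}$, hence $p\in\Pbb_{\alpha_2,\zeta}\ast\Qnm_{\alpha_2,\zeta}=\Pbb_{\alpha_2,\zeta+1}$. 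I expect the main subtlety to be precisely this uniform treatment of the successor step — identifying $\Qnm_{\alpha,\zeta}$ across levels and invoking $\lessdot$-absoluteness of condition-hood so that a single cofinality-plus-nice-name argument covers both Suslin-ccc coordinates and name-coordinates $\Tnm_\zeta$; the rest is bookkeeping, and the precise version of this step is the content of the cited \cite[Lemma 5]{BrF} and \cite[Cor. 3.9]{FFMM}. Finally, for the last assertion: a real of a $V$-coded Polish space $X$ in $V_{\gamma_0,\xi}$ is coded by a real of $\omega^\omega$ there and thus has a nice $\Pbb_{\gamma_0,\xi}$-name; by the same cofinality argument this name lies over some $\Pbb_{\alpha,\xi}$ with $\alpha<\gamma_0$, so the real belongs to $X^{V_{\alpha,\xi}}$, and together with $X^{V_{\alpha,\xi}}\subseteq X^{V_{\gamma_0,\xi}}$ this gives $X^{V_{\gamma_0,\xi}}=\bigcup_{\alpha<\gamma_0}X^{V_{\alpha,\xi}}$.
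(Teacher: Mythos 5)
Your proof is correct and is exactly the standard argument behind the cited results ([BrF, Lemma 5], [FFMM, Cor.\ 3.9]); the paper itself gives no proof, only the citation. The induction on $\xi$ with the limit step via finite supports and the successor step via ccc-ness, reflection of nice names below $\cf(\gamma_0)>\omega$, and the case split on $\zeta\in S^\mbf$ versus $\zeta\in T^\mbf$ (using absoluteness of the Suslin definition, respectively the fact that $\Tnm_\zeta$ is a single $\Pbb_{\Delta(\zeta),\zeta}$-name with $\Delta(\zeta)<\gamma_0$) is precisely how those sources prove it, with the usual convention that conditions are identified with their nice-name representatives.
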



\begin{theorem}[{\cite[Thm. 10 \& Cor. 1]{M}}]\label{4.4}
   Let $\mbf$ be a standard matrix iteration and $\Rcal=\la X,Y,\sqsubset\ra$ a gPrs coded in $V$. Assume that, for each $\alpha<\gamma$,
   \begin{enumerate}[(i)]
       \item $\Pbb_{\alpha+1,1}$ adds a real $\dot{c}_\alpha\in X$ that is $\Rcal$-unbounded over $V_{\alpha,1}$ and
       \item for each $\xi\in S^\mbf$, $\Pbb_{\alpha,\xi}$ forces that $\Sbb_\xi^{V_{\alpha,\xi}}$ is $\Rcal$-good.
   \end{enumerate}
   Then $\Pbb_{\alpha+1,\delta}$ forces that $\dot{c}_{\alpha}$ is $\Rcal$-unbounded over $V_{\alpha,\delta}$. Even more, if $\gamma$ has uncountable cofinality and $\Pbb_{\gamma,1}=\limdir_{\alpha<\gamma}\Pbb_{\alpha,1}$, then $\mathbb{P}_{\gamma,\delta}$ forces $\bfrak(\Rcal)\leq\mathrm{cf}(\gamma)\leq \dfrak(\Rcal)$.
\end{theorem}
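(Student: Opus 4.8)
The plan is to derive the first assertion from Corollary~\ref{3.26} and the ``even more'' part from Lemmas~\ref{4.3} and~\ref{bRdR}(b). Fix $\alpha<\gamma$. Since $\mbf$ is a matrix iteration, $\Pbb_{\alpha,\xi}\lessdot\Pbb_{\alpha+1,\xi}$ for all $\xi\leq\delta$ (Definition~\ref{4.1.1}), so it suffices to prove that $\Pbb_{\alpha+1,\xi}$ forces $\dot c_\alpha$ to be $\Rcal$-unbounded over $V_{\alpha,\xi}$ for every $\xi\leq\delta$. Regarding the $\alpha$-th and $(\alpha+1)$-th rows, restricted to the stages in $[1,\delta)$, as FS iterations $\Pbb^0$ (over $M:=V_{\alpha,1}$) and $\Pbb^1$ (over $N:=V_{\alpha+1,1}$), this is precisely the conclusion of Corollary~\ref{3.26} applied with $c:=\dot c_\alpha$, which is $\Rcal$-unbounded over $M$ by hypothesis (i); that corollary is proved by induction on $\xi$ (limit stages handled by Lemma~\ref{presunbdirlimit}), and the content of the successor step is: granting that $\Pbb_{\alpha+1,\xi}$ forces $\dot c_\alpha$ to be $\Rcal$-unbounded over $V_{\alpha,\xi}$, show that $\Qnm_{\alpha+1,\xi}$ forces it to remain $\Rcal$-unbounded over $V_{\alpha,\xi+1}$.

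I would split this successor step along the three clauses of Definition~\ref{Defstandmat}, working in $V_{\alpha+1,\xi}$. If $\xi\in S^\mbf$, then $\Qnm_{\alpha,\xi}=\Sbb_\xi^{V_{\alpha,\xi}}$ is a Suslin ccc poset which is $\Rcal$-good in $V_{\alpha,\xi}$ by hypothesis (ii), and $\Qnm_{\alpha+1,\xi}=\Sbb_\xi^{V_{\alpha+1,\xi}}$; Lemma~\ref{3.24} (with $M=V_{\alpha,\xi}$ and $N=V_{\alpha+1,\xi}$) then says exactly that $\Sbb_\xi^{V_{\alpha+1,\xi}}$ forces $\dot c_\alpha$ to be $\Rcal$-unbounded over $V_{\alpha,\xi}^{\Sbb_\xi}=V_{\alpha,\xi+1}$. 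If $\xi\in T^\mbf$ and $\alpha\geq\Delta^\mbf(\xi)$, then $\Qnm_{\alpha,\xi}=\Qnm_{\alpha+1,\xi}=\Tnm_\xi$ is one and the same poset, which lies in $V_{\Delta^\mbf(\xi),\xi}\subseteq V_{\alpha,\xi}$, so Lemma~\ref{3.25} (with $\Pbb:=\Tnm_\xi\in M:=V_{\alpha,\xi}$ and $N:=V_{\alpha+1,\xi}$) gives that $\Tnm_\xi$ forces $\dot c_\alpha$ to be $\Rcal$-unbounded over $V_{\alpha,\xi}^{\Tnm_\xi}=V_{\alpha,\xi+1}$. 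If $\xi\in T^\mbf$ and $\alpha<\Delta^\mbf(\xi)$, then $\Qnm_{\alpha,\xi}=\mathds 1$, hence $V_{\alpha,\xi+1}=V_{\alpha,\xi}$, and there is nothing to do: for each $y\in Y\cap V_{\alpha,\xi}$ the statement ``$\dot c_\alpha\not\sqsubset y$'' is arithmetic in the fixed Borel codes for the closed relations $\sqsubset_n$ (Definition~\ref{3.1}(III)), hence absolute, so forcing with $\Qnm_{\alpha+1,\xi}$ over $V_{\alpha+1,\xi}$ cannot destroy it. This yields the first part: $\Pbb_{\alpha+1,\delta}$ forces $\dot c_\alpha$ to be $\Rcal$-unbounded over $V_{\alpha,\delta}$.

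For the ``even more'' part, assume $\gamma$ has uncountable cofinality and $\Pbb_{\gamma,1}=\limdir_{\alpha<\gamma}\Pbb_{\alpha,1}$. By Lemma~\ref{4.3}, $\Pbb_{\gamma,\delta}=\limdir_{\alpha<\gamma}\Pbb_{\alpha,\delta}$, so $Z^{V_{\gamma,\delta}}=\bigcup_{\alpha<\gamma}Z^{V_{\alpha,\delta}}$ for the Polish space $Z$ of Definition~\ref{3.1}(II); as each $Y_e$ is analytic, Mostowski absoluteness gives that every $y\in Y^{V_{\gamma,\delta}}$ already lies in $Y\cap V_{\alpha,\delta}$ for some $\alpha<\gamma$. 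Fix a cofinal $C\subseteq\gamma$ with $|C|=\cf(\gamma)$ and put $F:=\{\dot c_\alpha:\alpha\in C\}$, which has size $\cf(\gamma)$ since the $\dot c_\alpha$ are pairwise distinct (each is new over $V_{\alpha,1}$). I claim $\Pbb_{\gamma,\delta}$ forces $F$ to be strongly $\cf(\gamma)$-$\Rcal$-unbounded; granting this, Lemma~\ref{bRdR}(b) gives $\bfrak(\Rcal)\leq\cf(\gamma)\leq\dfrak(\Rcal)$, as desired. To prove the claim, work in $V_{\gamma,\delta}$: given $y\in Y$, choose $\alpha_0<\gamma$ with $y\in Y\cap V_{\alpha_0,\delta}$; for every $\beta\in C$ with $\beta\geq\alpha_0$, the first part gives that $\dot c_\beta$ is $\Rcal$-unbounded over $V_{\beta,\delta}\ni y$ (this persists in $V_{\gamma,\delta}$ again by absoluteness and the fact that $Y\cap V_{\beta,\delta}$ is the same set there), so $\dot c_\beta\not\sqsubset y$; hence $\{x\in F:x\sqsubset y\}\subseteq\{\dot c_\beta:\beta\in C\cap\alpha_0\}$ has size $<\cf(\gamma)$. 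Since $\cf(\gamma)$ is regular, $F$ is strongly $\cf(\gamma)$-$\Rcal$-unbounded.

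The main obstacle is the bookkeeping in the first part: checking that the restricted rows are honest FS iterations over $M$ and $N$, that $\Pbb_{\alpha,\xi}\lessdot_M\Pbb_{\alpha+1,\xi}$ together with the correctness of the relevant pairs of iterations (needed for Corollary~\ref{3.26} and Lemma~\ref{presunbdirlimit}), and that the inductive hypothesis ``$\dot c_\alpha$ is $\Rcal$-unbounded over $V_{\alpha,\xi}$'' — which Lemmas~\ref{3.24} and~\ref{3.25} consume — is available at each stage, as it is precisely the conclusion of Corollary~\ref{3.26} for the truncated iteration. The delicate case is $\xi\in T^\mbf$ with $\alpha<\Delta^\mbf(\xi)$, where $\Tnm_\xi$ need not belong to $V_{\alpha,\xi}$ and one must fall back on plain absoluteness instead of a preservation lemma; everything else, including the ``even more'' part, is then routine.
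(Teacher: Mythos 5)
Your proposal is correct and follows the paper's own route exactly: the first assertion is obtained from Corollary \ref{3.26}, with the successor steps of its hypothesis verified via Lemma \ref{3.24} (Suslin case), Lemma \ref{3.25} (case $\alpha\geq\Delta^\mbf(\xi)$) and absoluteness of $\not\sqsubset$ (trivial case), and the ``even more'' part from Lemma \ref{4.3} together with Lemma \ref{bRdR}(b) applied to the strongly $\cf(\gamma)$-$\Rcal$-unbounded family of the $\dot{c}_\alpha$'s. The only cosmetic point is that $C$ should be taken as the range of an \emph{increasing} cofinal sequence of length $\cf(\gamma)$ (as in the paper) so that every initial segment $C\cap\alpha_0$ has size $<\cf(\gamma)$.
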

\begin{proof}
The first statement is a direct consequence of Lemmas \ref{3.24}, \ref{3.25} and Corollary \ref{3.26}.
For the second statement, given an increasing cofinal sequence $\{\alpha_{\zeta}:\zeta<\mathrm{cf}(\gamma)\}\in V$ in $\gamma$, by Lemma \ref{4.3} $\Pbb_{\gamma,\delta}$ forces that $\{\dot{c}_{\alpha_{\zeta}}:\zeta<\mathrm{cf}(\gamma)\}$ is strongly $\mathrm{cf}(\gamma)$-$\Rcal$-unbounded of size $\cf(\gamma)$, so $\bfrak(\Rcal)\leq\cf(\gamma)\leq\dfrak(\Rcal)$ by Lemma \ref{bRdR}. 
\end{proof}

For the reader convenience, before we prove our main results we summarize some facts about preservation from the previous sections. We assume that $b,h,\pi,\rho\in\omega^\omega$ are increasing.
\begin{enumerate}[({P}1)]
    \item If, for all but finitelly many $k<\omega$, $k\cdot\pi(k)\leq h(k)$ and $k\cdot|[b(k-1)]^{\leq k}|^k\leq\rho(k)$ then $\LOCbb^h_b(R)$ is $(\rho,\pi)$-linked for any $R\subseteq\prod b$ (Lemma \ref{Loclinked}).
    \item If $b>^*\pi\id_\omega$ then $\Ebb_b(S)$ is $((\id_\omega)^{\id_\omega},\pi)$-linked for any $S\subseteq\Scal(b,1)$ (Lemma \ref{linked}).
    \item Any $(h,b^{h^{\id_\omega}})$-linked poset is both $2$-$\aLc^*(b,h)$-good and $2$-$\Lc^*(b,h)$-good (Lemma \ref{3.20}).
    \item If $\theta$ is uncountable regular and $\mu<\theta$ is an infinite cardinal, then any $\mu$-centered poset is both $\theta$-$\aLc^*(b,h)$-good and $\theta$-$\Lc^*(b,h)$-good (Lemma \ref{3.21}).
    \item $\blc_{b,h}\leq\bfrak(\Lc^*(b,h))$ and $\dfrak(\Lc^*(b,h))\leq\dlc_{b,h}$ (Lemma \ref{Slmcard}).
    \item $\balc_{b,h^{\id_\omega}}\leq\bfrak(\aLc^*(b,h))$ and $\dfrak(\aLc^*(b,h))\leq\dalc_{b,h^{\id_\omega}}$ (Lemma \ref{Slmcard}).
\end{enumerate}

In relation to (P1) we consider the following structure discussed in \cite[Sect. 6]{BrM}  (see Lemma \ref{opRrho*}(d) below). Let $H,g\in\omega^\omega$ such that $H>\id_\omega$. Define $\Rbb^g_H:=\{x\in\omega^\omega:\forall k<\omega(H^{(k)}\circ x\leq^* g)\}$ where $H^{(0)}=\id_\omega$ and $H^{(k+1)}=H\circ H^{(k)}$, and denote $\Dbf(H,g):=\la\Rbb^g_H,\Rbb^g_H,\leq^*\ra$.

\begin{lemma}[Brendle and Mej\'ia {\cite[Lemma 6.5]{BrM}}]\label{genbounding}
    $\Dbf(H,g)\eqT\Dbf$ whenever $\Rbb^g_H\neq\emptyset$.
\end{lemma}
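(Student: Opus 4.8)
The plan is to verify $\Dbf(H,g)\leqT\Dbf$ and $\Dbf\leqT\Dbf(H,g)$ separately, via explicit Tukey--Galois connections built from the ``$k$-fold inverses'' of $g$ under $H$. Since $H$ is increasing with $H>\id_\omega$ we have $H(m)\geq m+1$ for all $m$, hence $H^{(k)}(0)\to\infty$; as some $x\in\Rbb^g_H$ satisfies $H^{(k)}\circ x\leq^* g$, it follows that $g(n)\geq H^{(k)}(0)$ for all large $n$, so $g$ goes to infinity. For $k,n<\omega$ set $g_k(n):=\max\bigl(\{0\}\cup\{m<\omega:H^{(k)}(m)\leq g(n)\}\bigr)$. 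I would first record: (a) for each fixed $k$, $g_k(n)\to\infty$ as $n\to\infty$, so $g_k(n)\geq 1$ for all large $n$; (b) whenever $g_k(n)\geq 1$ one has $H^{(k)}(g_k(n))\leq g(n)<H^{(k)}(g_k(n)+1)$, and, since $H^{(k+1)}(g_k(n))=H^{(k)}(H(g_k(n)))\geq H^{(k)}(g_k(n)+1)>g(n)$, also $g_{k+1}(n)\leq g_k(n)-1$; in particular $\langle g_k(n):k<\omega\rangle$ is non-increasing for every $n$; and (c) $x\in\Rbb^g_H$ iff $x\leq^* g_k$ for every $k<\omega$. All three are immediate from the definition of $g_k$ together with $g\to\infty$.

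Next fix the bookkeeping maps: for $d\in\omega^\omega$ let $d^\uparrow(k):=k+\max_{i\leq k}d(i)$ (strictly increasing, with $d\leq d^\uparrow$) and $\kappa_d(n):=\max\{k<\omega:d^\uparrow(k)\leq n\}$ (defined once $n\geq d^\uparrow(0)$, and $\kappa_d(n)\to\infty$); and let $\psi\colon\Rbb^g_H\to\omega^\omega$, $\psi(x)(k):=\min\{N<\omega:\forall n\geq N\,(x(n)\leq g_k(n))\}$, which is finite by (c). For $\Dbf(H,g)\leqT\Dbf$ take $\varphi_-:=\psi$ and $\varphi_+(d)(n):=g_{\kappa_d(n)}(n)$ for $n\geq d^\uparrow(0)$, $\varphi_+(d)(n):=0$ otherwise. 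Then $\varphi_+(d)\in\Rbb^g_H$: for fixed $j$ and large $n$, $\kappa_d(n)\geq j$, so $\varphi_+(d)(n)\leq g_j(n)$ by (b), and (c) applies. If $\psi(x)\leq^* d$, then for large $n$, with $k=\kappa_d(n)$, we get $\psi(x)(k)\leq d(k)\leq d^\uparrow(k)\leq n$, hence $x(n)\leq g_k(n)=\varphi_+(d)(n)$; so $x\leq^*\varphi_+(d)$. For $\Dbf\leqT\Dbf(H,g)$ take $\varphi_+:=\psi$ and $\varphi_-(x)(n):=g_{\kappa_x(n)}(n)+1$ for $n\geq x^\uparrow(0)$, $\varphi_-(x)(n):=0$ otherwise. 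Then $\varphi_-(x)\in\Rbb^g_H$: for fixed $j$ and large $n$, $\kappa_x(n)\geq j+1$, and either $g_{\kappa_x(n)}(n)\geq 1$, so $\varphi_-(x)(n)\leq g_{\kappa_x(n)-1}(n)\leq g_j(n)$ by (b), or $g_{\kappa_x(n)}(n)=0$, so $\varphi_-(x)(n)=1\leq g_j(n)$ by (a). Finally, if $\varphi_-(x)\leq^* y$ with $y\in\Rbb^g_H$, then for each large $k$, evaluating at $n=x^\uparrow(k)$ gives $\kappa_x(n)=k$, hence $y(x^\uparrow(k))\geq g_k(x^\uparrow(k))+1>g_k(x^\uparrow(k))$, so $\psi(y)(k)>x^\uparrow(k)\geq x(k)$; thus $x\leq^*\psi(y)=\varphi_+(y)$. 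The two connections together give $\Dbf(H,g)\eqT\Dbf$.

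The conceptual content is just facts (a)--(c), and I expect the only delicate point to be the ``$+1$'' appearing in $\varphi_-$: this is where the strict decrease $g_{k+1}(n)\leq g_k(n)-1$ in (b) is essential, since it is exactly what lets $\varphi_-(x)$ remain inside $\Rbb^g_H$ while still forcing any $y\geq^*\varphi_-(x)$ in $\Rbb^g_H$ to overshoot $g_k$ at the coordinate $x^\uparrow(k)$. The main obstacle, then, is bookkeeping --- keeping track of which statements hold only ``eventually'' and handling the degenerate values $g_k(n)=0$ --- rather than anything conceptually hard.
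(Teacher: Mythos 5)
Your argument is correct, and there is nothing in the paper to compare it against: the paper does not prove this lemma but simply cites \cite[Lemma 6.5]{BrM}, so your self-contained proof is welcome. I checked the two Tukey--Galois connections in detail: facts (a)--(c) about $g_k$ do hold, $\varphi_+(d)$ and $\varphi_-(x)$ do land in $\Rbb^g_H$ via (c), the computation $\kappa_x(x^\uparrow(k))=k$ is right because $x^\uparrow$ is strictly increasing, and the ``$+1$'' in $\varphi_-$ together with the strict drop $g_{k+1}(n)\leq g_k(n)-1$ is exactly what makes the second connection work, as you say. One caveat: you assume at the outset that $H$ is increasing, whereas the paper's statement of the hypotheses only says $H>\id_\omega$. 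Monotonicity (non-decreasing suffices) is genuinely used in your proof: in (b), the step $H^{(k)}(H(g_k(n)))\geq H^{(k)}(g_k(n)+1)$ and the passage from $H^{(k+1)}(g_k(n))>g(n)$ to $g_{k+1}(n)\leq g_k(n)-1$ need $H^{(k)}$ monotone, and the backward direction of (c) ($x\leq^* g_k$ for all $k$ implies $x\in\Rbb^g_H$) needs $H^{(k)}(x(n))\leq H^{(k)}(g_k(n))$, again monotonicity; without it, $\{m:H^{(k)}(m)\leq g(n)\}$ need not be an initial segment and the whole $g_k$-machinery breaks. This is harmless in practice --- the only instance used in the paper is $H(i)=2^i$, and the source presumably works with monotone $H$ as well --- but you should either state the monotonicity hypothesis explicitly or note that it can be assumed, since it is not literally part of the statement as reproduced here.
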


We use the following particular case. Fix the operation $\sigma^*:\omega\times\omega\to\omega$ such that $\sigma^*(m,0)=1$ and $\sigma^*(m,n+1)=m^{\sigma^*(m,n)}$. Define $\rho^*\in\omega^\omega$ such that $\rho^*(0)=2$ and $\rho^*(i+1)=\sigma(\rho^*(i),i+3)$. Denote $\Rbb^{\rho^*}:=\Rbb^g_H$ where $g(i):=\rho^*(i+1)$ and $H(i):=2^{i}$, and put $\Dbf^{\rho^*}:=\Dbf(H,g)$.

\begin{lemma}[cf. {\cite[Lemma 6.4]{BrM}}]\label{opRrho*}
\begin{enumerate}[(a)]
    \item $\id_\omega,\rho^*\in\Rbb^{\rho^*}$.
    \item $\Dbf^{\rho^*}\eqT\Dbf$.
    \item If $x,y\in\Rbb^{\rho^*}$ then $x+y$, $x\cdot y$, $x^y$ are in $\Rbb^{\rho^*}$.
    \item For any $x\in\Rbb^{\rho^*}$, $\forall^\infty i<\omega(i\cdot\big|[x(i-1)]^{\leq i}\big|^i\leq\rho^*(i))$.
\end{enumerate}
\end{lemma}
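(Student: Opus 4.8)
\textbf{Proof plan for Lemma \ref{opRrho*}.}

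The plan is to verify each item using the definition of $\Rbb^{\rho^*}=\Rbb^g_H$ with $H(i)=2^i$ and $g(i)=\rho^*(i+1)$, together with the recursion $\rho^*(i+1)=\sigma^*(\rho^*(i),i+3)$ and the definition $\sigma^*(m,n+1)=m^{\sigma^*(m,n)}$. First I would unwind what membership in $\Rbb^{\rho^*}$ means: $x\in\Rbb^{\rho^*}$ iff $x\in\omega^\omega$ and for every $k<\omega$ we have $H^{(k)}\circ x\leq^* g$, where $H^{(0)}=\id_\omega$ and $H^{(k+1)}=2^{H^{(k)}}$ (a tower of exponentials of height $k$). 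So the key is controlling how iterated exponentiation of $x$ compares to $\rho^*(\cdot+1)$. For (a), checking $\id_\omega\in\Rbb^{\rho^*}$ amounts to showing that for each fixed $k$, the $k$-fold exponential tower $H^{(k)}(i)$ is eventually $\leq\rho^*(i+1)$; this should follow because $\rho^*(i+1)=\sigma^*(\rho^*(i),i+3)$ grows like a tower of height $i+3$ with base $\rho^*(i)\geq 2$, which for $i$ large dominates any fixed-height tower of height $k$ evaluated at $i$. For $\rho^*\in\Rbb^{\rho^*}$, I would similarly argue that $H^{(k)}(\rho^*(i))$, a height-$k$ tower over $\rho^*(i)$, is eventually dominated by $\rho^*(i+1)=\sigma^*(\rho^*(i),i+3)$, a height-$(i+3)$ tower over the \emph{same} base $\rho^*(i)$; for $i\geq k$ the latter wins by comparing tower heights. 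This is really the crux of the whole lemma and I expect it to be the main obstacle: making the ``tall tower beats short tower'' comparison precise and uniform, being careful that the base $\rho^*(i)$ is itself growing.

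For (b), I would simply invoke Lemma \ref{genbounding} (i.e. $\Dbf(H,g)\eqT\Dbf$ whenever $\Rbb^g_H\neq\emptyset$), which applies because $\Dbf^{\rho^*}=\Dbf(H,g)$ and $\Rbb^{\rho^*}\neq\emptyset$ by (a). For (c), closure under $+$, $\cdot$, and exponentiation, the point is that if $x,y\in\Rbb^{\rho^*}$ then for each $k$, $H^{(k)}(x(i)+y(i))\leq H^{(k)}(2\max\{x(i),y(i)\})\leq H^{(k+c)}(\max\{x(i),y(i)\})$ for a suitable constant $c$ absorbing the factor $2$ into one more exponentiation, and the latter is eventually $\leq g(i)$ since $\max\{x,y\}\in\Rbb^{\rho^*}$ (indeed $\Rbb^{\rho^*}$ is closed under pointwise max, as $H^{(k)}$ is non-decreasing). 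The same idea handles $x\cdot y\leq (\max\{x,y\})^2\leq 2^{2\max\{x,y\}}$ and $x^y\leq (\max\{x,y\})^{\max\{x,y\}}\leq 2^{(\max\{x,y\})^2}$: in each case one bounds the operation by $H^{(j)}$ of $\max\{x,y\}$ for a fixed finite $j$, then uses that $H^{(k)}\circ H^{(j)}=H^{(k+j)}$ and $\max\{x,y\}\in\Rbb^{\rho^*}$ to conclude $H^{(k)}$ applied to the result is $\leq^* g$.

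Finally, for (d), fix $x\in\Rbb^{\rho^*}$. I would bound $\big|[x(i-1)]^{\leq i}\big|\leq (x(i-1)+1)^i\leq x(i-1)^{2i}$ for large $i$ (using $x(i-1)\geq 1$ eventually, or adjusting), hence $i\cdot\big|[x(i-1)]^{\leq i}\big|^i\leq i\cdot x(i-1)^{2i^2}\leq x(i-1)^{2i^2+i}\leq 2^{(2i^2+i)\log_2 x(i-1)}\leq 2^{x(i-1)^3}$ for $i$ large (since $i$ is eventually small compared to $x(i-1)$, because $x\geq^*\id_\omega$ fails in general — here I would instead note $x$ need not dominate $\id_\omega$, so I'd more carefully write the bound in terms of $H^{(j)}(\max\{\id_\omega,x\}(i))$ for a fixed $j$). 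The cleaner route: $i\cdot\big|[x(i-1)]^{\leq i}\big|^i$ is bounded by $H^{(j)}(w(i))$ where $w:=\max\{\id_\omega+2,x\}\in\Rbb^{\rho^*}$ (closure under max and $+2$ from (a),(c)) and $j$ is an absolute finite number; then, reindexing by $i\mapsto i-1$ and using $w\in\Rbb^{\rho^*}$, we get $H^{(j)}\circ w\leq^* g$, and since $g(i-1)=\rho^*(i)\leq\rho^*(i)$ this gives the desired $\forall^\infty i$ statement. The verification that the combinatorial quantity $i\cdot\big|[x(i-1)]^{\leq i}\big|^i$ sits under a fixed finite iterate of $H$ applied to $w(i)$ is routine arithmetic once $w$ is chosen to dominate both $i$ and $x(i-1)$.
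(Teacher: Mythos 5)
Your proof is correct. The paper itself gives no proof of this lemma --- it is quoted from \cite[Lemma 6.4]{BrM} --- so there is no argument in the text to compare against; your verification (the tower-height comparisons $H^{(k)}(i)\leq\sigma^*(2,i+k)\leq\sigma^*(\rho^*(i),i+3)$ and $H^{(k)}(\rho^*(i))\leq\sigma^*(\rho^*(i),k+1)$ for (a), absorbing each arithmetic operation into finitely many extra iterates of $H$ applied to $\max\{x,y\}$ for (c), and bounding the counting term in (d) by $H^{(j)}(w(i-1))$ with $w=\max\{\id_\omega+2,x\}\in\Rbb^{\rho^*}$ so that $H^{(j)}\circ w\leq^* g$ gives the bound by $g(i-1)=\rho^*(i)$) is exactly the standard argument one would give.
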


Now, we are ready to prove the main results of this paper.

\begin{theorem}\label{main}
Let $\mu\leq\nu\leq\kappa$ be uncountable regular cardinals and let $\lambda\geq\kappa$ be a cardinal such that $\lambda^{<\mu}=\lambda$. If
\begin{enumerate}[(I)]
    \item $\pi$ is an ordinal with $|\pi|\leq\lambda$,
    \item $\la\theta_\zeta:\zeta<\pi\ra$ is a non-decreasing sequence of regular cardinals in $[\mu,\nu]$,
    \item $\lambda^{<\theta_\zeta}=\lambda$ for all $\zeta<\pi$, and
    \item $\zeta^*\leq\pi$,
\end{enumerate} 
then there is a ccc poset forcing
\begin{enumerate}[(A)]
\item $\mathrm{MA}_{<\mu}$,  $\add(\Ncal)=\add(\Ical_{f'})=\cov(\Ical_{\id_\omega})=\mu$ and $\non(\Ical_{\id_\omega})=\cof(\Ical_{f'})=\cfrak=\lambda$ for all increasing $f'\in\omega^\omega$, and
\item there are sets $\{c_\zeta:\zeta<\pi\}$, $\{h_\zeta:\zeta<\zeta^*\}$ and $\{f_\zeta:\zeta^*\leq\zeta<\pi\}$ of increasing functions in $\omega^\omega$ such that
\begin{enumerate}[({B}1)]
   \item $\blc_{c_\zeta,h_\zeta}=\theta_\zeta$ for all $\zeta<\zeta^*$,
   \item $\balc_{c_\zeta,1}=\cov(\Ical_{f_\zeta})=\balc_{c_\zeta,H}=\theta_\zeta$ for all $\zeta\in[\zeta^*,\pi)$ where $H:=(\rho^*)^{\id_\omega}$, and
   \item there is an increasing function $f_\pi$ such that $\cov(\Ical_f)=\supcov=\add(\Mcal)=\cof(\Mcal)=\nu$ and $\minnon=\non(\Ical_f)=\kappa$ for any increasing $f\geq^*f_\pi$.
\end{enumerate}
\end{enumerate}
\end{theorem}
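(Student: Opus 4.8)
The plan is to build a standard matrix iteration $\mbf$ in the sense of Definition \ref{Defstandmat} and to read all the prescribed values off its structure, using the ZFC results of Section \ref{SecZFC} to translate the Yorioka invariants into localization and anti--localization cardinals, and the preservation theory of Section \ref{SecPreservation} to control the latter. The vertical height $\gamma^{\mbf}$ is chosen as a suitable ordinal assembled from $\mu,\nu,\kappa$ (so that initial segments of the rows realize, as cofinalities, each $\theta_\zeta$ together with $\mu$, $\nu$ and $\kappa$), each row--forcing $\Pbb_{\alpha,1}$ is ccc and forces $\mathrm{MA}_{<\mu}$; the horizontal length $\delta^{\mbf}$ is a $\lambda$--long finite support iteration split into blocks: for $\zeta<\zeta^{*}$ a block iterating (with bookkeeping) the localization poset $\LOCbb^{h_\zeta}_{c_\zeta}$ of Definition \ref{DefLocposet}; for $\zeta\in[\zeta^{*},\pi)$ a block iterating $\Ebb_{c_\zeta}$ and $\Ebb^{H}_{c_\zeta}$ of Definition \ref{DefEb} (with $H=(\rho^{*})^{\id_\omega}$ as in Lemma \ref{opRrho*}); a block for $\Mcal$ iterating Cohen $\Cbb$, Hechler $\Dbb$ and the eventually different real forcing $\Ebb$; a block of Cohen reals (in the presentation $\Cbb^{d}$ of Section \ref{SecPreservation}) to handle $\Ncal$ and the uniformities $\non(\Ical_{\id_\omega})$; and a cofinal block of all ccc posets of size $<\mu$ yielding $\mathrm{MA}_{<\mu}$ (legitimate since $\lambda^{<\mu}=\lambda$). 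The parameters $c_\zeta,h_\zeta$ (and then $f_\zeta$, $f_\pi$) are chosen increasing with fast, mutually nested growth rates, using Lemma \ref{opRrho*} and $\Rbb^{\rho^{*}}$, so as to satisfy simultaneously the hypotheses of Theorem \ref{2.4.1} ($2^{f_\zeta}\ll c_\zeta$ and $h_\zeta\leq^{*}\id_\omega^{l^{*}}$), of Lemmas \ref{2.7} and \ref{2.8} (tying $\cov(\Ical_{f_\zeta})$ between $\balc_{c_\zeta,1}$ and $\balc_{c_\zeta,H}$), and of the linkedness Lemmas \ref{genlink}, \ref{linked}, \ref{Loclinked} and \ref{3.20}.

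For the upper bounds I would exploit that $\gamma^{\mbf}$ has uncountable cofinality and $\Pbb_{\gamma^{\mbf},1}$ is a direct limit, so by Lemma \ref{4.3} and Theorem \ref{4.4} the Cohen reals added at the limit rows of an initial sub--matrix of cofinality $\theta$ form a strongly $\theta$--$\Rcal$--unbounded family for every gPrs $\Rcal$ whose remaining iterands are $\Rcal$--good, and this unboundedness survives into $V_{\gamma^{\mbf},\delta^{\mbf}}$ by Lemma \ref{3.8} and Corollary \ref{3.26}. Applying this with $\Rcal=\Lc^{*}(c_\zeta,h_\zeta)$ for $\zeta<\zeta^{*}$, with $\Rcal=\aLc^{*}(c_\zeta,1)$ and $\aLc^{*}(c_\zeta,\rho^{*})$ for $\zeta\in[\zeta^{*},\pi)$, with $\Rcal=\Ed(\omega)$ for $\Mcal$, and with $\Rcal=\Lc(\omega,\id_\omega)$ for $\Ncal$, and then reading off the relevant inequalities via $\blc_{c_\zeta,h_\zeta}\leq\bfrak(\Lc^{*}(c_\zeta,h_\zeta))$ and $\balc_{c_\zeta,H}\leq\bfrak(\aLc^{*}(c_\zeta,\rho^{*}))$ (Lemma \ref{Slmcard}), $\cov(\Ical_{f_\zeta})\leq\balc_{c_\zeta,H}$ (Lemma \ref{2.8}), $\add(\Ncal)\leq\add(\Ical_{f'})\leq\add(\Ical_{\id_\omega})\leq\cov(\Ical_{\id_\omega})$ (Corollary \ref{YorioaddN} and Theorem \ref{2.6}), and $\supcov=\supaLc$, $\minnon=\minaLc$ (Theorem \ref{2.9}), one gets all the $``\leq''$ parts of (A)--(B); dually, since no amoeba for measure is iterated, Lemma \ref{bLoc(b,h)addN} gives $\cof(\Ncal)=\max\{\dfrak,\supLc\}=\lambda$, whence $\cof(\Ical_{f'})\leq\lambda$ for all increasing $f'$.

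For the lower bounds I would use that the ``matching'' poset is iterated cofinally with bookkeeping inside its own block: iterating $\LOCbb^{h_\zeta}_{c_\zeta}$ catches and localizes every family of $<\theta_\zeta$ reals in $\prod c_\zeta$, so $\blc_{c_\zeta,h_\zeta}\geq\theta_\zeta$; iterating $\Ebb_{c_\zeta}$ (resp.\ $\Ebb^{H}_{c_\zeta}$) gives $\balc_{c_\zeta,1}\geq\theta_\zeta$ (resp.\ $\balc_{c_\zeta,H}\geq\theta_\zeta$), hence $\cov(\Ical_{f_\zeta})\geq\balc_{c_\zeta,1}=\theta_\zeta$ by Lemma \ref{2.7}; $\mathrm{MA}_{<\mu}$ forces $\mu$ below $\add(\Ncal)$, $\bfrak$, $\cov(\Mcal)$ and $\blc_{c_\zeta,h_\zeta}$, so combining with the upper bounds $\add(\Ncal)=\add(\Ical_{f'})=\cov(\Ical_{\id_\omega})=\mu$; the $\Mcal$--block pushes $\cov(\Mcal)$ and $\non(\Mcal)$ up to $\nu$, so the ZFC equalities $\add(\Mcal)=\min\{\bfrak,\cov(\Mcal)\}$ and $\cof(\Mcal)=\max\{\dfrak,\non(\Mcal)\}$ yield $\add(\Mcal)=\cof(\Mcal)=\supcov=\nu$ and $\cov(\Ical_f)=\nu$ for all increasing $f\geq^{*}f_\pi$; the Cohen block makes $\non(\Ncal)$, and (for the suitably slow exponent $\id_\omega$) $\non(\Ical_{\id_\omega})$, equal to $\cfrak=\lambda$, so that by the ZFC inequality $\non(\Ical)\leq\cof(\Ical)$ (valid for any ideal containing singletons) one obtains $\non(\Ical_{\id_\omega})=\cof(\Ical_{f'})=\cfrak=\lambda$; and $\minnon=\kappa$ follows from a family of cofinally--many eventually different reals along rows of cofinality $\kappa$ together with $\non(\Ical_f)\leq\kappa$ for $f\geq^{*}f_\pi$ read off from Lemma \ref{2.8} and Theorem \ref{2.9}.

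The main obstacle is the bookkeeping: to keep all the prescribed values one needs every Suslin iterand to be $\theta$--$\Rcal$--good for \emph{every} relational system $\Rcal$ attached to the other blocks. For the $\Lc^{*}$-- and $\aLc^{*}$--systems this is exactly why the growth rates of the $c_\zeta$ must be mutually nested and the blocks ordered accordingly: each $\Ebb^{(H)}_{c_\zeta}$ is $((\id_\omega h_\zeta)^{\id_\omega},\pi)$--linked (Corollary \ref{linked}) and each $\LOCbb^{h_\zeta}_{c_\zeta}$ is $(\rho,\pi)$--linked (Lemma \ref{Loclinked}) with linkedness parameters that, via Lemma \ref{3.20}, make them $2$--good (hence $\theta$--good) for the $\Lc^{*}$-- and $\aLc^{*}$--systems of the other blocks, while the $\sigma$--centered iterands (Hechler, and the small $\mathrm{MA}$--posets, which are moreover of size $<\mu$, cf.\ Lemma \ref{3.10}) are automatically $\theta$--good; one must arrange the function $\Delta^{\mbf}$, the partition $\langle S^{\mbf},T^{\mbf}\rangle$ and the cofinalities of the relevant row--indices and block lengths so that all these constraints are jointly consistent, that exactly $\theta_\zeta$, $\mu$, $\nu$, $\kappa$ are produced, and that $\cfrak=\lambda$ is preserved. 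The preservation of the covering numbers $\cov(\Ical_{f_\zeta})$ through the iteration is precisely the property $\theta$--$\Slm(c_\zeta,\rho^{*})$--good of Example \ref{3.18}, which is the reason the preservation theory of Section \ref{SecPreservation} was developed for generalized Polish relational systems; verifying that everything fits together, and that the vertical Cohen reals are genuinely $\Rcal$--unbounded for the intended systems and $\Rcal$--bounded for the rest, is the crux of the proof.
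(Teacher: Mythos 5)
Your overall architecture is the right one and matches the paper's: a standard matrix iteration of height $\kappa$ and length (essentially) $\lambda\kappa\nu$, with restricted $\LOCbb^{h_\zeta}_{c_\zeta}$ and $\Ebb_{c_\zeta}$ handled by bookkeeping, Hechler and a diagonally restricted $\Ebb_{c_\pi}$ for $\bfrak,\dfrak,\supcov,\minnon$, small ccc posets for $\mathrm{MA}_{<\mu}$, and the gPrs preservation theory ($\Lc^*$ and $\Slm$) plus Theorems \ref{3.13} and \ref{4.4} for the upper bounds. However, there are two genuine gaps. First, the parameters: you propose to choose $\la c_\zeta,h_\zeta:\zeta<\pi\ra$ in advance ``with mutually nested growth rates''. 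Since $|\pi|$ may be as large as $\lambda$, no such $<^*$-nested sequence exists in the ground model (once the length exceeds $\bfrak^V$ an initial segment is already unbounded and cannot be dominated further), and mutual domination is not a convenience but the engine of the whole goodness verification: one needs, e.g., $h_{\zeta'}>^* c_\zeta^{h_\zeta^{\id_\omega}}\id_\omega$ and $c_{\zeta'}>^*c_\zeta^{H}\id_\omega$ for all $\zeta<\zeta'$ in order that the iterand attached to $\zeta'$ be $(\rho,\pi)$-linked with parameters making it $2$-good for $\Lc^*(c_\zeta,h_\zeta)$ and $\Slm(c_\zeta,\rho^*)$. The paper therefore prepends a finite support iteration $\Dbb_{\pi+1}$ of Hechler forcing and takes each $h_\zeta$ (resp.\ $c_\zeta$) to be a dominating real over $\Rbb^{\rho^*}\cap V^{\Dbb_\zeta}$ (resp.\ over $V^{\Dbb_\zeta}$); this step cannot be omitted.

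Second, your mechanism for part (A) does not work as stated. Adding many Cohen reals makes $\non(\Ncal)$ large, but $\Ical_{\id_\omega}\subseteq\Ncal$ only gives $\non(\Ical_{\id_\omega})\leq\non(\Ncal)$, the wrong direction; likewise the chain $\add(\Ncal)\leq\add(\Ical_{f'})\leq\cov(\Ical_{\id_\omega})$ bounds $\cov(\Ical_{\id_\omega})$ from below, not above. What is needed is a fixed ground-model anchor: the paper sets $b_{-1}:=\max\{H+1,2^{g_{-1}\circ(H^+-1)}\}$ with $g_{-1}=\id_\omega^{\id_\omega}$, so that Lemma \ref{2.8} gives $\cov(\Ical_{\id_\omega})\leq\balc_{b_{-1},H}$ and $\dalc_{b_{-1},H}\leq\non(\Ical_{\id_\omega})$, and then verifies that \emph{every} iterand of the matrix is $\mu$-$\Slm(b_{-1},\rho^*)$-good (using that all the generated parameters dominate the ground model), whence Theorem \ref{3.13} forces $\balc_{b_{-1},H}\leq\mu$ and $\lambda\leq\dalc_{b_{-1},H}$; this is what pins $\cov(\Ical_{\id_\omega})=\mu$ and $\non(\Ical_{\id_\omega})=\lambda$ (and, via Corollary \ref{YorioaddN} and Theorem \ref{2.6}, all of (A)). Relatedly, your appeal to $\Lc(\omega,\id_\omega)$-goodness for $\Ncal$ is neither available (the posets $\LOCbb^{h_\zeta}_{c_\zeta}$ are only good for carefully chosen $\Lc(\omega,\Hcal)$, cf.\ Lemma \ref{linkedpresadd(N)}) nor needed here. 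Finally, a presentational but substantive point: the ``blocks'' must not be consecutive segments. The lower bounds $\blc_{c_\zeta,h_\zeta}\geq\theta_\zeta$ and $\balc_{c_\zeta,1}\geq\theta_\zeta$ require that every family of size $<\theta_\zeta$ appearing anywhere in the final model be caught, so the full bookkeeping (over all rows $\alpha<\kappa$ and all indices $\zeta$) must be repeated cofinally in the $\lambda\kappa\nu$-long iteration, with Lemma \ref{4.3} guaranteeing that each such family shows up in some $V_{\alpha,\lambda\rho}$.
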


We are allowed to use $\pi=0$ in this theorem, in which case we have Theorem \ref{mainA} (see the Introduction) and $f_\pi$ could be found in the ground model. When $\pi>0$, we additionally obtain many values for cardinals of the type $\blc_{b,h}$ and $\balc_{b,h}$, even allowing at most $\lambda$-many repetitions for each value (as the continuum is forced to be $\lambda$, no more that $\lambda$-many repeated values are allowed).

\begin{proof}
Fix a bijection $g=(g_0,g_1,g_2):\lambda\to\kappa\times(\pi+1)\times\lambda$, and consider $t:\kappa\nu\to\kappa$ such that $t(\kappa\delta+\alpha)=\alpha$ for all $\delta<\nu$ and $\alpha<\kappa$. The ccc poset required is of the form $\Dbb_{\pi+1}\ast\dot{\Pbb}$ where $\Dbb_{\pi+1}$ is the FS iteration of length $\pi+1$ of Hechler forcing $\Dbb$ and $\dot{\Pbb}$ is a $\Dbb_{\pi+1}$-name of a ccc poset constructed by a matrix iteration as defined in step 2 below. First, in the ground model, fix $g_{-1}:=\id_\omega^{\id_\omega}$ and $b_{-1}:=\max\{H+1,2^{g_{-1}\circ(H^+-1)}\}$. Note that $H,g_{-1},b_{-1}\in\Rbb^{\rho^*}$ by Lemma \ref{opRrho*} and, by Lemma \ref{2.8}, $\cov(\Ical_{\id_\omega})\leq\balc_{b_{-1},H}$ and $\dalc_{b_{-1},H}\leq\non(\Ical_{\id_\omega})$ (this will be used to show (A)).

\emph{Step 1.} For each $\zeta<\zeta^*$, let $h_\zeta\in\mathbb{R}^{\rho^*}\cap V^{\Dbb_{\zeta+1}}$ be an increasing dominating real over $\mathbb{R}^{\rho^*}\cap V^{\Dbb_{\zeta}}$ (recall that $\Dbf\eqT\Dbf^{\rho^*}$). In $V^{\Dbb_{\zeta+1}}$, choose $c_\zeta\in\Rbb^{\rho^*}$ such that $c_\zeta>h_\zeta^{\id_\omega}$ (this guarantees that $\Lc^*(c_\zeta,h_\zeta)$ is a gPrs).

For each $\zeta\in[\zeta^*,\pi]$, let $c_\zeta \in \omega^\omega\cap V^{\Dbb_{\zeta+1}}$ be an increasing dominating real over $V^{\Dbb_{\zeta}}$, and define $f_\zeta, g_\zeta,b_\zeta\in\omega^\omega\cap V^{\Dbb_{\zeta+1}}$, all increasing, such that $f_{\zeta}\geq (\log c_\zeta)^{+}$, $g_\zeta\gg f_\zeta$ and $b_\zeta\geq^* 2^{g_\zeta\circ(H^+-1)}$. Note that, By Lemmas \ref{2.7} and \ref{2.8}, $\balc_{c_\zeta,1}\leq\cov(\Ical_{f_\zeta})\leq\balc_{b_\zeta,H}$ and $\non(\Ical_{f_\zeta})\leq\dalc_{c_\zeta,1}$.

\emph{Step 2.} Work in $V_{0,0}:=V^{\Dbb_{\pi+1}}$. According to Definition \ref{Defstandmat}, construct a standard matrix iteration $\mathbf{m}$ that satisfies (i)-(viii) below.
\begin{enumerate}[(i)]
   \item $\gamma^{\mathbf{m}}=\kappa$ and $\delta^{\mathbf{m}}=\lambda\kappa\nu$ (as a product of ordinal numbers).

   \item $\Pbb_{\alpha,1}=\Cbb_\alpha$ for each $\alpha\leq\kappa$.

   \item $S=S^\mathbf{m}=\{\lambda\rho:0<\rho<\kappa\nu\}$ and $T=T^\mbf=[1,\delta^\mbf)\smallsetminus S$,

   \item $\Sbb_{\xi}=\Dbb$ for all $\xi\in S$.

   \item If $\xi=\lambda\rho+1$ for some $\rho<\kappa\nu$, put $\Delta^\mbf(\xi)=t(\rho)+1$ and let $\dot{\Tbb}_{\xi}$ be a $\Pbb_{t(\rho)+1,\xi}$-name for $(\Ebb_{c_\pi})^{V_{t(\rho)+1,\xi}}=\Ebb_{c_\pi}(\Scal(c_\pi,1)\cap V_{t(\rho)+1,\xi})$.
\end{enumerate}

For each $\alpha<\kappa$ and $\rho<\kappa\nu$,
\begin{enumerate}[(1)]
    \item let $\langle\dot{\Qbb}_{\alpha,\gamma}^{\rho}:\gamma<   \lambda\ra$ be an enumeration of \emph{all} the \emph{nice} $\Pbb_{\alpha,\lambda\rho}$-names for \emph{all} the posets which underlining set is a subset of $\mu$ of size $<\mu$ and $\Vdash_{\Pbb_{\kappa,\lambda\rho}}$``$\dot{\Qbb}_{\alpha,\gamma}^{\rho}$ is ccc" (possible because $|\Pbb_{\kappa,\lambda\rho}|\leq\lambda=\lambda^{<\mu}$) and

    \item for all $\zeta<\pi$, let $\langle \dot{F}_{\alpha,\zeta,\gamma}^{\rho}:\gamma<\lambda\rangle$ be an enumeration of \emph{all} the \emph{nice} $\Pbb_{\alpha,\lambda\rho}$-names for \emph{all} subsets of $\prod c_\zeta$  of size $<\theta_\zeta$.
\end{enumerate}
If $\xi=\lambda\rho+2+\varepsilon$ for some $\rho<\kappa\nu$ and $\varepsilon<\lambda$, put $\Delta^\mbf(\xi)=g_0(\varepsilon)+1$,
\begin{enumerate}[(i)]
    \setcounter{enumi}{5}
    \item whenever $g_1(\varepsilon)=\pi$ put $\dot{\Tbb}_{\xi}=\dot{\Qbb}_{g_0(\varepsilon),g_2(\varepsilon)}^{\rho}$,

    \item whenever $g_1(\varepsilon)<\zeta^*$ put $\dot{\Tbb}_{\xi}=\LOCbb_{c_{g_1(\varepsilon)}}^{h_{g_1(\varepsilon)}}(\dot{F}_{g(\varepsilon)}^{\rho})$, and

    \item whenever $\zeta^*\leq g_1(\varepsilon)<\pi$ put $\dot{\Tbb}_{\xi}=\Ebb_{c_{g_1(\varepsilon)}}(\dot{F}_{g(\varepsilon)}^{\rho})$.
\end{enumerate}

Put $\Pbb:=\Pbb_{\kappa,\lambda\kappa\nu}. $We prove that $V_{\kappa,\lambda\kappa\nu}$ satisfies the statements of this theorem.

\noindent\textbf{(A)} We first show that, for each $0\xi<\lambda\kappa\nu$, $\Pbb_{\kappa,\xi}$ forces that $\Qnm_{\kappa,\xi}$ is $\mu$-$\Slm(b_{-1},\rho^*)$-good. The case $\xi=0$ follows by (P4) ($\Qnm_{\kappa,0}=\Cbb_\kappa$ is a FS iteration of countable posets); when $\xi=\lambda\rho$ for some $\rho<\kappa\nu$, it is clear by (P4); when $\xi=\lambda\rho+2+\varepsilon$ for some $-1\leq\varepsilon<\lambda$, we split into three subcases: when $g_1(\varepsilon)=\pi$ it is clear by (P4); when $g_1(\varepsilon)<\zeta^*$, as $h_\zeta\geq^*\id_\omega b_{-1}^{H}$ (because $h_\zeta\in\Rbb^{\rho^*}$ is dominating), it follows by (P1), (P3) and Lemma \ref{opRrho*}; and when either $\varepsilon=-1$ or $\zeta^*\leq g_1(\varepsilon)<\pi$, as $c_\pi,c_\zeta>^*\id_\omega b_{-1}^{H}$, it follows by (P2) and (P3).

Therefore, by Theorem \ref{3.13} and (P6), $\Pbb$ forces $\balc_{b_{-1},H}\leq\bfrak(\Slm(b_{-1},\id_{\omega}^{\id_{\omega}}))\leq \mu$ and $\lambda\leq \dfrak(\Slm(b_{-1},\id_{\omega}^{\id_{\omega}}))\leq\dalc_{b_{-1},H}$. As $\cov(\Ical_{\id_\omega})\leq\balc_{b_{-1},H}$ and $\dalc_{b_{-1},H}\leq\non(\Ical_{\id_\omega})$, $\Pbb$ forces $\add(\Ncal)\leq\add(\Ical_{f'})\leq\cov(\Ical_{\id_\omega})\leq \mu$ and $\lambda\leq\non(\Ical_{\id_\omega})\leq\cof(\Ical_{f'})\leq\cof(\Ncal)$ for any increasing $f'\in \omega^\omega$. On the other hand, since $|\Pbb|=\lambda$, $\Pbb$ forces $\cfrak=\lambda$.

It remains to show that $\mathrm{MA}_{<\mu}$ holds in $V_{\kappa,\lambda\kappa\nu}$  (which implies $\add(\Ncal)\geq\mu$). 
Let $\Qbb$ be a ccc poset of size $<\mu$, wlog its underlining set is a subset of $\mu$, and let $\mathcal{D}$ be a family of size $<\mu$ of dense subsets of $\Qbb$. By Lemma \ref{4.3}, $\Qbb,\mathcal{D}\in V_{\alpha,\lambda\rho}$ for some $\alpha<\kappa$ and $\rho<\kappa\nu$. As $\Qbb$ is ccc in $V_{\kappa,\lambda\rho}$, there is some $\gamma<\lambda$ such that $\Qbb=\Qbb^\rho_{\alpha,\gamma}=\mathbb{T}_\xi$ where $\xi=\lambda\rho+2+\varepsilon$ and $\varepsilon=g^{-1}(\alpha,\pi,\gamma)$. It is clear that, in $V_{\alpha+1,\xi+1}$, there is a $\Qbb$-generic set over $V_{\alpha+1,\xi}$, so this generic set intersects all the members of $\mathcal{D}$.

\noindent\textbf{(B1)} Fix $\zeta<\zeta^*$. For the inequality $\bfrak_{c_{\zeta},h_\zeta}^{\Lc}\geq\theta_\zeta$: Let $F\subseteq\prod c_\zeta\cap V_{\kappa,\lambda\kappa\nu}$ be a family of size $<\theta_\zeta$. By Lemma \ref{4.3}, there are $\alpha<\kappa$ and $\rho<\kappa\nu$ such that $F\in V_{\alpha,\lambda\rho}$, so there is some $\gamma<\lambda$ such that $F=F_{\alpha, \zeta, \gamma}^{\rho}$. Hence, the generic slalom added by $\LOCbb_{c_{g_1(\varepsilon)}}^{h_{g_1(\varepsilon)}}(F_{\alpha, \zeta, \gamma}^{\rho})=\mathbb{T}_\xi$, where $\xi=\lambda\rho+2+\varepsilon$ and $\varepsilon=g^{-1}(\alpha,\zeta,\gamma)$, localizes all the reals in $F$.

For the converse, we first show that the iterands in the FS iteration $\langle\Pbb_{\kappa,\xi},\Qnm_{\kappa,\xi}:{\xi<\lambda\kappa\nu}\rangle$ are $\theta_\zeta$-$\Lc^{*}(c_\zeta,h_\zeta)$-good posets. Indeed, as $\Dbb$ is $\sigma$-centered, by (P4), $\Sbb_\xi$ is $\Lc^*(c_{\zeta},h_\zeta)$-good for each $\xi\in S$; as $c_{\pi}>^* c_{\zeta}^{h_{\zeta}^{\id_{\omega}}}\id_{\omega}$ (because $c_\pi$ is dominating over $V^{\Dbb_{\pi}}$ and $c_{\zeta}^{h_{\zeta}^{\id_{\omega}}}\id_{\omega}\in V^{\Dbb_{\zeta+1}}$), for each $\rho<\kappa\nu$, $\Pbb_{\kappa,\lambda\rho+1}$ forces that $\Tnm_{\lambda\rho+1}$ is $(h_\zeta,c_{\zeta}^{h_{\zeta}^{\id_{\omega}}})$-linked by (P2) so, by (P3), it is forced to be $2$-$\Lc^*(c_{\zeta},h_\zeta)$-good; now we analyse the iterands from (vi)-(viii): for (vi), as $\Tnm_\xi$ has size $<\mu\leq\theta_\zeta$, $\Tnm_{\xi}$ is $\theta_\zeta$-$\Lc^*(c_\zeta,h_\zeta)$-good by (P4); for (vii), when $g_1(\varepsilon)\leq \zeta$,  $\Tnm_{\xi}$ is $\theta_\zeta$-$\Lc^*(c_\zeta,h_\zeta)$-good by (P4) because it has size $<\theta_{\zeta}$, else, when $\zeta<g_1(\varepsilon)$, as $h_{g_1(\varepsilon)}$ is dominating over $\Rbb^{\rho^*}\cap V^{\Dbb_{\zeta+1}}$ and $c_{\zeta}^{h_{\zeta}^{\id_{\omega}}}\id_{\omega}\in \Rbb^{\rho^*}\cap V^{\Dbb_{\zeta+1}}$, $h_{g_1(\varepsilon)}>^*  c_{\zeta}^{h_{\zeta}^{\id_{\omega}}}\id_{\omega}$ so, by Lemma \ref{opRrho*}, (P1) and (P3),
$\dot{\Tbb}_{\xi}$ is $2$-$\Lc^*(c_\zeta,h_\zeta)$-good; finally, for (viii), as $g_1(\varepsilon)>\zeta$, $c_{g_1(\varepsilon)}$ is dominating over $V^{\Dbb_{\zeta+1}}$, so $c_{g_1(\varepsilon)}>^* c_{\zeta}^{h_{\zeta}^{\id_{\omega}}}\id_{\omega}$, and $\dot{\Tbb}_{\zeta}$ is $2$-$\Lc^*(c_\zeta,h_\zeta)$-good by (P2) and (P3).

Therefore, $\Pbb$ forces $\bfrak_{c_{\zeta},h_\zeta}^{\Lc}\leq \bfrak(\Lc^*(c_\zeta,h_\zeta))\leq \theta_\zeta$ by Theorem \ref{3.13} and (P5).

\noindent\textbf{(B2)} Fix $\zeta^*\leq \zeta<\pi$. It is enough to show $\theta_\zeta\leq\balc_{c_\zeta,1}$ and $\balc_{c_\zeta,H}\leq\theta_\zeta$. The former inequality is proved by a similar argument as for $\theta_\zeta\leq\bfrak_{c_{\zeta},h_\zeta}^{\Lc}$ in (B1). For the latter, we show that the posets we use in the $\kappa$-th FS iteration are $\theta_\zeta$-$\Slm(c_\zeta,\rho^*)$-good. For each $\xi\in S$, $\Sbb_\xi$ is $\Slm(c_\zeta,\rho^*)$-good by (P4) because $\Dbb$ is $\sigma$-centered; by (P2), as $c_{\pi}>^* c_{\zeta}^H\id_{\omega}$, 
for each $\rho<\kappa\nu$, $\Pbb_{\kappa,\lambda\rho+1}$ forces that $\Tnm_{\lambda\rho+1}$ is $(\rho^*,c_\zeta^H)$-linked (note that $\rho^*\geq^* \id_{\omega}^{\id_\omega}$) so, by (P3), it is forced to be $2$-$\Slm(c_\zeta,\rho^*)$-good; for $\rho<\kappa\nu$ and $\varepsilon<\lambda$ we have following cases: when $g_1(\varepsilon)=\pi$ or $g_1(\varepsilon)\leq\zeta$,
as $|\Tnm_\xi|< \theta_{\zeta}$, $\Tnm_{\xi}$ is $\theta_\zeta$-$\Slm(c_\zeta,\rho^*)$-good by (P4); 
when $\zeta<g_1(\varepsilon)$, $c_{g_1(\varepsilon)}>^* c_{\zeta}^{H}\id_{\omega}$ because $c_{g_1(\varepsilon)}$ is dominating over $V^{\Dbb_{\zeta+1}}$ and $c_{\zeta}^{H}\id_{\omega}\in V^{\Dbb_{\zeta+1}}$, so, by (P2), $\dot{\Tbb}_{\xi}$ is $(\rho^*,c_\zeta^H)$-linked and, by (P3), it is forced to be $2$-$\Slm(c_\zeta,\rho^*)$-good.

Therefore, by Theorem \ref{3.13} and (P6), $\Pbb$ forces that $\balc_{c_\zeta,H}\leq\bfrak(\Slm(c_\zeta,\rho^*))\leq \theta_{\zeta}$.

\noindent\textbf{(B3)} Note that $\Pbb$, as a FS iteration of length $\lambda\kappa\nu$, adds a $\nu$-scale and $\nu$-cofinally many Cohen reals. Therefore, it forces $\add(\Mcal)=\cof(\Mcal)=\nu$.

For $\rho<\kappa\nu$ denote by $r^{\rho}\in V_{t(\rho)+1,\lambda\rho+2}\cap\prod c_{\pi}$ the generic real added by $\dot{\Qbb}_{t(\rho)+1,\lambda\rho+1}=(\Ebb_{c_\pi})^{V_{t(\rho)+1,\lambda\rho+1}}$ over $V_{t(\rho)+1,\lambda\rho+1}$. This real is eventually different from all the members of $ V_{t(\rho)+1,\lambda\rho+1}\cap\prod{c}_{\pi}$. Hence $\nu\leq \balc_{c_{\pi},1}$ is a consequence of the following.

\begin{Claim}\label{claim} In $V_{\kappa,\lambda\kappa\nu}$, for any $F\subseteq \prod c_{\pi}$ of size $<\nu$, there is some  $r^{\rho}$ eventually different from all the members of $F$.
\end{Claim}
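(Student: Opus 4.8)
The plan is to exploit the bookkeeping encoded in the function $t$ in order to locate a single stage $\rho<\kappa\nu$ at which the generic eventually different real $r^\rho$ is added over an intermediate model that already contains all of $F$. Once $F$ sits inside $V_{t(\rho)+1,\lambda\rho+1}$, the conclusion is immediate from the basic property of $\dot{\Qbb}_{t(\rho)+1,\lambda\rho+1}=(\Ebb_{c_\pi})^{V_{t(\rho)+1,\lambda\rho+1}}=\Ebb_{c_\pi}(\Scal(c_\pi,1)\cap V_{t(\rho)+1,\lambda\rho+1})$ recorded after Definition \ref{DefEb}: its generic real $r^\rho$ satisfies $\varphi\not\ni^* r^\rho$ for every $\varphi\in\Scal(c_\pi,1)\cap V_{t(\rho)+1,\lambda\rho+1}$, which says exactly that $r^\rho$ is eventually different from every member of $\prod c_\pi\cap V_{t(\rho)+1,\lambda\rho+1}$.

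First I would reflect $F$ down to an intermediate model. Since $\Pbb=\Pbb_{\kappa,\lambda\kappa\nu}$ is a ccc FS iteration of length $\lambda\kappa\nu$ and $\cf(\lambda\kappa\nu)=\nu>\aleph_0$, every real of $V_{\kappa,\lambda\kappa\nu}$ already lies in some $V_{\kappa,\xi}$ with $\xi<\lambda\kappa\nu$ (a name for a real mentions conditions from countably many $\Pbb_{\kappa,\xi}$, whose supremum is below $\lambda\kappa\nu$); as $|F|<\nu=\cf(\lambda\kappa\nu)$, there is $\xi^*<\lambda\kappa\nu$ such that every member of $F$ lies in $V_{\kappa,\xi^*}$. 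The space $\prod c_\pi$ is coded in $V_{0,0}=V^{\Dbb_{\pi+1}}$, so Lemma \ref{4.3} applies (here $\kappa$ has uncountable cofinality and $\Pbb_{\kappa,1}=\Cbb_\kappa$ is the direct limit of the $\Pbb_{\alpha,1}$), giving $(\prod c_\pi)^{V_{\kappa,\xi^*}}=\bigcup_{\alpha<\kappa}(\prod c_\pi)^{V_{\alpha,\xi^*}}$; using again that $|F|<\nu\leq\kappa=\cf(\kappa)$, there is $\alpha^*<\kappa$ with $F\subseteq V_{\alpha^*,\xi^*}$.

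Next I would choose $\rho$. Write $\xi^*=\lambda\rho'+\varepsilon'$ with $\rho'<\kappa\nu$ and $\varepsilon'<\lambda$, and write $\rho'+1=\kappa\delta_0+\beta_0$ with $\delta_0<\nu$ and $\beta_0<\kappa$; put $\rho:=\kappa\delta_0+\max\{\beta_0,\alpha^*\}$, which is $<\kappa\nu$. Then $\rho\geq\rho'+1$, hence $\lambda\rho\geq\lambda(\rho'+1)=\lambda\rho'+\lambda>\xi^*$; and $t(\rho)=\max\{\beta_0,\alpha^*\}\geq\alpha^*$, hence $\Delta^\mbf(\lambda\rho+1)=t(\rho)+1>\alpha^*$. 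Therefore $F\subseteq V_{\alpha^*,\xi^*}\subseteq V_{t(\rho)+1,\lambda\rho+1}\cap\prod c_\pi$, and the previous paragraph's property of $r^\rho\in V_{t(\rho)+1,\lambda\rho+2}\subseteq V_{\kappa,\lambda\kappa\nu}$ yields that $r^\rho$ is eventually different from all members of $F$, as desired.

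I expect the only delicate point to be the combinatorics of the choice of $\rho$: one must find $\rho$ that is simultaneously far enough out in the $\xi$-direction (so $F$ has appeared by stage $\lambda\rho$) and has $t(\rho)$ above the row $\alpha^*$ in which $F$ appeared. The definition $t(\kappa\delta+\alpha)=\alpha$ is precisely what makes this compatible, since within a block $[\kappa\delta_0,\kappa\delta_0+\kappa)$ — which can be taken arbitrarily high in $\kappa\nu$ — the value $t(\rho)$ sweeps through all of $\kappa$; the remaining ingredients are the routine reflection argument above, relying on ccc-ness, Lemma \ref{4.3}, and the regularity of $\kappa$ and $\nu$.
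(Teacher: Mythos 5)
Your proof is correct and follows essentially the same route as the paper: reflect $F$ into some $V_{\alpha^*,\xi^*}$ using ccc-ness and Lemma \ref{4.3}, then use the definition of $t$ to pick $\rho$ in a sufficiently high block $[\kappa\delta_0,\kappa\delta_0+\kappa)$ with $t(\rho)\geq\alpha^*$ and $\lambda\rho>\xi^*$, so that $F\subseteq V_{t(\rho)+1,\lambda\rho+1}$ and the generic real $r^\rho$ of $\Ebb_{c_\pi}(\Scal(c_\pi,1)\cap V_{t(\rho)+1,\lambda\rho+1})$ does the job. The paper compresses the reflection step into one appeal to Lemma \ref{4.3} and chooses $\rho$ with $t(\rho)=\alpha^*$ exactly rather than via a maximum, but these are only cosmetic differences.
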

\begin{proof}
By Lemma \ref{4.3} there are $\alpha<\kappa$ and $\delta<\kappa\nu$ such that  $F\subseteq V_{\alpha,\lambda\delta}$. By the definition of $t$, find a $\rho\in[\delta,\kappa\nu)$ such that $t(\rho)=\alpha$. Clearly $F\subseteq V_{\alpha,\lambda\rho}$, so their members are all eventually different from $r^{\rho}$.
\end{proof}

On the other hand, $\{r_{}^{\rho}:\rho<\kappa\nu\}$ is a family of reals of size $\leq\kappa$ and, by Claim \ref{claim}, any member of $V_{\kappa,\lambda\kappa\nu}\cap\prod{c_{\pi}} $ is eventually different from some $r_{}^{\rho}$. Hence $\dalc_{c_\pi,1}\leq \kappa$.

Fix $f\in\omega^\omega$ increasing such that $f\geq f_\pi$. Then $\nu\leq\balc_{c_\pi,1}\leq\cov(\Ical_{f_\pi})\leq\cov(\Ical_f)$ and $\kappa\geq\dalc_{c_\pi,1}\geq\non(\Ical_{f_\pi})\geq\non(\Ical_f)$, in fact, $\balc_{c_\pi,1}=\cov(\Ical_f)=\nu$ because $\cov(\Ical_f)\leq\supcov\leq\non(\Mcal)=\nu$.

To finish the proof it remains to show that $\kappa\leq\minnon$. For any $b'\in\omega^\omega$, as $\Dbb$ is $\sigma$-centered and thus $\Slm(b',1)$-good by (P4), $\kappa\leq\dfrak(\Slm(b',1))\leq\dalc_{b',1}$ by Theorem \ref{4.4} and (P6). Therefore, by Theorem \ref{2.9}, $\kappa\leq \minnon=\min\{\dalc_{b',1}:b'\in\omega^\omega\}$.
\end{proof}

\begin{remark}\label{remnoHechler}
   If the matrix iteration construction of Theorem \ref{main} is modified so that $S^\mbf=\emptyset$ (i.e. no use of Hechler forcing) then the cardinal invariants associated with many Yorioka ideals can still be separated. Concretely, the final model still satisfies (A), (B1) and (B2), and also satisfies $\cov(\Ical_f)=\non(\Mcal)=\nu$ and $\cov(\Mcal)=\non(\Ical_f)=\kappa$ for all increasing $f\geq^*f_\pi$. However, the values of $\bfrak$ and $\dfrak$ are unknown because it is unclear whether the restricted versions of $\LOCbb_{c_\zeta,h_\zeta}$ and $\Ebb_{c_\zeta}$ ($\zeta\leq\pi$) used in the construction add dominating reals.
\end{remark}

\begin{remark}\label{rem3D}
   In the hypothesis of Theorem \ref{main} assume, additionally, that $\mu'$ is a regular cardinal and $\mu\leq\mu'\leq\nu$ and, instead of $\kappa$ being regular, just assume that $\kappa^{<\mu'}=\kappa$. As in \cite{mejiavert}, the forcing construction in Theorem \ref{main} can be modified so that the matrix iteration allows vertical support restrictions, that is, $\Pbb_{A,\xi}$ can be defined for all $A\subseteq\kappa$ and $\xi\leq\pi$. The final model of this construction still satisfies (A), (B1) and (B2), and also satisfies $\cov(\Ical_f)=\supcov=\mu'$, $\add(\Mcal)=\cof(\Mcal)=\nu$ and $\minnon=\non(\Ical_f)=\kappa$ (the latter not necessarily regular) for all increasing $f\geq^*f_\pi$. Though this is stronger than Theorem \ref{main}, we do not get to separate more of the cardinal invariants associated with Yorioka ideals.
\end{remark}

\begin{remark}\label{RemRoth}
   In the context of \cite{BrM}, Theorem \ref{main} could be modified so that, in (B1), it can be forced that $\blc_{c_\zeta,h_\zeta}=\bfrak(\Ical_\zeta)=\theta_\zeta$ for $\zeta<\zeta^*$ where $\la\Ical_\zeta:\zeta<\zeta^*\ra$ is some sequence of gradually fragmented ideals on $\omega$ and $\bfrak(\Ical_\zeta)$ denotes the \emph{Rothberger number} of $\Ical_\zeta$.
\end{remark}

The next result guarantees the consistency of $\add(\Ncal)<\add(\Ical_f)<\cof(\Ical_f)<\cof(\Ncal)$ for any fixed $f$.

\begin{theorem}\label{addN<addIf}
Let $\mu\leq\nu\leq\kappa$ be uncountable regular cardinals and let $\lambda\geq\kappa$ be a cardinal such that $\lambda^{<\mu}=\lambda$. If $f\in\omega^\omega$ is increasing then there is a ccc poset forcing that $\add(\Ncal)=\mu$, $\add(\Ical_f)=\bfrak=\non(\Mcal)=\nu$, $\cov(\Mcal)=\dfrak=\cof(\Ical_f)=\kappa$ and $\cof(\Ncal)=\cfrak=\lambda$.
\end{theorem}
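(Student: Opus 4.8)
The plan is to reduce the statement, via Theorems~\ref{2.4.1} and~\ref{2.5}, to a separation of Cichoń-diagram and localization cardinals, and then to realise that separation by a matrix iteration of the kind used for Theorem~\ref{main}. Fix, in the ground model, an increasing $b\in\omega^\omega$ with $2^f\ll b$ and put $h:=\id_\omega^2$. Then $h$ goes to infinity, $h\leq^*\id_\omega^{l^*}$ with $l^*=2$, and $h$ is superlinear, so by (P1) every restriction $\LOCbb^h_b(R)$ (for $R\subseteq\prod b$) is $(\rho,\pi)$-linked for a suitable $\rho\in\omega^\omega$ depending only on $b$ and some $\pi\in\omega^\omega$ going to infinity. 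By Theorem~\ref{2.4.1}, $\min\{\bfrak,\blc_{b,h}\}\leq\add(\Ical_f)$ and $\cof(\Ical_f)\leq\max\{\dfrak,\dlc_{b,h}\}$, while $\add(\Ical_f)\leq\bfrak$ and $\dfrak\leq\cof(\Ical_f)$ by Theorem~\ref{2.5}. So it suffices to construct a ccc poset forcing $\add(\Ncal)=\mu$, $\bfrak=\non(\Mcal)=\nu\leq\blc_{b,h}$, $\dlc_{b,h}\leq\dfrak=\cov(\Mcal)=\kappa$ and $\cof(\Ncal)=\cfrak=\lambda$; for then $\add(\Ical_f)=\nu$ and $\cof(\Ical_f)=\kappa$.

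The forcing is a standard matrix iteration $\mbf$ (Definition~\ref{Defstandmat}) of ccc posets, modelled on the one in the proof of Theorem~\ref{main} but arranged to realise the four distinct cardinals $\mu<\nu<\kappa<\lambda$; in particular its row/column lengths and supports are configured using the vertical-restriction (``$3$D'') technique of \cite{mejiavert} (cf. Remark~\ref{rem3D}), or a Brendle--Fischer/Mej\'ia-style matrix separating $\bfrak$ from $\dfrak$, so that the final model satisfies $\bfrak=\non(\Mcal)=\nu$, $\cov(\Mcal)=\dfrak=\kappa$ and $\cfrak=\lambda$. Into this skeleton we feed, through the usual cofinal bookkeeping: (i) all (nice names for) ccc posets of size $<\mu$, forcing $\mathrm{MA}_{<\mu}$, hence $\add(\Ncal)\geq\mu$, $\bfrak\geq\mu$ and $\cof(\Ncal)\leq\cfrak=\lambda$; (ii) along coordinates cofinal for the $\nu$-dimension, the restricted posets $\LOCbb^h_b(R)$ for all nice names $R$ for subsets of $\prod b$ of size $<\nu$, so that (Lemma~\ref{Locgeneric}, using $h\to\infty$) every such $R$ is eventually localized by a generic slalom in $\Scal(b,h)$, forcing $\blc_{b,h}\geq\nu$; (iii) along coordinates cofinal for the $\kappa$-dimension, the unrestricted poset $\LOCbb^h_b$, so that the $\kappa$-many slaloms it adds localize every real of $\prod b$ in the final model (each appears at a bounded stage, by ccc), forcing $\dlc_{b,h}\leq\kappa$; together with Hechler forcing to provide the scale and Cohen reals controlling $\bfrak,\dfrak,\cov(\Mcal),\non(\Mcal)$.

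The goodness bookkeeping pins $\add(\Ncal)=\mu$ and $\cof(\Ncal)=\lambda$. Since $\pi\to\infty$, each $\LOCbb^h_b(R)$ is, by Lemma~\ref{linkedpresadd(N)}, $\mu$-$\Lc(\omega,\Hcal)$-good for a countable family $\Hcal=\{g_n:n<\omega\}$ of functions going to infinity, fixed once and for all from the data $(\rho,\pi)$; Hechler forcing and the $\mathrm{MA}_{<\mu}$-posets (being $\sigma$-centered, respectively of size $<\mu$) are $\mu$-$\Lc(\omega,\Hcal)$-good by Examples~\ref{Exam} and~\ref{ExmPrs}(4). Hence, by Corollary~\ref{3.11} and Theorem~\ref{3.13} (with Theorem~\ref{1.6}), the whole iteration forces $\add(\Ncal)=\bfrak(\Lc(\omega,\Hcal))\leq\mu$ and $\cof(\Ncal)=\dfrak(\Lc(\omega,\Hcal))\geq|\delta^\mbf|=\lambda$, which combined with (i) gives $\add(\Ncal)=\mu$ and $\cof(\Ncal)=\cfrak=\lambda$. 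This is compatible with (ii)--(iii): the slaloms added by $\LOCbb^h_b$ live in $\Scal(b,h)$ and only localize the meagre set $\prod b\subseteq\omega^\omega$, so they inflate $\blc_{b,h}$ (indeed $\LOCbb^h_b(R)$ is \emph{not} $\Lc^*(b,h)$-good) without inflating $\add(\Ncal)$, and dually for $\dlc_{b,h}$ versus $\cof(\Ncal)$. The remaining values $\bfrak=\non(\Mcal)=\nu$ and $\cov(\Mcal)=\dfrak=\kappa$ follow from the matrix's dimensions together with the $\Ed(\omega)$- and $\Dbf$-goodness of every iterand, exactly as in the standard Cichoń-diagram analysis of matrix iterations (Theorems~\ref{4.3} and~\ref{4.4}).

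The main obstacle is this very simultaneity: keeping $\add(\Ncal)$ and $\cof(\Ncal)$ strictly on the ``thin'' side of $\blc_{b,h}$ and $\dlc_{b,h}$ while driving the latter to $\nu$ and $\kappa$ respectively. This is what forces the choice of $h$ to be polynomially bounded (so Theorem~\ref{2.4.1} applies) and yet superlinear (so that $\LOCbb^h_b$ is linked strongly enough, via (P1) and Lemma~\ref{linkedpresadd(N)}, to be $\Lc(\omega,\Hcal)$-good and hence harmless to $\add(\Ncal)$), and it is the point at which the argument genuinely departs from the proof of Theorem~\ref{main}. A secondary technical hurdle is realising $\bfrak=\nu<\kappa=\dfrak$ inside a finite-support matrix iteration---something one cannot read off a single-scale construction as in Theorem~\ref{main}---which is handled by the vertical-support configuration of \cite{mejiavert}.
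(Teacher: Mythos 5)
Your proposal is correct and follows essentially the same route as the paper's proof: reduce via Theorems \ref{2.4.1} and \ref{2.5} (plus Corollary \ref{2.11}) to pinning $\add(\Ncal)=\mu$, $\nu\leq\min\{\bfrak,\blc_{b,h}\}$, $\max\{\dfrak,\dlc_{b,h}\}\leq\kappa$ and $\cof(\Ncal)=\lambda$, and realise this by a standard matrix iteration feeding in restricted Hechler and restricted $\LOCbb^h_b$ over intermediate models, with $\Lc(\omega,\Gcal)$-goodness from Lemma \ref{linkedpresadd(N)} keeping $\add(\Ncal)$ at $\mu$ (the paper takes $h=\id_\omega$, but your $h=\id_\omega^2$ works equally well). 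The one point to correct is your closing claim that $\bfrak=\nu<\kappa=\dfrak$ requires the vertical-restriction machinery of \cite{mejiavert}: it does not --- the plain two-dimensional matrix, with $\Dbb^{V_{t(\rho)+1,\lambda\rho}}$ and $(\LOCbb^h_b)^{V_{t(\rho)+1,\lambda\rho+1}}$ placed via a surjection $t:\kappa\nu\to\kappa$ whose fibers are cofinal, already yields $\nu\leq\min\{\bfrak,\blc_{b,h}\}$ and $\max\{\dfrak,\dlc_{b,h}\}\leq\kappa$ exactly as in Claims \ref{claimb} and the subsequent claim of the paper, which is the alternative you yourself mention.
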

\begin{proof} Fix a function $b\in\omega^\omega$ such that $b\gg 2^f$, a bijection $g=(g_0,g_1):\lambda\to\kappa\times\lambda$, and fix $t:\kappa\nu\to\kappa$ such that $t(\kappa\delta+\alpha)=\alpha$ for $\delta<\nu$ and $\alpha<\kappa$. Put $h:=\id_\omega$.   According to Defintion \ref{Defstandmat}, construct a standard matrix iteration $\mathbf{m}$ such that:

\begin{enumerate}[(i)]
   \item $\gamma^{\mathbf{m}}=\kappa$ and $\delta^{\mathbf{m}}=\lambda\kappa\nu$,
   \item $\Pbb_{\alpha,1}=\Cbb_\alpha$ for each $\alpha\leq\kappa$,
   \item If $\xi=\lambda\rho>0$ for some $\rho<\kappa\nu$, put $\Delta^\mbf(\xi)=t(\rho)+1$ and let $\dot{\Tbb}_{\xi}$ be a $\Pbb_{t(\rho)+1,\xi}$-name for $\Dbb^{V_{t(\rho)+1,\xi}}$.
   \item If $\xi=\lambda\rho+1$ for some $\rho<\kappa\nu$, put $\Delta^\mbf(\xi)=t(\rho)+1$, and let $\dot{\Tbb}_{\xi}$ be a $\Pbb_{t(\rho)+1,\xi}$-name for $(\LOCbb_{b}^{h})^{V_{t(\rho)+1,\xi}}$.
\end{enumerate}

For each $\alpha<\kappa$ and $\rho<\kappa\nu$, let $\langle\dot{\Qbb}_{\alpha,\gamma}^{\rho}:\gamma<\lambda\ra$ be an enumeration of \emph{all} the \emph{nice} $\Pbb_{\alpha,\lambda\rho}$-names for \emph{all} the posets whose underlining set is a subset of $\mu$ of size $<\mu$ and $\Vdash_{\Pbb_{\kappa,\lambda\rho}}$``$\dot{\Qbb}_{\alpha,\gamma}^{\rho}$ is ccc" (possible because $|\Pbb_{\kappa,\lambda\rho}|\leq\lambda=\lambda^{<\mu}$).
\begin{enumerate}[(i)]
\setcounter{enumi}{4}
   \item If $\xi=\lambda\rho+2+\varepsilon$ for some $\rho<\kappa\nu$ and $\varepsilon<\lambda$, put $\Delta^\mbf(\xi)=g_0(\varepsilon)+1$ and  $\dot{\Tbb}_{\xi}=\dot{\Qbb}_{g(\varepsilon)}^{\rho}$.
\end{enumerate}

By (P1) we can find increasing $\pi, \rho \in \omega^\omega$ such that $\LOCbb_{b}^{h}(F)$ is $(\pi,\rho)$-linked for any $F\subseteq \prod b$. Therefore, by Lemma \ref{linkedpresadd(N)}, there is a $\leq^*$-increasing sequence $\mathcal{G}=\la g_n:n<\omega\ra$ such that $\LOCbb_{b}^{h}(F)$ is $\Lc(\omega, \mathcal{G})$-good for any $F\subseteq\prod b$. Also, $\Dbb$ is $\Lc(\omega, \mathcal{G})$-good (see Example \ref{Exam}) and $\dot{\Tbb}_{\xi}$ is $\mu$-$\Lc(\omega, \mathcal{G})$-good when $\xi=\lambda\rho+2+\varepsilon$ for some $\rho<\kappa\nu$ and $\varepsilon<\lambda$. Therefore, in $V_{\kappa,\lambda\kappa\mu}$, we have that $\add(\Ncal)\leq \mu$ and $\lambda\leq \cof(\Ncal)$ by Theorem \ref{3.13}. The converse inequalities are similar to the proof of (A) of Theorem \ref{main}.

We now show $\dfrak,\dfrak_{b,h}^{\Lc}\leq \kappa$ and $\bfrak, \bfrak_{b,h}^{\Lc}\geq \nu$. For each $\rho<\kappa\nu$ denote by $d^{\rho}\in V_{t(\rho)+1,\lambda\rho+1}\cap \omega^\omega$ the Hechler generic real added by $\Qnm_{t(\rho)+1,\lambda\rho}=\Qnm_{\kappa,\lambda\rho}=\Dbb^{V_{t(\rho)+1,\lambda\rho}}$ over $V_{t(\rho)+1,\lambda\rho}$, and by $\psi^{\rho}\in V_{t(\rho)+1,\lambda\rho+2}\cap\Scal(b,h)$ the generic slalom added by   $\Qnm_{t(\rho)+1,\lambda\rho+1}=\Qnm_{\kappa,\lambda\rho+1}=(\LOCbb_{b}^{h})^{V_{t(\rho)+1,\lambda\rho+1}}$ over $V_{t(\rho)+1,\lambda\rho+1}$.

\begin{Claim}\label{claimb}
In $V_{\kappa,\lambda\kappa\nu}$, each family of reals of size $<\nu$ is dominated by some $d^{\rho}$.
\end{Claim}
\begin{proof} Let $F$ be such a family. By Lemma \ref{4.3} there are $\alpha<\kappa$ and $\delta<\kappa\nu$ such that  $F\subseteq V_{\alpha,\lambda\delta}$. By the definition of $t$, find a $\rho\in[\delta,\kappa\nu)$ such that $t(\rho)=\alpha$. Clearly, $F\subseteq V_{\alpha,\xi}$ where $\xi=\lambda\rho$, so their members are dominated by $d^{\rho}$.
\end{proof}

As a direct consequence, $\nu\leq\bfrak$. On the other hand, $\{d_{}^{\rho}:\rho<\kappa\nu\}$ is a family of reals of size $\leq\kappa$ and, by Claim \ref{claimb}, any member of $V_{\kappa,\lambda\kappa\nu}\cap\omega^\omega $ is dominated by some $d_{}^{\rho}$. Hence $\dfrak\leq \kappa$. By a similar argument, we can prove:

\begin{Claim}
Each family of reals in $V_{\kappa,\lambda\kappa\nu}$ of size $<\nu$ is localized by some slalom $\psi^{\rho}$.
\end{Claim}

By Theorem \ref{2.4.1},  $\nu\leq\min\{\bfrak,\bfrak_{b,\id_{\omega}}^{\Lc}\}\leq \add(\Ical_f)$ and $\cof(\Ical_f)\leq \max\{\dfrak,\dfrak_{b,\id_\omega}^{\Lc}\}\leq\kappa$ (since $2^f\ll b$), so $\nu\leq \add(\Ical_f)$ and $\cof(\Ical_f)\leq \kappa$. On the other hand, as $\Pbb_{\kappa,\lambda\kappa\nu}$ is obtained by a FS iteration of cofinality $\nu$, $\Pbb_{\kappa,\lambda\kappa\nu}$ adds a $\nu$-$\Ed(\omega)$-unbounded family of Cohen reals of size $\nu$ by Lemma \ref{3.12}, so it forces $\non(\Mcal)=\bfrak(\Ed(\omega))\leq\nu$. Also, by Theorem \ref{4.4}, $\cov(\Mcal)=\dfrak(\Ed(\omega))\geq \kappa$. Hence, by Corollary \ref{2.11},  $\add(\Ical_f)\leq\cov(\Ical_f)\leq \non(\Mcal)\leq \nu$ and $\kappa\leq\cov(\Mcal)\leq \non(\Ical_f)\leq  \cof(\Ical_f)$.
\end{proof}

In the previous model, it is clear that $\minLc\leq\bfrak$ and $\dfrak\leq\supLc$, so $\add(\Ncal)=\minLc$ and $\cof(\Ncal)=\supLc$ by Theorem \ref{bLoc(b,h)addN}.

To finish this section, we show the consistency of $\bfrak<\minLc$ and $\dfrak<\supLc$. As the converse strict inequalities hold in the model of Theorem \ref{main}, each pair of cardinals are independent. In particular, the characterizations $\add(\Ncal)=\min\{\bfrak,\minLc\}$ and $\cof(\Ncal)=\max\{\dfrak,\supLc\}$ (see Theorem \ref{bLoc(b,h)addN}) are optimal. Note that $\bfrak\leq\minLc$ implies $\add(\Ncal)=\minadd=\add(\Ical_{\id_\omega})=\bfrak$, and $\supLc\leq\dfrak$ implies $\cof(\Ncal)=\supcof=\cof(\Ical_{\id_\omega})=\dfrak$.

\begin{theorem}\label{largeminLc}
   Let $\mu\leq\nu$ be uncountable regular cardinals and let $\lambda\geq\nu$ be a cardinal with $\lambda=\lambda^{<\mu}$. Then, there is a ccc poset that forces $\add(\Ncal)=\bfrak=\mu$, $\minLc=\supLc=\nu$ and $\dfrak=\cfrak$.
\end{theorem}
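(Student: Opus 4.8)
The plan is to isolate what genuinely must be forced. By Lemma \ref{bLoc(b,h)addN}, once the model satisfies $\bfrak=\mu$, $\minLc=\supLc=\nu$ and $\dfrak=\lambda$, the equalities $\add(\Ncal)=\min\{\bfrak,\minLc\}=\mu$ and $\cof(\Ncal)=\max\{\dfrak,\supLc\}=\lambda=\cfrak$ come for free. So I would aim to force: (1)~$\bfrak=\mu$ and $\dfrak=\cfrak=\lambda$; (2)~for every increasing $b\geq\id_\omega+1$ in $\omega^\omega$, every $F\subseteq\prod b$ of size $<\nu$ is localized by a single slalom of $\Scal(b,\id_\omega)$ (this gives $\blc_{b,\id_\omega}\geq\nu$, hence $\minLc\geq\nu$); (3)~for every such $b$, some family of $\nu$ slaloms of $\Scal(b,\id_\omega)$ dominates $\prod b$ (this gives $\dlc_{b,\id_\omega}\leq\nu$, hence $\supLc\leq\nu$); and (4)~a single $b_0$ carrying an $\Lc(b_0,\id_\omega)$‑unbounded family of size $\nu$ which no $<\nu$‑sized family of slaloms can dominate (this witnesses $\minLc\leq\nu\leq\supLc$).

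Concretely I would take $\Pbb:=\Cbb_\lambda\ast\dot{\mathbb{Q}}$, where $\dot{\mathbb{Q}}$ is a finite support iteration over $V^{\Cbb_\lambda}$ of length $\delta:=\lambda\cdot\nu$ (ordinal product, so $\cf(\delta)=\nu$ and $|\delta|=\lambda$), whose iterands, chosen along a bookkeeping of length $\delta$ (using $\lambda^{<\mu}=\lambda$ to enumerate nice names), alternate between two kinds. First, the posets $\LOCbb_b^{\id_\omega}$ (Definition \ref{DefLocposet}), which are $\sigma$‑linked and generically add a slalom in $\Scal(b,\id_\omega)$ localizing all ground‑model reals of $\prod b$ (Lemma \ref{Locgeneric}); these are arranged so that every increasing $b\geq\id_\omega+1$ appearing in an intermediate model is treated at least once inside each of the $\nu$ blocks $[\lambda\zeta,\lambda(\zeta+1))$ past the block where $b$ first appears. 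Second, all ccc posets of size $<\mu$, so as to force $\mathrm{MA}_{<\mu}$. Then $\Pbb$ is ccc of size $\lambda$, giving $\cfrak=\lambda$.

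For the verification I would use the preservation theory of Section \ref{SecPreservation}. Every iterand of $\dot{\mathbb{Q}}$ is $\mu$‑$\Dbf$‑good: $\LOCbb_b^{\id_\omega}$ is $\Dbf$‑good (Example \ref{ExmPrs}(3)), Cohen forcing is $\Dbf$‑good, and posets of size $<\mu$ are $\mu$‑$\Dbf$‑good (Lemma \ref{3.10}); hence $\dot{\mathbb{Q}}$ is $\mu$‑$\Dbf$‑good (Corollary \ref{3.11}). In $V^{\Cbb_\lambda}$ the first $\mu$ Cohen coordinates form a strongly $\mu$‑$\Dbf$‑unbounded family and all $\lambda$ of them form a $\lambda$‑$\Dbf$‑unbounded family; by Lemma \ref{3.8} both properties survive $\dot{\mathbb{Q}}$, so $\bfrak\leq\mu$ and $\dfrak\geq\lambda$ by Lemma \ref{bRdR}, and $\dfrak=\lambda$ since $\dfrak\leq\cfrak$. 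The $\mathrm{MA}$‑iterands force $\mathrm{MA}_{<\mu}$ (here $\cf(\delta)=\nu\geq\mu$ is used to confine $<\mu$ dense sets to one stage), so $\bfrak\geq\mu$, whence $\bfrak=\mu$. For the localization cardinals: a set $F\subseteq\prod b$ of size $<\nu$ lies in some $V_\xi$ with $\xi<\delta$ (as $|F|<\nu=\cf(\delta)$), so a later stage carries $\LOCbb_b^{\id_\omega}$ over a model containing $F$, whose generic slalom localizes $F$; this gives $\blc_{b,\id_\omega}\geq\nu$ for all $b\geq\id_\omega+1$, hence $\minLc\geq\nu$. Conversely, for fixed $b$ the $\nu$ slaloms added for $b$, one per block, dominate $\prod b$, since any $x\in\prod b$ enters in some block and is localized by the $b$‑slalom of the next block; hence $\dlc_{b,\id_\omega}\leq\nu$ and $\supLc\leq\nu$. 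Finally, with $b_0:=\id_\omega+1$, the reductions modulo $b_0$ of the Cohen reals added at the $\nu$ block‑limits of $\dot{\mathbb{Q}}$ form an $\Lc(b_0,\id_\omega)$‑unbounded family of size $\nu$ which no $<\nu$‑sized family of slaloms can dominate, so $\blc_{b_0,\id_\omega}\leq\nu$ and $\dlc_{b_0,\id_\omega}\geq\nu$; thus $\minLc=\supLc=\nu$.

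The step requiring the most care is the arithmetic of the iteration length. I expect the hard part to be arranging $\delta$ to have cofinality \emph{exactly} $\nu$, so that simultaneously every object of size $<\nu$ (and every collection of $<\mu$ dense sets) is confined to a proper initial segment and can be dealt with later, while $\nu$ many slaloms per $b$—one per block—already suffice to dominate $\prod b$; the factor $\lambda$ in $\delta=\lambda\cdot\nu$ must leave enough room inside each block to treat all $\leq\lambda$ relevant names. Equally essential is that the $\Dbf$‑unbounded families of sizes $\mu$ and $\lambda$ are introduced up front by $\Cbb_\lambda$ and kept alive only through $\mu$‑$\Dbf$‑goodness of $\dot{\mathbb{Q}}$; this is precisely what forbids any iterand adding a dominating real (no Hechler steps), and is why forcing $\bfrak=\mu$ does not pull $\dfrak$ down from $\lambda$.
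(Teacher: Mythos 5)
Your proposal is correct and follows essentially the same route as the paper: a ccc FS iteration of length $\lambda\nu$ (cofinality $\nu$, size $\lambda$) whose iterands are the full posets $\LOCbb^{\id_\omega}_b$ for all $b$ appearing along the way, used cofinally often, together with all ccc posets of size $<\mu$ for $\mathrm{MA}_{<\mu}$, with $\mu$-$\Dbf$-goodness keeping $\bfrak=\mu$ and $\dfrak=\cfrak=\lambda$ and the cofinally added Cohen reals capping $\blc_{b,\id_\omega}\leq\nu\leq\dlc_{b,\id_\omega}$. The only differences are cosmetic packaging: you prepend $\Cbb_\lambda$ and invoke Lemma \ref{3.8} directly where the paper relies on Theorem \ref{3.13}, and you witness $\minLc\leq\nu\leq\supLc$ by explicit Cohen-real witnesses for $b_0=\id_\omega+1$ where the paper phrases the same mechanism via $\non(\Mcal)=\cov(\Mcal)=\nu$.
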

\begin{proof}
  Construct $\Pbb$ by a FS iteration of length $\lambda\nu$ of $\LOCbb^{\id_\omega}_b$ for every $b\in\omega^\omega$ (including those that appear in intermediate extensions), and of all the ccc posets of size $<\mu$ with underlying set an ordinal $<\mu$ (like in the previous results). We also demand that $\LOCbb^{\id_\omega}_b$ for each $b\in\omega^\omega$ is used cofinally often. By this construction, $\Pbb$ is $\mu$-$\Dbf$-good (by Lemma \ref{3.10} and Example \ref{ExmPrs}(3)) and it forces $\mathrm{MA}_{<\mu}$, so it forces $\add(\Ncal)=\bfrak=\mu$ and $\dfrak=\cfrak=\lambda$ by Theorem \ref{3.13}. On the other hand, it is clear that $\Pbb$ forces $\blc_{b,\id_\omega}=\non(\Mcal)=\cov(\Mcal)=\dlc_{b,id_\omega}=\nu$ for any $b\in\omega^\omega$ (also thanks to the Cohen reals added at limit stages), so $\minLc=\supLc=\nu$.
\end{proof}

With respect to the pairs $\bfrak,\minaLc$ and $\dfrak,\supaLc$, it is clear that a FS iteration of big size but short cofinality of Hechler posets forces $\supaLc<\bfrak$ and $\dfrak<\minaLc$ ($\bfrak<\dfrak$ can be additionally forced with a matrix iteration as in Theorem \ref{addN<addIf} but using only (i)-(iii) for the construction). If after this iteration we force with a large random algebra, then $\minaLc=\non(\Ncal)=\aleph_1<\bfrak$ and $\dfrak<\cof(\Ncal)=\supaLc$ is satisfied in the final extension.

\section{Discussions and Open Questions}\label{SecQ}

Though we constructed a model of ZFC where the four cardinal invariants associated with many Yorioka ideals are pairwise different, we still do not know how to construct a model where we can separate those cardinals for \emph{all} Yorioka ideals.

\begin{Question}\label{Q1}
Is there a model of ZFC satisfying $\add(\Ical_f)<\cov(\Ical_f)<\non(\Ical_f)<\cof(\Ical_f)$ for all increasing $f:\omega\to\omega$?
\end{Question}

It is not even known whether there is such a model for just $f=\mathrm{id}_{\omega}$.

There are many ways to attack this problem. One would be to find a gPrs $\Rcal$ that satisfies $\add(\Ical_{\id_\omega})\leq\bfrak(\Rcal)$ and $\dfrak(\Rcal)\leq\cof(\Ical_{\id_\omega})$ and such that any poset of the form $\Ebb_b$ is $\Rcal$-good. If such a gPrs can be found, then a construction as in 
Theorem \ref{main} works. Other way is to adapt the techniques for not adding dominating reals from \cite{GMS} in matrix iterations so that a construction as in Remark \ref{remnoHechler} could force $\bfrak\leq\mu$ and $\lambda\leq\dfrak$. This is quite possible because $\Ebb_b$ is $\mathbf{D}$-good (see Example \ref{ExmPrs}(3)), though its restriction to some internal model may add dominating reals (see \cite{Paw-Dom}). Success with this method would actually solve the following problem.

\begin{Question}\label{Q4}
Is there a model of ZFC satisfying $\add(\Mcal)<\cov(\Mcal)<\non(\Mcal)<\cof(\Mcal)$?
\end{Question}

Such a model exists assuming strongly compact cardinals (see \cite{GKS}), but it is still open modulo ZFC alone.

Other way to attack Question \ref{Q1} is by large products of creature forcing (see \cite{Kell,KellS,FGKS}). Quite recently, A. Fischer, Goldstern, Kellner and Shelah \cite{FGKS} used this technique to prove that 5 cardinal invariants on the right of Cicho\'n's diagram are pairwise different. Such method would also work to solve the dual of Question \ref{Q1}, that is,

\begin{Question}\label{Q5}
Is there a model of $\add(\Ical_f)<\non(\Ical_f)<\cov(\Ical_f)<\cof(\Ical_f)$ for all increasing $f:\omega\to\omega$?
\end{Question}


There are still many open questions in ZFC about the relations between the cardinal invariants associated with Yorioka ideals and the cardinals in Cicho\'n's diagram. For instance, it is unknown whether $\minadd\leq\cov(\Ncal)$ (or $\non(\Ncal)\leq\supcof$) is provable in ZFC. In general, very little is known about the additivity and the cofinality of Yorioka ideals.

\begin{Question}\label{Q2}
Is it consistent with ZFC that infinitely many cardinal invariants of the form $\add(\Ical_f)$ are pairwise different?
\end{Question}

\begin{Question}\label{Q3}
   Is it true in ZFC that $\add(\Ical_{f})=\add(\Ical_{\mathrm{id}_{\omega}})$ (and $\cof(\Ical_f)=\cof(\Ical_{\mathrm{id}_{\omega}})$) for all (or some) increasing $f$?
\end{Question}

Question \ref{Q2} is related with finding a gPrs associated to $\add(\Ical_f)$ and $\cof(\Ical_f)$. 

\begin{Question}\label{Qminnadd}
  Is it true in ZFC that $\add(\Ncal)=\minadd$ (and $\cof(\Ncal)=\supcof$)?
\end{Question}

Theorem \ref{bLoc(b,h)addN} indicates that the method of Theorem \ref{addN<addIf} cannot be used to increase \emph{every} cardinal of the form $\add(\Ical_f)$ while preserving $\add(\Ncal)$ small.


Concerning the problem of separating the four cardinal invariants associated with an ideal, the consistency of $\add(\Ncal)<\cov(\Ncal)<\non(\Ncal)<\cof(\Ncal)$ is known from \cite{M}.

\begin{Question}\label{Q6}
Is there a model of ZFC satisfying $\add(\Ncal)<\non(\Ncal)<\cov(\Ncal)<\cof(\Ncal)?$
\end{Question}

The same question for $\Mcal$ is also open.

Though in Theorem \ref{main} we were able to separate (infinitely) many localization and anti-localization cardinals, the $\bfrak$-localization cardinals appear below the $\bfrak$-anti-localization ones. The reason of this is that the preservation methods we use for the localization cardinals relies on the structure $\Rbb^{\rho^*}$, so after including one $\bfrak$-anti-localization cardinal with functions quite above $\rho^*$ (to preserve the previous localization cardinals), a new forcing to increase some other localization cardinal may not preserve the previous anti-localization ones. In view of this, it would be interesting to improve our preservation methods to allow both type of cardinals appear alternatively.


\bibliography{prop}

\begin{thebibliography}{FFMM18}

\bibitem[BF11]{BrF}
J{\"o}rg Brendle and Vera Fischer.
\newblock Mad families, splitting families and large continuum.
\newblock {\em J. Symbolic Logic}, 76(1):198--208, 2011.

\bibitem[BJ95]{BJ}
Tomek Bartoszy\'nski and Haim Judah.
\newblock {\em {Set Theory: On the Structure of the Real Line}}.
\newblock A K Peters, Wellesley, Massachusetts, 1995.

\bibitem[Bla10]{B10}
Andreas Blass.
\newblock Combinatorial cardinal characteristics of the continuum.
\newblock In {\em Handbook of set theory. {V}ols. 1, 2, 3}, pages 395--489.
  Springer, Dordrecht, 2010.

\bibitem[BM14]{BrM}
J{\"o}rg Brendle and Diego~Alejandro Mej{\'{\i}}a.
\newblock Rothberger gaps in fragmented ideals.
\newblock {\em Fund. Math.}, 227(1):35--68, 2014.

\bibitem[Bre91]{Br}
J{\"o}rg Brendle.
\newblock Larger cardinals in {C}icho\'n's diagram.
\newblock {\em J. Symbolic Logic}, 56(3):795--810, 1991.

\bibitem[BS89]{B1S}
Andreas Blass and Saharon Shelah.
\newblock Ultrafilters with small generating sets.
\newblock {\em Israel J. Math.}, 65(3):259--271, 1989.

\bibitem[DS18]{DowShelah}
Alan Dow and Saharon Shelah.
\newblock On the cofinality of the splitting number.
\newblock {\em Indag. Math.}, 29(1):382--395, 2018.

\bibitem[Eis10]{eisworth}
Todd Eisworth.
\newblock Successors of singular cardinals.
\newblock In {\em Handbook of set theory. {V}ols. 1, 2, 3}, pages 1229--1350.
  Springer, Dordrecht, 2010.

\bibitem[FFMM18]{FFMM}
Vera Fischer, Sy~D. Friedman, Diego~A. Mej{\'{i}}a, and Diana~C. Montoya.
\newblock Coherent systems of finite support iterations.
\newblock {\em J. Symbolic Logic}, 83(1):208--236, 2018.

\bibitem[FGKS17]{FGKS}
Arthur Fischer, Martin Goldstern, Jakob Kellner, and Saharon Shelah.
\newblock Creature forcing and five cardinal characteristics in {C}icho\'n's
  diagram.
\newblock {\em Arch. Math. Logic}, 56(7-8):1045--1103, 2017.

\bibitem[GKS]{GKS}
Martin Goldstern, Jakob Kellner, and Saharon Shelah.
\newblock Cicho\'n's maximum.
\newblock Preprint.

\bibitem[GMS16]{GMS}
Martin Goldstern, Diego~Alejandro Mej{\'{\i}}a, and Saharon Shelah.
\newblock The left side of {C}icho\'n's diagram.
\newblock {\em Proc. Amer. Math. Soc.}, 144(9):4025--4042, 2016.

\bibitem[GS93]{GS93}
Martin Goldstern and Saharon Shelah.
\newblock Many simple cardinal invariants.
\newblock {\em Arch. Math. Logic}, 32(3):203--221, 1993.

\bibitem[Jec03]{Je2}
Thomas Jech.
\newblock {\em Set theory}.
\newblock Springer Monographs in Mathematics. Springer-Verlag, Berlin, 2003.
\newblock The third millennium edition, revised and expanded.

\bibitem[JS90]{JS}
Haim Judah and Saharon Shelah.
\newblock The {K}unen-{M}iller chart ({L}ebesgue measure, the {B}aire property,
  {L}aver reals and preservation theorems for forcing).
\newblock {\em J. Symbolic Logic}, 55(3):909--927, 1990.

\bibitem[Kam89]{Ka}
Anastasis Kamburelis.
\newblock Iterations of {B}oolean algebras with measure.
\newblock {\em Arch. Math. Logic}, 29(1):21--28, 1989.

\bibitem[Kec95]{Ke2}
Alexander~S. Kechris.
\newblock {\em Classical descriptive set theory}, volume 156 of {\em Graduate
  Texts in Mathematics}.
\newblock Springer-Verlag, New York, 1995.

\bibitem[KO08]{kamo-osuga}
Shizuo Kamo and Noboru Osuga.
\newblock The cardinal coefficients of the ideal {$\mathcal{I}_f$}.
\newblock {\em Arch. Math. Logic}, 47(7-8):653--671, 2008.

\bibitem[KO14]{KO}
Shizuo Kamo and Noboru Osuga.
\newblock Many different covering numbers of {Y}orioka's ideals.
\newblock {\em Arch. Math. Logic}, 53(1-2):43--56, 2014.

\bibitem[KS09]{Kell}
Jakob Kellner and Saharon Shelah.
\newblock Decisive creatures and large continuum.
\newblock {\em J. Symbolic Logic}, 74(1):73--104, 2009.

\bibitem[KS12]{KellS}
Jakob Kellner and Saharon Shelah.
\newblock Creature forcing and large continuum: the joy of halving.
\newblock {\em Arch. Math. Logic}, 51(1-2):49--70, 2012.

\bibitem[Kun80]{Ku}
Kenneth Kunen.
\newblock {\em Set theory}, volume 102 of {\em Studies in Logic and the
  Foundations of Mathematics}.
\newblock North-Holland Publishing Co., Amsterdam-New York, 1980.
\newblock An introduction to independence proofs.

\bibitem[Mej]{mejiavert}
Diego~A. Mej{\'{i}}a.
\newblock Matrix iterations with vertical support restrictions.
\newblock Submitted.

\bibitem[Mej13]{M}
Diego~Alejandro Mej{\'{\i}}a.
\newblock Matrix iterations and {C}ichon's diagram.
\newblock {\em Arch. Math. Logic}, 52(3-4):261--278, 2013.

\bibitem[Mej15]{mejia-temp}
Diego~A. Mej{\'{\i}}a.
\newblock Template iterations with non-definable ccc forcing notions.
\newblock {\em Ann. Pure Appl. Logic}, 166(11):1071--1109, 2015.

\bibitem[Mil81]{Mi}
Arnold~W. Miller.
\newblock Some properties of measure and category.
\newblock {\em Trans. Amer. Math. Soc.}, 266(1):93--114, 1981.

\bibitem[Osu06]{O}
Noboru Osuga.
\newblock The covering number and the uniformity of the ideal
  {$\mathcal{I}_f$}.
\newblock {\em MLQ}, 52(4):351--358, 2006.

\bibitem[Osu08]{K1}
Noboru Osuga.
\newblock The cardinal invariants of certain ideals related to the strong
  measure zero ideal.
\newblock {\em Ky\={o}to Daigaku S\=urikaiseki Kenky\=usho K\=oky\=uroku},
  1619:83--90, 2008.

\bibitem[Paw92]{Paw-Dom}
Janusz Pawlikowski.
\newblock Adding dominating reals with {$\omega^\omega$} bounding posets.
\newblock {\em J. Symbolic Logic}, 57(2):540--547, 1992.

\bibitem[Rin06]{rinot}
Assaf Rinot.
\newblock On the consistency strength of the {M}ilner-{S}auer conjecture.
\newblock {\em Ann. Pure Appl. Logic}, 140(1-3):110--119, 2006.

\bibitem[Voj93]{V}
Peter Vojt\'a\v{s}.
\newblock Generalized {G}alois-{T}ukey-connections between explicit relations
  on classical objects of real analysis.
\newblock In {\em Set theory of the reals ({R}amat {G}an, 1991)}, volume~6 of
  {\em Israel Math. Conf. Proc.}, pages 619--643. Bar-Ilan Univ., Ramat Gan,
  1993.

\bibitem[Yor02]{Yorioka}
Teruyuki Yorioka.
\newblock The cofinality of the strong measure zero ideal.
\newblock {\em J. Symbolic Logic}, 67(4):1373--1384, 2002.

\end{thebibliography}
\bibliographystyle{alpha}

\end{document}